\DeclareRobustCommand{\SkipTocEntry}[5]{}
\newcommand\ignore[1]{}
\newtheorem{theorem}{Theorem}[section]
\newtheorem{prop}[theorem]{Proposition}
\newtheorem{cor}[theorem]{Corollary}
\theoremstyle{definition}
\newtheorem{definition}[theorem]{Definition}
\theoremstyle{remark}
\newtheorem{remark}[theorem]{Remark}
\newtheorem{example}[theorem]{Example}
\newcommand{\nn}{\nonumber}
\newcommand{\nid}{\noindent}
\newcommand{\ra}{\rightarrow}
\newcommand{\xra}{\xrightarrow}
\newcommand{\ingeps}{\includegraphics[scale=1]}
\newcommand{\ing}{\includegraphics[scale=.3]}
\newcommand{\ingt}{\includegraphics[scale=.25]}
\newcommand{\ings}{\includegraphics[scale=.2]}
\newcommand{\xys}{\xymatrix@-15pt@M+5pt}
\newcommand{\arr}{\ar@{=>}}
\newcommand{\sct}{.4in}
\newcommand{\sca}{.5in}
\newcommand{\scb}{.6in}
\newcommand{\scc}{.7in}
\newcommand{\scd}{.8in}
\newcommand{\sce}{.9in}
\newcommand{\scg}{1.2in}
\newcommand{\hsqza}{-25pt}
\newcommand{\hsqzb}{-65pt}
\newcommand{\hscsqz}{.8}
\newcommand{\cboxht}{.6ex}
\newcommand{\hboxht}{.5ex}
\newcommand{\Hom}{\mathrm{Hom}}
\newcommand{\Bord}{\mathrm{Bord}}
\newcommand{\Vect}{\mathrm{Vect}}
\newcommand{\Hilb}{\mathrm{Hilb}}
\newcommand{\zCAT}{\mathrm{0CAT}}
\newcommand{\oCAT}{\mathrm{1CAT}}
\newcommand{\CAT}{\mathrm{CAT}}
\newcommand{\tCAT}{\mathrm{2CAT}}
\newcommand{\thCAT}{\mathrm{3CAT}}
\newcommand{\nCAT}{\mathrm{nCAT}}
\newcommand{\tCATnwk}{\mathrm{2CAT}_{\mathrm{nwk}}}
\newcommand{\tCATwk}{\mathrm{2CAT}_{\mathrm{wk}}}
\newcommand{\bBord}{\overline{\mathrm{Bord}}}
\newcommand{\SMC}{\mathrm{SMC}}
\newcommand{\Alg}{\mathrm{Alg}}
\newcommand{\vNalg}{\mathrm{vNalg}}
\newcommand{\TwVect}{\text{2-Vect}}
\newcommand{\ThVect}{\text{3-Vect}}
\newcommand{\Int}{\mathrm{Int}}
\newcommand{\CN}{\mathrm{CN}}
\newcommand{\TC}{\mathrm{TC}}
\newcommand{\dra}{\Rightarrow}
\newcommand{\Cat}{\mathrm{Cat}}
\newcommand{\obj}{\mathrm{obj}\,}
\newcommand{\mor}{\mathrm{mor}\,}
\newcommand{\fC}{\mathrm{C}}
\newcommand{\fN}{\mathrm{N}}
\newcommand{\fF}{\mathrm{F}}
\newcommand{\cb}{\raisebox{.6ex-.5\height}}
\newcommand{\db}{\raisebox{3ex-\height}}
\newcommand{\hb}{\raisebox{\hboxht}}
\newcommand{\bb}{\raisebox{2ex-.5\height}}
\newcommand{\cz}{C_0}
\newcommand{\cn}{C_1}
\newcommand{\bz}{B_0}
\newcommand{\bn}{B_1}
\newcommand{\bt}{B_2}
\newcommand{\tms}{\times}
\def\cmlg{4}
\def\cmsh{1}
\def\lbdist{1}
\def\lbgap{.2}
\def\bulletsize{.2}
\def\bull(#1,#2){
	\fill [color=black] (#1,#2) circle (\bulletsize);
}
\newcommand{\id}{\mathrm{id}}
\newcommand{\pr}{\mathrm{pr}}
\def\cA{\mathcal A}\def\cB{\mathcal B}\def\cC{\mathcal C}
\def\CC{\mathbb C}
\def\sectorh{4} % sector horizontal length
\def\dsectorh{7} % double sector h length
\def\tsectorh{10} % triple sector h length
\def\sectorv{3} % sector vertical length
\def\sectorvlow{1.65} % sector vertical length low
\def\sectorvmid{2.25} % sector vertical length mid
\def\sectorvhigh{4} % sector vertical length high
\def\dsectorvhigh{5.5} % vertical height of identity sectors of doulbe length in split situation.  Note, original was about 6.5, but because of different curve type, less is needed.
\def\tsectorvhigh{5.5} % vertical height of half identity sectors of triple length
\def\sectorhlong{5} % long sector length
\def\defecthsuplong{5} % defect super long
\def\sectorangle{60} % the angle at which the sector cookie begin.
\def\sectoranglelow{40}
\def\sectoranglemid{50}
\def\sectoranglehigh{65}
\def\dsectorangle{40}
\def\tsectorangle{28}
\def\dsectorhighangle{65}
\def\tsectorhighangle{65}
\def\vgap{.5} % distance between two sectors composed vertically
\def\hgap{.25} % distance associator ticks are away from picture
\def\exgap{.1} % extra gap when needed
\def\arcgap{.25} % gap at end of arc when an arrow arc is beginning there
\def\arcgapfudge{.1} % extension to cover the little end bits in the arc gap.
\def\arcgaparrowshiftv{.39} % vertical shift of caret placement in arrowarcgap
\def\arcgaparrowshifth{.24} % vertical shift of caret placement in arrowarcgap
\def\defecth{3} % defect (1-cell) horizontal length
\def\defectcgap{.5} % gap caused by caret in associator
\def\tickv{.75} % tick height
\def\tickvshort{.6} % short tick height
\def\tickvmini{.32} % mini tick height
\def\tickanglesharp{60} % angle of tick in a curving associator
\def\itickv{.5} % identity tick height
\def\itickvmini{.32} % mini id tick height
\def\tickvadd{.125} % extension of tick when near a curve
\def\iticksepv{.375} % identity tick separation for 2-cells
\def\itickseph{.36} % identity tick separation for 1-cells
\def\halfidsep{.375} % separation between lines in half identity sector
\def\halfidangle{45} % angle of half identity ticks.
\def\halfidangleshallow{35} % angle of half identity ticks when the main angle is too steep to fit.
\def\halfidanglesupshallow{18} % angle for the shallowest half id ticks
\def\hidticksangle{45} % angle for id ticks on an arc
\def\hidticksangleshallow{23} % angle for id ticks in triple sector
\def\hidshifth{1.27} % shift for id ticks on an arc % Note these are rotated shifts.
\def\hidshiftv{.18} % shift for id ticks on an arc
\def\hidshifthfact{1.21} % shift for id ticks on an arc when arc is \arcfactor stretched
\def\hidshiftvfact{.12} % shift for id ticks on an arc when arc is \arcfactor stretched
\def\hidshifttriph{1.6} % shift for id ticks on a triple arc % Note these are rotated shifts.
\def\hidshifttripv{.12} % shift for id ticks on an triple arc
\def\coordhhil{1.85}
\def\coordvhil{-.05}
\def\coordhhiltop{1.75}
\def\coordvhiltop{.5}
\def\coordhhilbotcut{2.6}
\def\coordvhilbotcut{-1.2}
\def\coordhhiltopthreecut{2}
\def\coordvhiltopthreecut{.75}
\def\coordhhilbotthreecut{2.6}
\def\coordvhilbotthreecut{-1.2}
\def\coordhhiltopthree{3}
\def\coordvhiltopthree{.5}
\def\coordhhilbot{3.5}
\def\coordvhilbot{-.75}
\def\coordhhilbotcutleft{3.35}%{5}
\def\coordvhilbotcutleft{-2.1}%{-3}
\def\ilength{2} % length of vertical identity lines
\def\ilengthd{4.5} % length of vertical identity lines in a two defect sector
\def\ilengtht{6.5} % length of vertical identity lines in a three defect sector
\def\ilengths{1.5} % length of vertical identity lines short
\def\assocv{1} % vertical height of associator
\def\caretangle{45} % caret angle
\def\caretlengthsect{.3} % caretlength in sectors
\def\caretlength{.375} % caretlength in arrows
\def\caretlengthmini{.2} % mini caretlength for tight spots
\def\arcfactor{.9} % factor by which arcs running along sectors are shrunk in vertical height.
\def\arcstretchfactor{1.1} % factor by which long arcs are stretched in vertical height.
\def\tickangle{13} % angle of tick in triple sector
\def\ttickh{1.31} % vertical position of tick and caret in triple sector, done by eye.
\def\dtickh{2.5} % vertical position of tick and caret in one exceptional half identity double width sector, namely hiltopthreecut.
\def\caretangle{11} %angle of caret in triple sector
\def\myinnersep{.15cm}
\def\myinnersepeps{.075cm}
\def\tableinnersep{.075cm}
\def\tableinnersepeps{.0375cm}
\def\linewidthvalue{.5985pt} % based on littlepictikzscale of .0315ex
\def\littlepicscale{.05ex}
\def\littlepicepsscale{.05ex}
\def\littlepictikzscale{.0315ex} % NB this is calibrated with respect to littlepicepsscale to produce the same size output.
\tikzset{littlepic/.style={scale=\littlepicscale, draw=black, line width=\linewidthvalue}}  %originally .08ex 
\tikzset{littlepiceps/.style={scale=\littlepicepsscale, draw=black, line width=\linewidthvalue}}  %originally .08ex 
\tikzset{littlepictikz/.style={scale=\littlepictikzscale, draw=black, line width=\linewidthvalue}}  %originally .08ex 
\tikzset{littlepicbull/.style={scale=.12ex, draw=black}}  %
\tikzset{littlenode/.style={fill=white,inner sep=\myinnersep}}
\tikzset{littlenode-eps/.style={fill=white,inner sep=\myinnersepeps}}
\tikzset{tablenode/.style={fill=white,inner sep=\tableinnersep}}
\tikzset{tablenode-eps/.style={fill=white,inner sep=\tableinnersepeps}}
\tikzset{displaypic/.style={scale=.15ex, draw=black}}
\def\innertikzeps{\tikz[littlepiceps]} % tricat eps
\def\innertikztikz{\tikz[littlepictikz]} % tricat tikz
\def\epspicscalex{.25ex}
\def\epspicscaledoubleplusx{.475ex}
\def\epspicscaledoublewidex{.6ex}
\def\epspicscaledoublesuperwidex{.7ex}
\def\epspicscaleyb{.20ex}
\def\epspicscaleyc{.25ex}
\def\epspicscaleycc{.27ex}
\def\epspicscaleyd{.30ex}
\def\tikztextscale{.15ex}
\def\tikztablescalex{.4ex}
\def\tikztablescaley{.05ex}
\def\tikztablescaleinner{.125ex}
\def\tikztablescaleinnerx{.35ex}
\def\tikztablescaleinnery{.125ex}
\def\sh(#1,#2){\def\x{#1}\def\y{#2}}
\def\sector{
	\coordinate (A) at (\x,\y);
	\coordinate (B) at ($ (A) + (.5*\sectorh,.5*\sectorv) $);
	\coordinate (C) at ($ (A) + (\sectorh,0) $);
	\coordinate (D) at ($ (A) + (.5*\sectorh,-.5*\sectorv) $);
	\draw (A) to [out=\sectorangle,in=180] (B) to [out=0,in=180-\sectorangle] (C);
	\draw (A) to [out=-\sectorangle,in=180] (D) to [out=0,in=180+\sectorangle] (C);
}
\def\arrowrarcu{
	\coordinate (A) at (\x,\y);
	\coordinate (B) at ($ (A) + (.5*\sectorh,\arcfactor*.5*\sectorv) $);
	\coordinate (C) at ($ (A) + (\sectorh,0) $);
	\draw (A) to [out=\sectorangle,in=180] (B) to [out=0,in=180-\sectorangle] (C);
	\caretr;
}
\def\arrowlarcu{
	\begin{scope}[cm={-1,0,0,1,(\x+\x+\sectorh,0)}]
		\arrowrarcu;
	\end{scope}
}
\def\arrowlarcd{
	\begin{scope}[cm={1,0,0,-1,(0,\y+\y)}]
		\arrowlarcu;
	\end{scope}
}
\def\arcu{
	\coordinate (A) at (\x,\y);
	\coordinate (B) at ($ (A) + (.5*\sectorh,\arcfactor*.5*\sectorv) $);
	\coordinate (C) at ($ (A) + (\sectorh,0) $);
	\draw (A) to [out=\sectorangle,in=180] (B) to [out=0,in=180-\sectorangle] (C);
}
\def\arcd{	
	\begin{scope}[cm={1,0,0,-1,(0,\y+\y)}]
		\arcu;
	\end{scope}
}
\def\arculong{
	\coordinate (A) at (\x,\y);
	\coordinate (B) at ($ (A) + (.5*\sectorh+.5,\arcstretchfactor*.5*\sectorv) $);
	\coordinate (C) at ($ (A) + (\sectorh+1,0) $);
	\draw (A) to [out=\sectorangle,in=180] (B) to [out=0,in=180-\sectorangle] (C);
}
\def\arcdlong{	
	\begin{scope}[cm={1,0,0,-1,(0,\y+\y)}]
		\arculong;
	\end{scope}
}
\def\arcugapl{
	\coordinate (D) at (\x-\arcgapfudge,\y-\arcgapfudge);
	\coordinate (E) at (\x+\arcgap,\y-\arcgapfudge);
	\coordinate (F) at (\x+\arcgap,\y+\arcgap+\arcgap+\arcgapfudge);
	\coordinate (G) at (\x-\arcgapfudge,\y+\arcgap+\arcgap+\arcgapfudge);
	\arcu;
	\draw[color=white,fill=white,line width=0pt] (D) -- (E) -- (F) -- (G) -- (D);
}
\def\arcugapr{
	\begin{scope}[cm={-1,0,0,1,(\x+\x+\sectorh,0)}]
		\arcugapl;
	\end{scope}
}
\def\arcdgapl{
	\begin{scope}[cm={1,0,0,-1,(0,\y+\y)}]
		\arcugapl;
	\end{scope}
}
\def\arcdgapr{
	\begin{scope}[cm={1,0,0,-1,(0,\y+\y)}]
		\arcugapr;
	\end{scope}
}
\def\arrowrarcugap{
	\arcugapl;
	\begin{scope}[shift={(\arcgaparrowshifth,\arcgaparrowshiftv)}]
		\caretr;
	\end{scope}
}
\def\arrowlarcugap{
	\begin{scope}[cm={-1,0,0,1,(\x+\x+\sectorh,0)}]
		\arrowrarcugap;
	\end{scope}
}
\def\arrowlarcdgap{
	\begin{scope}[cm={1,0,0,-1,(0,\y+\y)}]
		\arrowlarcugap;
	\end{scope}
}
\def\arrowrarcuturn{
	\arcugapl;
	\begin{scope}[shift={(.60,.57)}] %!% This is completely wrong, was tuned for a particular nonzero value of \y
	\begin{scope}[rotate=45]
		\caretrmini;
	\end{scope}
	\end{scope}
}
\def\arrowlarcuturn{
	\begin{scope}[cm={-1,0,0,1,(\x+\x+\sectorh,0)}]
		\arrowrarcuturn;
	\end{scope}
}
\def\arrowlarcdturn{
	\begin{scope}[cm={1,0,0,-1,(0,\y+\y)}]
		\arrowlarcuturn;
	\end{scope}
}
\def\splittopr{
	\coordinate (A) at (\x,\y);
	\coordinate (B) at ($ (A) + (.5*\sectorh,\arcfactor*.5*\sectorv) $);
	\coordinate (C) at ($ (A) + (\sectorh,0) $);
	\draw (A) to [out=\sectorangle,in=180] (B) to [out=0,in=180-\sectorangle] (C);
	\begin{scope}[shift={(.5*\sectorh,\arcfactor*.5*\sectorv)}]
		\caretrwhite;
	\end{scope}
}
\def\vid{
	\coordinate (A) at ($ (\x,\y) + (.5*\sectorh-.5*\ilength,-.5*\iticksepv) $);
	\coordinate (B) at ($ (\x,\y) + (.5*\sectorh+.5*\ilength,-.5*\iticksepv) $);
	\coordinate (C) at ($ (\x,\y) + (.5*\sectorh-.5*\ilength,.5*\iticksepv) $);
	\coordinate (D) at ($ (\x,\y) + (.5*\sectorh+.5*\ilength,.5*\iticksepv) $);
	\draw (A) to (B);
	\draw (C) to (D);
}
\def\vidd{
	\coordinate (A) at ($ (\x,\y) + (.5*\dsectorh-.5*\ilengthd,-.5*\iticksepv) $);
	\coordinate (B) at ($ (\x,\y) + (.5*\dsectorh+.5*\ilengthd,-.5*\iticksepv) $);
	\coordinate (C) at ($ (\x,\y) + (.5*\dsectorh-.5*\ilengthd,.5*\iticksepv) $);
	\coordinate (D) at ($ (\x,\y) + (.5*\dsectorh+.5*\ilengthd,.5*\iticksepv) $);
	\draw (A) to (B);
	\draw (C) to (D);
}
\def\vidt{
	\coordinate (A) at ($ (\x,\y) + (.5*\tsectorh-.5*\ilengtht,-.5*\iticksepv) $);
	\coordinate (B) at ($ (\x,\y) + (.5*\tsectorh+.5*\ilengtht,-.5*\iticksepv) $);
	\coordinate (C) at ($ (\x,\y) + (.5*\tsectorh-.5*\ilengtht,.5*\iticksepv) $);
	\coordinate (D) at ($ (\x,\y) + (.5*\tsectorh+.5*\ilengtht,.5*\iticksepv) $);
	\draw (A) to (B);
	\draw (C) to (D);
}
\def\vids{
	\coordinate (A) at ($ (\x,\y) + (.5*\sectorh-.5*\ilengths,-.5*\iticksepv) $);
	\coordinate (B) at ($ (\x,\y) + (.5*\sectorh+.5*\ilengths,-.5*\iticksepv) $);
	\coordinate (C) at ($ (\x,\y) + (.5*\sectorh-.5*\ilengths,.5*\iticksepv) $);
	\coordinate (D) at ($ (\x,\y) + (.5*\sectorh+.5*\ilengths,.5*\iticksepv) $);
	\draw (A) to (B);
	\draw (C) to (D);
}
\def\tripletopr{
	\coordinate (A) at (\x,\y);
	\coordinate (B) at ($ (A) + (.5*\tsectorh,.5*\sectorv) $);
	\coordinate (C) at ($ (A) + (\tsectorh,0) $);
	\draw (A) to [out=\tsectorangle,in=180] (B) to [out=0,in=180-\tsectorangle] (C);
	\begin{scope}[shift={(.31*\tsectorh,\ttickh)}]
		\tickcc;
	\end{scope}
	\begin{scope}[shift={(.69*\tsectorh,\ttickh)}]
		\caretrcwhite;
	\end{scope}
}
\def\tripletopl{
	\begin{scope}[cm={-1,0,0,1,(\x+\x+\tsectorh,0)}]
		\tripletopr;
	\end{scope}
}
\def\tripletoparc{
	\coordinate (A) at (\x,\y);
	\coordinate (B) at ($ (A) + (.5*\tsectorh,.5*\sectorv) $);
	\coordinate (C) at ($ (A) + (\tsectorh,0) $);
	\draw (A) to [out=\tsectorangle,in=180] (B) to [out=0,in=180-\tsectorangle] (C);
}
\def\triplebotr{
	\begin{scope}[cm={1,0,0,-1,(0,\y+\y)}]
		\tripletopr;
	\end{scope}
}
\def\triplebotl{
	\begin{scope}[cm={1,0,0,-1,(0,\y+\y)}]
		\tripletopl;
	\end{scope}
}
\def\triplebotarc{
	\begin{scope}[cm={1,0,0,-1,(0,\y+\y)}]
		\tripletoparc;
	\end{scope}
}
\def\tripletophigh{
	\coordinate (A) at (\x,\y);
	\coordinate (B) at ($ (A) + (.5*\tsectorh,.5*\tsectorvhigh) $);
	\coordinate (C) at ($ (A) + (\tsectorh,0) $);
	\draw (A) to [out=\tsectorhighangle,in=180] (B) to [out=0,in=180-\tsectorhighangle] (C);
}
\def\triplebothigh{
	\begin{scope}[cm={1,0,0,-1,(0,\y+\y)}]
		\tripletophigh;
	\end{scope}
}
\def\doubletop{
	\coordinate (A) at (\x,\y);
	\coordinate (B) at ($ (A) + (.5*\dsectorh,.5*\sectorv) $);
	\coordinate (C) at ($ (A) + (\dsectorh,0) $);
	\draw (A) to [out=\dsectorangle,in=180] (B) to [out=0,in=180-\dsectorangle] (C);
}
\def\doubletoptick{
	\doubletop;
	\begin{scope}[shift={(.5*\dsectorh,.5*\sectorv)}]
		\tick;
	\end{scope}
}
\def\doublebottick{
	\begin{scope}[cm={1,0,0,-1,(0,\y+\y)}]
		\doubletoptick;
	\end{scope}
}
\def\doubletopr{
	\doubletop;
	\begin{scope}[shift={(.5*\dsectorh,.5*\sectorv)}]
		\caretrwhite;
	\end{scope}
}
\def\doubletopl{
	\begin{scope}[cm={-1,0,0,1,(\x+\x+\dsectorh,0)}]
		\doubletopr;
	\end{scope}
}
\def\doublebotl{
	\begin{scope}[cm={1,0,0,-1,(0,\y+\y)}]
		\doubletopl;
	\end{scope}
}
\def\doubletophigh{
	\coordinate (A) at (\x,\y);
	\coordinate (B) at ($ (A) + (.5*\dsectorh,.5*\dsectorvhigh) $);
	\coordinate (C) at ($ (A) + (\dsectorh,0) $);
	\draw (A) to [out=\dsectorhighangle,in=180] (B) to [out=0,in=180-\dsectorhighangle] (C);
}
\def\doublebothigh{
	\begin{scope}[cm={1,0,0,-1,(0,\y+\y)}]
		\doubletophigh;
	\end{scope}
}
\def\caretrcwhite{
	\begin{scope}[rotate around={-\caretangle:(\x,\y)}]
		\caretrwhite;
	\end{scope}
}
\def\caretrwhite{
	\coordinate (A) at (\x,\y);
	\coordinate (B) at ($ (A) + (-\caretlength,\caretlength) $);
	\coordinate (C) at ($ (A) + (-\caretlength,-\caretlength) $);
	\coordinate (D) at ($ (A) + (-1.2*\caretlength,1.2*\caretlength) $);
	\coordinate (E) at ($ (A) + (-1.2*\caretlength,-1.2*\caretlength) $);
	\draw[color=white,fill=white,line width=0pt] (D) -- (A) -- (E) -- (D);
	\draw (B) to (A) to (C);
}
\def\caretr{
	\coordinate (A) at (\x,\y);
	\coordinate (B) at ($ (A) + (-\caretlength,\caretlength) $);
	\coordinate (C) at ($ (A) + (-\caretlength,-\caretlength) $);
	\draw (B) to (A) to (C);
}
\def\caretlwhite{
	\begin{scope}[cm={-1,0,0,1,(\x+\x+\caretlength,0)}]
		\caretrwhite;
	\end{scope}
}
\def\caretrmini{
	\coordinate (A) at (\x,\y);
	\coordinate (B) at ($ (A) + (-\caretlengthmini,\caretlengthmini) $);
	\coordinate (C) at ($ (A) + (-\caretlengthmini,-\caretlengthmini) $);
	\draw (B) to (A) to (C);
}
\def\tick{
	\coordinate (A) at ($ (\x,\y) + (0,.5*\tickv) $);
	\coordinate (B) at ($ (\x,\y) + (0,-.5*\tickv) $);
	\draw (A) to (B);
}
\def\tickaddup{
	\coordinate (A) at ($ (\x,\y) + (0,.5*\tickv+\tickvadd) $);
	\coordinate (B) at ($ (\x,\y) + (0,-.5*\tickv) $);
	\draw (A) to (B);
}
\def\tickadddown{
	\coordinate (A) at ($ (\x,\y) + (0,.5*\tickv) $);
	\coordinate (B) at ($ (\x,\y) + (0,-.5*\tickv-\tickvadd) $);
	\draw (A) to (B);
}
\def\tickshortupadddown{
	\coordinate (A) at ($ (\x,\y) + (0,.5*\tickv-\tickvadd) $);
	\coordinate (B) at ($ (\x,\y) + (0,-.5*\tickv-\tickvadd) $);
	\draw (A) to (B);
}
\def\tickshortdownaddup{
	\coordinate (A) at ($ (\x,\y) + (0,.5*\tickv+\tickvadd) $);
	\coordinate (B) at ($ (\x,\y) + (0,-.5*\tickv+\tickvadd) $);
	\draw (A) to (B);
}
\def\tickshort{
	\coordinate (A) at ($ (\x,\y) + (0,.5*\tickvshort) $);
	\coordinate (B) at ($ (\x,\y) + (0,-.5*\tickvshort) $);
	\draw (A) to (B);
}
\def\tickmini{
	\coordinate (A) at ($ (\x,\y) + (0,.5*\tickvmini) $);
	\coordinate (B) at ($ (\x,\y) + (0,-.5*\tickvmini) $);
	\draw (A) to (B);
}
\def\itickshere{
	\coordinate (A) at ($ (\x,\y) + (-.5*\itickseph,.5*\itickv) $);
	\coordinate (B) at ($ (\x,\y) + (-.5*\itickseph,-.5*\itickv) $);
	\coordinate (C) at ($ (\x,\y) + (.5*\itickseph,.5*\itickv) $);
	\coordinate (D) at ($ (\x,\y) + (.5*\itickseph,-.5*\itickv) $);	
	\draw (A) to (B);
	\draw (C) to (D);
}
\def\iticks{
	\coordinate (A) at ($ (\x,\y) + (.5*\defecth-.5*\itickseph,.5*\itickv) $);
	\coordinate (B) at ($ (\x,\y) + (.5*\defecth-.5*\itickseph,-.5*\itickv) $);
	\coordinate (C) at ($ (\x,\y) + (.5*\defecth+.5*\itickseph,.5*\itickv) $);
	\coordinate (D) at ($ (\x,\y) + (.5*\defecth+.5*\itickseph,-.5*\itickv) $);	
	\draw (A) to (B);
	\draw (C) to (D);
}
\def\iticksminihere{
	\coordinate (A) at ($ (\x,\y) + (-.5*\itickseph,.5*\itickvmini) $);
	\coordinate (B) at ($ (\x,\y) + (-.5*\itickseph,-.5*\itickvmini) $);
	\coordinate (C) at ($ (\x,\y) + (.5*\itickseph,.5*\itickvmini) $);
	\coordinate (D) at ($ (\x,\y) + (.5*\itickseph,-.5*\itickvmini) $);	
	\draw (A) to (B);
	\draw (C) to (D);
}
\def\iticksrotl{
	\begin{scope}[rotate around={\hidticksangle:(\x,\y)}]
	\begin{scope}[shift={(\hidshifth,\hidshiftv)}]
		\itickshere;
	\end{scope}
	\end{scope} %% Remark: the shifts end up being rotated; but if you try to do it the other way it doesn't work.
}
\def\iticksrotr{
	\begin{scope}[rotate around={-\hidticksangle:(\x,\y)}]
	\begin{scope}[shift={(.707*\sectorh-\hidshifth,.707*\sectorh+\hidshiftv)}]
		\itickshere;
	\end{scope}
	\end{scope} 
}
\def\iticksrotlminifact{
	\begin{scope}[rotate around={\hidticksangle:(\x,\y)}]
	\begin{scope}[shift={(\hidshifthfact,\hidshiftvfact)}]
		\iticksminihere;
	\end{scope}
	\end{scope} 
}
\def\iticksrotrminifact{
	\begin{scope}[rotate around={-\hidticksangle:(\x,\y)}]
	\begin{scope}[shift={(.707*\sectorh-\hidshifthfact,.707*\sectorh+\hidshiftvfact)}]
		\iticksminihere;
	\end{scope}
	\end{scope} 
}
\def\iticksrotlfact{
	\begin{scope}[rotate around={\hidticksangle:(\x,\y)}]
	\begin{scope}[shift={(\hidshifthfact,\hidshiftvfact)}]
		\itickshere;
	\end{scope}
	\end{scope} 
}
\def\iticksrotrfact{
	\begin{scope}[rotate around={-\hidticksangle:(\x,\y)}]
	\begin{scope}[shift={(.707*\sectorh-\hidshifthfact,.707*\sectorh+\hidshiftvfact)}]
		\itickshere;
	\end{scope}
	\end{scope} 
}
\def\itickstriprotlshallow{ %!%
	\begin{scope}[rotate around={\hidticksangleshallow:(\x,\y)}]
	\begin{scope}[shift={(\hidshifttriph,\hidshifttripv)}]
		\itickshere;
	\end{scope}
	\end{scope} 
}
\def\itickstriprotrshallow{ %!%
	\begin{scope}[rotate around={-\hidticksangleshallow:(\x,\y)}]
	\begin{scope}[shift={(-\hidshifttriph,\hidshifttripv)}]
		\itickshere;
	\end{scope}
	\end{scope} 
}
\def\tickcc{
	\begin{scope}[rotate around={\tickangle:(\x,\y)}]
		\tick;
	\end{scope}
}
\def\tickminicc{
	\begin{scope}[rotate around={\tickanglesharp:(\x,\y)}]
		\tickmini;
	\end{scope}
}
\def\defect{
	\coordinate (A) at (\x,\y);
	\coordinate (B) at ($ (A) + (\defecth,0) $);
	\draw (A) to (B);
}
\def\defectsemilong{
	\coordinate (A) at (\x,\y);
	\coordinate (B) at ($ (A) + (\defecth+.5,0) $); %!%
	\draw (A) to (B);
}
\def\defectlong{
	\coordinate (A) at (\x,\y);
	\coordinate (B) at ($ (A) + (\sectorh,0) $);
	\draw (A) to (B);
}
\def\defectsuplong{
	\coordinate (A) at (\x,\y);
	\coordinate (B) at ($ (A) + (\defecthsuplong,0) $);
	\draw (A) to (B);
}
\def\defectshortr{
	\coordinate (A) at (\x,\y);
	\coordinate (B) at ($ (A) + (\defecth-\defectcgap,0) $);
	\draw (A) to (B);
}
\def\defectshortl{
	\coordinate (A) at (\x+\defectcgap,\y);
	\coordinate (B) at ($ (A) + (\defecth-\defectcgap,0) $);
	\draw (A) to (B);
}
\def\defectshorterr{
	\coordinate (A) at (\x,\y);
	\coordinate (B) at ($ (A) + (\defecth-\defectcgap-\exgap,0) $);
	\draw (A) to (B);
}
\def\defectshorterl{
	\coordinate (A) at (\x+\defectcgap+\exgap,\y);
	\coordinate (B) at ($ (A) + (\defecth-\defectcgap-\exgap,0) $);
	\draw (A) to (B);
}
\def\strutv{
	\coordinate (A) at (\x,\y);
	\coordinate (B) at ($ (A) + (0,\assocv) $);
	\draw (A) to (B);
}
\def\strutlong{
	\coordinate (A) at (\x,\y);
	\coordinate (B) at ($ (A) + (0,\assocv+.5*\vgap) $);
	\draw (A) to (B);
}
\def\strutshort{
	\coordinate (A) at (\x,\y);
	\coordinate (B) at ($ (A) + (0,\assocv-.5*\vgap) $);
	\draw (A) to (B);
}
\def\strutshortdown{
	\coordinate (A) at (\x,\y);
	\coordinate (B) at ($ (A) - (0,\assocv-.5*\vgap) $);
	\draw (A) to (B);
}
\def\assoctickr{
	\coordinate (A) at (\x + \hgap,\y);
	\coordinate (B) at (\x + \hgap,\y + \vgap);
	\draw (A) to [out=45,in=-45] (B);
}
\def\assoctickl{
	\coordinate (A) at (\x - \hgap,\y);
	\coordinate (B) at (\x - \hgap,\y + \vgap);
	\draw (A) to [out=135,in=-135] (B);
}
\def\arrowr{
	\caretr;
	\defect;
}
\def\arrowrlong{
	\caretr;
	\defectlong;
}
\def\arrowl{
	\begin{scope}[cm={-1,0,0,1,(\x+\x+\defecth,0)}]
		\arrowr;
	\end{scope}
}
\def\arrowllong{
	\begin{scope}[cm={-1,0,0,1,(\x+\x+\sectorh,0)}]
		\arrowrlong;
	\end{scope}
}
\def\arrowrgap{
	\begin{scope}[cm={1,0,0,1,(\defectcgap,0)}]
		\caretr;
	\end{scope}
	\defectshortl;
}	
\def\arrowrgapbig{
	\begin{scope}[cm={1,0,0,1,(\defectcgap+\exgap,0)}]
		\caretr;
	\end{scope}
	\defectshorterl;
}
\def\arrowlgap{
	\begin{scope}[cm={-1,0,0,1,(\x+\x+\defecth-\defectcgap,0)}]
		\caretr;
	\end{scope}
	\defectshortr;
}
\def\arrowlgapbig{
	\begin{scope}[cm={-1,0,0,1,(\x+\x+\defecth-\defectcgap-\exgap,0)}]
		\caretr;
	\end{scope}
	\defectshorterr;
}
\def\sectorcaretr{
	\coordinate (A) at (\x,\y);
	\coordinate (B) at ($ (A) + (.5*\sectorh,.5*\sectorv) $);
	\coordinate (C) at ($ (A) + (\sectorh-\caretlengthsect,\caretlengthsect) $);
	\coordinate (CCC) at ($ (A) + (\sectorh-\caretlengthsect,-\caretlengthsect) $);
	\coordinate (CC) at ($ (A) + (\sectorh-2*\caretlengthsect,0) $);
	\coordinate (D) at ($ (A) + (.5*\sectorh,-.5*\sectorv) $);
	\draw (A) to [out=\sectorangle,in=180] (B) to [out=0,in=180-\sectorangle] (C) to (CC) to (CCC) to [out=180+\sectorangle,in=0] (D) to [out=180,in=-\sectorangle] (A);
%	\draw (A) to [out=-\sectorangle,in=180] (D) to [out=0,in=180+\sectorangle] (CCC);
}	
\def\sectorcaretl{
	\begin{scope}[cm={-1,0,0,1,(\x+\x+\sectorh,0)}]
		\sectorcaretr;
	\end{scope}
}
\def\sectortoparc{
	\coordinate (A) at (\x,\y);
	\coordinate (B) at ($ (A) + (.5*\sectorh,.5*\sectorv) $);
	\coordinate (C) at ($ (A) + (\sectorh,0) $);
	\draw (A) to [out=\sectorangle,in=180] (B) to [out=0,in=180-\sectorangle] (C);
}
\def\sectorbotarc{
	\coordinate (A) at (\x,\y);
	\coordinate (C) at ($ (A) + (\sectorh,0) $);
	\coordinate (D) at ($ (A) + (.5*\sectorh,-.5*\sectorv) $);
	\draw (A) to [out=-\sectorangle,in=180] (D) to [out=0,in=180+\sectorangle] (C);

}
\def\sectorbotarclong{
	\coordinate (A) at (\x,\y);
	\coordinate (C) at ($ (A) + (\sectorhlong,0) $);
	\coordinate (D) at ($ (A) + (.5*\sectorhlong,-.5*\sectorv) $);
	\draw (A) to [out=-\sectorangle,in=180] (D) to [out=0,in=180+\sectorangle] (C);

}
\def\sectortop{
	\sectortoparc;
	\draw (A) to (C);
}
\def\sectorbot{
	\sectorbotarc;
	\draw (A) to (C);
}
\def\sectortoparclow{
	\coordinate (A) at (\x,\y);
	\coordinate (B) at ($ (A) + (.5*\sectorh,.5*\sectorvlow) $);
	\coordinate (C) at ($ (A) + (\sectorh,0) $);
	\draw (A) to [out=\sectoranglelow,in=180] (B) to [out=0,in=180-\sectoranglelow] (C);
}
\def\sectortoparcmid{
	\coordinate (A) at (\x,\y);
	\coordinate (B) at ($ (A) + (.5*\sectorh,.5*\sectorvmid) $);
	\coordinate (C) at ($ (A) + (\sectorh,0) $);
	\draw (A) to [out=\sectoranglemid,in=180] (B) to [out=0,in=180-\sectoranglemid] (C);
}
\def\sectortoparchigh{
	\coordinate (A) at (\x,\y);
	\coordinate (B) at ($ (A) + (.5*\sectorh,.5*\sectorvhigh) $);
	\coordinate (C) at ($ (A) + (\sectorh,0) $);
	\draw (A) to [out=\sectoranglehigh,in=180] (B) to [out=0,in=180-\sectoranglehigh] (C);
}
\def\sectorbotarclow{
	\coordinate (A) at (\x,\y);
	\coordinate (B) at ($ (A) + (.5*\sectorh,-.5*\sectorvlow) $);
	\coordinate (C) at ($ (A) + (\sectorh,0) $);
	\draw (A) to [out=-\sectoranglelow,in=180] (B) to [out=0,in=180+\sectoranglelow] (C);
}
\def\sectorbotarcmid{
	\coordinate (A) at (\x,\y);
	\coordinate (B) at ($ (A) + (.5*\sectorh,-.5*\sectorvmid) $);
	\coordinate (C) at ($ (A) + (\sectorh,0) $);
	\draw (A) to [out=-\sectoranglemid,in=180] (B) to [out=0,in=180+\sectoranglemid] (C);
}
\def\sectorbotarchigh{
	\coordinate (A) at (\x,\y);
	\coordinate (B) at ($ (A) + (.5*\sectorh,-.5*\sectorvhigh) $);
	\coordinate (C) at ($ (A) + (\sectorh,0) $);
	\draw (A) to [out=-\sectoranglehigh,in=180] (B) to [out=0,in=180+\sectoranglehigh] (C);
}
\def\hil{ % half id left
	\coordinate (A) at (\x,\y);
	\coordinate (B) at ($ (A) + (.5*\sectorh,.5*\sectorv) $);
	\coordinate (BB) at ($ (A) + (.5*\sectorh,-.5*\sectorv) $);
	\coordinate (C) at ($ (A) + (\sectorh,0) $);
	\coordinate (D) at ($ (A) + (\coordhhil,\coordvhil) $);
	\coordinate (E) at ($ (D) + (90-\halfidangle:.5*\halfidsep) $);
	\coordinate (F) at ($ (D) + (90-\halfidangle:-.5*\halfidsep) $);
	\coordinate (T) at ($ (A) + (.5*\sectorh,.5*\sectorv+.5*\tickv) $);
	\coordinate (S) at ($ (A) + (.5*\sectorh,.5*\sectorv-.5*\tickv) $);		
	\begin{scope}
		\clip (A) to [out=\sectorangle,in=180] (B) to [out=0,in=180-\sectorangle] (C) to [out=180+\sectorangle,in=0] (BB) to [out=180,in=-\sectorangle] (A);
		\draw (E) to +(180-\halfidangle:3);
		\draw (F) to +(180-\halfidangle:3);
	\end{scope}
	\sectortoparc;
	\sectorbotarc;
	\draw (T) to (S);
}
\def\hir{
	\begin{scope}[cm={-1,0,0,1,(\x+\x+\sectorh,0)}]
		\hil;
	\end{scope}
}
\def\hiltop{
	\coordinate (A) at (\x,\y);
	\coordinate (B) at ($ (A) + (.5*\sectorh,.5*\sectorv) $);
	\coordinate (C) at ($ (A) + (\sectorh,0) $);
	\coordinate (D) at ($ (A) + (\coordhhiltop,\coordvhiltop) $);
	\coordinate (E) at ($ (D) + (90-\halfidangle:.5*\halfidsep) $);
	\coordinate (F) at ($ (D) + (90-\halfidangle:-.5*\halfidsep) $);
	\coordinate (T) at ($ (A) + (.5*\sectorh,.5*\sectorv+.5*\tickv) $);
	\coordinate (S) at ($ (A) + (.5*\sectorh,.5*\sectorv-.5*\tickv) $);		
	\begin{scope}
		\clip (A) to [out=\sectorangle,in=180] (B) to [out=0,in=180-\sectorangle] (C) to (A);
		\draw (E) to +(180-\halfidangle:3);
		\draw (F) to +(180-\halfidangle:3);
	\end{scope}
	\sectortop;
	\draw (T) to (S);
}
\def\hirtop{
	\begin{scope}[cm={-1,0,0,1,(\x+\x+\sectorh,0)}]
		\hiltop;
	\end{scope}
}
\def\hilbotcut{
	\coordinate (A) at (\x,\y);
	\coordinate (B) at ($ (A) + (\defecth,0) $);
	\coordinate (C) at ($ (A) + (\defecth,-\defecth) $);
	\coordinate (D) at ($ (A) + (0,-\defecth) $);
	\coordinate (S) at ($ (A) + (\coordhhilbotcut,\coordvhilbotcut) $);
	\coordinate (E) at ($ (S) + (90-\halfidangle:.5*\halfidsep) $);
	\coordinate (F) at ($ (S) + (90-\halfidangle:-.5*\halfidsep) $);
	\begin{scope}
		\clip (A) to (B) to (C) to (D) to (A);
		\draw (E) to +(180-\halfidangle:3);
		\draw (F) to +(180-\halfidangle:3);
	\end{scope}
	\doublebothigh;
	\defect;
	\begin{scope}[shift={(\defecth,0)}]
		\arcd;
	\end{scope}
}
\def\hilbotcuttight{
	\coordinate (A) at (\x,\y);
	\coordinate (B) at ($ (A) + (\defecth,0) $);
	\coordinate (C) at ($ (A) + (\defecth,-\defecth) $);
	\coordinate (D) at ($ (A) + (0,-\defecth) $);
	\coordinate (S) at ($ (A) + (\coordhhilbotcut,\coordvhilbotcut) $);
	\coordinate (E) at ($ (S) + (90-\halfidangle:.5*\halfidsep) $);
	\coordinate (F) at ($ (S) + (90-\halfidangle:-.5*\halfidsep) $);
	\begin{scope}
		\clip (A) to (B) to (C) to (D) to (A);
		\draw (E) to +(180-\halfidangle:3);
		\draw (F) to +(180-\halfidangle:3);
	\end{scope}
	\doublebothigh;
	\defect;
	\begin{scope}[shift={(\defecth,0)}]
%		\sectorbotarc;
	\end{scope}
}
\def\hirbotcut{
	\begin{scope}[cm={-1,0,0,1,(\x+\x+\sectorh+\defecth,0)}]
		\hilbotcut;
	\end{scope}
}
\def\hirbotcuttight{
	\begin{scope}[cm={-1,0,0,1,(\x+\x+\sectorh+\defecth,0)}]
		\hilbotcuttight;
	\end{scope}
}
\def\hiltopthreecut{
	\coordinate (A) at (\x,\y);
	\coordinate (B) at ($ (A) + (.5*\dsectorh,.5*\dsectorvhigh) $);
	\coordinate (C) at ($ (A) + (\dsectorh,0) $);
	\coordinate (S) at ($ (A) + (\coordhhiltopthreecut,\coordvhiltopthreecut) $);
	\coordinate (E) at ($ (S) + (90-\halfidangleshallow:.5*\halfidsep) $);
	\coordinate (F) at ($ (S) + (90-\halfidangleshallow:-.5*\halfidsep) $);
	\begin{scope}
		\clip (A) to [out=\dsectorhighangle,in=180] (B) to [out=0,in=180-\dsectorhighangle] (C) to (A);
		\draw (E) to +(180-\halfidangleshallow:3);
		\draw (F) to +(180-\halfidangleshallow:3);
	\end{scope}
	\doubletophigh;
	\defect;
	\begin{scope}[shift={(\defecth,0)}]
		\arcu;
	\end{scope}
	\begin{scope}[shift={(.69*\dsectorh,\dtickh)}]
%		\tickc;
	\begin{scope}[rotate around={-\tickangle-10:(\x,\y)}]
		\tick;
	\end{scope}
	\end{scope}
	\begin{scope}[shift={(.225*\dsectorh,\dtickh-.265)}] %!%!%
	\begin{scope}[rotate around={\tickangle+10:(\x,\y)}]
		\caretlwhite;
	\end{scope}
	\end{scope}	
}
\def\hirtopthreecut{
	\begin{scope}[cm={-1,0,0,1,(\x+\x+\sectorh+\defecth,0)}]
		\hiltopthreecut;
	\end{scope}
}
\def\hilbotthreecut{
	\coordinate (A) at (\x,\y);
	\coordinate (B) at ($ (A) + (1.2*\defecth,0) $);
	\coordinate (C) at ($ (A) + (1.2*\defecth,-\defecth) $);
	\coordinate (D) at ($ (A) + (0,-\defecth) $);
	\coordinate (S) at ($ (A) + (\coordhhilbotthreecut,\coordvhilbotthreecut) $);
	\coordinate (E) at ($ (S) + (90-\halfidangle:.5*\halfidsep) $);
	\coordinate (F) at ($ (S) + (90-\halfidangle:-.5*\halfidsep) $);
	\begin{scope}
		\clip (A) to (B) to (C) to (D) to (A);
		\draw (E) to +(180-\halfidangle:3);
		\draw (F) to +(180-\halfidangle:3);
	\end{scope}
	\triplebothigh;
	\arrowl;
	\begin{scope}[shift={(\defecth,0)}]
		\defectshortl;
	\end{scope}
	\begin{scope}[shift={(\defecth+\defecth,0)}]
		\arcd;
	\end{scope}
	\begin{scope}[shift={(.5*\sectorh+.5*\defecth+.5*\defecth,-.5*\tsectorvhigh)}]
		\tick;
	\end{scope}
}
\def\hirbotthreecut{
	\begin{scope}[cm={-1,0,0,1,(\x+\x+\sectorh+\defecth+\defecth,0)}]
		\hilbotthreecut;
	\end{scope}
}
\def\hilbotthreecutmid{
	\coordinate (A) at (\x,\y);
	\coordinate (B) at ($ (A) + (1.2*\defecth,0) $);
	\coordinate (C) at ($ (A) + (1.2*\defecth,-\defecth) $);
	\coordinate (D) at ($ (A) + (0,-\defecth) $);
	\coordinate (S) at ($ (A) + (\coordhhilbotthreecut,\coordvhilbotthreecut) $);
	\coordinate (E) at ($ (S) + (90-\halfidangle:.5*\halfidsep) $);
	\coordinate (F) at ($ (S) + (90-\halfidangle:-.5*\halfidsep) $);
	\begin{scope}
		\clip (A) to (B) to (C) to (D) to (A);
		\draw (E) to +(180-\halfidangle:3);
		\draw (F) to +(180-\halfidangle:3);
	\end{scope}
	\triplebothigh;
	\arrowlgap;
	\begin{scope}[shift={(\defecth,0)}]
		\arcd;
	\end{scope}
	\begin{scope}[shift={(\defecth+\sectorh,0)}]
		\defect;
		\tickadddown;
	\end{scope}
	\begin{scope}[shift={(.5*\sectorh+.5*\defecth+.5*\defecth,-.5*\tsectorvhigh)}]
		\tick;
	\end{scope}
}
\def\hilbotthreecutmidshorttick{
	\coordinate (A) at (\x,\y);
	\coordinate (B) at ($ (A) + (1.2*\defecth,0) $);
	\coordinate (C) at ($ (A) + (1.2*\defecth,-\defecth) $);
	\coordinate (D) at ($ (A) + (0,-\defecth) $);
	\coordinate (S) at ($ (A) + (\coordhhilbotthreecut,\coordvhilbotthreecut) $);
	\coordinate (E) at ($ (S) + (90-\halfidangle:.5*\halfidsep) $);
	\coordinate (F) at ($ (S) + (90-\halfidangle:-.5*\halfidsep) $);
	\begin{scope}
		\clip (A) to (B) to (C) to (D) to (A);
		\draw (E) to +(180-\halfidangle:3);
		\draw (F) to +(180-\halfidangle:3);
	\end{scope}
	\triplebothigh;
	\arrowlgap;
	\begin{scope}[shift={(\defecth,0)}]
		\arcd;
	\end{scope}
	\begin{scope}[shift={(\defecth+\sectorh,0)}]
		\defect;
		\tickshortupadddown;
	\end{scope}
	\begin{scope}[shift={(.5*\sectorh+.5*\defecth+.5*\defecth,-.5*\tsectorvhigh)}]
		\tick;
	\end{scope}
}
\def\hilbotthreecutmidnotick{
	\coordinate (A) at (\x,\y);
	\coordinate (B) at ($ (A) + (1.2*\defecth,0) $);
	\coordinate (C) at ($ (A) + (1.2*\defecth,-\defecth) $);
	\coordinate (D) at ($ (A) + (0,-\defecth) $);
	\coordinate (S) at ($ (A) + (\coordhhilbotthreecut,\coordvhilbotthreecut) $);
	\coordinate (E) at ($ (S) + (90-\halfidangle:.5*\halfidsep) $);
	\coordinate (F) at ($ (S) + (90-\halfidangle:-.5*\halfidsep) $);
	\begin{scope}
		\clip (A) to (B) to (C) to (D) to (A);
		\draw (E) to +(180-\halfidangle:3);
		\draw (F) to +(180-\halfidangle:3);
	\end{scope}
	\triplebothigh;
	\arrowlgap;
	\begin{scope}[shift={(\defecth,0)}]
		\arcd;
	\end{scope}
	\begin{scope}[shift={(\defecth+\sectorh,0)}]
		\defect;
	\end{scope}
	\begin{scope}[shift={(.5*\sectorh+.5*\defecth+.5*\defecth,-.5*\tsectorvhigh)}]
		\tick;
	\end{scope}
}
\def\hirbotthreecutmidnotick{
	\begin{scope}[cm={-1,0,0,1,(\x+\x+\tsectorh,0)}]
		\hilbotthreecutmidnotick;
	\end{scope}
}
\def\hiltopthree{
	\coordinate (A) at (\x,\y);
	\coordinate (B) at ($ (A) + (.5*\tsectorh,.5*\sectorv) $);
	\coordinate (C) at ($ (A) + (\tsectorh,0) $);
	\coordinate (S) at ($ (A) + (\coordhhiltopthree,\coordvhiltopthree) $);
	\coordinate (E) at ($ (S) + (90-\halfidanglesupshallow:.5*\halfidsep) $);
	\coordinate (F) at ($ (S) + (90-\halfidanglesupshallow:-.5*\halfidsep) $);
	\begin{scope}
		\clip (A) to [out=\tsectorangle,in=180] (B) to [out=0,in=180-\tsectorangle] (C) to (A);
		\draw (E) to +(180-\halfidanglesupshallow:3);
		\draw (F) to +(180-\halfidanglesupshallow:3);
	\end{scope}
	\draw (C) to (A);
	\tripletopl;
}
\def\hirtopthree{
	\begin{scope}[cm={-1,0,0,1,(\x+\x+\tsectorh,0)}]
		\hiltopthree;
	\end{scope}
}
\def\hilbot{
	\coordinate (A) at (\x,\y);
	\coordinate (B) at ($ (A) + (\defecthsuplong,0) $);
	\coordinate (C) at ($ (A) + (\defecthsuplong,-\defecth) $);
	\coordinate (D) at ($ (A) + (0,-\defecth) $);
	\coordinate (H) at ($ (A) + (\tsectorh,0) $);
	\coordinate (S) at ($ (A) + (\coordhhilbot,\coordvhilbot) $);
	\coordinate (E) at ($ (S) + (90-\halfidangleshallow:.5*\halfidsep) $);
	\coordinate (F) at ($ (S) + (90-\halfidangleshallow:-.5*\halfidsep) $);
	\begin{scope}
		\clip (A) to (B) to (C) to (D) to (A);
		\draw (E) to +(180-\halfidangleshallow:3);
		\draw (F) to +(180-\halfidangleshallow:3);
	\end{scope}
	\triplebotarc;
	\draw (A) to (H);
	\begin{scope}[shift={(\defecthsuplong,0)}]
		\tick;
	\end{scope}
}
\def\hilbottight{
	\coordinate (A) at (\x,\y);
	\coordinate (B) at ($ (A) + (\defecthsuplong,0) $);
	\coordinate (C) at ($ (A) + (\defecthsuplong,-\defecth) $);
	\coordinate (D) at ($ (A) + (0,-\defecth) $);
	\coordinate (H) at ($ (A) + (\tsectorh,0) $);
	\coordinate (S) at ($ (A) + (\coordhhilbot,\coordvhilbot) $);
	\coordinate (E) at ($ (S) + (90-\halfidangleshallow:.5*\halfidsep) $);
	\coordinate (F) at ($ (S) + (90-\halfidangleshallow:-.5*\halfidsep) $);
	\begin{scope}
		\clip (A) to (B) to (C) to (D) to (A);
		\draw (E) to +(180-\halfidangleshallow:3);
		\draw (F) to +(180-\halfidangleshallow:3);
	\end{scope}
	\triplebotarc;
%	\draw (A) to (H);
	\begin{scope}[shift={(\defecthsuplong,0)}]
%		\tick;
	\end{scope}

}
\def\hirbot{
	\begin{scope}[cm={-1,0,0,1,(\x+\x+\tsectorh,0)}]
		\hilbot;
	\end{scope}
}
\def\hirbottight{
	\begin{scope}[cm={-1,0,0,1,(\x+\x+\tsectorh,0)}]
		\hilbottight;
	\end{scope}
}
\def\hilbotcutleft{
	\coordinate (A) at (\x,\y);
	\coordinate (BT) at ($ (A) + (.5*\sectorh,-.5*\sectorv) $); %(.5*\sectorh,-.5*\sectorv)
	\coordinate (CT) at ($ (A) + (\sectorh,0) $);
	\coordinate (DT) at ($ (A) + (\sectorh+\defecth,0) $);
	\coordinate (DDT) at ($ (A) + (.5*\sectorh+.5*\defecth,-.5*\dsectorvhigh) $);
	\coordinate (ST) at ($ (A) + (\coordhhilbotcutleft,\coordvhilbotcutleft) $);
	\coordinate (ET) at ($ (ST) + (90-\halfidangleshallow:.5*\halfidsep) $);
	\coordinate (FT) at ($ (ST) + (90-\halfidangleshallow:-.5*\halfidsep) $);
	\begin{scope}
		\clip (A) to [out=-\sectorangle,in=180] (BT) to [out=0,in=180+\sectorangle] (CT) to (DT) to [out=180+\sectorangle,in=0] (DDT) to [out=180,in=-\sectorangle] (A);
		\draw (ET) to +(180-\halfidangleshallow:3);
		\draw (FT) to +(180-\halfidangleshallow:3);
	\end{scope}
	\doublebothigh;
%	\draw[red] (A) circle (.2);
%	\draw[green] (BT) circle (.2);
%	\draw[blue] (CT) circle (.2);
%	\draw[orange] (D) circle (.2);
%\draw[red] (A) to [out=-\sectorangle,in=180] (BT) to [out=0,in=180+\sectorangle] (CT) to (DT) to [out=180+\sectorangle,in=0] (DDT) to [out=180,in=-\sectorangle] (A);
%	\sectorbotarc;
	\begin{scope}[shift={(\sectorh,0)}]
		\defect;
	\end{scope}
}
\def\ax#1-#2{
\ifthenelse{\equal{#1}{1}}{
			\axiombA{#2}}{}
\ifthenelse{\equal{#1}{2}}{
			\axiombB{#2}}{}
\ifthenelse{\equal{#1}{3}}{
			\axiombC{#2}}{}
\ifthenelse{\equal{#1}{4}}{
			\axiombD{#2}}{}
\ifthenelse{\equal{#1}{5}}{
			\axiombE{#2}}{}
\ifthenelse{\equal{#1}{6}}{
			\axiombF{#2}}{}
\ifthenelse{\equal{#1}{7}}{
			\axiombG{#2}}{}
\ifthenelse{\equal{#1}{8}}{
			\axiombH{#2}}{}
\ifthenelse{\equal{#1}{9}}{
			\axiombI{#2}}{}
\ifthenelse{\equal{#1}{10}}{
			\axiombJ{#2}}{}			
\ifthenelse{\equal{#1}{11}}{
			\axiombK{#2}}{}
\ifthenelse{\equal{#1}{12}}{
			\axiombL{#2}}{}			
\ifthenelse{\equal{#1}{13}}{
			\axiombM{#2}}{}
\ifthenelse{\equal{#1}{14}}{
			\axiombN{#2}}{}			
\ifthenelse{\equal{#1}{15}}{
			\axiombO{#2}}{}
\ifthenelse{\equal{#1}{16}}{
			\axiombP{#2}}{}
\ifthenelse{\equal{#1}{17}}{
			\axiombQ{#2}}{}
\ifthenelse{\equal{#1}{18}}{
			\axiombR{#2}}{}
\ifthenelse{\equal{#1}{19}}{
			\axiombS{#2}}{}
\ifthenelse{\equal{#1}{20}}{
			\axiombT{#2}}{}			
\ifthenelse{\equal{#1}{21}}{
			\axiombU{#2}}{}
\ifthenelse{\equal{#1}{22}}{
			\axiombV{#2}}{}			
\ifthenelse{\equal{#1}{23}}{
			\axiombW{#2}}{}
\ifthenelse{\equal{#1}{24}}{
			\axiombX{#2}}{}			
\ifthenelse{\equal{#1}{25}}{
			\axiombY{#2}}{}
\ifthenelse{\equal{#1}{26}}{
			\axiombZ{#2}}{}			
\ifthenelse{\equal{#1}{27}}{
			\axiombRefl{#2}}{}			
}
\def\axgrid#1{
\ifthenelse{\equal{#1}{1}}{
			\axgridA}{}
\ifthenelse{\equal{#1}{2}}{
			\axgridB}{}
\ifthenelse{\equal{#1}{3}}{
			\axgridC}{}
\ifthenelse{\equal{#1}{4}}{
			\axgridD}{}
\ifthenelse{\equal{#1}{5}}{
			\axgridE}{}
\ifthenelse{\equal{#1}{6}}{
			\axgridF}{}
\ifthenelse{\equal{#1}{7}}{
			\axgridG}{}
\ifthenelse{\equal{#1}{8}}{
			\axgridH}{}
\ifthenelse{\equal{#1}{9}}{
			\axgridI}{}
\ifthenelse{\equal{#1}{10}}{
			\axgridJ}{}			
\ifthenelse{\equal{#1}{11}}{
			\axgridK}{}
\ifthenelse{\equal{#1}{12}}{
			\axgridL}{}			
\ifthenelse{\equal{#1}{13}}{
			\axgridM}{}
\ifthenelse{\equal{#1}{14}}{
			\axgridN}{}			
\ifthenelse{\equal{#1}{15}}{
			\axgridO}{}
\ifthenelse{\equal{#1}{16}}{
			\axgridP}{}
\ifthenelse{\equal{#1}{17}}{
			\axgridQ}{}
\ifthenelse{\equal{#1}{18}}{
			\axgridR}{}
\ifthenelse{\equal{#1}{19}}{
			\axgridS}{}
\ifthenelse{\equal{#1}{20}}{
			\axgridT}{}			
\ifthenelse{\equal{#1}{21}}{
			\axgridU}{}
\ifthenelse{\equal{#1}{22}}{
			\axgridV}{}			
\ifthenelse{\equal{#1}{23}}{
			\axgridW}{}
\ifthenelse{\equal{#1}{24}}{
			\axgridX}{}			
\ifthenelse{\equal{#1}{25}}{
			\axgridY}{}
\ifthenelse{\equal{#1}{26}}{
			\axgridZ}{}			
\ifthenelse{\equal{#1}{27}}{
			\axgridRefl}{}			
}
\def\table#1-#2{
\ifthenelse{\equal{#1}{1}}{
			\tableAA{#2}}{}
\ifthenelse{\equal{#1}{2}}{
			\tableBB{#2}}{}
\ifthenelse{\equal{#1}{3}}{
			\tableCC{#2}}{}
\ifthenelse{\equal{#1}{4}}{
			\tableDD{#2}}{}
}
\def\tablegrid#1{
\ifthenelse{\equal{#1}{1}}{
			\tablegridAA}{}
\ifthenelse{\equal{#1}{2}}{
			\tablegridBB}{}
\ifthenelse{\equal{#1}{3}}{
			\tablegridCC}{}
\ifthenelse{\equal{#1}{4}}{
			\tablegridDD}{}
}
\def\axiombI#1{
\ifnum#1=1
	\sh(0,0);
	\sector;
	\vid;
	\sh(0,-1);
	\strutv;
	\arrowlarcd;
	\sh(4,0);
	\defectshortr;
	\sh(7,0);
	\arrowr;
	\sh(4,-1);
	\defectshortl;
	\sh(7,-1);
	\tick;
	\defect;
	\sh(10,-1);
	\strutv;
\else \ifnum#1=2
	\sh(0,0);
	\sectorcaretr;
	\sh(-.5*\caretlength,0);
	\vid;
	\sh(4,0);
	\defect;
	\sh(7,0);
	\tick;
	\defect;
	\sh(10,0);
	\strutv;
	\sh(0,0);
	\strutv;
	\sh(0,1);
	\arcu;
	\sh(4,1);
	\tickaddup;
	\defectshortr;
	\sh(7,1);
	\arrowr;
\else \ifnum#1=3
	\sh(0,0);
	\tripletopl;
	\triplebotl;
	\vidt;
	\strutv;
	\sh(0,1);
	\tripletopr;
	\sh(10,0);
	\strutv;
\else \ifnum#1=4
	\sh(0,0);
	\arrowl;
	\strutv;
	\sh(3,0);
	\defectshortl;
	\sh(6,0);
	\defect;
	\tick;
	\sh(9,0);
	\strutv;
	\sh(0,1);
	\defect;
	\sh(3,1);
	\tick;
	\defectshortr;
	\sh(6,1);
	\arrowr;
		
\else \ifnum#1=5
	\sh(0,0);
	\tripletopr;
	\triplebotr;
	\vidt;
	\sh(0,-1);
	\triplebotl;
	\strutv;
	\sh(10,-1);
	\strutv;

\else \ifnum#1=6
	\sh(0,0);
	\vidd;
	\doubletoptick;
	\doublebottick;
	\sh(7,0);
	\defect;
	\sh(0,-1);
	\strutv;
	\doublebotl;
	\sh(7,-1);
	\tick;
	\defect;
	\sh(10,-1);
	\strutv;
	
\else \ifnum#1=0
	\begin{scope}[cm={-1,0,0,-1,(0,0)}]
		\axiombI{2}
	\end{scope}

%	\sh(0,0);
%	\defect;
%	\sh(3,0);
%	\tick;
%	\defect;
%	\sh(6,0);
%	\sectorcaretl;
%	\sh(6+.5*\caretlength,0);
%	\vid;
%	\sh(0,-1);
%	\strutv;
%	\arrowl;
%	\sh(3,-1);
%	\defectshortl;
%	\sh(6,-1);
%	\arcd;
%	\tickadddown;
%	\sh(10,-1);
%	\strutv;

\fi\fi\fi\fi\fi\fi\fi}
\def\axiombJ#1{
\ifnum#1=1
	\sh(0,0);
	\defect;
	\sh(3,0);
	\sector;
	\vid;
	\sh(7,0);
	\arrowrgap;
	\sh(10,-1);
	\strutv;
	\sh(7,-1);
	\tickadddown;
	\defect;
	\sh(3,-1);
	\arcd;
	\sh(0,-1);
	\strutv;
	\arrowlgap;

\else \ifnum#1=2
	\sh(0,0);
	\defect;
	\sh(3,0);
	\tickaddup;
	\arcu;
	\sh(7,0);
	\arrowrgap;
	\sh(10,-1);
	\strutv;
	\sh(7,-1);
	\defect;
	\sh(3,-1);
	\sector;
	\vid;
	\sh(0,-1);
	\strutv;
	\arrowlgap;

\else \ifnum#1=3
	\sh(0,0);
	\defect;
	\sh(3,0);
	\tick;
	\doubletopr;
	\sh(0,-1);
	\strutv;
	\defect;
	\sh(3,-1);
	\vidd;
	\doubletoptick;
	\doublebottick;
	\sh(10,-1);
	\strutv;
\else \ifnum#1=4
	\sh(0,0);
	\tripletopl;
	\triplebotl;
	\vidt;
	\strutv;
	\sh(0,1);
	\tripletopr;
	\sh(10,0);
	\strutv;
\else \ifnum#1=5
	\sh(0,0);
	\arrowl;
	\strutv;
	\sh(3,0);
	\defectshortl;
	\sh(6,0);
	\defect;
	\tick;
	\sh(9,0);
	\strutv;
	\sh(0,1);
	\defect;
	\sh(3,1);
	\tick;
	\defectshortr;
	\sh(6,1);
	\arrowr;
		
\else \ifnum#1=6
	\sh(0,0);
	\tripletopr;
	\triplebotr;
	\vidt;
	\sh(0,-1);
	\triplebotl;
	\strutv;
	\sh(10,-1);
	\strutv;

\else \ifnum#1=7
	\sh(0,0);
	\vidd;
	\doubletoptick;
	\doublebottick;
	\sh(7,0);
	\defect;
	\sh(0,-1);
	\strutv;
	\doublebotl;
	\sh(7,-1);
	\tick;
	\defect;
	\sh(10,-1);
	\strutv;

\fi\fi\fi\fi\fi\fi\fi}
\def\axiombK#1{
\ifnum#1=1
	\sh(0,0);
	\sectortop;
	\sectorbotarc;
	\sh(4,0);
	\arrowrgap;
	\sh(7,0);
	\arrowrgap;
	\sh(10,-1);
	\strutv;
	\sh(7,-1);
	\defect;
	\tick;
	\sh(4,-1);
	\defectshortl;
	\sh(0,-1);
	\arrowlarcd;
	\strutv;
	
\else \ifnum#1=2
	\sh(0,0);
	\sectortop;
	\sh(4,0);
	\defectshortr;
	\sh(0,-.5);
	\sectorbot;
	\sh(4,-.5);
	\defectshortr;
	\sh(7,-.25);
	\arrowr;
	\sh(0,-1.5);
	\strutv;
	\arrowlarcd;
	\sh(4,-1.5);
	\defectshortl;
	\sh(7,-1.5);
	\tick;
	\defect;
	\sh(10,-1.5);
	\strutlong;
	
\else \ifnum#1=3
	\sh(0,0);
	\strutshort;
	\arrowlarcd;
	\sh(4,0);
	\defectshortl;
	\sh(7,0);
	\tick;
	\defect;
	\sh(10,0);
	\strutv;
	\sh(0,1.5);
	\sectorbot;
	\sh(4,1.5);
	\defectshortr;
	\sh(0,2);
	\sectortop;
	\sh(4,2);
	\defectshortr;	
	\sh(7,1.5);
	\arrowr;
	\sh(7,2);
	\arrowr;
	\sh(0,1.5);
	\assoctickl;
	\sh(10,1.5);
	\assoctickr;

\else \ifnum#1=4
	\sh(0,0);
	\strutv;
	\arrowlarcd;
	\sh(4,0);
	\defectshortl;
	\sh(7,0);
	\tick;
	\defect;
	\sh(10,0);
	\strutv;
	\sh(0,1);
	\sectorbot;
	\sh(4,1);
	\defectshortr;
	\sh(0,1.5);
	\sectortop;
	\sh(4,1.5);
	\defectshortr;	
	\sh(7,1);
	\arrowr;
	\sh(7,1.5);
	\arrowr;

\else \ifnum#1=5
	\sh(0,0);
	\sectorbot;
	\strutv;
	\sh(4,0);
	\defectshortl;
	\sh(7,0);
	\tick;
	\defect;
	\sh(10,0);
	\strutv;
	\sh(0,1);
	\defectlong;
	\sh(4,1);
	\tickshort;
	\defectshortr;
	\sh(7,1);
	\arrowr;
	\sh(0,1.5);
	\sectortop;
	\sh(4,1.5);
	\defectshortr;
	\sh(7,1.5);
	\arrowr;
\else \ifnum#1=6
	\sh(0,0);
	\sectorbot;
	\sh(4,0);
	\defectshortl;
	\sh(7,0);
	\tick;
	\defect;
	\sh(0,.5);
	\strutv;
	\arrowllong;
	\sh(4,.5);
	\defectshortl;
	\sh(7,.5);
	\tick;
	\defect;
	\sh(10,.5);
	\strutv;
	\sh(0,1.5);
	\sectortop;
	\sh(4,1.5);
	\defectshortr;
	\sh(7,1.5);
	\arrowr;

\else \ifnum#1=7
	\sh(0,0);
	\sectorbot;
	\sh(4,0);
	\defectshortl;
	\sh(7,0);
	\tick;
	\defect;
	\sh(0,.5);
	\sectortop;
	\strutv;
	\sh(4,.5);
	\defectshortl;
	\sh(7,.5);
	\tick;
	\defect;
	\sh(10,.5);
	\strutv;
	\sh(0,1.5);
	\arcu;
	\sh(4,1.5);
	\tickaddup;
	\defectshortr;
	\sh(7,1.5);
	\arrowr;
	
\else \ifnum#1=8
	\sh(0,0);
	\sectorbot;
	\sh(4,0);
	\defectshortl;
	\sh(7,0);
	\tick;
	\defect;
	\sh(0,.5);
	\sectortop;
	\sh(4,.5);
	\defectshortl;
	\sh(7,.5);
	\tick;
	\defect;
	\sh(0,2);
	\strutshortdown;
	\sh(10,1);
	\strutv;
	\sh(0,2);
	\arcu;
	\sh(4,2);
	\tickaddup;
	\defectshortr;
	\sh(7,2);
	\arrowr;
	\sh(0,0);
	\assoctickl;
	\sh(10,0);
	\assoctickr;	

\else \ifnum#1=9
	\sh(0,0);
	\sectorbot;
	\sectortoparc;
	\strutv;
	\sh(4,0);
	\defectshortl;
	\sh(7,0);
	\tick;
	\defect;
	\sh(10,0);
	\strutv;
	\sh(0,1);
	\arcu;
	\sh(4,1);
	\tickaddup;
	\defectshortr;
	\sh(7,1);
	\arrowr;
	
\else \ifnum#1=0
	\begin{scope}[cm={-1,0,0,-1,(0,0)}]
		\axiombK{9}
	\end{scope}

%	\sh(0,0);
%	\defect;
%	\sh(3,0);
%	\tick;
%	\defect;
%	\sh(6,0);
%	\sectortop;
%	\sectorbotarc;

\fi\fi\fi\fi\fi\fi\fi\fi\fi\fi}
\def\axiombL#1{
\ifnum#1=1
	\sh(0,0);
	\arrowlgap;
	\strutv;
	\sh(3,0);
	\arcd;
	\sh(7,0);
	\tickadddown;
	\defect;
	\sh(10,0);
	\strutv;
	\sh(0,1);
	\arrowlgap;
	\sh(3,1);
	\sectorbot;
	\sectortoparc;
	\sh(7,1);
	\arrowrgap;
	\sh(7.9,1);
	\caretr;
	
\else \ifnum#1=2
	\begin{scope}[cm={-1,0,0,-1,(0,0)}]
		\axiombL{1}
	\end{scope}
\else \ifnum#1=3
	\sh(0,0);
	\arrowlgapbig; 
	\strutlong;
	\sh(3,-.25);
	\sectorbot;
	\sh(3,.25);
	\sectortop;
	\sh(7,-.25);
	\defect;
	\sh(7,.25);
	\defect;
	\sh(10,.25);
	\strutv;
	\sh(0,1.25);
	\defect;
	\sh(3,1.25);
	\tickaddup;
	\arcu;
	\sh(7,1.25);
	\arrowrgap;
	
\else \ifnum#1=4
	\sh(0,0);
	\arrowlgap;
	\assoctickl;
	\sh(3,0);
	\sectorbot;
	\sh(7,0);
	\defect;
	\sh(10,0);
	\assoctickr;
	\sh(0,.5);
	\arrowlgap;
	\sh(3,.5);
	\sectortop;
	\sh(7,.5);
	\defect;
	\sh(0,1);
	\strutv;
	\sh(10,1);
	\strutv;
	\sh(0,2);
	\defect;
	\sh(3,2);
	\tickaddup;
	\arcu;
	\sh(7,2);
	\arrowrgap;
	
\else \ifnum#1=5
	\sh(0,0);
	\arrowlgap;
	\sh(3,0);
	\sectorbot;
	\sh(7,0);
	\defect;
	\sh(0,.5);
	\arrowlgap;
	\sh(3,.5);
	\sectortop;
	\sh(7,.5);
	\defect;
	\sh(0,.5);
	\strutv;
	\sh(10,.5);
	\strutv;
	\sh(0,1.5);
	\defect;
	\sh(3,1.5);
	\tickaddup;
	\arcu;
	\sh(7,1.5);
	\arrowrgap;

\else \ifnum#1=6
	\sh(0,0);
	\arrowlgap;
	\sh(3,0);
	\sectorbot;
	\sh(7,0);
	\defect;
	\sh(0,.5);
	\arrowlgap;
	\strutv;
	\sh(3,.5);
	\defectlong;
	\sh(7,.5);
	\defect;
	\tickshort;
	\sh(10,.5);
	\strutv;
	\sh(0,1.5);
	\defect;
	\sh(3,1.5);
	\sectortop;
	\sh(7,1.5);
	\arrowrgap;
	
\else \ifnum#1=7
	\sh(0,0);
	\arrowlgap;
	\strutv;
	\sh(3,0);
	\sectorbot;
	\sh(7,0);
	\defect;
	\sh(10,0);
	\strutv;
	\sh(0,1);
	\defect;
	\sh(3,1);
	\tickshort;
	\defectlong;
	\sh(7,1);
	\arrowrgap;
	\sh(0,1.5);
	\defect;
	\sh(3,1.5);
	\sectortop;
	\sh(7,1.5);
	\arrowrgap;
	
\else \ifnum#1=8
	\sh(0,0);
	\strutv;
	\arrowlgap;
	\sh(3,0);
	\arcd;
	\sh(7,0);
	\tickadddown;
	\defect;
	\sh(10,0);
	\strutv;
	\sh(0,1);
	\defect;
	\sh(3,1);
	\sectorbot;
	\sh(7,1);
	\arrowrgap;
	\sh(0,1.5);
	\defect;
	\sh(3,1.5);
	\sectortop;
	\sh(7,1.5);
	\arrowrgap;
	
\else \ifnum#1=9
	\sh(0,0);
	\strutv;
	\arrowlgap;
	\sh(3,0);
	\arcd;
	\sh(7,0);
	\tickadddown;
	\defect;
	\sh(10,0);
	\strutv;
	\sh(0,1.5);
	\defect;
	\assoctickl;
	\sh(3,1.5);
	\sectorbot;
	\sh(7,1.5);
	\arrowrgap;
	\sh(10,1.5);
	\assoctickr;
	\sh(0,2);
	\defect;
	\sh(3,2);
	\sectortop;
	\sh(7,2);
	\arrowrgap;
	
\else \ifnum#1=10
	\sh(0,0);
	\strutv;
	\arrowlgap;
	\sh(3,0);
	\arcd;
	\sh(7,0);
	\tickadddown;
	\defect;
	\sh(10,0);
	\strutlong;
	\sh(0,1);
	\defect;
	\sh(3,1);
	\sectorbot;
	\sh(7,1.25);
	\arrowrgapbig;
	\sh(0,1.5);
	\defect;
	\sh(3,1.5);
	\sectortop;

\fi\fi\fi\fi\fi\fi\fi\fi\fi\fi}
\def\axiombM#1{
\ifnum#1=1
	\sh(4,0);
	\arcdgapl;
	\sh(0,0);
	\strutv;
	\arrowlarcdgap;
	\sh(8,0);
	\tickadddown;
	\defect;
	\sh(11,0);
	\strutlong;
	\sh(0,1);
	\arcd;
	\sh(4,1);
	\sector;
	\sh(8,1.25);
	\arrowrgapbig;
	\sh(0,1.5);
	\sector;
	\sh(4,1.5);
	\arcu;
	
\else \ifnum#1=2
	\sh(4,0);
	\arcdgapl;
	\sh(0,0);
	\strutv;
	\arrowlarcdgap;
	\sh(8,0);
	\tickadddown;
	\defect;
	\sh(11,0);
	\strutlong;
	\sh(0,1);
	\sector;
	\sh(4,1);
	\arcd;
	\sh(8,1.25);
	\arrowrgapbig;
	\sh(0,1.5);
	\arcu;
	\sh(4,1.5);
	\sector;

\else \ifnum#1=3
	\sh(4,0);
	\arcdgapl;
	\sh(0,0);
	\strutshort;
	\arrowlarcdgap;
	\sh(8,0);
	\tickadddown;
	\defect;
	\sh(11,0);
	\strutv;
	\sh(0,1.5);
	\sector;
	\sh(4,1.5);
	\arcd;
	\sh(8,1.5);
	\arrowrgap;
	\sh(8,2);
	\arrowrgap;
	\sh(11,1.5);
	\assoctickr;
	\sh(0,1.5);
	\assoctickl;
	\sh(0,2);
	\arcu;
	\sh(4,2);
	\sector;

\else \ifnum#1=4
	\sh(4,0);
	\arcdgapl;
	\sh(0,0);
	\strutv;
	\arrowlarcdgap;
	\sh(8,0);
	\tickadddown;
	\defect;
	\sh(11,0);
	\strutv;
	\sh(0,1);
	\sector;
	\sh(4,1);
	\arcd;
	\sh(8,1);
	\arrowrgap;
	\sh(8,1.5);
	\arrowrgap;
	\sh(0,1.5);
	\arcu;
	\sh(4,1.5);
	\sector;
	
\else \ifnum#1=5
	\sh(0,0);
	\sectorcaretr;
	\strutv;
	\sh(4,0);
	\arcd;
	\sh(8,0);
	\tickadddown;
	\defect;
	\sh(11,0);
	\strutv;
	\sh(0,1);
	\arcu;
	\sh(4,1);
	\arcd;
	\tickshort;
	\sh(8,1);
	\arrowrgap;
	\sh(8,1.5);
	\arrowrgap;
	\sh(0,1.5);
	\arcu;
	\sh(4,1.5);
	\sector;

\else \ifnum#1=6
	\sh(0,0);
	\sectorcaretr;
	\sh(4,0);
	\arcd;
	\sh(8,0);
	\tickadddown;
	\defect;
	\sh(0,.5);
	\arrowlarcuturn;
	\strutv;
	\sh(4,.5);
	\arcd;
	\sh(8,.5);
	\tickadddown;
	\defect;
	\sh(11,.5);
	\strutv;
	\sh(8,1.5);
	\arrowrgap;
	\sh(0,1.5);
	\arcu;
	\sh(4,1.5);
	\sector;

\else \ifnum#1=7
	\sh(0,0);
	\sectorcaretr;
	\sh(4,0);
	\arcdlong;
	\sh(9,0);
	\tickshortupadddown;
	\defect;
	\sh(0,.5);
	\arrowlarcu;
	\strutv;
	\sh(4.5,.5);
	\sector;
	\sh(8.5,.5);
	\defectsemilong;
	\sh(12,.5);
	\strutv;
	\sh(0,1.5);
	\arcu;
	\sh(4,1.5);
	\tickaddup;
	\arculong;
	\sh(9,1.5);
	\arrowrgap;

\else \ifnum#1=8
	\sh(0,0);
	\sectorcaretr;
	\assoctickl;
	\sh(4,0);
	\arcdlong;
	\sh(9,0);
	\tickshortupadddown;
	\defect;
	\sh(12,0);
	\assoctickr;
	\sh(0,.5);
	\arrowlarcu;
	\sh(4.5,.5);
	\sector;
	\sh(8.5,.5);
	\defectsemilong;
	\sh(12,.5);
	\sh(0,2);
	\strutshortdown;
	\sh(12,1);
	\strutv;
	\sh(0,2);
	\arcu;
	\sh(4,2);
	\tickaddup;
	\arculong;
	\sh(9,2);
	\arrowrgap;

\else \ifnum#1=9
	\sh(0,.5);
	\sectorcaretr;
	\sh(4,.5);
	\arculong;
	\sh(9,.5);
	\tickshortdownaddup;
	\defect;
	\sh(12,0);
	\assoctickr;
	\sh(0,0);
	\arrowlarcd;
	\assoctickl;
	\sh(4.5,0);
	\sector;
	\sh(8.5,0);
	\defectsemilong;
	\sh(12,.5);
	\sh(0,2);
	\strutshortdown;
	\sh(12,1);
	\strutv;
	\sh(0,2);
	\arcu;
	\sh(4,2);
	\tickaddup;
	\arculong;
	\sh(9,2);
	\arrowrgap;
	
\else \ifnum#1=10
	\sh(0,.5);
	\sectorcaretr;
	\sh(4,.5);
	\arculong;
	\sh(9,.5);
	\tickshortdownaddup;
	\defect;
	\sh(0,0);
	\arrowlarcd;
	\sh(4.5,0);
	\sector;
	\sh(8.5,0);
	\defectsemilong;
	\sh(12,.5);
	\sh(0,.5);
	\strutv;
	\sh(12,.5);
	\strutv;
	\sh(0,1.5);
	\arcu;
	\sh(4,1.5);
	\tickaddup;
	\arculong;
	\sh(9,1.5);
	\arrowrgap;
	
\else \ifnum#1=11
	\sh(0,.5);
	\strutv;
	\arrowlarcdturn;
	\sh(4,.5);
	\arcu;
	\sh(8,.5);
	\tickaddup;
	\defect;
	\sh(11,.5);
	\strutv;
	\sh(0,1.5);
	\sector;
	\sh(4,1.5);
	\arcu;
	\sh(8,1.5);
	\arrowrgap;
	\sh(0,-.25);
	\arcdgapr;
	\sh(4,-.25);
	\begin{scope}[rotate=-135,shift={(-6.17,3.38)}] %!% insane hack
		\caretrmini;
	\end{scope}
	\sh(4,-.25);
	\sector;
	\sh(8,-.25);
	\defect;

\else \ifnum#1=12
	\sh(0,.65);
	\arcd;
	\sh(4,.65);
	\arcu;
	\tickshort;
	\sh(8,.65);
	\arrowrgap;
	\sh(0,1.15);
	\sector;
	\sh(4,1.15);
	\arcu;
	\sh(8,1.15);
	\arrowrgap;
	\sh(0,-.35);
	\arcdgapr;
	\strutv;
	\sh(4,-.35);
	\begin{scope}[rotate=-135,shift={(-6.10,3.55)}] %!% insane hack
		\caretrmini;
	\end{scope}
	\sh(4,-.35);
	\sector;
	\sh(8,-.35);
	\defect;
	\sh(11,-.35);
	\strutv;
	
\else \ifnum#1=13
	\sh(4,0);
	\arcdgapl;
	\sh(0,0);
	\strutv;
	\arrowlarcdgap;
	\sh(8,0);
	\tickadddown;
	\defect;
	\sh(11,0);
	\strutv;
	\sh(0,1);
	\arcd;
	\sh(4,1);
	\sector;
	\sh(8,1);
	\arrowrgap;
	\sh(8,1.5);
	\arrowrgap;
	\sh(0,1.5);
	\sector;
	\sh(4,1.5);
	\arcu;

\else \ifnum#1=14
	\sh(4,0);
	\arcdgapl;
	\sh(0,0);
	\strutshort;
	\arrowlarcdgap;
	\sh(8,0);
	\tickadddown;
	\defect;
	\sh(11,0);
	\strutv;
	\sh(0,1.5);
	\arcd;
	\sh(4,1.5);
	\sector;
	\sh(8,1.5);
	\arrowrgap;
	\sh(8,2);
	\arrowrgap;
	\sh(11,1.5);
	\assoctickr;
	\sh(0,1.5);
	\assoctickl;
	\sh(0,2);
	\sector;
	\sh(4,2);
	\arcu;
	
\else \ifnum#1=0
	\sh(0,0);
	\defect;
	\sh(3,0);
	\sector;
	\sh(7,0);
	\sectorcaretl;
	\sh(0,-1);
	\strutv;
	\arrowlgap;
	\sh(3,-1);
	\arcd;
	\sh(7,-1);
	\tickshortupadddown;
	\arcd;
	\sh(11,-1);
	\strutv;

%	\sh(0,0);
%	\defect;
%	\sh(3,0);
%	\sector;
%	\sh(7,0);
%	\sector;
	
\fi\fi\fi\fi\fi\fi\fi\fi\fi\fi\fi\fi\fi\fi\fi}
\def\axiombN#1{
\ifnum#1=1
	\sh(0,0);
	\strutshort;
	\arrowlarcd;
	\sh(4,0);
	\defectshortl;
	\sh(7,0);
	\tickadddown;
	\arcd;
	\sh(11,0);
	\strutshort;
	\sh(0,1.5);
	\assoctickl;
	\arcd;
	\sh(4,1.5);
	\tickshortupadddown;
	\defect;
	\sh(7,1.5);
	\sectorcaretl;
	\sh(11,1.5);
	\assoctickr;
	\sh(0,2);
	\sector;
	\sh(4,2);
	\defectshortr;
	\sh(7,2);
	\arcu;
	\caretrmini;

\else \ifnum#1=2
	\sh(0,0);
	\strutshort;
	\arrowlarcd;
	\sh(4,0);
	\defectshortl;
	\sh(7,0);
	\tickadddown;
	\arcd;
	\sh(11,0);
	\strutshort;
	\sh(0,1.5);
	\assoctickl;
	\sector;
	\sh(4,1.5);
	\defectshortr;
	\sh(7,1.5);
	\arcd;
	\caretrmini;
	\sh(11,1.5);
	\assoctickr;
	\sh(0,2);
	\arcu;
	\sh(4,2);
	\tickshortdownaddup;
	\defect;
	\sh(7,2);
	\sectorcaretl;

\else \ifnum#1=3
	\sh(0,0);
	\strutv;
	\arrowlarcd;
	\sh(4,0);
	\defectshortl;
	\sh(7,0);
	\tickadddown;
	\arcd;
	\sh(11,0);
	\strutv;
	\sh(0,1);
	\sector;
	\sh(4,1);
	\defectshortr;
	\sh(7,1);
	\arcd;
	\caretrmini;
	\sh(0,1.5);
	\arcu;
	\sh(4,1.5);
	\tickshortdownaddup;
	\defect;
	\sh(7,1.5);
	\sectorcaretl;

\else \ifnum#1=4
	\sh(0,0);
	\sectorcaretr;
	\strutv;
	\sh(4,0);
	\defect;
	\sh(7,0);
	\tickadddown;
	\arcd;
	\sh(11,0);
	\strutv;
	\sh(0,1);
	\arcu;
	\sh(4,1);
	\tickaddup;
	\defectshortr;
	\sh(7,1);
	\arcd;
	\caretrmini;
	\sh(0,1.5);
	\arcu;
	\sh(4,1.5);
	\tickaddup;
	\defect;
	\sh(7,1.5);
	\sectorcaretl;
	
\else \ifnum#1=5
	\begin{scope}[cm={-1,0,0,-1,(0,0)}]
		\axiombN{4}
	\end{scope}
\else \ifnum#1=6
	\begin{scope}[cm={-1,0,0,-1,(0,0)}]
		\axiombN{3}
	\end{scope}
\else \ifnum#1=7
	\begin{scope}[cm={-1,0,0,-1,(0,0)}]
		\axiombN{2}
	\end{scope}
\else \ifnum#1=8
	\begin{scope}[cm={-1,0,0,-1,(0,0)}]
		\axiombN{1}
	\end{scope}
\else \ifnum#1=9
	\begin{scope}[cm={-1,0,0,-1,(0,0)}]
		\axiombN{12}
	\end{scope}
\else \ifnum#1=10
	\begin{scope}[cm={-1,0,0,-1,(0,0)}]
		\axiombN{11}
	\end{scope}
\else \ifnum#1=11
	\sh(0,0);
	\strutv;
	\arrowlarcd;
	\sh(4,0);
	\defectshortl;
	\sh(7,0);
	\sector
	\sh(11,0);
	\strutv;
	\sh(0,1);
	\arcd;
	\sh(4,1);
	\tickshortupadddown;
	\defectshortr;
	\sh(7,1);
	\arrowrarcu;
	\sh(0,1.5);
	\sector;
	\sh(4,1.5);
	\defectshortr;
	\sh(7,1.5);
	\arrowrarcu;

\else \ifnum#1=12
	\sh(0,0);
	\strutv;
	\arrowlarcd;
	\sh(4,0);
	\defectshortl;
	\sh(7,0);
	\tickadddown;
	\arcd;
	\sh(11,0);
	\strutv;
	\sh(0,1);
	\arcd;
	\sh(4,1);
	\tickshortupadddown;
	\defect;
	\sh(7,1);
	\sectorcaretl;
	\sh(0,1.5);
	\sector;
	\sh(4,1.5);
	\defectshortr;
	\sh(7,1.5);
	\arcu;
	\caretrmini;

\fi\fi\fi\fi\fi\fi\fi\fi\fi\fi\fi\fi}
\def\axiombT#1{
\ifnum#1=1
	\sh(0,0);
	\hirbotcut;
	\sectortoparc;
	\sh(7,0);
	\arrowrgap;
\else \ifnum#1=2
	\sh(0,0);
	\hirbotcut;
	\sh(7,0);
	\arrowrgap;
	\sh(0,.5);
	\sector;
	\sh(4,.5);
	\defect;
	\iticks;
	\sh(7,.5);
	\arrowrgap;
\else \ifnum#1=3
	\sh(0,0);
	\arcd;
	\strutv;
	\sh(4,0);
	\hil;
	\sh(8,0);
	\strutv;
	\sh(0,1);
	\arcd;
	\sh(4,1);
	\tickminicc;
	\splittopr;
	\iticksrotlminifact;
	\sh(0,1.5);
	\sector;
	\sh(4,1.5);
	\splittopr;
	\iticksrotlminifact;
		
%	\sh(0,0);
%	\arcd;
%	\strutv;
%	\sh(4,0);
%	\hilwide;
%	\sh(11,0);
%	\strutv;
%	\sh(0,1);
%	\arcd;
%	\sh(4,1);
%	\tickminicc;
%	\doubletopr;
%	\sh(0,1.5);
%	\sector;
%	\sh(4,1.5);
%	\doubletopr;

\else \ifnum#1=4
	\sh(0,.5);
	\strutv;
	\arrowlarcdturn;
	\sh(4,.5);
	\arcu;
	\iticksrotlminifact;	
	\sh(6,.5+\arcfactor*.5*\sectorv);
	\tick;
	\sh(8,.5);
	\strutv;
	\sh(0,1.5);
	\sector;
	\sh(4,1.5);
	\splittopr;
	\iticksrotlminifact;
	\sh(0,0);
	\arcd;
	\sh(4,0);
	\hil;
	
\else \ifnum#1=5
	\sh(0,0);
	\arcd;
	\sh(4,0);
	\hil;
	\sh(0,.5);
	\sectorcaretr;
	\strutv;
	\sh(4,.5);
	\arcu;
	\iticksrotlminifact;
	\sh(6,.5+\arcfactor*.5*\sectorv);
	\tick;
	\sh(8,.5);
	\strutv;
	\sh(0,1.5);
	\arcu;
	\sh(4,1.5);
	\tickshortdownaddup;
	\splittopr;
	\iticksrotlminifact;
	
\else \ifnum#1=6
	\sh(0,0);
	\arcd;
	\assoctickl;
	\sh(4,0);
	\hil;
	\sh(8,0);
	\assoctickr;
	\sh(0,.5);
	\sectorcaretr;
	\sh(4,.5);
	\arcu;
	\iticksrotlminifact;
	\sh(6,.5+\arcfactor*.5*\sectorv);
	\tick;
	\sh(0,2);
	\strutshortdown;
	\sh(8,2);
	\strutshortdown;
	\sh(0,2);
	\arcu;
	\sh(4,2);
	\tickshortdownaddup;
	\splittopr;
	\iticksrotlminifact;	

\else \ifnum#1=7
	\sh(0,0);
	\sector;
	\assoctickl;
	\sh(4,0);
	\arcd;
	\sh(8,0);
	\assoctickr;
	\sh(0,.5);
	\arcu;
	\sh(4,.5);
	\hil;
	\sh(0,2);
	\strutshortdown;
	\sh(8,2);
	\strutshortdown;
	\sh(0,2);
	\arcu;
	\sh(4,2);
	\tickshortdownaddup;
	\splittopr;
	\iticksrotlminifact;	

\else \ifnum#1=8
	\sh(0,0);
	\sector;
	\sh(4,0);
	\arcd;
	\sh(0,.5);
	\arcu;
	\sh(4,.5);
	\hil;
	\sh(0,.5);
	\strutv;
	\sh(8,.5);
	\strutv;
	\sh(0,1.5);
	\arcu;
	\sh(4,1.5);
	\tickshortdownaddup;
	\splittopr;
	\iticksrotlminifact;	

\else \ifnum#1=9
	\sh(0,0);
	\sectorbot;
	\sh(4,0);
	\defect;
	\sh(0,.5);
	\hirtop;
	\sh(4,.5);
	\defect;
	
\else \ifnum#1=10
	\sh(0,0);
	\hirtop;
	\sectorbotarc;
	\sh(4,0);
	\arrowrgap;
	
\else \ifnum#1=0
	\sh(0,0);
	\hir;
	\sh(4,0);
	\sector;
	
\fi\fi\fi\fi\fi\fi\fi\fi\fi\fi\fi}
\def\axiombU#1{
\ifnum#1=1
	\sh(0,0);
	\arcd;
	\assoctickl;
	\sh(4,0);
	\hir;
	\sh(8,0);
	\assoctickr;
	\sh(0,.5);
	\sectorcaretr;
	\sh(4,.5);
	\arcu;
	\iticksrotrminifact;
	\sh(6,.5+\arcfactor*.5*\sectorv);
	\tick;
	\sh(0,2);
	\strutshortdown;
	\sh(8,2);
	\strutshortdown;
	\sh(0,2);
	\arcu;
	\sh(4,2);
	\tickshortdownaddup;
	\splittopr;
	\iticksrotrminifact;

\else \ifnum#1=2
	\sh(0,0);
	\sector;
	\assoctickl;
	\sh(4,0);
	\arcd;
	\sh(8,0);
	\assoctickr;
	\sh(0,.5);
	\arcu;
	\sh(4,.5);
	\hir;
	\sh(0,2);
	\strutshortdown;
	\sh(8,2);
	\strutshortdown;
	\sh(0,2);
	\arcu;
	\sh(4,2);
	\tickshortdownaddup;
	\splittopr;
	\iticksrotrminifact;	

\else \ifnum#1=3
	\sh(0,0);
	\sector;
	\sh(4,0);
	\arcd;
	\sh(0,.5);
	\arcu;
	\sh(4,.5);
	\hir;
	\sh(0,.5);
	\strutv;
	\sh(8,.5);
	\strutv;
	\sh(0,1.5);
	\arcu;
	\sh(4,1.5);
	\tickshortdownaddup;
	\splittopr;
	\iticksrotrminifact;	

\else \ifnum#1=4
	\sh(0,0);
	\hirtopthreecut;
	\sectorbotarc;
	
\else \ifnum#1=5
	\sh(0,0);
	\hirbotthreecut;
	\sectortoparc;
	
\else \ifnum#1=6
	\sh(0,0);
	\arcd;
	\strutv;
	\sh(4,0);
	\hir;
	\sh(8,0);
	\strutv;
	\sh(0,1);
	\arcd;
	\sh(4,1);
	\tickminicc;
	\splittopr;
	\iticksrotrminifact;
	\sh(0,1.5);
	\sector;
	\sh(4,1.5);
	\splittopr;
	\iticksrotrminifact;

\else \ifnum#1=7
	\sh(0,.5);
	\arrowlarcdturn;
	\strutv;
	\sh(0,0);
	\arcd;
	\sh(4,0);
	\hir;
	\sh(4,.5);
	\arcu;
	\iticksrotrminifact;
	\sh(6,.5+\arcfactor*.5*\sectorv);
	\tick;
	\sh(8,.5);
	\strutv;
	\sh(0,1.5);
	\sector;
	\sh(4,1.5);
	\splittopr;
	\iticksrotrminifact;	

\else \ifnum#1=8
	\sh(0,0);
	\arcd;
	\sh(4,0);
	\hir;
	\sh(0,.5);
	\sectorcaretr;
	\sh(4,.5);
	\arcu;
	\iticksrotrminifact;
	\sh(6,.5+\arcfactor*.5*\sectorv);
	\tick;
	\sh(0,.5);
	\strutv;
	\sh(8,.5);
	\strutv;
	\sh(0,1.5);
	\arcu;
	\sh(4,1.5);
	\tickshortdownaddup;
	\splittopr;
	\iticksrotrminifact;
	
\else \ifnum#1=0
	\sh(0,0);
	\hil;
	\sh(4,0);
	\sector;		

\fi\fi\fi\fi\fi\fi\fi\fi\fi}
\def\axiombV#1{
\ifnum#1=1
	\sh(0,0);
	\hilbotcut;
	\sh(3,0);
	\sectortoparc;
	\sh(7,0);
	\arrowrgap;
	
\else \ifnum#1=2
	\sh(0,0);
	\hilbotcut;
	\sh(7,0);
	\arrowrgap;
	\sh(0,.5);
	\defect;
	\iticks;
	\sh(3,.5);
	\sector;
	\sh(7,.5);
	\arrowrgap;
	
\else \ifnum#1=3
	\sh(0,0);
	\hilbotthreecutmid;
	\strutv;
	\sh(10,0);
	\strutv;
	\sh(0,1);
	\defect;
	\iticks;
	\sh(3,1);
	\tickshortupadddown;
	\arcd;
	\sh(7,1);
	\arrowrgap;
	\sh(0,1.5);
	\defect;
	\iticks;
	\sh(3,1.5);
	\sector;
	\sh(7,1.5);
	\arrowrgap;
	
\else \ifnum#1=4
	\sh(0,0);
	\hilbotthreecutmid;
	\sh(0,.5);
	\strutv;
	\arrowlgap;
	\iticks;
	\sh(3,.5);
	\arcd;
	\sh(7,.5);
	\defect;
	\tickadddown;
	\sh(10,.5);
	\strutv;
	\sh(0,1.5);
	\defect;
	\iticks;
	\sh(3,1.5);
	\sector;
	\sh(7,1.5);
	\arrowrgap;

\else \ifnum#1=5
	\sh(0,0);
	\hilbotthreecutmidshorttick;
	\sh(0,.5);
	\strutv;
	\arrowlgap;
	\iticks;
	\sh(3,.5);
	\sector;
	\sh(7,.5);
	\defect;
	\sh(10,.5);
	\strutv;
	\sh(0,1.5);
	\defect;
	\iticks;
	\sh(3,1.5);
	\arcu;
	\tickaddup;
	\sh(7,1.5);
	\arrowrgap;

\else \ifnum#1=6
	\sh(0,0);
	\hilbotthreecutmidnotick;
	\sh(3,0);
	\sectortoparc;
	\sh(0,.5);
	\strutv;
	\sh(10,.5);
	\strutv;
	\sh(0,1.5);
	\defect;
	\iticks;
	\sh(3,1.5);
	\tickaddup;
	\arcu;
	\sh(7,1.5);
	\arrowrgap;
	
\else \ifnum#1=7
	\sh(0,0);
	\hiltopthree;
	\sectorbotarclong;
	\sh(0,.5);
	\strutv;
	\sh(10,.5);
	\strutv;
	\sh(0,1.5);
	\tripletopr;
	\itickstriprotlshallow;
	
\else \ifnum#1=8
	\sh(0,0);
	\sectorbotarclong;
	\defectsuplong;
	\sh(5,0);
	\defectsuplong;
	\sh(0,.5);
	\hiltopthree;
	\strutv;
	\sh(10,.5);
	\strutv;
	\sh(0,1.5);
	\tripletopr;
	\itickstriprotlshallow;

\else \ifnum#1=9
	\sh(0,0);
	\sectorbot;
	\sh(4,0);
	\defect;
	\sh(0,.5);
	\hiltop;
	\sh(4,.5);
	\defect;
	
\else \ifnum#1=10
	\sh(0,0);
	\hiltop;
	\sectorbotarc;
	\sh(4,0);
	\arrowrgap;
	
\else \ifnum#1=0
	\sh(0,0);
	\hirbotthreecutmidnotick;
	\sh(3,0);
	\sectortoparc;	

\fi\fi\fi\fi\fi\fi\fi\fi\fi\fi\fi}
\def\axiombW#1{
\ifnum#1=1
	\sh(0,0);
	\hilbotcuttight;
	\sh(3,0);
	\hil;
	\sh(0,.5);
	\strutv;
	\sh(7,1.5);
	\strutshortdown;
	\sh(0,1.5);
	\defect;
	\sh(3,1.5);
	\tickaddup;
	\splittopr;
	\sh(0,1.5);
	\iticks;
	\sh(3,1.5);
	\iticksrotlfact;
	
\else \ifnum#1=2
	\sh(0,0);
	\hilbottight;
	\hiltopthree;
	\sh(5,0);
	\tick;
	\sh(5,1.5);
	\itickshere;
	\sh(0,.5);
	\strutv;
	\sh(10,.5);
	\strutv;
	\sh(0,1.5);
	\tripletopr;
	\itickstriprotlshallow;
	\sh(5,3);
	\itickshere;
	
\else \ifnum#1=3
	\sh(0,0);
	\hilbot;
	\sh(0,.5);
	\hiltopthree;
	\sh(5,.5);
	\tick;
	\sh(5,2);
	\itickshere;
	\sh(0,.5);
	\strutv;
	\sh(10,.5);
	\strutv;
	\sh(0,1.5);
	\tripletopr;
	\itickstriprotlshallow;
	\sh(5,3);
	\itickshere;

\else \ifnum#1=4
	\sh(0,0);
	\hilbotcutleft;
	\hil;
	\iticksrotr;
	
\else \ifnum#1=5
	\sh(0,0);
	\hilbotcutleft;
	\hir;
	\iticksrotl;

\else \ifnum#1=6
	\sh(0,0);
	\hilbotcut;
	\sh(0,.5);
	\strutv;
	\defect;
	\iticks;
	\sh(3,.5);
	\hil;
	\sh(7,.5);
	\strutv;
	\sh(0,1.5);
	\defect;
	\iticks;
	\sh(3,1.5);
	\tickaddup;
	\splittopr;
	\iticksrotlfact;
	
\else \ifnum#1=0
	\sh(0,0);
	\hirbotcuttight;
	\hir;
	
\fi\fi\fi\fi\fi\fi\fi}
\def\axiombX#1{
\ifnum#1=1
	\sh(0,0);
	\hirbotcuttight;
	\hil;

\else \ifnum#1=2
	\sh(0,0);
	\hirbot;
	\sh(0,.5);
	\hiltopthree;
	\sh(5,.5);
	\tick;
	\sh(10,.5);
	\itickstriprotrshallow;
	\sh(0,.5);
	\strutv;
	\sh(10,.5);
	\strutv;
	\sh(0,1.5);
	\tripletopr;
	\itickstriprotlshallow;
	\sh(10,1.5);
	\itickstriprotrshallow;

\else \ifnum#1=3
	\sh(0,0);
	\hirbottight;
	\hiltopthree;
	\sh(5,0);
	\tick;
	\sh(0,.5);
	\strutv;
	\sh(10,.5);
	\strutv;
	\sh(0,1.5);
	\tripletopr;
	\itickstriprotlshallow;
	\sh(10,0);
	\itickstriprotrshallow;
	\sh(10,1.5);
	\itickstriprotrshallow;

\else \ifnum#1=4
	\sh(0,0);
	\hilbotcuttight;
	\sh(3,0);
	\hir;
	\sh(0,.5);
	\strutv;
	\sh(7,1.5);
	\strutshortdown;
	\sh(0,1.5);
	\defect;
	\sh(3,1.5);
	\tickaddup;
	\splittopr;
	\sh(0,1.5);
	\iticks;
	\sh(3,1.5);
	\iticksrotrfact;

\else \ifnum#1=5
	\sh(0,0);
	\hilbotcut;
	\sh(0,.5);
	\strutv;
	\defect;
	\iticks;
	\sh(3,.5);
	\hir;
	\sh(7,.5);
	\strutv;
	\sh(0,1.5);
	\defect;
	\iticks;
	\sh(3,1.5);
	\tickaddup;
	\splittopr;
	\iticksrotrfact;

\else \ifnum#1=6
	\sh(0,0);
	\hilbottight;
	\hirtopthree;
	\sh(5,0);
	\tick;
	\sh(0,0);
	\itickstriprotlshallow;

\fi\fi\fi\fi\fi\fi}
\def\axiombA#1{
\ifnum#1=1
\pgftext{\ingeps{3stuff-item1-1.eps}}
\else \ifnum#1=2
\pgftext{\ingeps{3stuff-item1-2.eps}} 
\else \ifnum#1=3 % This is the null node, though in fact it is not called
\fi\fi\fi}
\def\axiombB#1{
\ifnum#1=1
\pgftext{\ingeps{3stuff-item2-1.eps}}
\else \ifnum#1=2
\pgftext{\ingeps{3stuff-item2-2.eps}} 
\else \ifnum#1=3
\pgftext{\ingeps{3stuff-item2-3.eps}}
\else \ifnum#1=4
\else \ifnum#1=0
\pgftext{\ingeps{3stuff-item2a.eps}}
\fi\fi\fi\fi\fi}
\def\axiombC#1{
\ifnum#1=1
\pgftext{\ingeps{3stuff-item3-1.eps}}
\else \ifnum#1=2
\pgftext{\ingeps{3stuff-item3-2.eps}} 
\else \ifnum#1=3
\pgftext{\ingeps{3stuff-item3-3.eps}}
\else \ifnum#1=4
\fi\fi\fi\fi}
\def\axiombD#1{
\ifnum#1=1
\pgftext{\ingeps{3stuff-item4-1.eps}}
\else \ifnum#1=2
\pgftext{\ingeps{3stuff-item4-2.eps}} 
\else \ifnum#1=3
\pgftext{\ingeps{3stuff-item4-3.eps}}
\else \ifnum#1=4
\pgftext{\ingeps{3stuff-item4-4.eps}} 
\else \ifnum#1=5
\pgftext{\ingeps{3stuff-item4-5.eps}}
\else \ifnum#1=6
\fi\fi\fi\fi\fi\fi}
\def\axiombE#1{
\ifnum#1=1
\pgftext{\ingeps{3stuff-item5-1.eps}}
\else \ifnum#1=2
\pgftext{\ingeps{3stuff-item5-2.eps}} 
\else \ifnum#1=3
\pgftext{\ingeps{3stuff-item5-3.eps}}
\else \ifnum#1=4
\pgftext{\ingeps{3stuff-item5-4.eps}} 
\else \ifnum#1=5
\else \ifnum#1=0
\pgftext{\ingeps{3stuff-item5a.eps}}
\else \ifnum#1=-1
\pgftext{\ingeps{3stuff-item5b.eps}}
\else \ifnum#1=-2
\pgftext{\ingeps{3stuff-item5c.eps}}

\fi\fi\fi\fi\fi\fi\fi\fi}
\def\axiombF#1{
\ifnum#1=1
\pgftext{\ingeps{3stuff-item6-1.eps}}
\else \ifnum#1=2
\pgftext{\ingeps{3stuff-item6-2.eps}} 
\else \ifnum#1=3
\pgftext{\ingeps{3stuff-item6-3.eps}}
\else \ifnum#1=4
\pgftext{\ingeps{3stuff-item6-4.eps}} 
\else \ifnum#1=5
\pgftext{\ingeps{3stuff-item6-5.eps}}
\else \ifnum#1=6
\else \ifnum#1=0
\pgftext{\ingeps{3stuff-item6a.eps}}
\fi\fi\fi\fi\fi\fi\fi}
\def\axiombG#1{
\ifnum#1=1
\pgftext{\ingeps{3stuff-item7-1.eps}}
\else \ifnum#1=2
\pgftext{\ingeps{3stuff-item7-2.eps}} 
\else \ifnum#1=3
\pgftext{\ingeps{3stuff-item7-3.eps}}
\else \ifnum#1=4
\pgftext{\ingeps{3stuff-item7-4.eps}} 
\else \ifnum#1=5
\pgftext{\ingeps{3stuff-item7-5.eps}}
\else \ifnum#1=6
\pgftext{\ingeps{3stuff-item7-6.eps}}
\else \ifnum#1=7
\else \ifnum#1=0
\pgftext{\ingeps{3stuff-item7a.eps}}
\fi\fi\fi\fi\fi\fi\fi\fi}
\def\axiombH#1{
\ifnum#1=1
\pgftext{\ingeps{3stuff-item8-1.eps}}
\else \ifnum#1=2
\pgftext{\ingeps{3stuff-item8-2.eps}} 
\else \ifnum#1=3
\pgftext{\ingeps{3stuff-item8-3.eps}}
\else \ifnum#1=4
\pgftext{\ingeps{3stuff-item8-4.eps}} 
\else \ifnum#1=5
\pgftext{\ingeps{3stuff-item8-5.eps}}
\else \ifnum#1=6
\pgftext{\ingeps{3stuff-item8-6.eps}} 
\else \ifnum#1=7
\pgftext{\ingeps{3stuff-item8-7.eps}}
\else \ifnum#1=8
\pgftext{\ingeps{3stuff-item8-8.eps}}
\else \ifnum#1=9
\else \ifnum#1=0
\pgftext{\ingeps{3stuff-item8a.eps}}
\fi\fi\fi\fi\fi\fi\fi\fi\fi\fi}
\def\axiombO#1{
\ifnum#1=1
\pgftext{\ingeps{b3stuff-item15-1A.eps}}
\else \ifnum#1=2
\pgftext{\ingeps{b3stuff-item15-1B.eps}} 
\else \ifnum#1=3
\pgftext{\ingeps{b3stuff-item15-2A.eps}}
\else \ifnum#1=4
\pgftext{\ingeps{b3stuff-item15-2B.eps}} 
\else \ifnum#1=5
\pgftext{\ingeps{b3stuff-item15-3A.eps}}
\else \ifnum#1=6
\pgftext{\ingeps{b3stuff-item15-4A.eps}} 
\else \ifnum#1=7
\pgftext{\ingeps{b3stuff-item15-5A.eps}}
\else \ifnum#1=8
\pgftext{\ingeps{b3stuff-item15-5B.eps}}
\else \ifnum#1=9
\pgftext{\ingeps{b3stuff-item15-6A.eps}} 
\else \ifnum#1=10
\pgftext{\ingeps{b3stuff-item15-6B.eps}}
\else \ifnum#1=11
\pgftext{\ingeps{b3stuff-item15-7A.eps}} 
\else \ifnum#1=12
\pgftext{\ingeps{b3stuff-item15-7B.eps}}
\else \ifnum#1=13
\pgftext{\ingeps{b3stuff-item15-8A.eps}} 
\else \ifnum#1=14
\pgftext{\ingeps{b3stuff-item15-8B.eps}}
\else \ifnum#1=15
\pgftext{\ingeps{b3stuff-item15-9A.eps}}
\else \ifnum#1=16
\pgftext{\ingeps{b3stuff-item15-9B.eps}}
\else \ifnum#1=0
\pgftext{\ingeps{b3stuff-item15a.eps}}
\fi\fi\fi\fi\fi\fi\fi\fi\fi\fi\fi\fi\fi\fi\fi\fi\fi}
\def\axiombP#1{
\ifnum#1=1
\pgftext{\ingeps{b3stuff-item16-1A.eps}}
\else \ifnum#1=2
\pgftext{\ingeps{b3stuff-item16-1B.eps}} 
\else \ifnum#1=3
\pgftext{\ingeps{b3stuff-item16-2A.eps}}
\else \ifnum#1=4
\pgftext{\ingeps{b3stuff-item16-2B.eps}} 
\else \ifnum#1=5
\pgftext{\ingeps{b3stuff-item16-3A.eps}}
\else \ifnum#1=6
\pgftext{\ingeps{b3stuff-item16-4A.eps}} 
\else \ifnum#1=7
\pgftext{\ingeps{b3stuff-item16-5A.eps}}
\else \ifnum#1=8
\pgftext{\ingeps{b3stuff-item16-5B.eps}}
\else \ifnum#1=9
\pgftext{\ingeps{b3stuff-item16-6A.eps}} 
\else \ifnum#1=10
\pgftext{\ingeps{b3stuff-item16-6B.eps}}
\else \ifnum#1=11
\pgftext{\ingeps{b3stuff-item16-7A.eps}} 
\else \ifnum#1=12
\pgftext{\ingeps{b3stuff-item16-8A.eps}}
\else \ifnum#1=13
\pgftext{\ingeps{b3stuff-item16-9A.eps}} 
\else \ifnum#1=14
\pgftext{\ingeps{b3stuff-item16-9B.eps}}
\else \ifnum#1=15
\pgftext{\ingeps{b3stuff-item16-10A.eps}}
\else \ifnum#1=16
\pgftext{\ingeps{b3stuff-item16-10B.eps}}
\else \ifnum#1=17
\pgftext{\ingeps{b3stuff-item16-11A.eps}}
\else \ifnum#1=18
\pgftext{\ingeps{b3stuff-item16-11B.eps}}
\else \ifnum#1=0
\pgftext{\ingeps{b3stuff-item16a.eps}}
\fi\fi\fi\fi\fi\fi\fi\fi\fi\fi\fi\fi\fi\fi\fi\fi\fi\fi\fi}
\def\axiombQ#1{
\ifnum#1=1
\pgftext{\ingeps{b3stuff-item17-1A.eps}}
\else \ifnum#1=2
\pgftext{\ingeps{b3stuff-item17-13B.eps}} 
\else \ifnum#1=3
\pgftext{\ingeps{b3stuff-item17-13A.eps}}
\else \ifnum#1=4
\pgftext{\ingeps{b3stuff-item17-12A.eps}} 
\else \ifnum#1=5
\pgftext{\ingeps{b3stuff-item17-11A.eps}}
\else \ifnum#1=6
\pgftext{\ingeps{b3stuff-item17-10B.eps}} 
\else \ifnum#1=7
\pgftext{\ingeps{b3stuff-item17-10A.eps}}
\else \ifnum#1=8
\pgftext{\ingeps{b3stuff-item17-9B.eps}}
\else \ifnum#1=9
\pgftext{\ingeps{b3stuff-item17-9A.eps}} 
\else \ifnum#1=10
\pgftext{\ingeps{b3stuff-item17-8B.eps}}
\else \ifnum#1=11
\pgftext{\ingeps{b3stuff-item17-8A.eps}} 
\else \ifnum#1=12
\pgftext{\ingeps{b3stuff-item17-7B.eps}}
\else \ifnum#1=13
\pgftext{\ingeps{b3stuff-item17-7A.eps}} 
\else \ifnum#1=14
\pgftext{\ingeps{b3stuff-item17-6B.eps}}
\else \ifnum#1=15
\pgftext{\ingeps{b3stuff-item17-6A.eps}}
\else \ifnum#1=16
\pgftext{\ingeps{b3stuff-item17-5B.eps}}
\else \ifnum#1=17
\pgftext{\ingeps{b3stuff-item17-5A.eps}}
\else \ifnum#1=18
\pgftext{\ingeps{b3stuff-item17-4B.eps}}
\else \ifnum#1=19
\pgftext{\ingeps{b3stuff-item17-4A.eps}}
\else \ifnum#1=20
\pgftext{\ingeps{b3stuff-item17-3A.eps}}
\else \ifnum#1=21
\pgftext{\ingeps{b3stuff-item17-2A.eps}}
\else \ifnum#1=22
\pgftext{\ingeps{b3stuff-item17-1B.eps}}
\fi\fi\fi\fi\fi\fi\fi\fi\fi\fi\fi\fi\fi\fi\fi\fi\fi\fi\fi\fi\fi\fi}
\def\axiombR#1{
\ifnum#1=1
\pgftext{\ingeps{d3stuff-item18-1.eps}}
\else \ifnum#1=2
\pgftext{\ingeps{d3stuff-item18-2.eps}} 
\else \ifnum#1=3
\pgftext{\ingeps{d3stuff-item18-3.eps}}
\else \ifnum#1=4
\pgftext{\ingeps{d3stuff-item18-4.eps}} 
\else \ifnum#1=5
\else \ifnum#1=0
\pgftext{\ingeps{d3stuff-item18a.eps}} 
\fi\fi\fi\fi\fi\fi}
\def\axiombS#1{
\ifnum#1=1
\pgftext{\ingeps{d3stuff-item19-1.eps}}
\else \ifnum#1=2
\pgftext{\ingeps{d3stuff-item19-2.eps}} 
\else \ifnum#1=3
\pgftext{\ingeps{d3stuff-item19-3.eps}}
\else \ifnum#1=4
\pgftext{\ingeps{d3stuff-item19-4.eps}} 
\else \ifnum#1=5
\pgftext{\ingeps{d3stuff-item19-5.eps}} 
\else \ifnum#1=6
\pgftext{\ingeps{d3stuff-item19-6.eps}}
\else \ifnum#1=7
\pgftext{\ingeps{d3stuff-item19-7.eps}} 
\else \ifnum#1=8
\else \ifnum#1=0
\pgftext{\ingeps{d3stuff-item19a.eps}} 
\fi\fi\fi\fi\fi\fi\fi\fi\fi}
\def\axiombY#1{
\ifnum#1=1
\pgftext{\ingeps{b3stuff-item25-1.eps}}
\else \ifnum#1=2
\pgftext{\ingeps{b3stuff-item25-2.eps}} 
\else \ifnum#1=3
\pgftext{\ingeps{b3stuff-item25-3A.eps}}
\else \ifnum#1=4
\pgftext{\ingeps{b3stuff-item25-3B.eps}} 
\else \ifnum#1=5
\pgftext{\ingeps{b3stuff-item25-4A.eps}}
\else \ifnum#1=6
\pgftext{\ingeps{b3stuff-item25-4B.eps}} 
\else \ifnum#1=7
\pgftext{\ingeps{b3stuff-item25-5A.eps}}
\else \ifnum#1=8
\pgftext{\ingeps{b3stuff-item25-5B.eps}}
\else \ifnum#1=9
\pgftext{\ingeps{b3stuff-item25-6A.eps}}
\else \ifnum#1=10
\pgftext{\ingeps{b3stuff-item25-7.eps}}
\else \ifnum#1=11
\else \ifnum#1=0
\pgftext{\ingeps{b3stuff-item25a.eps}}
\fi\fi\fi\fi\fi\fi\fi\fi\fi\fi\fi\fi}
\def\axiombZ#1{
\ifnum#1=1
\pgftext{\ingeps{b3stuff-item26-1.eps}}
\else \ifnum#1=2
\pgftext{\ingeps{b3stuff-item26-8.eps}} 
\else \ifnum#1=3
\pgftext{\ingeps{b3stuff-item26-7B.eps}}
\else \ifnum#1=4
\pgftext{\ingeps{b3stuff-item26-7A.eps}} 
\else \ifnum#1=5
\pgftext{\ingeps{b3stuff-item26-6B.eps}}
\else \ifnum#1=6
\pgftext{\ingeps{b3stuff-item26-6A.eps}} 
\else \ifnum#1=7
\pgftext{\ingeps{b3stuff-item26-5A.eps}}
\else \ifnum#1=8
\pgftext{\ingeps{b3stuff-item26-4A.eps}}
\else \ifnum#1=9
\pgftext{\ingeps{b3stuff-item26-3B.eps}} 
\else \ifnum#1=10
\pgftext{\ingeps{b3stuff-item26-3A.eps}}
\else \ifnum#1=11
\pgftext{\ingeps{b3stuff-item26-2.eps}}
\else \ifnum#1=12
\else \ifnum#1=0
\pgftext{\ingeps{b3stuff-item26a.eps}}
\fi\fi\fi\fi\fi\fi\fi\fi\fi\fi\fi\fi\fi}
\def\axiombRefl#1{
\ifnum#1=2
\axiombB{0}
\else \ifnum#1=51
\axiombE{0}
\else \ifnum#1=52
\axiombE{-1}
\else \ifnum#1=53
\axiombE{-2}
\else \ifnum#1=6
\axiombF{0}
\else \ifnum#1=7
\axiombG{0}
\else \ifnum#1=8
\axiombH{0}
\else \ifnum#1=9
\axiombI{0}
\else \ifnum#1=11
\axiombK{0}
\else \ifnum#1=13
\axiombM{0}
\else \ifnum#1=15
\axiombO{0}
\else \ifnum#1=16
\axiombP{0}
\else \ifnum#1=18
\axiombR{0}
\else \ifnum#1=19
\axiombS{0}
\else \ifnum#1=20
\axiombT{0}
\else \ifnum#1=21
\axiombU{0}
\else \ifnum#1=22
\axiombV{0}
\else \ifnum#1=23
\axiombW{0}
\else \ifnum#1=25
\axiombY{0}
\else \ifnum#1=26
\axiombZ{0}
\fi\fi\fi\fi\fi\fi\fi\fi\fi\fi\fi\fi\fi\fi\fi\fi\fi\fi\fi\fi}
\def\tableAA#1{
\ifnum#1=1
	\sh(0,0);
	\draw[fill=black] (0,0) circle (.3ex);
\else \ifnum#1=2
	\sh(0,0);
	\defect;
\else \ifnum#1=3
	\sh(0,0);
	\sector;
\fi\fi\fi}
\def\tableBB#1{
\ifnum#1=1
	\sh(0,0);
	\defect;
	\iticks;
\else \ifnum#1=2
	\sh(0,0);
	\defect;
	\sh(3,0);
	\tick;
	\defect;
\else \ifnum#1=3
	\sh(0,0);
	\sector;
	\vid;
\else \ifnum#1=4
	\sh(0,0);
	\sectortop;
	\sectorbotarc;
\else \ifnum#1=5
	\sh(0,0);
	\sector;
	\sh(4,0);
	\defect;
\else \ifnum#1=6
	\sh(0,0);
	\defect;
	\sh(3,0);
	\sector;
\else \ifnum#1=7
	\sh(0,0);
	\defect;
	\iticks;
	\sh(3,0);
	\tick;
	\defect;
\else \ifnum#1=8
	\sh(0,0);
	\defect;
	\sh(3,0);
	\tick;
	\defect;
	\iticks;
\else \ifnum#1=9
	\sh(0,0);
	\defect;
	\sh(3,0);
	\tick;
	\defect;
	\sh(6,0);
	\tick;
	\defect;
\fi\fi\fi\fi\fi\fi\fi\fi\fi}
\def\tableCC#1{
\ifnum#1=1
\pgftext{\ingeps{2stuff-item1-1.eps}}
\else \ifnum#1=2
\pgftext{\ingeps{2stuff-item2-1.eps}}
\else \ifnum#1=3
	\sh(0,0);
	\sectortoparclow;
	\sectortoparchigh;
	\sectorbotarclow;
	\sectorbotarchigh;
\else \ifnum#1=4
	\sh(0,0);
	\sector;
	\vid;
	\sh(4,0);
	\defect;
\else \ifnum#1=5
	\sh(0,0);
	\defect;
	\sh(3,0);
	\sector;
	\vid;
\else \ifnum#1=6
	\sh(0,0);
	\sectortop;
	\sectorbotarc;
	\sh(4,0);
	\defect;
\else \ifnum#1=7
	\sh(0,0);
	\defect;
	\sh(3,0);
	\sectortop;
	\sectorbotarc;
\else \ifnum#1=8
	\sh(0,0);
	\sector;
	\sh(4,0);
	\sector;
\else \ifnum#1=9
	\sh(0,0);
	\sector;
	\sh(4,0);
	\defect;
	\sh(7,0);
	\tick;
	\defect;
\else \ifnum#1=10
	\sh(0,0);
	\defect;
	\sh(3,0);
	\tick;
	\defect;
	\sh(6,0);
	\sector;
\else \ifnum#1=11
	\sh(0,0);
	\defect;
	\sh(3,0);
	\sector;
	\sh(7,0);
	\defect;
\else \ifnum#1=12
	\sh(0,0);
	\defect;
	\sh(3,0);
	\tick;
	\defect;
	\sh(6,0);
	\tick;
	\defect;
	\sh(9,0);
	\tick;
	\defect;
\else \ifnum#1=13
	\sh(0,0);
	\defect;
	\iticks;
	\sh(3,0);
	\sector;
\else \ifnum#1=14
	\sh(0,0);
	\sector;
	\sh(4,0);
	\defect;
	\iticks;
\else \ifnum#1=15
	\sh(0,0);
	\defect;
	\iticks;
	\sh(3,0);
	\tick;
	\defect;
	\iticks;
\else \ifnum#1=16
	\sh(0,0);
	\defect;
	\iticks;
	\sh(3,0);
	\tick;
	\defect;
	\sh(6,0);
	\tick;
	\defect;
\else \ifnum#1=17
	\sh(0,0);
	\defect;
	\sh(3,0);
	\tick;
	\defect;
	\sh(6,0);
	\tick;
	\defect;
	\iticks;
\else \ifnum#1=18
	\sh(0,0);
	\defect;
	\sh(3,0);
	\tick;
	\defect;
	\iticks;
	\sh(6,0);
	\tick;
	\defect;
\fi\fi\fi\fi\fi\fi\fi\fi\fi\fi\fi\fi\fi\fi\fi\fi\fi\fi}
\def\tableDD#1{
\ifnum#1=1
	\pgftext{\ingeps{3stuff-item1-1.eps}}
\else \ifnum#1=2
	\begin{scope}[cm={1,0,0,-1,(0,0)}]
		\pgftext{\ingeps{3stuff-item2a.eps}}
	\end{scope}
\else \ifnum#1=3
	\pgftext{\ingeps{3stuff-item2a.eps}}
\else \ifnum#1=4
	\sh(0,0);
	\sectortoparclow;
	\sectortoparchigh;
	\sectorbotarclow;
	\sectorbotarchigh;
	\vids;
\else \ifnum#1=5
	\sh(0,0);
	\sectortoparcmid;
	\sectortoparchigh;
	\sectorbotarcmid;
	\sectorbotarchigh;
	\draw (A) to (C);
\else \ifnum#1=6
	\begin{scope}[cm={1,0,0,-1,(0,0)}]
		\pgftext{\ingeps{3stuff-item5a.eps}}
	\end{scope}
\else \ifnum#1=7
	\pgftext{\ingeps{3stuff-item5a.eps}}
\else \ifnum#1=8
	\pgftext{\ingeps{3stuff-item5b.eps}}
\else \ifnum#1=9
	\pgftext{\ingeps{3stuff-item5c.eps}}
\else \ifnum#1=10
	\sh(0,0);
	\sector;
	\sh(4,0);
	\sector;
	\vid;
\else \ifnum#1=11
	\sh(0,0);
	\sector;
	\vid;
	\sh(4,0);
	\sector;
\else \ifnum#1=12
	\begin{scope}[cm={-1,0,0,1,(0,0)}]
		\pgftext{\ingeps{3stuff-item7a.eps}}
	\end{scope}
\else \ifnum#1=13
	\pgftext{\ingeps{3stuff-item7a.eps}}
\else \ifnum#1=14
	\sh(0,0);
	\sectortop;
	\sectorbotarc;
	\sh(4,0);
	\sector;
\else \ifnum#1=15
	\sh(0,0);
	\sector;
	\sh(4,0);
	\sectortop;
	\sectorbotarc;
\else \ifnum#1=16
	\sh(0,0);
	\sector;
	\vid;
	\sh(4,0);
	\defect;
	\sh(7,0);
	\tick;
	\defect;
\else \ifnum#1=17
	\sh(0,0);
	\defect;
	\sh(3,0);
	\tick;
	\defect;
	\sh(6,0);
	\sector;
	\vid;
\else \ifnum#1=18
	\sh(0,0);
	\defect;
	\sh(3,0);
	\sector;
	\vid;
	\sh(7,0);
	\defect;
\else \ifnum#1=19
	\sh(0,0);
	\sectortop;
	\sectorbotarc;
	\sh(4,0);
	\defect;
	\sh(7,0);
	\tick;
	\defect;
\else \ifnum#1=20
	\sh(0,0);
	\defect;
	\sh(3,0);
	\tick;
	\defect;
	\sh(6,0);
	\sectortop;
	\sectorbotarc;
\else \ifnum#1=21
	\sh(0,0);
	\defect;
	\sh(3,0);
	\sectortop;
	\sectorbotarc;
	\sh(7,0);
	\defect;
\else \ifnum#1=22
	\sh(0,0);
	\sector;
	\sh(4,0);
	\sector;
	\sh(8,0);
	\defect;
\else \ifnum#1=23
	\sh(0,0);
	\defect;
	\sh(3,0);
	\sector;
	\sh(7,0);
	\sector;
\else \ifnum#1=24
	\sh(0,0);
	\sector;
	\sh(4,0);
	\defect;
	\sh(7,0);
	\sector;
\else \ifnum#1=25
	\sh(0,0);
	\sector;
	\sh(4,0);
	\defect;
	\sh(7,0);
	\tick;
	\defect;
	\sh(10,0);
	\tick;
	\defect;
\else \ifnum#1=26
	\sh(0,0);
	\defect;
	\sh(3,0);
	\tick;
	\defect;
	\sh(6,0);
	\tick;
	\defect;
	\sh(9,0);
	\sector;
\else \ifnum#1=27
	\sh(0,0);
	\defect;
	\sh(3,0);
	\sector;
	\sh(7,0);
	\defect;
	\sh(10,0);
	\tick;
	\defect;
\else \ifnum#1=28
	\sh(0,0);
	\defect;
	\sh(3,0);
	\tick;
	\defect;
	\sh(6,0);
	\sector;
	\sh(10,0);
	\defect;
\else \ifnum#1=29
	\sh(0,0);
	\defect;
	\sh(3,0);
	\tick;
	\defect;
	\sh(6,0);
	\tick;
	\defect;
	\sh(9,0);
	\tick;
	\defect;
	\sh(12,0);
	\tick;
	\defect;
\else \ifnum#1=30
	\sh(0,0);
	\sector;
	\vid;
	\sh(4,0);
	\defect;
	\iticks;
\else \ifnum#1=31
	\sh(0,0);
	\defect;
	\iticks;
	\sh(3,0);
	\sector;
	\vid;
\else \ifnum#1=32
	\sh(0,0);
	\sectortop;
	\sectorbotarc;
	\sh(4,0);
	\defect;
	\iticks;
\else \ifnum#1=33
	\sh(0,0);
	\defect;
	\iticks;
	\sh(3,0);
	\sectortop;
	\sectorbotarc;
\else \ifnum#1=34
	\sh(0,0);
	\sector;
	\sh(4,0);
	\defect;
	\iticks;
	\sh(7,0);
	\tick;
	\defect;
\else \ifnum#1=35
	\sh(0,0);
	\defect;
	\sh(3,0);
	\tick;
	\defect;
	\iticks;
	\sh(6,0);
	\sector;
\else \ifnum#1=36
	\sh(0,0);
	\sector;
	\sh(4,0);
	\defect;
	\sh(7,0);
	\tick;
	\defect;
	\iticks;
\else \ifnum#1=37
	\sh(0,0);
	\defect;
	\iticks;
	\sh(3,0);
	\tick;
	\defect;
	\sh(6,0);
	\sector;
\else \ifnum#1=38
	\sh(0,0);
	\defect;
	\iticks;
	\sh(3,0);
	\sector;
	\sh(7,0);
	\defect;
\else \ifnum#1=39
	\sh(0,0);
	\defect;
	\sh(3,0);
	\sector;
	\sh(7,0);
	\defect;
	\iticks;
\else \ifnum#1=40
	\sh(0,0);
	\defect;
	\iticks;
	\sh(3,0);
	\tick;
	\defect;
	\iticks;
	\sh(6,0);
	\tick;
	\defect;
\else \ifnum#1=41
	\sh(0,0);
	\defect;
	\sh(3,0);
	\tick;
	\defect;
	\iticks;
	\sh(6,0);
	\tick;
	\defect;
	\iticks;
\else \ifnum#1=42
	\sh(0,0);
	\defect;
	\iticks;
	\sh(3,0);
	\tick;
	\defect;
	\sh(6,0);
	\tick;
	\defect;
	\iticks;
\else \ifnum#1=43
	\sh(0,0);
	\defect;
	\iticks;
	\sh(3,0);
	\tick;
	\defect;
	\sh(6,0);
	\tick;
	\defect;
	\sh(9,0);
	\tick;
	\defect;
\else \ifnum#1=44
	\sh(0,0);
	\defect;
	\sh(3,0);
	\tick;
	\defect;
	\sh(6,0);
	\tick;
	\defect;
	\sh(9,0);
	\tick;
	\defect;
	\iticks;
\else \ifnum#1=45
	\sh(0,0);
	\defect;
	\sh(3,0);
	\tick;
	\defect;
	\iticks;
	\sh(6,0);
	\tick;
	\defect;
	\sh(9,0);
	\tick;
	\defect;
\else \ifnum#1=46
	\sh(0,0);
	\defect;
	\sh(3,0);
	\tick;
	\defect;
	\sh(6,0);
	\tick;
	\defect;
	\iticks;
	\sh(9,0);
	\tick;
	\defect;
\fi\fi\fi\fi\fi\fi\fi\fi\fi\fi\fi\fi\fi\fi\fi\fi\fi\fi\fi\fi\fi\fi\fi\fi\fi\fi\fi\fi\fi\fi\fi\fi\fi\fi\fi\fi\fi\fi\fi\fi\fi\fi\fi\fi\fi\fi}
\def\axgridA{
\path
	(0,0) coordinate (1)
	(0,-1) coordinate (2)
	(0,-1.75) coordinate (n); % This is a null node, may need to change to a number if need to put content in the node
\draw (1) -- (2) -- (n);
}
\def\axgridB{
\path
	(0,0) coordinate (1)
	(0,-1) coordinate (2)
	(0,-2) coordinate (3)
	(0,-2.75) coordinate (n)
	(0,-3.5) coordinate (0); % This is a reflected node
\draw (1) -- (2) -- (3) -- (n);
}
\def\axgridC{
\path
	(0,0) coordinate (1)
	(0,-1) coordinate (2)
	(0,-2) coordinate (3)
	(0,-2.75) coordinate (n);
\draw (1) -- (2) -- (3) -- (n);
}
\def\axgridD{
\path
	(0,0) coordinate (1)
	(0,-1) coordinate (2)
	(0,-2) coordinate (3)
	(0,-3) coordinate (4)
	(0,-4) coordinate (5)
	(0,-4.75) coordinate (n);
\draw (1) -- (2) -- (3) -- (4) -- (5) -- (n);
}
\def\axgridE{
\path
	(0,0) coordinate (1)
	(0,-1) coordinate (2)
	(0,-2) coordinate (3)
	(0,-3) coordinate (4)
	(0,-3.75) coordinate (n)
	(0,-4.5) coordinate (0)
	(0,-5.5) coordinate (-1)
	(0,-6.5) coordinate (-2);
\draw (1) -- (2) -- (3) -- (4) -- (n);
}
\def\axgridF{
\path
	(0,0) coordinate (1)
	(0,-1) coordinate (2)
	(0,-2) coordinate (3)
	(0,-3) coordinate (4)
	(0,-4) coordinate (5)
	(0,-4.75) coordinate (n)
	(0,-5.5) coordinate (0);
\draw (1) -- (2) -- (3) -- (4) -- (5) -- (n);
}
\def\axgridG{
\path
	(0,0) coordinate (1)
	(0,-1) coordinate (2)
	(0,-2) coordinate (3)
	(0,-3) coordinate (4)
	(0,-4) coordinate (5)
	(0,-5) coordinate (6)
	(0,-5.75) coordinate (n)
	(0,-6.5) coordinate (0);
\draw (1) -- (2) -- (3) -- (4) -- (5) -- (6) -- (n);
}
\def\axgridH{
\path
	(0,0) coordinate (1)
	(0,-1) coordinate (2)
	(0,-2) coordinate (3)
	(0,-3) coordinate (4)
	(0,-4) coordinate (5)
	(0,-5) coordinate (6)
	(0,-6) coordinate (7)
	(0,-7) coordinate (8)
	(0,-7.75) coordinate (n)
	(0,-8.5) coordinate (0);
\draw (1) -- (2) -- (3) -- (4) -- (5) -- (6) -- (7) -- (8) -- (n);
}
\def\axgridI{
\path
	(0,0) coordinate (1)
	(0,-1) coordinate (2)
	(0,-2) coordinate (3)
	(0,-3) coordinate (4)
	(0,-4) coordinate (5)
	(0,-5) coordinate (6)
	(0,-5.75) coordinate (n)
	(0,-6.5) coordinate (0);
\draw (1) -- (2) -- (3) -- (4) -- (5) -- (6) -- (n);
}
\def\axgridJ{
\path
	(0,0) coordinate (1)
	(0,-1) coordinate (2)
	(0,-2) coordinate (3)
	(0,-3) coordinate (4)
	(0,-4) coordinate (5)
	(0,-5) coordinate (6)
	(0,-6) coordinate (7)
	(0,-6.75) coordinate (n);
\draw (1) -- (2) -- (3) -- (4) -- (5) -- (6) -- (7) -- (n);
}
\def\axgridK{
\path
	(0,0) coordinate (1)
	(0,-1) coordinate (2)
	(0,-2) coordinate (3)
	(0,-3) coordinate (4)
	(0,-4) coordinate (5)
	(0,-5) coordinate (6)
	(0,-6) coordinate (7)
	(0,-7) coordinate (8)
	(0,-8) coordinate (9)
	(0,-8.75) coordinate (n)
	(0,-9.5) coordinate (0);
\draw (1) -- (2) -- (3) -- (4) -- (5) -- (6) -- (7) -- (8) -- (9) -- (n);
}
\def\axgridL{
\path
	(0,0) coordinate (1)
	(0,-1) coordinate (2)
	(0,-2) coordinate (3)
	(0,-3) coordinate (4)
	(0,-4) coordinate (5)
	(0,-5) coordinate (6)
	(0,-6) coordinate (7)
	(0,-7) coordinate (8)
	(0,-8) coordinate (9)
	(0,-9) coordinate (10)
	(0,-9.75) coordinate (n)
	(0,-10.5) coordinate (0);
\draw (1) -- (2) -- (3) -- (4) -- (5) -- (6) -- (7) -- (8) -- (9) -- (10) -- (n);
}
\def\axgridM{
\path
	(0,0) coordinate (1)
	(1,0) coordinate (2)
	(1,-1) coordinate (3)
	(1,-2) coordinate (4)
	(1,-3) coordinate (5)
	(1,-4) coordinate (6)
	(1,-5) coordinate (7)
	(1,-6) coordinate (8)	
	(0,-6) coordinate (9)
	(0,-5) coordinate (10)
	(0,-4) coordinate (11)
	(0,-3) coordinate (12)
	(0,-2) coordinate (13)
	(0,-1) coordinate (14);
\draw (1) -- (2) -- (3) -- (4) -- (5) -- (6) -- (7) -- (8) -- (9) -- (10) -- (11) -- (12) -- (13) -- (14) -- (1);
}
\def\axgridN{
\path
	(0,0) coordinate (1)
	(1,0) coordinate (2)
	(1,-1) coordinate (3)
	(1,-2) coordinate (4)
	(1,-3) coordinate (5)
	(1,-4) coordinate (6)
	(1,-5) coordinate (7)
	(0,-5) coordinate (8)	
	(0,-4) coordinate (9)
	(0,-3) coordinate (10)
	(0,-2) coordinate (11)
	(0,-1) coordinate (12);
\draw (1) -- (2) -- (3) -- (4) -- (5) -- (6) -- (7) -- (8) -- (9) -- (10) -- (11) -- (12) -- (1);
}
\def\axgridO{
\path
	(0,0) coordinate (1)
	(1,0) coordinate (2)
	(1,-1) coordinate (3)
	(1,-2) coordinate (4)
	(1,-3) coordinate (5)
	(1,-4) coordinate (6)
	(1,-5) coordinate (7)
	(1,-6) coordinate (8)	
	(1,-7) coordinate (9)	
	(0,-7) coordinate (10)
	(0,-6) coordinate (11)
	(0,-5) coordinate (12)
	(0,-4) coordinate (13)
	(0,-3) coordinate (14)
	(0,-2) coordinate (15)
	(0,-1) coordinate (16);
\draw (1) -- (2) -- (3) -- (4) -- (5) -- (6) -- (7) -- (8) -- (9) -- (10) -- (11) -- (12) -- (13) -- (14) -- (15) -- (16) -- (1);
}
\def\axgridP{
\path
	(0,0) coordinate (1)
	(1,0) coordinate (2)
	(1,-1) coordinate (3)
	(1,-2) coordinate (4)
	(1,-3) coordinate (5)
	(1,-4) coordinate (6)
	(1,-5) coordinate (7)
	(1,-6) coordinate (8)	
	(1,-7) coordinate (9)	
	(1,-8) coordinate (10)	
	(0,-8) coordinate (11)
	(0,-7) coordinate (12)
	(0,-6) coordinate (13)
	(0,-5) coordinate (14)
	(0,-4) coordinate (15)
	(0,-3) coordinate (16)
	(0,-2) coordinate (17)
	(0,-1) coordinate (18);
\draw (1) -- (2) -- (3) -- (4) -- (5) -- (6) -- (7) -- (8) -- (9) -- (10) -- (11) -- (12) -- (13) -- (14) -- (15) -- (16) -- (17) -- (18) -- (1);
}
\def\axgridQ{
\path
	(0,0) coordinate (1)
	(1,0) coordinate (2)
	(1,-1) coordinate (3)
	(1,-2) coordinate (4)
	(1,-3) coordinate (5)
	(1,-4) coordinate (6)
	(1,-5) coordinate (7)
	(1,-6) coordinate (8)	
	(1,-7) coordinate (9)	
	(1,-8) coordinate (10)	
	(1,-9) coordinate (11)	
	(1,-10) coordinate (12)	
	(0,-10) coordinate (13)
	(0,-9) coordinate (14)
	(0,-8) coordinate (15)
	(0,-7) coordinate (16)
	(0,-6) coordinate (17)
	(0,-5) coordinate (18)
	(0,-4) coordinate (19)
	(0,-3) coordinate (20)
	(0,-2) coordinate (21)
	(0,-1) coordinate (22);
\draw (1) -- (2) -- (3) -- (4) -- (5) -- (6) -- (7) -- (8) -- (9) -- (10) -- (11) -- (12) -- (13) -- (14) -- (15) -- (16) -- (17) -- (18) -- (19) -- (20) -- (21) -- (22) -- (1);
}
\def\axgridR{
\path
	(0,0) coordinate (1)
	(0,-1) coordinate (2)
	(0,-2) coordinate (3)
	(0,-3) coordinate (4)
	(0,-3.75) coordinate (n);
\draw (1) -- (2) -- (3) -- (4) -- (n);
}
\def\axgridS{
\path
	(0,0) coordinate (1)
	(0,-1) coordinate (2)
	(0,-2) coordinate (3)
	(0,-3) coordinate (4)
	(0,-4) coordinate (5)
	(0,-5) coordinate (6)
	(0,-6) coordinate (7)
	(0,-6.75) coordinate (n);
\draw (1) -- (2) -- (3) -- (4) -- (5) -- (6) -- (7) -- (n);
}
\def\axgridT{
\path
	(0,0) coordinate (1)
	(0,-1) coordinate (2)
	(0,-2) coordinate (3)
	(0,-3) coordinate (4)
	(0,-4) coordinate (5)
	(0,-5) coordinate (6)
	(0,-6) coordinate (7)
	(0,-7) coordinate (8)
	(0,-8) coordinate (9)
	(0,-9) coordinate (10)
	(0,-9.75) coordinate (n);
\draw (1) -- (2) -- (3) -- (4) -- (5) -- (6) -- (7) -- (8) -- (9) -- (10) -- (n);
}
\def\axgridU{
\path
	(0,0) coordinate (1)
	(0,-1) coordinate (2)
	(0,-2) coordinate (3)
	(0,-3) coordinate (4)
	(0,-4) coordinate (5)
	(0,-5) coordinate (6)
	(0,-6) coordinate (7)
	(0,-7) coordinate (8)
	(0,-7.75) coordinate (n);
\draw (1) -- (2) -- (3) -- (4) -- (5) -- (6) -- (7) -- (8) -- (n);
}
\def\axgridV{
\path
	(0,0) coordinate (1)
	(0,-1) coordinate (2)
	(0,-2) coordinate (3)
	(0,-3) coordinate (4)
	(0,-4) coordinate (5)
	(0,-5) coordinate (6)
	(0,-6) coordinate (7)
	(0,-7) coordinate (8)
	(0,-8) coordinate (9)
	(0,-9) coordinate (10)
	(0,-9.75) coordinate (n);
\draw (1) -- (2) -- (3) -- (4) -- (5) -- (6) -- (7) -- (8) -- (9) -- (10) -- (n);
}
\def\axgridW{
\path
	(0,0) coordinate (1)
	(0,-1) coordinate (2)
	(0,-2) coordinate (3)
	(0,-3) coordinate (4)
	(0,-4) coordinate (5)
	(0,-5) coordinate (6)
	(0,-5.75) coordinate (n);
\draw (1) -- (2) -- (3) -- (4) -- (5) -- (6) -- (n);
}
\def\axgridX{
\path
	(0,0) coordinate (1)
	(0,-1) coordinate (2)
	(0,-2) coordinate (3)
	(0,-3) coordinate (4)
	(0,-4) coordinate (5)
	(0,-5) coordinate (6)
	(0,-5.75) coordinate (n);
\draw (1) -- (2) -- (3) -- (4) -- (5) -- (6) -- (n);
}
\def\axgridY{
\path
	(0,0) coordinate (1)
	(0,-1) coordinate (2)
	(0,-2) coordinate (3)
	(0,-3) coordinate (4)
	(0,-4) coordinate (5)
	(0,-5) coordinate (6)
	(0,-6) coordinate (7)
	(0,-7) coordinate (8)
	(0,-8) coordinate (9)
	(0,-9) coordinate (10)
	(0,-9.75) coordinate (n);
\draw (1) -- (2) -- (3) -- (4) -- (5) -- (6) -- (7) -- (8) -- (9) -- (10) -- (n);
}
\def\axgridZ{
\path
	(0,0) coordinate (1)
	(0,-1) coordinate (2)
	(0,-2) coordinate (3)
	(0,-3) coordinate (4)
	(0,-4) coordinate (5)
	(0,-5) coordinate (6)
	(0,-6) coordinate (7)
	(0,-7) coordinate (8)
	(0,-8) coordinate (9)
	(0,-9) coordinate (10)
	(0,-10) coordinate (11)
	(0,-10.75) coordinate (n);
\draw (1) -- (2) -- (3) -- (4) -- (5) -- (6) -- (7) -- (8) -- (9) -- (10) -- (11) -- (n);
}
\def\axgridRefl{
\path
	(0,0) coordinate (2)
	(0,-1) coordinate (51)
	(0,-2) coordinate (52)
	(0,-3) coordinate (53)
	(0,-4) coordinate (6)
	(0,-5) coordinate (7)
	(0,-6) coordinate (8)
	(0,-7) coordinate (9)
	(0,-8) coordinate (11)
	(0,-9) coordinate (13)
	(0,-10) coordinate (15)
	(0,-11) coordinate (16)
	(0,-12) coordinate (18)
	(0,-13) coordinate (19)
	(0,-14) coordinate (20)
	(0,-15) coordinate (21)
	(0,-16) coordinate (22)
	(0,-17) coordinate (23)
	(0,-18) coordinate (25)
	(0,-19) coordinate (26);
	
}
\def\tablegridAA{
\path
	(0,0) coordinate (1)
	(0,-.75) coordinate (2)
	(0,-1.75) coordinate (3);
}
\def\tablegridBB{
\path
	(0,0) coordinate (1)
	(0,-.75) coordinate (2)
	(0,-1.75) coordinate (3)
	(0,-2.75) coordinate (4)
	(0,-3.75) coordinate (5)
	(0,-4.75) coordinate (6)
	(0,-5.75) coordinate (7)
	(0,-6.5) coordinate (8)
	(0,-7.25) coordinate (9);
}
\def\tablegridCC{
\path
	(0.02,0) coordinate (1)
	(0.02,-1) coordinate (2)
	(0,-2.25) coordinate (3)
	(0,-3.5) coordinate (4)
	(0,-4.5) coordinate (5)
	(0,-5.5) coordinate (6)
	(0,-6.5) coordinate (7)
	(0,-7.5) coordinate (8)
	(1,0) coordinate (9)
	(1,-1) coordinate (10)
	(1,-2) coordinate (11)
	(1,-3) coordinate (12)
	(1,-4) coordinate (13)
	(1,-5) coordinate (14)
	(1,-6) coordinate (15)
	(1,-6.75) coordinate (16)
	(1,-7.5) coordinate (17)
	(1,-8.25) coordinate (18);
}
\def\tablegridDD{
\path
	(0.01,0) coordinate (1)
	(0.01,-1.25) coordinate (2)
	(0.01,-2.625) coordinate (3)
	(0,-4) coordinate (4)
	(0,-5.25) coordinate (5)
	(0.01,-6.5) coordinate (6)
	(0.01,-7.5) coordinate (7)
	(0.01,-8.5) coordinate (8)
	(0.01,-9.5) coordinate (9)
	(0,-10.5) coordinate (10)
	(0,-11.5) coordinate (11)
	(1.06,0) coordinate (12)
	(1.06,-1.375) coordinate (13)
	(1.05,-2.75) coordinate (14)
	(1.05,-3.75) coordinate (15)
	(1.05,-4.75) coordinate (16)
	(1.05,-5.75) coordinate (17)
	(1.05,-6.75) coordinate (18)
	(1.05,-7.75) coordinate (19)
	(1.05,-8.75) coordinate (20)
	(1.05,-9.75) coordinate (21)
	(1.05,-10.75) coordinate (22)
	(1.05,-11.75) coordinate (23)
	(1.05,-12.75) coordinate (24)
	(2.25,0) coordinate (25)
	(2.25,-1) coordinate (26)
	(2.25,-2) coordinate (27)
	(2.25,-3) coordinate (28)
	(2.25,-4) coordinate (29)
	(2.25,-5) coordinate (30)
	(2.25,-6) coordinate (31)
	(2.25,-7) coordinate (32)
	(2.25,-8) coordinate (33)
	(2.25,-9) coordinate (34)
	(2.25,-10) coordinate (35)
	(2.25,-11) coordinate (36)
	(2.25,-12) coordinate (37)
	(2.25,-13) coordinate (38)
	(2.25,-14) coordinate (39)
	(3.75,0) coordinate (40)
	(3.75,-1) coordinate (41)
	(3.75,-2) coordinate (42)
	(3.75,-3) coordinate (43)
	(3.75,-4) coordinate (44)
	(3.75,-5) coordinate (45)
	(3.75,-6) coordinate (46);
}
\begin{document}

\author{Christopher L. Douglas}
\address{Department of Mathematics, University of Oxford, Oxford OX1 3LB, UK}
\email{cdouglas@maths.ox.ac.uk}

\author{Andr\'e G. Henriques}
\address{Mathematisch Instituut, Universiteit Utrecht, 3508 TA Utrecht, NL}
\email{a.g.henriques@uu.nl}

\title{Internal bicategories}

\begin{abstract}
We define bicategories internal to 2-categories.  When the ambient 2-category is that of symmetric monoidal categories, we regard this as a framework for encoding the structure of a symmetric monoidal 3-category.  This framework is well suited to examples arising in geometry and algebra, such as the 3-category of bordisms or the 3-category of conformal nets.
\end{abstract}

\maketitle

%%%%%%%%%%

\xyoption{poly}
\xyoption{2cell}
\objectmargin{5pt}

\setcounter{tocdepth}{2}

\vspace*{-20pt}
\tableofcontents

\vspace*{-20pt}
Our purpose in this paper is to introduce a collection of notions that conveniently encode 3-categorical structures arising in geometry and algebra.  We begin, in Section~\ref{sec-lft}, by discussing the examples of primary interest: the bordism 3-category of $0$-, $1$-, $2$-, and $3$-dimensional manifolds, the 3-category of tensor categories and bimodule categories,
and the 3-category of conformal nets.  This last example (studied extensively in the papers~\cite{BDH(nets),BDH(modularity),BDH(1*1),BDH(3-category)}) was the principle motivation for this work: in considering the question, ``What kind of 3-categorical structure do conformal nets form?", we were led ineluctably to the categorical notions in the present paper.

Each structure we describe is, for some $n$ and $k$, a notion of $n$-category defined as a weak $k$-category internal to the $(n-k+1)$-category of strict $(n-k)$-categories.  We recall, in Section~\ref{sec-internalcats}, the concept of categories in $2$-categories, and then define categories in $3$-categories.  Next, in Section~\ref{sec-internalbicats}, we define bicategories in $2$-categories and the accompanying, stricter, notions of $2$-categories in 2-categories and dicategories in $2$-categories. (Recall $2$-categories are bicategories where the associator and identity transformations are strict; we use the term `dicategory' for a bicategory where only the associator transformation is strict.)  We relate these various structures by showing that an internal $2$-category has an associated internal dicategory, which in turn has an associated internal bicategory; we also show that a category in the 3-category of $2$-categories has an associated bicategory in the $2$-category of categories.  Finally, we connect the given notions to more traditional categorical structures by observing that bicategories in the $2$-category of categories have an underlying tricategory.

\section{Motivating examples} \label{sec-lft}

We describe the primary examples that motivated our investigation of internal higher categories: bordism higher categories, the 2-categorical structure of algebras and bimodules, the 3-categorical structure of tensor categories and bimodule categories, and the 3-categorical structure of conformal nets.  By examining the natural structures that are present in these examples, we motivate the notions of internal categories and internal bicategories, the study of which will occupy the bulk of the paper.  

We then describe a framework that organizes the various categorical structures encountered in the examples: weak $k$-categories internal to the $(n-k+1)$-category of strict $(n-k)$-categories.  If one replaces the $(n-k+1)$-category of $(n-k)$-categories by the $(n-k+1)$-category of symmetric monoidal $(n-k)$-categories, these internal weak higher categories provide notions of symmetric monoidal $n$-categories.  In the case of our primary concern, for conformal nets, $n=3$ and $k=2$: the notion of bicategory in the 2-category of symmetric monoidal categories furnishes a type of symmetric monoidal 3-category.

\subsection{Geometric examples} \label{sec-localgeo} 
\nopagebreak

\nopagebreak
\subsubsection*{Dimension $1+\epsilon$}

The usual bordism category $\Bord_0^1$ has objects $0$-manifolds and morphisms $1$-manifolds (bordisms) up to diffeomorphism rel boundary.  Instead of quotienting out by the diffeomorphisms of $1$-manifolds to form $\Bord_0^1$, we can take these diffeomorphisms into account in our definition of the bordism category.  Even the $0$-manifolds in the bordism category have diffeomorphisms, and we can incorporate these at the same time.  The resulting structure is as follows.  Zero-manifolds and their diffeomorphisms form a category $B_0$.  There is a second category $B_1$ whose objects are, roughly speaking, $1$-manifolds equipped with a partition of the boundary into two pieces, the source and target, and whose morphisms are diffeomorphisms.  The source and target are functors $s,t: B_1 \ra B_0$, and there is a composition functor $B_1 \times_{B_0} B_1 \ra B_1$.  Altogether, the pair $(B_0, B_1)$ forms a \emph{category object in $\CAT$}, also called a \emph{category internal to $\CAT$}, or simply a \emph{category in $\CAT$}---this notion is defined precisely in Section~\ref{sec-catin2cat}.  This category object is abbreviated $\bBord_0^1$, to indicate that it is an enrichment of the usual bordism category of $0$- and $1$-manifolds.  In fact, both the category $B_0$ and the category $B_1$ are symmetric monoidal, under disjoint union, and $\bBord_0^1 = (B_0, B_1)$ forms a category object in $\SMC$, the $2$-category of symmetric monoidal categories. 

\subsubsection*{Dimension $2$}

There is a bicategory whose objects are $0$-manifolds, whose 1-cells are $1$-manifolds (bordisms), and whose $2$-cells are $2$-manifolds (bordisms between bordisms) up to diffeomorphism.  This bicategory and its symmetric monoidal structure can be conveniently encoded as a category object in $\SMC$, as follows.  The symmetric monoidal category $B_0$ is again 0-manifolds and their diffeomorphisms.  There is a second symmetric monoidal category $B_1^2$ whose objects are $1$-manifolds with a partition of their boundary and whose morphisms are $2$-manifolds with boundary written as a union of a source $1$-manifold and target $1$-manifold, up to diffeomorphism.  The pair $(B_0, B_1^2)$ forms a category object in $\SMC$, denoted $\Bord_0^2$.  Note that there are various distinct ways to give precise models for $B_1^2$, the choices occurring primarily in the treatment of corners and in the definition of identity bordisms.

\subsubsection*{Dimension $2+\epsilon$}

In the category object $\Bord_0^2$, diffeomorphic $2$-manifolds are considered equivalent.  We can refine this bordism category to incorporate diffeomorphisms of $2$-manifolds.  Consider the category $B_0$ of 0-manifolds and diffeomorphisms, the category $B_1$ of 1-manifolds (with source and target $0$-manifolds) and diffeomorphisms, and the category $B_2$ of 2-manifolds (with source and target $1$-manifolds) and diffeomorphisms.  As before there are source and target functors $s,t: B_1 \ra B_0$, and a composition functor $B_1 \times_{B_0} B_1 \ra B_1$; now in addition there are source and target functors $s,t: B_2 \ra B_1$ and a composition functor $B_2 \times_{B_1} B_2 \ra B_2$.  Altogether the triple $(B_0, B_1, B_2)$ forms a \emph{bicategory object in $\CAT$}, also called a \emph{bicategory internal to $\CAT$}, or simply a \emph{bicategory in $\CAT$}---bicategory objects are defined in Section~\ref{sec-internalbicats}.  We denote this bordism bicategory object by $\bBord_0^2$.  By making careful choices in the formulation of the categories $B_1$ and $B_2$, it is possible to construct $\bBord_0^2$ as a \emph{2-category object in $\CAT$}.  In a 2-category object, the composition and identity functors are stricter than in a bicategory object---see the discussion at the beginning of Section~\ref{sec-internalbicats}.  The categories $B_0$, $B_1$, and $B_2$ are symmetric monoidal, and we can easily incorporate this structure, observing that $\bBord_0^2$ is in fact a bicategory in $\SMC$.

\subsubsection*{Dimension $3$}

There is a tricategory whose objects, $1$-cells, $2$-cells, and $3$-cells are $0$-, $1$-, $2$-, and $3$-manifolds, respectively.  We can reformulate this bordism tricategory in the framework of bicategory objects in symmetric monoidal categories.  As before $B_0$, respectively $B_1$, is the symmetric monoidal category of $0$-manifolds and diffeomorphisms, respectively $1$-manifolds and diffeomorphisms.  There is a third symmetric monoidal category $B_2^3$ whose objects are $2$-manifolds (with as before source and target $1$-manifolds) and whose morphisms are 3-dimensional bordisms, up to diffeomorphism.  Altogether the triple $(B_0, B_1, B_2^3)$ forms a bicategory object in $\SMC$, denoted $\Bord_0^3$.

\subsection{Algebraic examples} \label{sec-localalg} 
\nopagebreak

\nopagebreak
\subsubsection*{Dimension $2$}
We consider symmetric monoidal bicategories $C$ with the property that $\Hom_C(1,1)$ is equivalent to the category of vector spaces.  We refer to this property by saying that ``$C$ deloops the category of vector spaces", and write $\Omega C \simeq \Vect$.  A category does not have a unique deloop, but rather a few natural deloops, that serve different purposes.

\emph{2-vector spaces.}
One deloop of vector spaces is the 2-category $\TwVect$ of 2-vector spaces.  Recall that $\Vect$ is a symmetric bimonoidal (that is ``commutative ring") category, and a 2-vector space is, roughly speaking, a symmetric monoidal category equipped with the structure of a module over $\Vect$; often one restricts attention to the free finitely generated $\Vect$-modules, or even for simplicity to the specific modules $\Vect^n$~\cite{kapvoev,elgueta}.  More generally and more precisely, one may define a 2-vector space as a semisimple $\mathbb C$-linear abelian category.  The 1-morphisms of $\TwVect$ are linear functors, and the 2-morphisms of $\TwVect$ are linear natural transformations.  The unit object of $\TwVect$ is the category $\Vect$ itself.
A $\Vect$-module functor from $\Vect$ to itself is determined by the image of the object $\CC$, thus by a choice of an object of $\Vect$.  Therefore $\Hom_{\TwVect}(1,1)$ is equivalent to the category of vector spaces, i.e.\ $\TwVect$ is a deloop of $\Vect$.  Note that $\TwVect$ can be viewed as a category object in symmetric monoidal categories, where the first category is 2-vector spaces with module functors, and the second category is module functors with module natural transformations. 

\emph{Algebras.}
The other prominent deloop of $\Vect$, besides 2-vector spaces, is the bicategory of algebras, bimodules, and maps of bimodules.  The unit object in this bicategory is the trivial algebra $\CC$.  Note that indeed the category of $\CC$-$\CC$-bimodules is the category of vector spaces.  Algebras naturally form a symmetric monoidal category $\Alg_0$ whose morphisms are algebra isomorphisms.  Bimodules and their maps also form a symmetric monoidal category $\Alg_1$.  We see that the bicategory of algebras, bimodules, and maps is a shadow of the category object $\Alg=(\Alg_0,\Alg_1)$ in symmetric monoidal categories.  Note that there are variations on this example---for instance von Neumann algebras and their bimodules provide a category object delooping, not vector spaces per se, but Hilbert spaces.

\subsubsection*{Dimension $3$}

There were two natural deloops of the category $\Vect$, namely the 2-category $\TwVect$ and the category object $\Alg$.
Similarly, there are multiple interesting double deloops of $\Vect$---we now describe two, the 3-category of 3-vector spaces and the category object of tensor categories, and we also describe the bicategory object of conformal nets, which deloops von Neumann algebras and therefore double deloops Hilbert spaces.

\emph{3-vector spaces.}
The most direct way to double-deloop the category of vector spaces is to consider 3-vector spaces.  The 2-category $\TwVect$ of 2-vector spaces is symmetric monoidal, and a 3-vector space is, roughly speaking, a 2-category that is a module over $\TwVect$; as with 2-vector spaces, it is usually best to restrict attention to the 3-vector spaces that are free finitely-generated $\TwVect$-modules.  We have not endeavored to make precise the symmetric monoidal 3-category of 3-vector spaces.

\emph{Tensor categories.}
A notion of 3-vector space came from considering modules over the monoidal 2-category $\TwVect$.  Instead we can consider algebra objects in $\TwVect$---these are called tensor categories.  Together with functors and natural transformations, tensor categories form a 2-category $\TC_0$.  Bimodules between tensor categories, with their functors and natural transformations, form another 2-category $\TC_1$.  We expect that the pair $\TC:=(\TC_0,\TC_1)$ forms a category object in the 3-category of 2-categories---this notion of category object in a 3-category is described in Section~\ref{sec-catin3cat}.  As we will see in Sections~\ref{sec-catin2catarebicatincat} and~\ref{app-bicatincataretricat}, any category object in the 3-category of 2-categories has an underlying tricategory.  Indeed there is a tricategory of tensor categories, built as a symmetric monoidal $(\infty,3)$-category by a construction of Johnson-Freyd--Scheimbauer~\cite{jfs}, or (restricting attention to fusion categories) as a Gordon--Powers--Street-style tricategory by Schaumann~\cite{schaumann}.  Etingof, Nikshych, and Ostrik have also investigated a tricategory of tensor categories, and have used this structure to understand extensions of tensor categories and even to construct novel tensor categories~\cite{eno-fcht}.

\emph{Conformal nets.}
A 3-vector space is in particular a 2-category, and a tensor category is in particular a category equipped with a multiplication operation.  We might wonder if there is a double deloop of $\Vect$ whose objects are yet less categorical, for instance whose objects are vector spaces equipped with not one but two multiplication operations.  Indeed, conformal nets provide such a double deloop, not exactly of vector spaces but of Hilbert spaces.  Conformal nets are a mathematical formalization of the notion of a conformal field theory~\cite{Gabbiani-Froehlich(OperatorAlg-CFT),Longo(Lectures-on-Nets),Wassermann(Operator-algebras-and-conformal-field-theory),cklw}.  We sketch the notion of conformal net, and then briefly describe the symmetric monoidal 3-category of conformal nets as a bicategory object in symmetric monoidal categories---the definition of the precise relevant notion of conformal nets (finite-index coordinate-free conformal nets), the construction of the necessary associated structures of defects, sectors, and fusion operations, and the proof that these structures indeed form a bicategory object in symmetric monoidal categories all appear in a series of papers with Arthur Bartels~\cite{BDH(nets),BDH(modularity),BDH(1*1),BDH(3-category)}.  This result is a formulation of the idea that conformal field theories form a symmetric monoidal 3-category.

A conformal net is a functor $\cA:\Int\ra\vNalg$ from the category of intervals to the category of von Neumann algebras.  Here, an interval is a compact, contractible 1-manifold, and a morphism of intervals is an embedding.  The functor must satisfy various conditions, including: (1) the subalgebras $\cA([0,1])$ and $\cA([1,2])$ commute inside $\cA([0,2])$ and they generate a dense subalgebra, (2) the closed subalgebra generated by $\cA([0,1])$ and $\cA([2,3])$ inside $\cA[0,3]$ is isomorphic to the spatial tensor product $\cA([0,1])\,\bar\otimes\,\cA([2,3])$, and (3) if a diffeomorphism $\varphi:[0,1]\to [0,1]$ satisfies $\varphi'|_{[0,\epsilon]\cup[1-\epsilon,1]}=1$ then the automorphism $\cA(\varphi):\cA([0,1])\to \cA([0,1])$ is inner.  The algebra $\cA([0,1])$ can be equipped with a second multiplication that sends $a,b\in \cA([0,1])$ to $\cA(\varphi)(a)\,\cA(\varphi\!+\!\frac12)(b)$, where $\varphi:[0,1]\to [0,\frac12]$ is a diffeomorphism satisfying $\varphi'|_{[0,\epsilon]\cup[1-\epsilon,1]}=1$. That second multiplication is associative up to conjugation with respect to the usual algebra multiplication.

Conformal nets are the objects of a tricategory.  The 1-cells, called \emph{defects}, can be thought of, roughly, as bimodules between two nets, each considered with respect to the second multiplication described above.  More precisely, defects are defined as functors $D:\Int_{\mathrm{bicol}} \ra \vNalg$ from a cerain category of bicolored intervals to the category of von Neumann algebras.  Here, a bicolored interval is an interval equipped with a partition into two (possibly empty) intervals, called the white and black subintervals. The restriction of a defect $D$ to the purely white (respectively purely black) intervals in $\Int_{\mathrm{bicol}}$ is the source (respectively target) conformal net of that defect.  Given two defects $D$ and $E$ from the net $\cA$ to the net $\cB$, a 2-cell, called a \emph{sector}, from $D$ to $E$ is a Hilbert space equipped with compatible representations of the following collection of von Neumann algebras associated to intervals $I$ contained in the unit circle $S^1 \subset \CC$: the algebras that act on the sector are $\cA(I)$ for $I\subset \CC_{\mathrm{Re}<0}$, $\cB(I)$ for $I\subset \CC_{\mathrm{Re}>0}$, $D(I)$ for $i\in I$ and $-i\not \in I$, and $E(I)$ for $-i\in I$ and $i\not \in I$.  Sectors must also have the property that the actions of $D(I)$ and $E(J)$ commute if $I$ and $J$ are disjoint.

Conformal nets form a symmetric monoidal category $\CN_0$ whose morphisms are natural isomorphisms of functors from intervals to von Neumann algebras. Similarly, defects form a symmetric monoidal category $\CN_1$.  Sectors form a symmetric monoidal category $\CN_2$ whose morphisms are maps of Hilbert spaces intertwining all the von Neumann algebra actions.  There are source and target functors $s,t: \CN_1 \ra \CN_0$ and source and target functors $s,t: \CN_2 \ra \CN_1$.  Moreover, (provided one restricts to finite-index conformal nets) there are composition functors $\CN_1 \times_{\CN_0} \CN_1 \ra \CN_1$ and $\CN_2 \times_{\CN_1} \CN_2 \ra \CN_2$, and action functors $\CN_2 \times_{\CN_0} \CN_1 \ra \CN_2$ and $\CN_1 \times_{\CN_0} \CN_2 \ra \CN_2$.
(See \cite[Appendix~A]{BDH(1*1)} for more operations relating $\CN_0$, $\CN_1$, $\CN_2$, and various fiber products thereof.)
Altogether, the triple $\CN:=(\CN_0, \CN_1, \CN_2)$ is a bicategory object, actually a dicategory object, in symmetric monoidal categories:
\begin{theorem}[\cite{BDH(nets),BDH(modularity),BDH(1*1),BDH(3-category)}]\label{thm-cn}
Finite-index conformal nets, defects, sectors, and intertwiners form the 0-, 1-, 2-, and 3-cells of a dicategory object in the $2$-category of symmetric monoidal categories.
\end{theorem} \vspace{-1pt}
\nid 
By forgetting from symmetric monoidal to ordinary categories and applying Proposition~\ref{prop-dicatisbicat} and Theorem~\ref{thm-bicatincataretricat} (see Figure~\ref{fig-results} below), we have an underlying tricategory:
\begin{cor}\label{cor: CN is a tricategory}
Finite-index conformal nets, defects, sectors, and intertwiners form the 0-, 1-, 2-, and 3-cells of a tricategory.
\end{cor}

\begin{remark}
The two deloops of $\Vect$, namely $\TwVect$ and $\Alg$ are both frameworks for 2-dimensional algebra, but they have different characters.  The first deloop, $\TwVect$, is a categorification of $\Vect$, in that the objects are no longer built out of sets but out of categories; by contrast, the second deloop, $\Alg$, is an algebraification of $\Vect$, in that the objects still have underlying sets, but have in addition a multiplication operation.  Similarly, the three double deloops of $\Vect$ or $\Hilb$, namely $\ThVect$, $\TC$, and $\CN$, are all frameworks for 3-dimensional algebra.  The first, $\ThVect$, is a categorification of $\TwVect$, in that the objects are no longer based on categories, but on 2-categories.  The second, $\TC$, is an algebraification of $\TwVect$, in that the objects still have underlying categories but in addition have a multiplication operation; alternately, $\TC$ may be viewed as a categorification of $\Alg$, in that the objects have the structure of algebras, but are built on categories rather than sets.  Finally, the third double deloop, $\CN$, is an algebraification of $\Alg$ (actually of von Neumann algebras), in the sense that the objects still have underlying sets, but have not one, but two distinct multiplication operations.
\end{remark}

\subsection{Internal higher categories} \label{sec-semiweak}

We now discuss an organizational framework, namely weak higher categories internal to a higher category of strict higher categories, that accommodates all the aforementioned examples.

A strict $n$-category has \emph{morphisms}, which, like functions, compose in a strictly associative manner.  A weak $n$-category has \emph{cells}, the composition of which is only weakly associative.  We observe that by mixing cells and morphisms, one finds a family of notions of $n$-category intermediate between the strict and weak extremes.  These types of $n$-category are conveniently described as weak $k$-categories internal to the $(n-k+1)$-category of strict $(n-k)$-categories.

We begin with $1$-categories and work our way up.   A $1$-categorical structure has only objects and $1$-arrows, and in principle we might ask whether the $1$-arrows compose strictly.  However, in this case there are no $2$-arrows to give meaning to weakly associative composition, and the usual notion of 1-category is both maximally strict and maximally weak.

A $2$-categorical structure has objects, $1$-arrows, and $2$-arrows.  If the $1$-arrows compose strictly, and the $1$-identities are strict, the structure is a $2$-category in the usual sense.  If by contrast composition of $1$-arrows is only associative up to invertible $2$-arrows and similarly $1$-identities are only identities up to invertible $2$-arrows, the structure is a bicategory.  There is an intermediate notion between $2$-categories and bicategories, namely a category object in categories.  Such a category object has a category $C_0$ of objects and a category $C_1$ of 1-cells, and therefore two distinct kinds of $1$-arrows: the ``1-morphisms", that is the 1-arrows of $C_0$, compose strictly, while the ``1-cells", that is the objects of $C_1$, compose weakly. (The idea of a 2-categorical structure in which there are two distinct types of 1-cells, one strict and one weak, is familiar and well-studied in the literature; see for instance~\cite{ehresmann,grandispare-limits}.)  The three kinds of $2$-categorical structures are indicated in the ``staircase" pictured in Figure~\ref{fig-staircase}.  In that figure, to emphasize the relationship between the various notions, we describe certain familiar notions in unfamiliar terms: a ``$0$-category in $\nCAT$" is simply a strict n-category, while a ``weak $n$-category in $\zCAT$" is a weak $n$-category.
(In particular, $0$-categories in $\CAT$ and categories in $\zCAT$ both refer to the usual notion of a $1$-category).

\begin{figure}[ht]
\db{
\begin{tikzpicture}[scale=.07ex] 
	\draw (0,0) rectangle (\cmlg,-\cmsh);
	\draw (0,0) rectangle (\cmsh,-\cmlg);
	\bull(.5*\cmsh,-.5*\cmsh);
	\bull(\cmlg-.5*\cmsh,-.5*\cmsh);
	\bull(.5*\cmsh,.5*\cmsh-\cmlg);
	\draw (\cmlg+\lbgap,-\cmsh-\lbgap) -- (\cmlg+\lbdist,-\cmsh-\lbdist) node[anchor=west] {\small 0-cat in $\oCAT$};
	\draw (\cmsh+\lbgap,-\cmlg-\lbgap) -- (\cmsh+\lbdist,-\cmlg-\lbdist) node[anchor=west] {\small $\Cat$ in $\zCAT$};
\end{tikzpicture}
}
\db{
\begin{tikzpicture}[scale=.07ex] 
	\draw (0,0) rectangle (2*\cmlg-\cmsh,-\cmsh);
	\draw (0,0) rectangle (\cmlg,-\cmlg);
	\draw (0,0) rectangle (\cmsh,-2*\cmlg+\cmsh);
	\bull(.5*\cmsh,-.5*\cmsh);
	\bull(\cmlg-.5*\cmsh,-.5*\cmsh);
	\bull(.5*\cmsh,.5*\cmsh-\cmlg);
	\bull(2*\cmlg-\cmsh-.5*\cmsh,-.5*\cmsh);
	\bull(\cmlg-.5*\cmsh,-\cmlg+.5*\cmsh);
	\bull(.5*\cmsh,-2*\cmlg+\cmsh+.5*\cmsh);
	\draw (2*\cmlg-\cmsh+\lbgap,-\cmsh-\lbgap) -- (2*\cmlg-\cmsh+\lbdist,-\cmsh-\lbdist) node[anchor=west] {\small 0-cat in $\tCAT$};
	\draw (\cmlg+\lbgap,-\cmlg-\lbgap) -- (\cmlg+\lbdist,-\cmlg-\lbdist) node[anchor=west] {\small $\Cat$ in $\oCAT$};
	\draw (\cmsh+\lbgap,-2*\cmlg+\cmsh-\lbgap) -- (\cmsh+\lbdist,-2*\cmlg+\cmsh-\lbdist) node[anchor=west] {\small Bicat in $\zCAT$};
\end{tikzpicture}
}
\db{
\begin{tikzpicture}[scale=.07ex] 
	\draw (0,0) rectangle (3*\cmlg-2*\cmsh,-\cmsh);
	\draw (0,0) rectangle (2*\cmlg-\cmsh,-\cmlg);
	\draw (0,0) rectangle (\cmlg,-2*\cmlg+\cmsh);
	\draw (0,0) rectangle (\cmsh,-3*\cmlg+2*\cmsh);
	\bull(.5*\cmsh,-.5*\cmsh);
	\bull(\cmlg-.5*\cmsh,-.5*\cmsh);
	\bull(.5*\cmsh,.5*\cmsh-\cmlg);
	\bull(2*\cmlg-\cmsh-.5*\cmsh,-.5*\cmsh);
	\bull(\cmlg-.5*\cmsh,-\cmlg+.5*\cmsh);
	\bull(.5*\cmsh,-2*\cmlg+\cmsh+.5*\cmsh);
	\bull(3*\cmlg-2*\cmsh-.5*\cmsh,-.5*\cmsh);
	\bull(2*\cmlg-\cmsh-.5*\cmsh,-\cmlg+.5*\cmsh);
	\bull(\cmlg-.5*\cmsh,-2*\cmlg+\cmsh+.5*\cmsh);
	\bull(.5*\cmsh,-3*\cmlg+2*\cmsh+.5*\cmsh);
	\draw (3*\cmlg-2*\cmsh+\lbgap,-\cmsh-\lbgap) -- (3*\cmlg-2*\cmsh+\lbdist,-\cmsh-\lbdist) node[anchor=west] {\small 0-cat in $\thCAT$};
	\draw (2*\cmlg-\cmsh+\lbgap,-\cmlg-\lbgap) -- (2*\cmlg-\cmsh+\lbdist,-\cmlg-\lbdist) node[anchor=west] {\small $\Cat$ in $\tCAT$};
	\draw (\cmlg+\lbgap,-2*\cmlg+\cmsh-\lbgap) -- (\cmlg+\lbdist,-2*\cmlg+\cmsh-\lbdist) node[anchor=west] {\small Bicat in $\oCAT$};	
	\draw (\cmsh+\lbgap,-3*\cmlg+2*\cmsh-\lbgap) -- (\cmsh+\lbdist,-3*\cmlg+2*\cmsh-\lbdist) node[anchor=west] {\small Tricat in $\zCAT$};	
\end{tikzpicture}
}

\caption{Interpolating between strict and weak higher categories} \label{fig-staircase}
\end{figure}

Now consider the $3$-categorical structures pictured in Figure~\ref{fig-staircase}.  At the two extremes, we have 3-categories and tricategories. (See Definition~\ref{def-tricat} for our notion of tricategory.)  In between we have two intermediate notions, bicategories in $\oCAT$, and categories in $\tCAT$.  A bicategory in $\oCAT$ has a category of objects, a category of 1-cells, and a category of 2-cells, and therefore has two distinct kinds of 1-arrows (namely 1-cells and 1-morphisms) and two distinct kinds of 2-arrows (namely 2-cells and 1-morphisms of 1-cells).  A category in $\tCAT$ has a 2-category of objects and a 2-category of 1-cells, and therefore also has two kinds of 1-arrows (the 1-cells and the 1-morphisms) and two kinds of 2-arrows (the 1-morphisms of 1-cells and the 2-morphisms).  Our main examples of $3$-dimensional categorical structures naturally separate into these various cases: 2-categories form a 3-category, tensor categories form a category in 2-categories, and conformal nets form a bicategory (actually a dicategory) in categories.  Note that there are a number of examples in the literature of notions of 3-categorical structures where 1-cells or 2-cells have multiple types.  For instance, Shulman~\cite{shulman-constructing} and Garner--Gurski~\cite{garnergurski} study respectively monoidal double categories and multi-object monoidal double categories, which (interpreted as 3-categorical structures) have one type of 1-arrows and two types of 2-arrows; Grandis--Par\'e~\cite{grandispare-cubicalglobular,grandispare-intercategories} investigate 3-cubical categories and `intercategories', which can have three types of 1-arrows and three types of 2-arrows.

One technical convenience of the internal higher category framework is that it easily accommodates symmetric monoidal structures.  For instance, in the 3-by-3 staircase of Figure~\ref{fig-staircase}, we can replace $\thCAT$, $\tCAT$, $\oCAT$, and $\zCAT$ by respectively any 4-, 3-, 2-, and 1-category.  In particular, we can consider bicategory objects in the 2-category SMC of symmetric monoidal categories---a bicategory object in SMC is a type of symmetric monoidal tricategory.

\subsection{Main results} \label{sec-main-result}

Figure~\ref{fig-staircase} lists four types of 3-categorical structures: 0-categories in $\thCAT$, 1-categories in $\tCAT$, bicategories in $\CAT$, and tricategories in $\zCAT$.  There are, however, a number of variations on these notions.  In Section~\ref{sec-catin3cat}, we will describe not only categories in $\tCAT$ but categories in a weaker 3-category ``$\tCATnwk$" in which the natural transformations are allowed to be weak rather than strict; later we also discuss ``strict" categories in $\tCAT$, in which the identity and associator transformations are strictly invertible, and a notion of ``strictly associatively strict" (sas) categories in $\tCATnwk$, in which the associator transformation is a strictly invertible strict natural transformation.  In Section~\ref{sec-internalbicats}, we will describe not only bicategories in $\CAT$, but also dicategories in $\CAT$, in which horizontal associators are somewhat strict, and 2-categories in $\CAT$, in which both horizontal associators and horizontal identities are somewhat strict.

\begin{figure}[h!]
\begin{tikzpicture}[xscale=1.05ex,yscale=.4ex,auto,ssty/.style={->,decorate,decoration={snake,amplitude=.3mm,segment length=3mm,post length=.5mm}}] 
\node (cin2c) at (0,0) {strict cat in $\tCAT\phantom{.}$};
\node (2cinc) at (1,0) {2-cat in $\CAT$};
\node (ascin2c) at (0,-1) {sas cat in $\tCATnwk$};
%{\begin{minipage}{2.28cm}\begin{center}assoc strict cat\\ in $\tCATnwk$\end{center}\end{minipage}};
\node (dcinc) at (1,-1) {dicat in $\CAT$};
\node (cin2cnwk) at (0,-2) {cat in $\tCATnwk$};
\node (bcinc) at (1,-2) {bicat in $\CAT$};
\node (tri) at (2,-2) {tricat};
\draw [ssty] (cin2c) -- (ascin2c);
\draw [ssty] (ascin2c) -- (cin2cnwk); 
\draw [ssty] (cin2cnwk) -- node {[Thm~\ref{thm-catin2catarebicatincat}]} (bcinc); 
\draw [ssty] (bcinc) -- node {[Thm~\ref{thm-bicatincataretricat}]} (tri); 
\draw [ssty] (cin2c) -- node {[Cor~\ref{cor-catin2catare2catincat}]} (2cinc); 
\draw [ssty] (2cinc) -- node {[Prop~\ref{prop-2catisdicat}]} (dcinc); 
\draw [ssty] (dcinc) -- node {[Prop~\ref{prop-dicatisbicat}]} (bcinc);
\draw [ssty] (ascin2c) -- node {[Cor~\ref{cor-ascatin2cataredicatincat}]} (dcinc);
\end{tikzpicture}
\caption{Interrelations among 3-categorical structures} \label{fig-results}
\end{figure}

The main results of this paper connect these various notions by means of explicit constructions; these are displayed in Figure~\ref{fig-results}.  An unlabeled arrow indicates that the first notion is a special case of the second; a labelled arrow indicates that there is a canonical construction of a structure of the second type from one of the first type.

\section{Internal categories} \label{sec-internalcats}

We begin our investigation of internal weak higher categories by discussing internal categories in 2- and 3-categories.  Section~\ref{sec-catin2cat} concentrates on categories internal to 2-categories; that notion naturally encodes, for instance, the structure of 2-dimensional local bordism, and the structure of algebras, bimodules, and intertwiners.  In Section~\ref{sec-catin3cat} we introduce the rather more involved definition of categories internal to 3-categories; this is the appropriate framework for tensor categories, bimodule categories, functors between bimodule categories, and natural transformations between functors between bimodule categories. 

\subsection{Categories in 2-categories} \label{sec-catin2cat} ($\vcenter{\xymatrix@R=3pt@C=3pt@M=0pt{
\scriptscriptstyle\bullet & \scriptscriptstyle\bullet \\ \scriptscriptstyle\bullet & \scriptscriptstyle\bullet
}}$) 
\nopagebreak

\nopagebreak
The basic data of a category object $C$ in the 2-category of categories will be a ``category of objects" $C_0$ and a ``category of 1-cells" $C_1$.  We would like the 1-morphisms of $C$, that is the 1-arrows of $C_0$, to be closely related to the 1-cells of $C$, that is to the objects of $C_1$.  We can accomplish this by imposing a fibrancy condition on our category object, and for that we will need a notion of fibration in the 2-category of categories.  We will be restricting attention to the case when the category of objects $C_0$ is a groupoid; under this assumption various potential notions of fibration coincide.

\begin{definition}
For $A$ a category and $B$ a groupoid, a functor $F: A \ra B$ is a fibration if for all arrows $f: b \ra b'$ of $B$ and objects $a'$ of $A$ with $F(a') = b'$, there exists an invertible arrow $\tilde{f}: a \ra a'$ of $A$ with $F(\tilde{f})=f$.
\end{definition}

\nid This condition can be thought of as a categorical version of a homotopy lifting property.  

More generally, to define category objects in a 2-category, we will need to provide a notion of fibration in an arbitrary 2-category; this is accomplished via the Yoneda embedding.  An object $B$ of a 2-category $\cC$ is called a groupoid object if for all objects $T$ of $\cC$ the category $\Hom(T,B)$ is a groupoid.

\begin{definition}
For an object $A$ and a groupoid object $B$ in a 2-category $\cC$, an arrow $F: A \ra B$ is a fibration if for all objects $T$ of $\cC$ the functor $\Hom(T,A) \ra \Hom(T,B)$ is a fibration.  Explicitly, this holds if for all 2-arrows $f: b \dra b'$, with $b, b' \in \Hom(T,B)$, and $a' \in \Hom(T,A)$ with $F a'=b'$, there exists an invertible 2-arrow $\tilde{f}: a \dra a'$ with $F \tilde{f}=f$.
\end{definition}

We will be interested in the case where $\cC$ is the 2-category $\CAT$ of categories and the case where $\cC$ is the 2-category $\SMC$ of symmetric monoidal categories.  Notice that if an arrow in $\SMC$ is a fibration, then the underlying functor of categories is a fibration; this will imply that a category object in $\SMC$ is in particular a category object in $\CAT$.  We now proceed to make these notions precise.

The notion of a category object in a 2-category is a categorification of the notion of a category object in a 1-category.  To highlight this transition we list the two definitions in sequence.

\begin{definition} \label{def-catobj1cat}
A category object $C$ in the 1-category $\cC$ consists of the following two collections of data, satisfying the subsequent axioms.
\begin{description}
\item[0-data]
There is an object $C_0$ of $\cC$, denoted \cb{\ing{0stuff-item0.eps}}, and an object $C_1$ of $\cC$, denoted \cb{\ing{0stuff-item1.eps}}, together with morphisms $s,t: C_1 \ra C_0$.
\item[1-data]
There is a morphism $i: C_0 \ra C_1$, denoted \cb{\ing{1stuff-item1.eps}}, and a morphism $m: C_1 \times_{C_0} C_1 \ra C_1$, denoted \cb{\ing{1stuff-item2.eps}}.  These morphisms are compatible with the source and target maps in the sense that $s \circ i = t \circ i = \id$, $s \circ m = s \circ \pr_1$, and $t \circ m = t \circ \pr_2$.
\item[2-axioms] \hspace*{\fill}\\
%\begin{itemize}
%\item[] 
The morphism $m((i \circ s) \times \id): C_1 \ra C_1$ is the identity; pictorially 
\raisebox{\cboxht}{\xymatrix@C-12pt{\ing{2stuff-item15-1.eps}   \ar@{=}[r] &   \hb{\ing{2stuff-item15-2.eps}}}}.
%\item[] 
Similarly the morphism $m(\id \times (i \circ t)): C_1 \ra C_1$ is the identity; pictorially 
\raisebox{\cboxht}{\xymatrix@C-12pt{\ing{2stuff-item16-1.eps}   \ar@{=}[r] &   \hb{\ing{2stuff-item16-2.eps}}}}.
%\item[] 
Lastly, the morphisms $m(m \times \id): C_1 \times_{C_0} C_1 \times_{C_0} C_1 \ra C_1$ and $m(\id \times m): C_1 \times_{C_0} C_1 \times_{C_0} C_1 \ra C_1$ are equal; pictorially
\raisebox{\cboxht}{\xymatrix@C-12pt{\ing{2stuff-item12-1.eps}   \ar@{=}[r] &   \ing{2stuff-item12-2.eps}}}.
%\end{itemize}
\end{description}
\end{definition}

\begin{definition} \label{def-catobj2cat}
A category object $C$ in the 2-category $\cC$ consists of the following three collections of data, subject to the listed axioms.

\begin{description}

\item[0-data] 
There is a groupoid object $C_0$ of $\cC$, called the object of ``0-cells" and denoted by the picture \cb{\ing{0stuff-item0.eps}}, and an object $C_1$ of $\cC$, called the object of ``1-cells" and denoted by the picture \cb{\ing{0stuff-item1.eps}}.  There are 1-morphisms $s,t: C_1 \ra C_0$ of $\cC$ called the source and target, such that the 1-morphism $s \times t: C_1 \ra C_0 \times C_0$ is a fibration.  

\item[1-data] There is a 1-morphism $i: C_0 \ra C_1$, called the ``identity" and denoted \cb{\ing{1stuff-item1.eps}}, and a 1-morphism $m: C_1 \times_{C_0} C_1 \ra C_1$, called ``horizontal composition" and denoted \cb{\ing{1stuff-item2.eps}}.  These morphisms are compatible with the source and target maps.

\item[2-data] There is a 2-isomorphism of $\cC$ from $m((i \circ s) \times \id): C_1 \ra C_1$ to $\id: C_1 \ra C_1$, called the ``left identity transformation" and denoted 
\raisebox{\cboxht}{\xymatrix@C-12pt{\ing{2stuff-item15-1.eps}   \ar@{=>}[r] &   \hb{\ing{2stuff-item15-2.eps}}}}, and a 2-isomorphism from $m(\id \times (i \circ t)): C_1 \ra C_1$ to $\id: C_1 \ra C_1$, called the ``right identity transformation" and denoted 
\raisebox{\cboxht}{\xymatrix@C-12pt{\ing{2stuff-item16-1.eps}   \ar@{=>}[r] &   \hb{\ing{2stuff-item16-2.eps}}}}.  
There is finally a 2-isomorphism from $m(m \times \id): C_1 \times_{C_0} C_1 \times_{C_0} C_1 \ra C_1$ to $m(\id \times m): C_1 \times_{C_0} C_1 \times_{C_0} C_1 \ra C_1$, called the ``associator" and denoted 
\raisebox{\cboxht}{\xymatrix@C-12pt{\ing{2stuff-item12-1.eps}   \ar@{=>}[r] &   \ing{2stuff-item12-2.eps}}}.  These 2-isomorphisms are compatible with source and target maps in the sense that applying the source or target map to the left or right identity transformation or to the associator yields an identity 2-isomorphism.

\item[3-axioms] These data are required to satisfy the following axioms.  
\begin{enumerate}
\item[1.] The two 2-morphisms from $m(i \times i): C_0 \ra C_1$ to $i: C_0 \ra C_1$ induced by the left and right identity transformations are equal.  This axiom is denoted by the commutative diagram
\raisebox{\cboxht}{\xymatrix@C-12pt{\ing{3stuff-item23-1.eps} \ar@/^9pt/ @{-}[r] \ar@/_9pt/@{-}[r] & \ing{3stuff-item23-2.eps}}}.
\item[2.] The composition of the associator with the left identity transformation is equal to the horizontal composition of the left identity transformation with the identity.  These are two 2-morphisms from $m(m((i \circ s) \times \id) \times \id): C_1 \times_{C_0} C_1 \ra C_1$ to $m: C_1 \times_{C_0} C_1 \ra C_1$.  The axiom is denoted 
\def\alphanum{\ifcase\xypolynode\or \ing{3stuff-item24-2.eps} \or \ing{3stuff-item24-1.eps} \or 
\ing{3stuff-item24-3.eps} \fi}
\xy/r\sct/: \xypolygon3{~={0}~*{\alphanum}}
\endxy.
The corresponding axiom for the right identity is also satisfied---this axiom is abbreviated \raisebox{\cboxht}{\xys{\ing{3stuff-item24a.eps}}}.
\item[3.] The two 2-morphisms from $m(m(\id \times (i \circ t)) \times \id): C_1 \times_{C_0} C_1 \ra C_1$ to $m: C_1 \times_{C_0} C_1 \ra C_1$, using on the one hand the associator and the left identity and on the other hand the right identity, are equal.  This axiom is denoted
\def\alphanum{\ifcase\xypolynode\or \ing{3stuff-item25-2.eps} \or \ing{3stuff-item25-1.eps} \or 
\ing{3stuff-item25-3.eps} \fi}
\xy/r\sct/: \xypolygon3{~={0}~*{\alphanum}}
\endxy.
\item[4.] The pentagon axiom for the associator is satisfied:
\def\alphanum{\ifcase\xypolynode\or \ing{3stuff-item17-1.eps} \or \ing{3stuff-item17-2.eps} \or 
\ing{3stuff-item17-3.eps} \or \ing{3stuff-item17-4.eps} \or \ing{3stuff-item17-5.eps} \fi}
\hspace{-50pt}
\db{
\xy/r\scd/: \xypolygon5{~:{(0,.7)::}~={36}~*{\alphanum}}
\endxy.
}
Here the double caret end on a morphism indicates that it is composed with the other morphisms after the singly careted morphism.
\end{enumerate}
\end{description}
\end{definition}

\begin{remark} \label{rem-nnf}
This definition can be modified by omitting the condition that $s \times t: C_1 \ra C_0 \times C_0$ be a fibration---we refer to the resulting notion as a not-necessarily fibrant category object in the 2-category $\cC$.  Not-necessarily fibrant category objects in the 2-category $\CAT$ are better known as ``double categories"; they were introduced by Ehresmann~\cite{ehresmann} and studied extensively by Grandis--Par\'e~\cite{grandispare-limits}.
\end{remark}

\begin{remark}
When the ambient 2-category is $\CAT$ or $\SMC$, it is convenient to modify this definition slightly in order to allow $C_0$ and $C_1$ to be large (symmetric monoidal) categories.  Though these are not objects of $\CAT$ or $\SMC$, per se, the definition given functions perfectly well in that context, with ``1-morphism" replaced by ``(symmetric monoidal) functor" and ``2-morphism" replaced by ``(symmetric monoidal) natural transformation".
\end{remark}

\begin{remark} \label{rem-defpullback}
The definition of a category object uses pullbacks in the ambient 2-category $\cC$.  These pullbacks are taken, by definition, in the underlying 1-category of $\cC$, that is the 1-category obtained by forgetting the 2-morphisms of $\cC$.  
Our condition that $s \times t: C_1 \ra C_0 \times C_0$ is a fibration ensures that this approach to pullbacks is sensible.

More generally, given a possibly-weak $n$-category that has an underlying 1-category (that is, that has strictly associative composition of 1-morphisms and strict 1-morphism identities), we define pullbacks in the $n$-category to be pullbacks in the underlying 1-category.
\end{remark}

%\begin{remark}
The axioms for a category object are reminiscent of those occurring in the definition of a bicategory.  Briefly, a bicategory $B$ has a collection $B_0$ of objects, denoted \cb{\ing{0stuff-item0.eps}}, a collection $B_1$ of 1-cells, denoted \cb{\ing{0stuff-item1.eps}}, and a collection $B_2$ of 2-cells, denoted \cb{\ings{0stuff-item2.eps}}, together with source and target maps $s,t: B_1 \ra B_0$ and $s,t: B_2 \ra B_1$ such that $st = ss$ and $tt=ts$.  There is a 1-cell identity map $i_x : B_0 \ra B_1$, denoted \cb{\ing{1stuff-item1.eps}}, and a horizontal composition of 1-cells map $m_x: B_1 \times_{B_0} B_1 \ra B_1$, denoted \cb{\ing{1stuff-item2.eps}}.  There is moreover a 2-cell identity $i_y: B_1 \ra B_2$, \cb{\ings{1stuff-item3.eps}}, and 2-cell composition $m_y: B_2 \times_{B_1} B_2 \ra B_2$, \cb{\ings{1stuff-item4.eps}}.  There are maps $w_r: B_2 \times_{B_0} B_1 \ra B_2$, \cb{\ings{1stuff-item5.eps}} and $w_l: B_1 \times_{B_0} B_2 \ra B_2$, \cb{\ings{1stuff-item6.eps}} that whisker a 2-cell by a 1-cell.  Finally there are three structure maps taking values in invertible 2-cells, namely the left identity $i_l: B_1 \ra B_2$, \cb{\ings{d1stuff-item7.eps}}, right identity $i_r: B_1 \ra B_2$, \cb{\ings{d1stuff-item8.eps}}, and associator $a_x: B_1 \times_{B_0} B_1 \times_{B_0} B_1 \ra B_2$, \cb{\ings{b1stuff-item9.eps}}.  These maps are required to satisfy various equations, including the exchange equation, the pentagon equation, and the left and right and middle triangle equations, denoted respectively: 
\hspace*{-7pt}
\xymatrix@C-17pt{\cb{\ings{2stuff-item8-1.eps}}   \ar@{=}[r] &   \cb{\ings{2stuff-item8-2.eps}}}
\xymatrix@C-17pt{\cb{\ings{b2stuff-item12-1.eps}}   \ar@{=}[r] &   \cb{\ings{b2stuff-item12-2t.eps}}}
\xymatrix@C-17pt{\cb{\ings{b2stuff-item16-1.eps}}   \ar@{=}[r] &   \cb{\ings{b2stuff-item16-2.eps}}}
\xymatrix@C-17pt{\cb{\ings{b2stuff-item17-1.eps}}   \ar@{=}[r] &   \cb{\ings{b2stuff-item17-2.eps}}}
\xymatrix@C-17pt{\cb{\ings{b2stuff-item18-1.eps}}   \ar@{=}[r] &   \cb{\ings{b2stuff-item18-2.eps}}}

\nid A 2-category is a bicategory for which the left identity $i_l$, right identity $i_r$, and associator map $a_x$ all factor through the 2-cell identity map $i_y$.

The reader may wonder why there is no horizontal composition of 2-cells, \cb{\ings{horz2cells.eps}}, in this definition of a bicategory.  Indeed, one could have taken as basic the horizontal composition of 2-cells, and then defined the operations \cb{\ings{1stuff-item5.eps}} and \cb{\ings{1stuff-item6.eps}} to be \cb{\ings{horz2cellsidright.eps}} and \cb{\ings{horz2cellsidleft.eps}} respectively.  Instead we take \cb{\ings{1stuff-item5.eps}} and \cb{\ings{1stuff-item6.eps}} as basic and let \cb{\ings{2stuff-item8-1.eps}} and \cb{\ings{2stuff-item8-2.eps}} play the role of horizontal composition of 2-cells.

A (non-necessarily fibrant) category object $C$ in $\CAT$ has an underlying bicategory, whose objects, 1-cells, and 2-cells are respectively the objects of $C_0$, the objects of $C_1$, and the morphisms of $C_1$ with identity source and target. 
%\end{remark}

The most unusual aspect of the above definition of a category object is the insistence that the 1-morphism $s \times t: C_1 \ra C_0 \times C_0$ be a fibration.  We illustrate the utility of this condition with the bordism category $\Bord_0^2 = (B_0, B_1^2)$ of $0$-, $1$-, and $2$-manifolds.  Here $B_0$ is the category of 0-manifolds with diffeomorphisms, and $B_1^2$ is the category of 1-manifolds with bordisms, as in Section~\ref{sec-localgeo}.  The horizontal composition functor $B_1^2 \times_{B_0} B_1^2 \ra B_1^2$ allows us to glue two 1-manifolds $M$ and $N$, when the target of $M$ is equal to the source of $N$.  However, it often happens that we would like to compose two manifolds $M$ and $N$ when the target of $M$ is merely isomorphic to the source of $N$.  Suppose we are given such an isomorphism $t(M) \xra{\simeq} s(N)$; the object $M$ maps to the domain of the isomorphism $s(M) \times t(M) \xra{\simeq} s(M) \times s(N)$, and using the fibration condition, we can lift this isomorphism to an isomorphism $M \xra{\simeq} M'$, where $M'$ now has source $s(M)$ but target $s(N)$---we can therefore compose the manifold $M'$ with the manifold $N$.  Altogether, the fibration condition allows us to construct a composition functor $B_1^2 \times^h_{B_0} B_1^2 \ra B_1^2$ with domain the weak or homotopy pullback, rather than the strict pullback.  The rationale for the fibration condition becomes even more vivid when we move to 2-category objects in categories: in that case, the fibration conditions are necessary to ensure that there is an associated tricategory.

What we have called a category object in $\CAT$ is very closely related to what Shulman calls a framed bicategory in~\cite{shulman-framed}, and what we have called a category object in $\SMC$ is very closely related to what Shulman calls a fibrant symmetric monoidal double category in~\cite{shulman-constructing}. Shulman provides an elegant and extensive account of fibrant monoidal double categories, and the reader should look at his papers for details about various notions of fibrations of categories and for numerous applications of the idea of category objects in $\CAT$ and in $\SMC$.  The reader familiar with Shulman's work might want to think of Section~\ref{sec-internalbicats} below as developing a theory of ``framed tricategories" or ``fibrant symmetric monoidal $(2\!\times\!1)$-categories".

\subsection{Categories in 3-categories} \label{sec-catin3cat} ($\vcenter{\xymatrix@R=3pt@C=3pt@M=0pt{
\scriptscriptstyle\bullet & \scriptscriptstyle\bullet & \scriptscriptstyle\bullet \\ \scriptscriptstyle\bullet & \scriptscriptstyle\bullet & \scriptscriptstyle\bullet
}}$)
\nopagebreak

\nopagebreak
In Section~\ref{sec-semiweak} we described four notions of ``3-level categories", namely 3-categories, category objects in $\tCAT$, bicategory objects in $\CAT$, and tricategories.  Bicategory objects in $\CAT$, and more generally in any 2-category, will occupy our attention in Section~\ref{sec-internalbicats}.  In a 3-level structure coming from a bicategory object in $\CAT$, the horizontal composition of 1-cells and the vertical composition of 2-cells are both weak operations.  In the 3-level structure arising from a category object in $\tCAT$, only the horizontal composition of 1-cells is weak.

Category objects in $\tCAT$ are examples of category objects in a 3-category.  It is convenient, for instance for the example of tensor categories, to define a notion not of category object in a strict 3-category but of category object in a weaker type of 3-category known as a `Gray 3-category' (also called simply a `Gray category').  Our definition of Gray 3-category below differs from the usual presentation~\cite{gray, gps}, as it is based on a whisker, rather than horizontal-composition, model of 2-categories; but note that the categories of whisker- and horizontal-composition-versions of Gray 3-categories are isomorphic.  (Also note that a (whisker-style) Gray 3-category with only one object is a rigid kind of Kapranov-Voevodsky monoidal 2-category~\cite{kapvoev}.)

\begin{definition}
A Gray 3-category $T$ consists of the following two collections of data, subject to axioms as follows:
\begin{description}

\item[0-data] There are four sets:
\begin{itemize}
\item[] \cb{\ing{0stuff-item0.eps}} --- the set $T_0$ of objects.
\item[] \cb{\ing{0stuff-item1.eps}} --- the set $T_1$ of 1-morphisms.
\item[] \cb{\ing{0stuff-item2.eps}} --- the set $T_2$ of 2-morphisms. \vspace{2pt}
\item[] \cb{\ing{0stuff-item3.eps}} --- the set $T_3$ of 3-morphisms.
\end{itemize}
Moreover there are source and target maps $s,t:T_1 \ra T_0$, $s,t:T_2 \ra T_1$, and $s,t:T_3 \ra T_2$, such that $st = ss$ and $tt = ts$.

\item[1-data] There are thirteen maps of sets in three collections:

\begin{description}

\item[1-morphism target] There are two maps with target $T_1$:
\begin{itemize}
\item[S1-1]: \ing{1stuff-item1.eps} --- $i_x: T_0 \ra T_1$ --- horizontal identity.
\item[S1-2]: \ing{1stuff-item2.eps} --- $m_x: T_1 \times_{T_0} T_1 \ra T_1$ --- horizontal composition.
\end{itemize}
\vspace{4pt}

\item[2-morphism target] There are four maps with target $T_2$:
\begin{itemize}
\item[S1-3]: \cb{\ing{1stuff-item3.eps}} --- $i_y: T_1 \ra T_2$ --- vertical identity. \vspace{2pt}
\item[S1-4]: \cb{\ing{1stuff-item4.eps}} --- $m_y: T_2 \times_{T_1} T_2 \ra T_2$ --- vertical composition. \vspace{2pt}
\item[S1-5]: \cb{\ing{1stuff-item5.eps}} --- $w_r: T_2 \times_{T_0} T_1 \ra T_2$ --- right whisker. \vspace{2pt}
\item[S1-6]: \cb{\ing{1stuff-item6.eps}} --- $w_l: T_1 \times_{T_0} T_2 \ra T_2$ --- left whisker.
\end{itemize}
\vspace{4pt}

\item[3-morphism target] There are seven maps with target $T_3$.  The first of these is a map $sw: T_2 \times_{T_0} T_2 \ra T_3$ indicated by depicting the source and target of the 3-morphism $sw(a)$ in terms of the element $a \in T_2 \times_{T_0} T_2$.  The remaining six maps we indicate directly by drawing a picture of the resulting 3-morphism.
\begin{itemize}
\item[S1-7]: \xymatrix@C-12pt{\cb{\ing{2stuff-item8-1.eps}}   \ar@{-}[r] &   \cb{\ing{2stuff-item8-2.eps}}} 
--- $sw: T_2 \times_{T_0} T_2 \ra T_3$ --- switch. \vspace{4pt}
\item[S1-8]: \cb{\ing{t1stuff-item28.eps}} --- $i_z: T_2 \ra T_3$ --- spatial identity. \vspace{4pt}
\item[S1-9]: \cb{\ing{t1stuff-item29.eps}} --- $m_z: T_3 \times_{T_2} T_3 \ra T_3$ --- spatial composition. \vspace{4pt}
\item[S1-10]: \cb{\ing{t1stuff-item30.eps}} --- $f_b: T_3 \times_{T_1} T_2 \ra T_3$ --- bottom fin. \vspace{4pt}
\item[S1-11]: \cb{\ing{t1stuff-item31.eps}} --- $f_t: T_2 \times_{T_1} T_3 \ra T_3$ --- top fin. \vspace{4pt}
\item[S1-12]: \cb{\ing{t1stuff-item32.eps}} --- $h_r: T_3 \times_{T_0} T_1 \ra T_3$ --- right 3-cell whisker. \vspace{4pt}
\item[S1-13]: \cb{\ing{t1stuff-item33.eps}} --- $h_l: T_1 \times_{T_0} T_3 \ra T_3$ --- left 3-cell whisker.
\end{itemize}
\vspace{4pt}

\item[Inverses]
A 3-morphism $c \in T_3$ is called invertible if there exists a 3-morphism $c^{-1} \in T_3$ such that $m_z(c \times c^{-1}) = i_z(s(c))$ and $m_z(c^{-1} \times c) = i_z(t(c))$.  The 1-datum [S1-7] is required to take values in invertible 3-morphisms.

\end{description}

\item[2-axioms]
The above data are subject to the following thirty-four axioms, together with variant axioms abbreviated in parentheses.  In the first fifteen axioms, the condition is that the indicated 1-morphisms or 2-morphisms are equal.  In the next four axioms, the conditions is that the 3-morphism obtained by composing all the edges of the diagram is the spatial identity.  In the last fifteen axioms, the indicated equation of 3-morphisms is satisfied.  There, composition of 3-morphisms denotes spatial composition; also, the variant axioms are indicated as axial reflections of the drawn axioms.
\begin{description}
\item[1-morphism axioms]\hspace*{\fill}\nopagebreak
\begin{itemize}
\item[S2-1]: \raisebox{\cboxht}{\xymatrix@C-12pt{\ing{2stuff-item15-1.eps}   \ar@{=}[r] &   \hb{\ing{2stuff-item15-2.eps}}}} 
\item[S2-2]: \raisebox{\cboxht}{\xymatrix@C-12pt{\ing{2stuff-item16-1.eps}   \ar@{=}[r] &   \hb{\ing{2stuff-item16-2.eps}}}} 
\item[S2-3]: \raisebox{\cboxht}{\xymatrix@C-12pt{\ing{2stuff-item12-1.eps}   \ar@{=}[r] &   \ing{2stuff-item12-2.eps}}} 
\end{itemize}

\item[2-morphism axioms]\hspace*{\fill}\nopagebreak
\begin{itemize}
\begin{multicols}{2}
\item[S2-4]: \hb{\xymatrix@C-12pt{\cb{\ing{2stuff-item1-1.eps}}   \ar@{=}[r] &   \cb{\ing{2stuff-item1-2.eps}}}} 
\item[S2-5]: \hb{\xymatrix@C-12pt{\cb{\ing{2stuff-item2-1.eps}}   \ar@{=}[r] &   \cb{\ing{2stuff-item1-2.eps}}}} 
\item[S2-6]: \hb{\xymatrix@C-12pt{\cb{\ing{2stuff-item3-1.eps}}   \ar@{=}[r] &   \cb{\ing{2stuff-item3-2.eps}}}} 
\item[S2-7]: \hb{\xymatrix@C-12pt{\cb{\ing{2stuff-item4-1.eps}}   \ar@{=}[r] &   \cb{\ing{2stuff-item4-2.eps}}}} 
\item[S2-8]: \hb{\xymatrix@C-12pt{\cb{\ing{2stuff-item5-1.eps}}   \ar@{=}[r] &   \cb{\ing{2stuff-item5-2.eps}}}} 
\item[S2-9]: \hb{\xymatrix@C-12pt{\cb{\ing{2stuff-item6-1.eps}}   \ar@{=}[r] &   \cb{\ing{2stuff-item6-2.eps}}}} 
\item[S2-10]: \hb{\xymatrix@C-12pt{\cb{\ing{2stuff-item7-1.eps}}   \ar@{=}[r] &   \cb{\ing{2stuff-item7-2.eps}}}} 
\item[S2-11]: \hb{\xymatrix@C-12pt{\cb{\ing{2stuff-item9-1.eps}}   \ar@{=}[r] &   \cb{\ing{2stuff-item9-2.eps}}}} 
\item[S2-12]: \hb{\xymatrix@C-12pt{\cb{\ing{2stuff-item10-1.eps}}   \ar@{=}[r] &   \cb{\ing{2stuff-item10-2.eps}}}} 
\item[S2-13]: \hb{\xymatrix@C-12pt{\cb{\ing{2stuff-item11-1.eps}}   \ar@{=}[r] &   \cb{\ing{2stuff-item11-2.eps}}}} 
\item[S2-14]: \hb{\xymatrix@C-12pt{\cb{\ing{2stuff-item13-1.eps}}   \ar@{=}[r] &   \cb{\ing{2stuff-item13-2.eps}}}} 
\item[S2-15]: \hb{\xymatrix@C-12pt{\cb{\ing{2stuff-item14-1.eps}}   \ar@{=}[r] &   \cb{\ing{2stuff-item14-2.eps}}}} 
\end{multicols}
\end{itemize}

\item[3-morphism axioms]\hspace*{\fill}\nopagebreak
\begin{itemize}

\begin{multicols}{2}

\item[S2-16]:  
\def\alphanum{\ifcase\xypolynode\or \ing{3stuff-item6-4.eps} \or \ing{3stuff-item6-1.eps} \or 
\ing{3stuff-item6-2.eps} \fi}
\def\alphalab{\ifcase\xypolynode\or {=} \or {} \or {=} \fi}
\db{
\xy/r\sca/: \xypolygon3{~={0}~*{\alphanum}~><{@{-}}~>>{_{\alphalab}}}
\endxy
}
\\
\hspace{20pt} $\big[\hspace{-1.5ex}\bb{\xys{\ing{3stuff-item6a.eps}}}\hspace{-1.5ex}\big]$

\item[S2-17]: 
\def\alphanum{\ifcase\xypolynode\or \ing{3stuff-item8-8t.eps} \or \ing{3stuff-item8-1t.eps} \or 
\ing{3stuff-item8-4t.eps} \or \ing{3stuff-item8-5.eps} \or \ing{3stuff-item8-6.eps} \fi}
\def\alphalab{\ifcase\xypolynode\or {} \or {} \or {=} \or {} \or {=} \fi}
\hspace{\hsqza}
\db{
\xy/r\scc/: \xypolygon5{~={0}~*{\alphanum}~><{@{-}}~>>{_{\alphalab}}}
\endxy
}
\\
\hspace{20pt} $\big[\hspace{-1.5ex}\bb{\xys{\ing{3stuff-item8a.eps}}}\hspace{-1.5ex}\big]$

\item[S2-18]: 
\def\alphanum{\ifcase\xypolynode\or \ing{3stuff-item13-1.eps} \or \ing{3stuff-item13-2.eps} \or 
\ing{3stuff-item13-4.eps} \or \ing{3stuff-item13-5.eps} \fi}
\def\alphalab{\ifcase\xypolynode\or {} \or {=} \or {} \or {=} \fi}
\hspace{-15pt}
\db{
\xy/r\scd/: \xypolygon4{~:{(0,.6)::}~*{\alphanum}~><{@{-}}~>>{_{\alphalab}}}
\endxy
}
\\
\hspace{20pt} $\big[\hspace{-1.5ex}\bb{\xys{\ing{3stuff-item13a.eps}}}\hspace{-1.5ex}\big]$ 

\item[S2-19]: 
\def\alphanum{\ifcase\xypolynode\or \ing{3stuff-item14-1.eps} \or \ing{3stuff-item14-2.eps} \or 
\ing{3stuff-item14-3.eps} \or \ing{3stuff-item14-4.eps} \fi}
\def\alphalab{\ifcase\xypolynode\or {} \or {=} \or {} \or {=} \fi}
\hspace{-15pt}
\db{
\xy/r\scd/: \xypolygon4{~:{(0,.6)::}~*{\alphanum}~><{@{-}}~>>{_{\alphalab}}}
\endxy
}

\end{multicols}

\begin{multicols}{2}

\item[S2-20]: %1
$\cb{\ingt{t2stuff-item27-1.eps}} \xlongequal{\phantom{x}} \cb{\ingt{t2stuff-item27-2.eps}}$ 
%\& [z-flip] 
\vspace{3pt}

\item[S2-21]: %2
$\cb{\ingt{t2stuff-item28-1.eps}} \xlongequal{\phantom{x}} \cb{\ingt{t2stuff-item28-2.eps}}$ 
\vspace{3pt}

\item[S2-22]: %3
$\cb{\scalebox{\hscsqz}{\ingt{t2stuff-item29-1.eps}}}  \xlongequal{\phantom{x}}   \text{id} \left( \cb{\scalebox{\hscsqz}{\ingt{t2stuff-item29-2.eps}}} \right)$ 
%\& [y-flip]
\vspace{3pt}

\item[S2-23]: %4
$\cb{\scalebox{\hscsqz}{\ingt{t2stuff-item30-1.eps}}}  \xlongequal{\phantom{x}}  \cb{\scalebox{\hscsqz}{\ingt{t2stuff-item30-2.eps}}}$ 
%\& [y-flip] 
\vspace{3pt}

\item[S2-24]: %5
$\cb{\ingt{t2stuff-item31-1.eps}} \xlongequal{\phantom{x}} \cb{\ingt{t2stuff-item31-2.eps}}$ 
%\& [y-flip]  
\vspace{3pt}

\item[S2-25]: %6
$\cb{\ingt{t2stuff-item32-1.eps}} \xlongequal{\phantom{x}} \cb{\ingt{t2stuff-item32-2.eps}}$
\vspace{3pt}

\item[S2-26]: %7
$\cb{\scalebox{\hscsqz}{\ingt{t2stuff-item33-1.eps}}}  
\xlongequal{\phantom{x}}  
\cb{\scalebox{\hscsqz}{\ingt{t2stuff-item33-2.eps}}}$ 
%\& [y-flip]
\vspace{3pt}

\item[S2-27]: %8
$\cb{\scalebox{\hscsqz}{\ingt{t2stuff-item34-1.eps}}}
\xlongequal{\phantom{x}}
\cb{\scalebox{\hscsqz}{\ingt{t2stuff-item34-2.eps}}}$
\vspace{3pt}

\item[S2-28]: %9
$\cb{\scalebox{\hscsqz}{\ingt{t2stuff-item35-1.eps}}}  \xlongequal{\phantom{x}}   \text{id} \left( \cb{\scalebox{\hscsqz}{\ingt{t2stuff-item35-2.eps}}} \right)$ 
%\& [x-flip]
\vspace{3pt}

\item[S2-29]: %10 
$\cb{\scalebox{\hscsqz}{\ingt{s2stuff-item29-1.eps}}}
\xlongequal{\phantom{x}}
\cb{\scalebox{\hscsqz}{\ingt{t2stuff-item27-2.eps}}}$ 
%\& [x-flip]
\vspace{3pt}

\end{multicols}

\item[S2-30]: %11
$\left(\cb{\scalebox{\hscsqz}{\ingt{t2stuff-item37-1.eps}}}\right) \circ (sw)   \xlongequal{\phantom{x}}  (sw) \circ \left( \cb{\scalebox{\hscsqz}{\ingt{t2stuff-item37-2.eps}}} \right)$ 
%\& [x-flip]
\vspace{3pt}

\begin{multicols}{2}

\item[S2-31]: %12
$\cb{\scalebox{\hscsqz}{\ingt{t2stuff-item38-1.eps}}} \xlongequal{\phantom{x}} \cb{\scalebox{\hscsqz}{\ingt{t2stuff-item38-2.eps}}}$ 
%\& [x-flip] 
\vspace{3pt}

\item[S2-32]: %13
$\cb{\scalebox{\hscsqz}{\ingt{t2stuff-item39-1.eps}}}
\xlongequal{\phantom{x}}
\cb{\scalebox{\hscsqz}{\ingt{t2stuff-item39-2.eps}}}$ 
%\& [x-flip, y-flip, xy-flip]
\vspace{3pt}

\end{multicols}

\item[S2-33]: %14
$\cb{\scalebox{\hscsqz}{\ingt{s2stuff-item33-1.eps}}} 
\xlongequal{\phantom{x}}
\cb{\scalebox{\hscsqz}{\ingt{s2stuff-item33-2.eps}}}$ 
%[x-flip]
\vspace{3pt}

\item[S2-34]: %15
$\cb{\scalebox{\hscsqz}{\ingt{s2stuff-item34-1.eps}}} 
\xlongequal{\phantom{x}}
\cb{\scalebox{\hscsqz}{\ingt{s2stuff-item34-2.eps}}}$ 
\vspace{3pt}

\item[Reflections]: z-flip of [S2-20]; y-flip of [S2-22], [S2-23], [S2-24], and [S2-26]; x-flip of [S2-28], [S2-29], [S2-30], [S2-31], and [S2-33]; and x-flip, y-flip, and xy-flip of [S2-32].

\end{itemize}

\end{description}
\end{description}
\end{definition}

\nid In axiom [S2-30], ``$sw$'' refers to the switch 3-morphism [S1-7].

\begin{definition}
A 3-category is a Gray 3-category such that the switch map [S1-7] lands in the image of the spatial identity.  In particular, the two composites \cb{\ings{2stuff-item8-1.eps}} and \cb{\ings{2stuff-item8-2.eps}} are equal.
\end{definition}

\begin{example} \label{eg-2catnwk}
There is a 3-category $\tCAT$ whose objects are 2-categories, whose 1-morphisms are strict functors of 2-categories, whose 2-morphisms are strict natural transformations of strict functors, and whose 3-morphisms are modifications of strict natural transformations.  There is a Gray 3-category $\tCATnwk$ whose objects, 1-morphisms, 2-morphisms, and 3-morphisms are respectively 2-categories, strict functors, weak natural transformations, and modifications.  This is the ambient Gray 3-category that will be relevant for tensor categories.  Notice that the collection $\tCATwk$ of 2-categories, weak functors, weak natural transformations, and modifications does not form a Gray 3-category, because the axiom S2-9 will not be satisfied.
\end{example}

We now have the necessary technology to define a category in a Gray 3-category, and in particular then a category in a 3-category.  As the notion ``category in a 2-category" categorifies ``category in a 1-category", so the notion ``category in a (Gray) 3-category" categorifies ``category in a 2-category", as follows.

\begin{definition} \label{def-catinswitch3cat}
A category object $C$ in the Gray 3-category $\cC$ consists of the following four collections of data, subject to the listed axioms.

\begin{description}
\item[0-data] 
There are two objects of $\cC$ as follows:
\begin{itemize}
\item[] $C_0$, denoted \cb{\ing{0stuff-item0.eps}} and called the object of 0-cells.
\item[] $C_1$, denoted \cb{\ing{0stuff-item1.eps}} and called the object of 1-cells.
\item[] Moreover, there are morphisms $s,t: C_1 \ra C_0$ called the source and target.\footnote{One might want to insist that $C_0$ be a 2-groupoid object and that the map $s \times t: C_1 \ra C_0 \times C_0$ be a fibration---see Remark~\ref{rem-nnfcaveat}.}
\end{itemize}
\vspace{4pt}

\item[1-data]
There are two morphisms of $\cC$ as follows:
\begin{itemize}
\item[C1-1]: $i: C_0 \ra C_1$, denoted \cb{\ing{1stuff-item1.eps}} and called the 1-cell identity.
\item[C1-2]: $m: C_1 \times_{C_0} C_1 \ra C_1$, denoted \cb{\ing{1stuff-item2.eps}} and called the horizontal composition.  Here and following, pullbacks are defined as in Remark~\ref{rem-defpullback}.
\end{itemize}
These morphisms are compatible with the source and target morphisms (as in Definition~\ref{def-catobj1cat}).
\vspace{4pt}

\item[2-data]
There are three 2-morphisms of $\cC$ as follows:
\begin{itemize}
\item[C2-1]: A 2-morphism from $m (i \times \id) (s \times \id): C_1 \ra C_1$ to $\id: C_1 \ra C_1$, denoted \cb{\ing{d1stuff-item7.eps}} and called the left identity transformation.
\item[C2-2]: A 2-morphism from $m (\id \times i) (\id \times t): C_1 \ra C_1$ to $\id: C_1 \ra C_1$, denoted \cb{\ing{d1stuff-item8.eps}} and called the right identity transformation.
\item[C2-3]: A 2-morphism from $m (m \times \id): C_1 \times_{C_0} C_1 \times_{C_0} C_1 \ra C_1$ to $m (\id \times m): C_1 \times_{C_0} C_1 \times_{C_0} C_1 \ra C_1$, denoted \cb{\ing{b1stuff-item9.eps}} and called the associator transformation.
\end{itemize}
The left and right identity transformations and the associator transformation are compatible with source and target maps (as in Definition~\ref{def-catobj2cat}).  These 2-morphisms are also required to be invertible, in that there exist three 2-morphisms pointing in the opposite direction, namely \cb{\ing{d1stuff-item9.eps}}, \cb{\ing{d1stuff-item10.eps}}, and \cb{\ing{b1stuff-item9a.eps}}, together with six invertible 3-morphisms from the appropriate composites to identity 2-morphisms, all satisfying the six ``triangle" axioms---note that despite the name, the ``triangle" axiom for an invertible 2-morphism in a Gray 3-category is a bigon.\footnote{See the 1-data portion of Definition~\ref{def-dicat} for a discussion of a bigon triangle axiom in a distinct but related context.}
%\pagebreak

\item[3-data]
There are five invertible 3-morphisms of $\cC$ as follows: %\vspace{-8pt}
\begin{multicols}{2}
\begin{itemize}
\item[C3-1]: \xymatrix@C-12pt{\cb{\ing{d2stuff-item15-1.eps}}   \ar@3{->}[r] &   \cb{\ing{d2stuff-item15-2.eps}}} 
\item[C3-2]: \xymatrix@C-12pt{\cb{\ing{b2stuff-item16-1.eps}}   \ar@3{->}[r] &   \cb{\ing{b2stuff-item16-2.eps}}} 
\item[C3-3]: \xymatrix@C-12pt{\cb{\ing{b2stuff-item17-1.eps}}   \ar@3{->}[r] &   \cb{\ing{b2stuff-item17-2.eps}}} 
\item[C3-4]: \xymatrix@C-12pt{\cb{\ing{b2stuff-item18-1.eps}}   \ar@3{->}[r] &   \cb{\ing{b2stuff-item18-2.eps}}} 
\item[C3-5]: \xymatrix@C-12pt{\cb{\ing{b2stuff-item12-1.eps}}   \ar@3{->}[r] &   \cb{\ing{b2stuff-item12-2t.eps}}} 
\item[] \hspace*{-48pt} \parbox{3in}{These 3-morphisms are compatible with source and target maps in the sense that applying the source or target map to any of these five 3-morphisms yields an identity 3-morphism.}
\end{itemize}
\end{multicols} \vspace{-6pt}

\item[4-axioms]
The above data are such that the following five diagrams, together with the variant diagrams abbreviated in parentheses, commute: \vspace{-4pt}
\begin{multicols}{2}
\begin{itemize}

\item[C4-1]:\\
\xymatrix@C-8pt@R-12pt@M-2pt{\cb{\ing{c4stuff-item1-1.eps}} \ar[d] \ar[r] & \cb{\ing{c4stuff-item1-4.eps}}  \\ 
\cb{\ing{c4stuff-item1-2.eps}} \ar[r] & \cb{\ing{c4stuff-item1-3.eps}} \ar[u]_{\text{sw}}
}
\\
\hspace*{\fill} $\big[\hspace{-1.5ex}\xys{\ing{c4stuff-item1a.eps}}\hspace{-1.5ex}\big]$ \hspace*{30pt} \vspace{-2pt}

\item[C4-2]:\\
\xymatrix@C-8pt@R-12pt@M-2pt{\cb{\ing{c4stuff-item2-1.eps}} \ar[d] \ar[r]^{\text{sw}} & \cb{\ing{c4stuff-item2-4.eps}} \\ 
\cb{\ing{c4stuff-item2-2.eps}} \ar[r]^{\text{sw}^{-1}} & \cb{\ing{c4stuff-item2-3.eps}} \ar[u]
} \vspace{-2pt}

\item[C4-3]:\\
\xymatrix@C-8pt@R-12pt@M-2pt{
\cb{\ing{c4stuff-item3-1.eps}} \ar[r] \ar[d]_{\text{sw}} & \cb{\ing{c4stuff-item3-6.eps}} \ar[d] \\
\cb{\ing{c4stuff-item3-2.eps}} \ar[d] & \cb{\ing{c4stuff-item3-5.eps}}  \\
\cb{\ing{c4stuff-item3-3.eps}} \ar[r] & \cb{\ing{c4stuff-item3-4.eps}} \ar[u]_{\text{sw}}
}
\\
\hspace*{\fill} $\big[\hspace{-1.5ex}\xys{\ing{c4stuff-item3a.eps}}\hspace{-1.5ex}\big]$ \hspace*{30pt}

\item[C4-4]:\\
\xymatrix@C-8pt@R-12pt@M-2pt{
\cb{\ing{c4stuff-item4-1.eps}} \ar[r]^{\text{sw}} \ar[d] & \cb{\ing{c4stuff-item4-6.eps}} \ar[d] \\
\cb{\ing{c4stuff-item4-2.eps}} \ar[d]_{\text{sw}} & \cb{\ing{c4stuff-item4-5.eps}} \\
\cb{\ing{c4stuff-item4-3.eps}} \ar[r] & \cb{\ing{c4stuff-item4-4.eps}} \ar[u]
}
\\
\hspace*{\fill} $\big[\hspace{-1.5ex}\xys{\ing{c4stuff-item4a.eps}}\hspace{-1.5ex}\big]$ \hspace*{30pt} \vspace{-4pt}

\item[C4-5]:\\
\xymatrix@C-56pt@R-10pt@M-2pt{\cb{\ing{c4stuff-item5-1.eps}} \ar[rr]^{\text{sw}} && \cb{\ing{c4stuff-item5-9.eps}} \\
\cb{\ing{c4stuff-item5-2.eps}} \ar[u]  && \cb{\ing{c4stuff-item5-8.eps}} \ar[u] \\
\cb{\ing{c4stuff-item5-3.eps}} \ar[u]  && \cb{\ing{c4stuff-item5-7.eps}} \ar[u]  \\
\cb{\ing{c4stuff-item5-4.eps}} \ar[u]^{\text{sw}} && \cb{\ing{c4stuff-item5-6.eps}} \ar[u]_{\text{sw}^{-1}}\\
& \cb{\ing{c4stuff-item5-5.eps}} \ar[ur] \ar[ul] &
}

\end{itemize}

\end{multicols}

\nid In these diagrams, unmarked arrows are 3-morphisms given by 3-data, while the arrows marked ``sw" are determined by the switch morphism [S1-7] in the ambient Gray 3-category.

\end{description}

\end{definition}

\begin{remark}
We explain in detail why the morphisms marked as switches in the axioms of the above definition are indeed switches in the ambient Gray 3-category, and along the way explicate the pictographic notation.  We focus on the switch morphism in axiom C4-1; all the other switches are analogous.  The claim is that, given the 0-, 1-, 2-, and 3-data of a category object $C$ in the Gray 3-category $\cC$, the switch morphism in $\cC$ induces a 3-morphism from \cb{\ings{d3stuff-item23-3.eps}} to \cb{\ings{d3stuff-item23-4.eps}}, and therefore from \cb{\ings{c4stuff-item1-3.eps}} to \cb{\ings{c4stuff-item1-4.eps}}.  

Let us first recall the exact meaning of these pictures.  The picture \cb{\ings{d1stuff-item7.eps}} is a 2-morphism of $\cC$ from the 1-morphism $C_1 \xra{s \times \id} C_0 \times C_1 \xra{i \times \id} C_1 \times C_1 \xra{m} C_1$ to the identity morphism $C_1 \xra{\id} C_1$.  The picture \cb{\ings{d2stuff-item16-2.eps}} applies that 2-morphism not to $C_1$ itself, but to the image of the multiplication \cb{\ing{1stuff-item2.eps}}, $m: C_1 \times_{C_0} C_1 \xra{} C_1$.  By \emph{definition} this picture \cb{\ings{d2stuff-item16-2.eps}} is therefore the left whisker \emph{in the ambient Gray 3-category} of the 2-morphism \cb{\ings{d1stuff-item7.eps}} by the 1-morphism $m$---this left whisker operation can be thought of as a form of ``precomposition".  The composite 2-morphism \cb{\ings{d3stuff-item23-4.eps}} can therefore be written in pasting-diagram notation as \vspace{-8pt}
\begin{equation} \nn
\xymatrix@C-8pt@R-16pt{
&&&& C_0 \times C_1 \ar[r]^-{i \times \id} & C_1 \times C_1 \ar[dr]^-{m} & \\
C_1 \ar[r]^-{s \times \id} & C_0 \times C_1 \ar[r]^-{i \times \id} & C_1 \times C_1 \ar[r]^-{m} & C_1 \ar[ur]^-{s \times \id} \ar[rrr]^{\id} 
\ar@/^15pt/ @{{ }{ }}[rrr] |{\Downarrow}
&&& C_1 \\
C_1 \ar[r]^-{s \times \id} \ar@/_26pt/[rrr]^{\id} 
\ar@/_11pt/ @{{ }{ }}[rrr] |{\Downarrow}
& C_0 \times C_1 \ar[r]^-{i \times \id} & C_1 \times C_1 \ar[r]^-{m} & C_1 \ar[rrr]^{\id} &&& C_1
}
\end{equation}
By similar unpacking, the composite 2-morphism \cb{\ings{d3stuff-item23-3.eps}} has the pasting-diagram notation
\begin{equation} \nn
\xymatrix@C-8pt@R-16pt{
& C_0 \times C_1 \ar[r]^-{i \times \id} & C_1 \times C_1 \ar[dr]^-{m} &&&& \\
C_1 \ar[ur]^-{s \times \id} \ar[rrr]^{\id} 
\ar@/^15pt/ @{{ }{ }}[rrr] |{\Downarrow}
&&& C_1 \ar[r]^-{s \times \id} & C_0 \times C_1 \ar[r]^-{i \times \id} & C_1 \times C_1 \ar[r]^-{m} & C_1 \\
C_1\ar[rrr]^{\id} &&& C_1 \ar[r]^-{s \times \id} \ar@/_26pt/[rrr]^{\id} 
\ar@/_11pt/ @{{ }{ }}[rrr] |{\Downarrow}
& C_0 \times C_1 \ar[r]^-{i \times \id} & C_1 \times C_1 \ar[r]^-{m} & C_1
}
\end{equation}
Indeed these two 2-morphisms are related by a switch in the ambient Gray 3-category.
\end{remark}

\begin{remark} \label{rem-nnfcaveat}
The above definition describes a notion of not-necessarily fibrant category object $C = (C_0, C_1)$ in a Gray 3-category $\cC$.  There certainly exist notions of fibrations of 2-categories in the literature (see for instance~\cite[p.35]{gray}), and these could be used to provide a definition of when a 1-morphism is a fibration in a Gray 3-category.  We do not pursue this aspect of the theory here.
\end{remark}

\begin{example} \label{eg-tc}
We expect that tensor categories form a category object $\TC$ in the Gray 3-category $\tCATnwk$.  (Recall from Example~\ref{eg-2catnwk} that $\tCATnwk$ is the Gray 3-category of 2-categories, strict functors, weak natural transformation, and modifications.)  Note that Greenough's work on the monoidal 2-category of bimodules over a fixed tensor category~\cite{greenough} and Schaumann's work on fusion categories and their bimodules~\cite{schaumann} do a substantive part but not all of the work of establishing that $\TC$ forms a category object in $\tCATnwk$.  (Note also that Johnson-Freyd--Scheimbauer construct an ``($\infty,1)$-by-$(\infty,2)$"-category of algebra objects in linear categories~\cite{jfs}, which provides an infinity-category-theoretic version of the category object (of 2-categories) $\TC$.)

The 2-category $\TC_0$ of 0-cells of the category object $\TC$ is the strict 2-category of tensor categories, monoidal functors, and monoidal natural transformations.  The 2-category $\TC_1$ of 1-cells of the category object $\TC$ is the strict 2-category of bimodules between tensor categories, along with functors of bimodule categories, and natural transformations of these functors.  The structure data of the category object $\TC$ consists of the categorifications of the structure data of $\Alg$, the category object in $\CAT$ whose 0-cells are algebras and whose 1-cells are bimodules---for instance the 1-cell identity of $\TC$ is the identity bimodule category, and the horizontal composition is the balanced tensor product of bimodule categories over a tensor category.

There is a model for the balanced tensor product $\mathcal{M} \boxtimes_{\mathcal{A}} \mathcal{N}$ of bimodule categories in which objects are represented by retracts of direct sums of elementary tensors $m \otimes n$ of objects $m \in \mathcal{M}$ and $n \in \mathcal{N}$.  In this model, observe as follows that the left and right identity transformations [C2-1] and [C2-2] are weak, not strict, natural transformations.  For objects $a \in \mathcal{A}$ and $n \in \mathcal{N}$, the $\mathcal{A}$-module action on $\mathcal{N}$ determines an object $a \cdot n \in \mathcal{N}$---this action defines the transformation from the functor $\mathcal{N} \mapsto \mathcal{A} \boxtimes_\mathcal{A} \mathcal{N}$ to the identity functor; for a bimodule functor $\phi: {}_\mathcal{A} \mathcal{N}_\mathcal{B} \ra {}_\mathcal{A} \mathcal{P}_\mathcal{B}$, the objects $a \cdot \phi(n)$ and $\phi(a \cdot n)$ are not equal but merely isomorphic, as accommodated by a weak natural transformation.  It is this consideration that encourages working in the Gray 3-category $\tCATnwk$ rather than the 3-category $\tCAT$.

Notice, however, that in the aforementioned model the category object $\TC$ is somewhat stricter than a generic category object in $\tCATnwk$, in the sense that the associator 2-datum [C2-3] happens to be a strict, rather than weak, natural transformation.  We will refer to this kind of category object as an ``associatively strict category object in $\tCATnwk$".
\end{example}

%It is worth asking how one knows that the list of data and axioms for a category object in a Gray 3-category, given in Definition~\ref{def-catinswitch3cat}, is ``sufficient".  Sufficiency can be formulated as a precise condition, namely the 2-connectivity of the space of ways to evaluate a series of 1-cells of the category object; this space of evaluations is defined in terms of the specified data and axioms for a category object.  This issue of sufficiency and the corresponding connectivity result are discussed in \textcolor{red}{Appendix~A}.

\vspace{-10pt}
\section{Internal bicategories} \label{sec-internalbicats}

Bicategory objects in the 2-category of categories provide a natural framework for studying both the bordism ``3-level category", of $0$-, $1$-, $2$-, and $3$-manifolds, and the ``3-level category" of conformal nets.  Bicategory objects in 2-categories also provide a convenient way to encode the symmetric monoidal structure of a bicategory in categories, by changing the ambient 2-category to symmetric monoidal categories.

Recall that a bicategory has objects, 1-cells, 2-cells, identity 1-cells, composition of 1-cells, identity 2-cells, composition of 2-cells, left and right whiskers of 2-cells by 1-cells, and associator and left and right identity isomorphisms.  A bicategory object $C$ in a 2-category $\cC$ will have, roughly speaking, the following structure.  There will be an object $C_0 \in \cC$ of 0-cells, an object $C_1 \in \cC$ of 1-cells, and an object $C_2 \in \cC$ of 2-cells.  There are morphisms $s,t: C_1 \ra C_0$ of $\cC$, called the source and target of 1-cells, and morphisms $s,t: C_2 \ra C_1$, called the source and target of 2-cells.  There is a morphism $i: C_0 \ra C_1$, called the 1-cell identity, and a morphism $m: C_1 \times_{C_0} C_1 \ra C_1$, called horizontal composition.  Similarly, there is a 2-cell identity morphism $C_1 \ra C_2$, 2-cell composition morphism $C_2 \times_{C_1} C_2 \ra C_2$, and whisker morphisms $C_2 \times_{C_0} C_1 \ra C_2$ and $C_1 \times_{C_0} C_2 \ra C_2$.  The next pieces of structure generalize the notion of the identity transformations and the associator transformation of a bicategory.  

Consider the left identity: it is meant to compare the morphism $m((i \circ s) \times \id): C_1 \ra C_1$ of $\cC$ to the morphism $\id: C_1 \ra C_1$ of $\cC$.  We can either ask for a 2-morphism of $\cC$ between these two 1-morphisms, or we can ask for a morphism $C_1 \ra C_2$ whose composites with the 2-cell source and target $s,t:C_2 \ra C_1$ recover the two morphisms in question.  For example, if the ambient 2-category $\cC$ is the 2-category of categories, then we are asking either for a natural isomorphism between the functors $m((i \circ s) \times \id): C_1 \ra C_1$ and $\id: C_1 \ra C_1$, or for a functor $C_1 \ra C_2$ from 1-cells to 2-cells such that the composites $C_1 \ra C_2 \xra{s,t} C_1$ are the functors $m((i \circ s) \times \id)$ and $\id$ respectively.  Demanding a natural isomorphism leads to a simpler, more rigid structure, while allowing 2-cell transformations produces a more complex, weaker and thus more inclusive structure.

We have a similar choice for the associator transformations: they can either be 2-morphisms of the ambient 2-category (a strict notion) or 2-cell transformations (a weak notion).  Altogether then, there are 4 distinct notions of internal bicategories in 2-categories: strict identities and strict associators, weak identities and strict associators, strict identities and weak associators, and weak identities and weak associators.  We do not have need of the third notion (strict identities and weak associators) and give it no consideration.  The other three notions are called respectively, ``2-category object in a 2-category", ``dicategory object in a 2-category", and ``bicategory object in a 2-category".  These are categorifications of the notions 2-category, dicategory, and bicategory.

In Sections~\ref{sec-2catin2cat},~\ref{sec-dicatin2cat}, and~\ref{sec-bicatin2cat} we give precise definitions of the three variants of internal bicategories.  In Section~\ref{sec-2dibi} we describe how to construct a dicategory object from a 2-category object, and a bicategory object from a dicategory object.  In Section~\ref{sec-catin2catarebicatincat} we observe that categories in (the 3-category of) 2-categories are bicategories in (the 2-category of) categories.  In Section~\ref{app-bicatincataretricat}, we will see that bicategory objects in $\CAT$ have underlying tricategories.

\subsection{2-categories in 2-categories} \label{sec-2catin2cat} $\left(\vcenter{\xymatrix@R=3pt@C=3pt@M=0pt{
\scriptscriptstyle\bullet & \scriptscriptstyle\bullet \\ \scriptscriptstyle\bullet & \scriptscriptstyle\bullet \\ \scriptscriptstyle\bullet & \scriptscriptstyle\bullet
}}\right)$

A 2-category object $C$ in a 2-category $\cC$ has 0-, 1-, and 2-cell objects $C_0, C_1, C_2 \in \cC$, and has identity 1-cells and composition of 1-cells, with comparatively strict identity transformations and associator transformations.  In Section~\ref{sec-catin2cat} we introduced a convenient diagrammatic notion for category objects in 2-categories.  We rely heavily on this notation in the definition of 2-category object.

\begin{definition}
A 2-category object $C$ in the 2-category $\cC$ consists of the following three collections of data, subject to the listed axioms.  
\begin{description}
\item[0-data]
There are three objects of $\cC$ as follows:
\begin{itemize}
\item[] $C_0$, a groupoid object, denoted \cb{\ing{0stuff-item0.eps}} and called the object of 0-cells.
\item[] $C_1$, a groupoid object, denoted \cb{\ing{0stuff-item1.eps}} and called the object of 1-cells.
\item[] $C_2$, denoted \cb{\ing{0stuff-item2.eps}} and called the object of 2-cells.
\end{itemize}
In addition, there are morphisms $s,t: C_1 \ra C_0$ and $s,t: C_2 \ra C_1$, the source and target, such that $st = ss$ and $tt = ts$, and such that $s \times t: C_1 \ra C_0 \times C_0$ and $s \times t: C_2 \ra C_1 \times_{C_0 \times C_0} C_1$ are fibrations.
\item[1-data]
There are six morphisms of $\cC$ as follows:
\begin{itemize}
\item[F1-1]: $i: C_0 \ra C_1$, denoted \cb{\ing{1stuff-item1.eps}} and called the 1-cell identity.
\item[F1-2]: $m: C_1 \times_{C_0} C_1 \ra C_1$, denoted \cb{\ing{1stuff-item2.eps}} and called the horizontal composition.
\item[F1-3]: $i_v: C_1 \ra C_2$, denoted \cb{\ing{1stuff-item3.eps}} and called the 2-cell identity.
\item[F1-4]: $m_v: C_2 \times_{C_1} C_2 \ra C_2$, denoted \cb{\ing{1stuff-item4.eps}} and called the vertical composition.
\item[F1-5]: $w_r: C_2 \times_{C_0} C_1 \ra C_2$, denoted \cb{\ing{1stuff-item5.eps}} and called the right composition or whisker of a 1-cell with a 2-cell.
\item[F1-6]: $w_l: C_1 \times_{C_0} C_2 \ra C_2$, denoted \cb{\ing{1stuff-item6.eps}} and called the left composition or whisker of a 1-cell with a 2-cell.
\end{itemize} \nopagebreak[4]
These morphisms are compatible with source and target maps.

\item[2-data]
There are sixteen 2-isomorphisms of $\cC$ as follows: \vspace{-2pt}
\begin{multicols}{2}
\begin{itemize}
\item[F2-1]: \xymatrix@C-12pt{\cb{\ing{2stuff-item1-1.eps}}   \ar@{=>}[r] &   \cb{\ing{2stuff-item1-2.eps}}} 
\item[F2-2]: \xymatrix@C-12pt{\cb{\ing{2stuff-item2-1.eps}}   \ar@{=>}[r] &   \cb{\ing{2stuff-item2-2.eps}}} 
\item[F2-3]: \xymatrix@C-12pt{\cb{\ing{2stuff-item3-1.eps}}   \ar@{=>}[r] &   \cb{\ing{2stuff-item3-2.eps}}} 
\item[F2-4]: \xymatrix@C-12pt{\cb{\ing{2stuff-item4-1.eps}}   \ar@{=>}[r] &   \cb{\ing{2stuff-item4-2.eps}}} 
\item[F2-5]: \xymatrix@C-12pt{\cb{\ing{2stuff-item5-1.eps}}   \ar@{=>}[r] &   \cb{\ing{2stuff-item5-2.eps}}} 
\item[F2-6]: \xymatrix@C-12pt{\cb{\ing{2stuff-item6-1.eps}}   \ar@{=>}[r] &   \cb{\ing{2stuff-item6-2.eps}}}  
\item[F2-7]: \xymatrix@C-12pt{\cb{\ing{2stuff-item7-1.eps}}   \ar@{=>}[r] &   \cb{\ing{2stuff-item7-2.eps}}} 
\item[F2-8]: \xymatrix@C-12pt{\cb{\ing{2stuff-item8-1.eps}}   \ar@{=>}[r] &   \cb{\ing{2stuff-item8-2.eps}}} 
\item[F2-9]: \xymatrix@C-12pt{\cb{\ing{2stuff-item9-1.eps}}   \ar@{=>}[r] &   \cb{\ing{2stuff-item9-2.eps}}}  
\item[F2-10]: \xymatrix@C-12pt{\cb{\ing{2stuff-item10-1.eps}}   \ar@{=>}[r] &   \cb{\ing{2stuff-item10-2.eps}}} 
\item[F2-11]: \xymatrix@C-12pt{\cb{\ing{2stuff-item11-1.eps}}   \ar@{=>}[r] &   \cb{\ing{2stuff-item11-2.eps}}}  
\item[F2-12]: \xymatrix@C-12pt{\cb{\ing{2stuff-item12-1.eps}}   \ar@{=>}[r] &   \cb{\ing{2stuff-item12-2.eps}}} 
\item[F2-13]: \xymatrix@C-12pt{\cb{\ing{2stuff-item13-1.eps}}   \ar@{=>}[r] &   \cb{\ing{2stuff-item13-2.eps}}}  
\item[F2-14]: \xymatrix@C-12pt{\cb{\ing{2stuff-item14-1.eps}}   \ar@{=>}[r] &   \cb{\ing{2stuff-item14-2.eps}}} 
\item[F2-15]: \xymatrix@C-12pt{\cb{\ing{2stuff-item15-1.eps}}   \ar@{=>}[r] &   \cb{\ing{2stuff-item15-2.eps}}}  
\item[F2-16]: \xymatrix@C-12pt{\cb{\ing{2stuff-item16-1.eps}}   \ar@{=>}[r] &   \cb{\ing{2stuff-item16-2.eps}}}
\end{itemize}
\end{multicols} \vspace{-2pt}
\nid These 2-isomorphisms are compatible with source and target maps in the sense that the sources and targets of [F2-1] through [F2-8] are identity 2-isomorphisms, the sources and targets of [F2-9] through [F2-11] are the 2-isomorphism [F2-12], the source and target of [F2-13] are [F2-15], and the source and target of [F2-14] are [F2-16].

\item[3-axioms]
The above data are such that the following twenty-five diagrams, as well as the variant diagrams abbreviated in parentheses, commute:

\end{description} % In order for axioms to have full page width.

\begin{itemize}
\begin{multicols}{2}
\item[F3-1]:
\db{
\xymatrix@C-12pt{\cb{\ing{3stuff-item1-1.eps}} \ar@/^10pt/ @{-}[r] \ar@/_10pt/@{-}[r] & \cb{\ing{3stuff-item1-2.eps}}}
}
\vspace{10pt}

\item[F3-2]: 
\def\alphanum{\ifcase\xypolynode\or \ing{3stuff-item2-1.eps} \or \ing{3stuff-item2-2.eps} \or 
\ing{3stuff-item2-3.eps} \fi}
\db{
\xy/r\sca/: \xypolygon3{~={0}~*{\alphanum}}
\endxy
}\nopagebreak
\\[-30pt]
\hspace*{\fill} $\big[\hspace{-1.5ex}\bb{\xys{\ing{3stuff-item2a.eps}}}\hspace{-1.5ex}\big]$ \hspace*{30pt}

\item[F3-3]:  
\def\alphanum{\ifcase\xypolynode\or \ing{3stuff-item3-1.eps} \or \ing{3stuff-item3-2.eps} \or 
\ing{3stuff-item3-3.eps} \fi}
\db{
\xy/r\sca/: \xypolygon3{~={0}~*{\alphanum}}
\endxy
}
\vspace{10pt}

\item[F3-4]: 
\def\alphanum{\ifcase\xypolynode\or \ing{3stuff-item4-1.eps} \or \ing{3stuff-item4-2.eps} \or 
\ing{3stuff-item4-3.eps} \or \ing{3stuff-item4-4.eps} \or \ing{3stuff-item4-5.eps} \fi}
\hspace{\hsqza}
\db{
\xy/r\scc/: \xypolygon5{~*{\alphanum}}
\endxy
}
\vspace{10pt}

\item[F3-5]: 
\def\alphanum{\ifcase\xypolynode\or \ing{3stuff-item5-1.eps} \or \ing{3stuff-item5-2.eps} \or 
\ing{3stuff-item5-3.eps} \or \ing{3stuff-item5-4.eps} \fi}
\db{
\xy/r\scb/: \xypolygon4{~*{\alphanum}}
\endxy
}
\\
\hspace{10pt} $\big[\hspace{-1.5ex}\bb{\xys{\ing{3stuff-item5a.eps}}} 
\hspace{-5pt} \bb{\xys{\ing{3stuff-item5b.eps}}} 
\hspace{-5pt} \bb{\xys{\ing{3stuff-item5c.eps}}}\hspace{-1.5ex}\big]$
\vspace{5pt}

\item[F3-6]: 
\def\alphanum{\ifcase\xypolynode\or \ing{3stuff-item6-1.eps} \or \ing{3stuff-item6-2.eps} \or 
\ing{3stuff-item6-3.eps} \or \ing{3stuff-item6-4.eps} \or \ing{3stuff-item6-5.eps} \fi}
\hspace{\hsqza}
\db{
\xy/r\scc/: \xypolygon5{~={0}~*{\alphanum}}
\endxy
}
\\[-5pt]
\hspace*{\fill} $\big[\hspace{-1.5ex}\bb{\xys{\ing{3stuff-item6a.eps}}}\hspace{-1.5ex}\big]$ \hspace*{30pt}
\vspace{5pt}

\item[F3-7]: 
\def\alphanum{\ifcase\xypolynode\or \ing{3stuff-item7-1.eps} \or \ing{3stuff-item7-2.eps} \or 
\ing{3stuff-item7-3.eps} \or \ing{3stuff-item7-4.eps} \or \ing{3stuff-item7-5.eps} \or \ing{3stuff-item7-6.eps} \fi}
\hspace{\hsqza}
\db{
\xy/r\sce/: \xypolygon6{~={30}~*{\alphanum}}
\endxy
}
\\[-5pt]
\hspace*{\fill} $\big[\hspace{-1.5ex}\bb{\xys{\ing{3stuff-item7a.eps}}}\hspace{-1.5ex}\big]$ \hspace*{30pt} 
\vspace{5pt}

\end{multicols}

\item[F3-8]: 
\def\alphanum{\ifcase\xypolynode\or \ing{3stuff-item8-1.eps} \or \ing{3stuff-item8-2.eps} \or \ing{3stuff-item8-3.eps} \or \ing{3stuff-item8-4.eps} \or \ing{3stuff-item8-5.eps} \or \ing{3stuff-item8-6.eps} \or \ing{3stuff-item8-7.eps} \or \ing{3stuff-item8-8.eps} \fi}
\hspace{\hsqza}
\db{
\xy/r\scg/:  \xypolygon8{~*{\alphanum}}
\endxy
}
\\[-5pt]
\hspace*{\fill} $\big[\hspace{-1.5ex}\bb{\xys{\ing{3stuff-item8a.eps}}}\hspace{-1.5ex}\big]$ \hspace*{160pt}
\vspace{5pt}

\begin{multicols}{2}

\item[F3-9]: 
\hspace*{-25pt}
\def\alphanum{\ifcase\xypolynode\or \ing{3stuff-item9-1.eps} \or \ing{3stuff-item9-2.eps} \or 
\ing{3stuff-item9-3.eps} \or \ing{3stuff-item9-4.eps} \or \ing{3stuff-item9-5.eps} \fi}
\hspace{\hsqza}
\db{
\xy/r\scd/: \xypolygon5{~={0}~*{\alphanum}}
\endxy
}
\\[-5pt]
\hspace*{\fill} $\big[\hspace{-1.5ex}\bb{\xys{\ing{3stuff-item9a.eps}}}\hspace{-1.5ex}\big]$ \hspace*{30pt} 
\vspace{5pt}

\item[F3-10]: 
\hspace*{-25pt}
\def\alphanum{\ifcase\xypolynode\or \ing{3stuff-item10-1.eps} \or \ing{3stuff-item10-2.eps} \or 
\ing{3stuff-item10-3.eps} \or \ing{3stuff-item10-4.eps} \or \ing{3stuff-item10-5.eps} \or \ing{3stuff-item10-6.eps} \fi}
\hspace{\hsqza}
\db{
\xy/r\sce/: \xypolygon6{~={30}~*{\alphanum}}
\endxy
}
\vspace{10pt}

\item[F3-11]: 
\hspace*{-25pt}
\def\alphanum{\ifcase\xypolynode\or \ing{3stuff-item11-1.eps} \or \ing{3stuff-item11-2.eps} \or 
\ing{3stuff-item11-3.eps} \or \ing{3stuff-item11-4.eps} \or \ing{3stuff-item11-5.eps} \fi}
\hspace{\hsqza}
\db{
\xy/r\scd/: \xypolygon5{~={36}~*{\alphanum}}
\endxy
} \nopagebreak
\\[-5pt] 
\hspace*{\fill} $\big[\hspace{-1.5ex}\bb{\xys{\ing{3stuff-item11a.eps}}}\hspace{-1.5ex}\big]$ \hspace*{30pt} 
\vspace{5pt}

\item[F3-12]: 
\hspace*{-25pt}
\def\alphanum{\ifcase\xypolynode\or \ing{3stuff-item12-1.eps} \or \ing{3stuff-item12-2.eps} \or 
\ing{3stuff-item12-3.eps} \or \ing{3stuff-item12-4.eps} \or \ing{3stuff-item12-5.eps} \or \ing{3stuff-item12-6.eps} \fi}
\hspace{\hsqza}
\db{
\xy/r\sce/: \xypolygon6{~={30}~*{\alphanum}}
\endxy
}
\vspace{10pt}

\item[F3-13]: 
\hspace*{-25pt}
\def\alphanum{\ifcase\xypolynode\or \ing{3stuff-item13-1.eps} \or \ing{3stuff-item13-2.eps} \or 
\ing{3stuff-item13-3.eps} \or \ing{3stuff-item13-4.eps} \or \ing{3stuff-item13-5.eps} \or \ing{3stuff-item13-6.eps} \fi}
\hspace{\hsqza}
\db{
\xy/r\sce/: \xypolygon6{~={30}~*{\alphanum}}
\endxy
}
\\[-5pt]
\hspace*{\fill} $\big[\hspace{-1.5ex}\bb{\xys{\ing{3stuff-item13a.eps}}}\hspace{-1.5ex}\big]$ \hspace*{30pt} 
\vspace{5pt}

\item[F3-14]: 
\def\alphanum{\ifcase\xypolynode\or \ing{3stuff-item14-1.eps} \or \ing{3stuff-item14-2.eps} \or 
\ing{3stuff-item14-3.eps} \or \ing{3stuff-item14-4.eps} \fi}
\db{
\xy/r\scd/: \xypolygon4{~*{\alphanum}}
\endxy
}
\vspace{10pt}

\item[F3-15]: 
\hspace*{15pt}
\def\alphanum{\ifcase\xypolynode\or \ing{3stuff-item15-1.eps} \or \ing{3stuff-item15-2.eps} \or 
\ing{3stuff-item15-3.eps} \or \ing{3stuff-item15-4.eps} \or \ing{3stuff-item15-5.eps} \fi}
\hspace{\hsqzb}
\db{
\xy/r\sce/: \xypolygon5{~={36}~*{\alphanum}}
\endxy
}
\\[-5pt]
\hspace*{\fill} $\big[\hspace{-1.5ex}\bb{\xys{\ing{3stuff-item15a.eps}}}\hspace{-1.5ex}\big]$ \hspace*{30pt} 
\vspace{5pt}

\item[F3-16]: 
\hspace*{15pt}
\def\alphanum{\ifcase\xypolynode\or \ing{3stuff-item16-1.eps} \or \ing{3stuff-item16-2.eps} \or 
\ing{3stuff-item16-3.eps} \or \ing{3stuff-item16-4.eps} \or \ing{3stuff-item16-5.eps} \fi}
\hspace{\hsqzb}
\db{
\xy/r\sce/: \xypolygon5{~={36}~*{\alphanum}}
\endxy
}
\\[-5pt]
\hspace*{\fill} $\big[\hspace{-1.5ex}\bb{\xys{\ing{3stuff-item16a.eps}}}\hspace{-1.5ex}\big]$ \hspace*{30pt} 
\vspace{5pt}

\item[F3-17]: 
\hspace*{15pt}
\def\alphanum{\ifcase\xypolynode\or \ing{3stuff-item17-1.eps} \or \ing{3stuff-item17-2.eps} \or 
\ing{3stuff-item17-3.eps} \or \ing{3stuff-item17-4.eps} \or \ing{3stuff-item17-5.eps} \fi}
\hspace{\hsqzb}
\db{
\xy/r\sce/: \xypolygon5{~={36}~*{\alphanum}}
\endxy
}
\vspace{10pt}

\item[F3-18]: 
\def\alphanum{\ifcase\xypolynode\or \ing{3stuff-item18-1.eps} \or \ing{3stuff-item18-2.eps} \or 
\ing{3stuff-item18-3.eps} \fi}
\db{
\xy/r\sca/: \xypolygon3{~={0}~*{\alphanum}}
\endxy
}
\\[-15pt]
\hspace*{\fill} $\big[\hspace{-1.5ex}\bb{\xys{\ing{3stuff-item18a.eps}}}\hspace{-1.5ex}\big]$ \hspace*{30pt}
\vspace{5pt}

\item[F3-19]:  
\def\alphanum{\ifcase\xypolynode\or \ing{3stuff-item19-1.eps} \or \ing{3stuff-item19-2.eps} \or 
\ing{3stuff-item19-3.eps} \fi}
\db{
\xy/r\sca/: \xypolygon3{~={0}~*{\alphanum}}
\endxy
}
\\[-15pt]
\hspace*{\fill} $\big[\hspace{-1.5ex}\bb{\xys{\ing{3stuff-item19a.eps}}}\hspace{-1.5ex}\big]$ \hspace*{30pt}
\vspace{5pt}

\item[F3-20]:  
\def\alphanum{\ifcase\xypolynode\or \ing{3stuff-item20-1.eps} \or \ing{3stuff-item20-2.eps} \or 
\ing{3stuff-item20-3.eps} \fi}
\db{
\xy/r\sca/: \xypolygon3{~={0}~*{\alphanum}}
\endxy
} \nopagebreak
\\[-15pt]
\hspace*{\fill} $\big[\hspace{-1.5ex}\bb{\xys{\ing{3stuff-item20a.eps}}}\hspace{-1.5ex}\big]$ \hspace*{30pt}
\vspace{5pt}

\item[F3-21]:
\def\alphanum{\ifcase\xypolynode\or \ing{3stuff-item21-1.eps} \or \ing{3stuff-item21-2.eps} \or 
\ing{3stuff-item21-3.eps} \fi}
\db{
\xy/r\sca/: \xypolygon3{~={0}~*{\alphanum}}
\endxy
}
\\[-15pt]
\hspace*{\fill} $\big[\hspace{-1.5ex}\bb{\xys{\ing{3stuff-item21a.eps}}}\hspace{-1.5ex}\big]$ \hspace*{30pt}
\vspace{5pt}

\item[F3-22]: 
\def\alphanum{\ifcase\xypolynode\or \ing{3stuff-item22-1.eps} \or \ing{3stuff-item22-2.eps} \or 
\ing{3stuff-item22-3.eps} \fi}
\db{
\xy/r\sca/: \xypolygon3{~={0}~*{\alphanum}}
\endxy
}
\\[-15pt]
\hspace*{\fill} $\big[\hspace{-1.5ex}\bb{\xys{\ing{3stuff-item22a.eps}}}\hspace{-1.5ex}\big]$ \hspace*{30pt}
\vspace{5pt}

\item[F3-23]: 
\db{
\xymatrix@C-12pt{\ing{3stuff-item23-1.eps} \ar@/^10pt/ @{-}[r] \ar@/_10pt/@{-}[r] & \ing{3stuff-item23-2.eps}}
}
\vspace{10pt}

\item[F3-24]: 
\def\alphanum{\ifcase\xypolynode\or \ing{3stuff-item24-1.eps} \or \ing{3stuff-item24-2.eps} \or 
\ing{3stuff-item24-3.eps} \fi}
\db{
\xy/r\sca/: \xypolygon3{~={0}~*{\alphanum}}
\endxy
}
\\[-15pt]
\hspace*{\fill} $\big[\hspace{-1.5ex}\bb{\xys{\ing{3stuff-item24a.eps}}}\hspace{-1.5ex}\big]$ \hspace*{30pt}
\vspace{5pt}

\item[F3-25]: 
\def\alphanum{\ifcase\xypolynode\or \ing{3stuff-item25-1.eps} \or \ing{3stuff-item25-2.eps} \or 
\ing{3stuff-item25-3.eps} \fi}
\db{
\xy/r\sca/: \xypolygon3{~={0}~*{\alphanum}}
\endxy
}

\end{multicols}
\end{itemize}

%\end{description}
\end{definition}

As in the case of category objects in 2-categories, when the ambient 2-category is $\CAT$ or $\SMC$, we modify the definition of 2-category object to allow $C_0$ and $C_1$ to be large categories.  A similar remark applies to the upcoming definitions of dicategory object and bicategory object, and we will not repeat it there.

In a 2-category $M=(M_0, M_1, M_2)$, the pair $(M_0, M_1)$ of objects and morphisms, and the pair $(M_1, M_2)$ of morphisms and 2-morphisms both form categories in their own right.  Similarly, the pairs $(C_0, C_1)$ and $(C_1,C_2)$ extracted from a 2-category object form category objects.  
\begin{prop} \label{prop-subcatobj}
Let $C=(C_0, C_1, C_2)$ be a 2-category object in the 2-category $\cC$.  The pair $(C_0,C_1)$ forms a category object in $\cC$, while the pair $(C_1, C_2)$ forms a not-necessarily fibrant category object in $\cC$---see Remark~\ref{rem-nnf}.

If $\cC$ is $\CAT$, then for fixed objects $a,b \in C_0$, let $C_1(a,b)$ be the subcategory of $C_1$ whose objects have source $a$ and target $b$ and whose morphisms have source the identity on $a$ and target the identity on $b$, and let $C_2(a,b)$ be the subcategory of $C_2$ whose objects have source source $a$ and target target $b$ and whose morphisms have source source the identity on $a$ and target target the identity on $b$.  In this case the pair $(C_1(a,b),C_2(a,b))$ forms a category object in $\CAT$.
\end{prop}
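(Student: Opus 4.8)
The strategy is that a $2$-category object's data and axioms are assembled precisely so that two category-object structures sit inside them as identifiable subcollections: a ``horizontal'' one on $(C_0,C_1)$ and a ``vertical'' one on $(C_1,C_2)$. For $(C_0,C_1)$ I would exhibit the data of Definition~\ref{def-catobj2cat} directly. The $0$-data ($C_0$ a groupoid object, $C_1$ an object, the maps $s,t\colon C_1\ra C_0$, and the fibration $s\times t\colon C_1\ra C_0\times C_0$) and the $1$-data ($i$ and $m$, i.e.\ F1-1 and F1-2, with their source/target compatibilities) are literally present; the left identity, right identity, and associator are the $2$-isomorphisms F2-15, F2-16, and F2-12, whose compatibilities are among those listed in the $2$-data; and the four required axioms coincide picture-for-picture with F3-23, F3-24, F3-25, and the pentagon F3-17. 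Thus $(C_0,C_1)$ is a category object with no work beyond this matching.

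For $(C_1,C_2)$ I would play the same game with $C_1$ in the role of the object of $0$-cells and $C_2$ in the role of the object of $1$-cells. The $1$-data are now the $2$-cell identity $i_v$ (F1-3) and vertical composition $m_v$ (F1-4); the left/right identity transformations and the associator are three of the $2$-isomorphisms F2-1 through F2-8 (those expressing weak unitality and associativity of $m_v$, whose source and target under $s,t\colon C_2\ra C_1$ are identities); and the four governing coherences again appear among the F3 axioms. The one datum \emph{not} provided is a fibration $s\times t\colon C_2\ra C_1\times C_1$ — the $2$-category object posits only the weaker fibration $s\times t\colon C_2\ra C_1\times_{C_0\times C_0}C_1$ — so $(C_1,C_2)$ is merely a not-necessarily fibrant category object in the sense of Remark~\ref{rem-nnf}.

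For the last claim, set $\cC=\CAT$ and fix $a,b\in C_0$. First I would check that $C_1(a,b)$ is a groupoid: since $C_1$ is a groupoid and $s,t$ are functors, an arrow $\phi$ with $s(\phi)=\id_a$ and $t(\phi)=\id_b$ has inverse satisfying $s(\phi^{-1})=\id_a$ and $t(\phi^{-1})=\id_b$, so the defining conditions are closed under inversion. Next I would restrict the vertical structure found above and verify closure: using $st=ss$ and $tt=ts$ together with the source/target compatibilities of $i_v$ and $m_v$, the identity cell on an object of $C_1(a,b)$ and the vertical composite of two cells of $C_2(a,b)$ again have source-source $a$ and target-target $b$ (and likewise on morphisms), while the left/right identity and associator restrict since they are defined on exactly these cells. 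The four coherence axioms hold because they already hold in $(C_1,C_2)$.

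The crux, and the only step that uses a genuine hypothesis rather than bookkeeping, is verifying the fibration $s\times t\colon C_2(a,b)\ra C_1(a,b)\times C_1(a,b)$; this is exactly what promotes the not-necessarily fibrant structure of $(C_1,C_2)$ to a fibrant one once endpoints are fixed. I would observe that $C_1(a,b)=(s\times t)^{-1}\bigl(\{(a,b)\}\bigr)$ is the preimage in $C_1$ of the trivial subgroupoid of $C_0\times C_0$ on the object $(a,b)$, whence $C_1(a,b)\times C_1(a,b)$ is a subgroupoid of $C_1\times_{C_0\times C_0}C_1$ and $C_2(a,b)$ is precisely its preimage under the ambient fibration $s\times t\colon C_2\ra C_1\times_{C_0\times C_0}C_1$. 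It then suffices to invoke the general fact that the restriction of a fibration over a subgroupoid of its base is again a fibration: given an arrow $f$ of $C_1(a,b)\times C_1(a,b)$ and a cell lying over its target, the lift supplied by the ambient fibration has image $f$, hence lies over an arrow of $C_1(a,b)\times C_1(a,b)$ and stays inside $C_2(a,b)$. This produces the required fibration and identifies $(C_1(a,b),C_2(a,b))$ as a category object in $\CAT$.
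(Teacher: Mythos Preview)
Your proposal is correct and follows essentially the same approach as the paper: identifying the horizontal structure on $(C_0,C_1)$ via F1-1, F1-2, F2-15, F2-16, F2-12 subject to F3-23, F3-24, F3-25, F3-17, the vertical structure on $(C_1,C_2)$ via F1-3, F1-4 and the corresponding coherences, and then restricting to fixed endpoints to recover fibrancy. Your only imprecision is in saying the vertical identity/associator data are ``three of F2-1 through F2-8'' and that the coherences are ``among the F3 axioms''; the paper pins these down as exactly F2-1, F2-2, F2-3 with axioms F3-1 through F3-4, and you should too.
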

\begin{proof}
The pair of objects $(C_0,C_1)$ may be equipped with the data [F1-1], [F1-2], [F2-15], [F2-16], and [F2-12], subject to the axioms [F3-23], [F3-24], [F3-25], and [F3-17].  These are precisely the requirements for a category object.  Similarly the pair of objects $(C_1,C_2)$ may be equipped with the data [F1-3], [F1-4], [F2-1], [F2-2], and [F2-3], subject to the axioms [F3-1], [F3-2], [F3-3], and [F3-4].  These are precisely the requirements for a category object, except that it may not be the case that $s \times t: C_2 \ra C_1 \times C_1$ is a fibration.  However, when $\cC=\CAT$, if we restrict to the subcategories $C_1(a,b)$ and $C_2(a,b)$, then the condition that $s \times t: C_2 \ra C_1 \times_{C_0 \times C_0} C_1$ be a fibration implies that $s \times t: C_2(a,b) \ra C_1(a,b) \times C_1(a,b)$ is a fibration.
\end{proof}

\subsection{Dicategories in 2-categories} \label{sec-dicatin2cat} $\left(\vcenter{\xymatrix@R=3pt@C=3pt@M=0pt{
\scriptscriptstyle\bullet & \scriptscriptstyle\bullet \\ \scriptscriptstyle\bullet & \scriptscriptstyle\bullet \\ \scriptscriptstyle\bullet & \scriptscriptstyle\bullet
}}\right)$

It often happens that a structure we would like to form a 2-category doesn't quite, and we are forced to work with bicategories instead.  For example, algebras, bimodules, and bimodule maps form a bicategory, not a 2-category, because tensor product is associative and unital only up to isomorphism.  We could hunt down a 2-category equivalent to this bicategory of algebras, but it is wiser to come to terms with the additional complexity of bicategories.

A similar phenomenon happens when working with 2-category objects---that is, natural examples are a bit less strict than we would like.  For instance, the most naive presentation of the bordism category $\Bord_0^3$ of $0$-, $1$-, $2$-, and $3$-manifolds (in which the 1-cell identity [F1-1] sends a 0-manifold $M$ to $M \times [0,1]$) does not form a 2-category object in categories; the identity transformations for 1-manifolds (that is the comparisons between a 1-dimensional bordism and the composite of that bordism with the identity on its source or target) do not exist as \emph{natural} isomorphisms of functors.  Instead, $\Bord_0^3$ is a dicategory object in categories---this structure is nearly the same as a 2-category object, except that the 1-cell identities are given by 2-cells rather than 1-cell morphisms.  Similarly, the 3-level category of conformal nets is not a 2-category object, but in fact a dicategory object.  Indeed, it was our investigation of conformal nets that motivated us to introduce the notion of dicategory object.

For brevity, we refer as much as possible of the definition of a dicategory object to the corresponding structure in a 2-category object.
\begin{definition} \label{def-dicat}
A dicategory object $C$ in the 2-category $\cC$ consists of the following three collections of data, subject to the listed axioms.

\begin{description}

\item[0-data]
The 0-data is exactly as for a 2-category object; in particular, there are three objects $C_0$, $C_1$, and $C_2$ of $\cC$, with $C_0$ and $C_1$ being groupoid objects.

\item[1-data]
There are eight pieces of 1-data, named [D1-1] - [D1-8].
The first six are the 1-data morphisms [F1-1] - [F1-6] for a 2-category object.  There are two additional morphisms of $\cC$ as follows:
\begin{itemize}
\item[D1-7]: $i_l: C_1 \ra C_2$, denoted \cb{\ing{d1stuff-item7.eps}} and called the (upper) left 2-cell identity.
\item[D1-8]: $i_r: C_1 \ra C_2$, denoted \cb{\ing{d1stuff-item8.eps}} and called the (upper) right 2-cell identity.
\end{itemize}
These morphisms are compatible with source and target maps. \\
\indent The morphisms [D1-7] and [D1-8] are required to be invertible, in the following sense.  There exists a morphism of $\cC$, denoted \cb{\ing{d1stuff-item9.eps}} (the lower left 2-cell identity), such that there are invertible 2-morphisms from \cb{\ing{d1stuff-halfid1}} to \cb{\ing{d1stuff-vertid1}} and from \cb{\ing{d1stuff-halfid2}} to \cb{\ing{d1stuff-vertid2}}, such that the two resulting 2-morphisms from \cb{\ing{d1stuff-triangle1}} to \cb{\ing{d1stuff-item7.eps}} are equal, and similarly the two 2-morphisms from \cb{\ing{d1stuff-triangle2}} to \cb{\ing{d1stuff-item9.eps}} are equal --- these are versions of the usual triangle identity for invertible morphisms in a bicategory.  Similarly there exists a morphism \cb{\ing{d1stuff-item10.eps}} (the lower right 2-cell identity) satisfying the corresponding conditions.

\item[2-data]
There are eighteen pieces of 2-data, named [D2-1] - [D2-18].  The first twelve are the twelve 2-isomorphisms [F2-1] - [F2-12] from the definition of a 2-category object; these are the pieces of 2-data not involving the 1-cell identity.  There are six additional 2-isomorphisms of $\cC$ as follows:
\begin{itemize}
\begin{multicols}{2}
\item[D2-13]: \xymatrix@C-12pt{\cb{\ing{d2stuff-item13-1.eps}}   \ar@{=>}[r] &   \cb{\ing{d2stuff-item13-2.eps}}} 
\item[D2-14]: \xymatrix@C-12pt{\cb{\ing{d2stuff-item14-1.eps}}   \ar@{=>}[r] &   \cb{\ing{d2stuff-item14-2.eps}}} 
\item[D2-15]: \xymatrix@C-12pt{\cb{\ing{d2stuff-item15-1.eps}}   \ar@{=>}[r] &   \cb{\ing{d2stuff-item15-2.eps}}} 
\item[D2-16]: \xymatrix@C-12pt{\cb{\ing{d2stuff-item16-1.eps}}   \ar@{=>}[r] &   \cb{\ing{d2stuff-item16-2.eps}}} 
\item[D2-17]: \xymatrix@C-12pt{\cb{\ing{d2stuff-item17-1.eps}}   \ar@{=>}[r] &   \cb{\ing{d2stuff-item17-2.eps}}} 
\item[D2-18]: \xymatrix@C-12pt{\cb{\ing{d2stuff-item18-1.eps}}   \ar@{=>}[r] &   \cb{\ing{d2stuff-item18-2.eps}}} 
\end{multicols}
\end{itemize}
\nid These 2-isomorphisms are compatible with source and target maps in the sense that the sources and targets of [D2-1] through [D2-8] and of [D2-13] through [D2-15] are identity 2-morphisms, the sources and targets of [D2-9] through [D2-11] are the 2-isomorphism [D2-12], the sources of [D2-16] and [D2-18] are [D2-12], the source of [D2-17] is the inverse of [D2-12], and the targets of [D2-16] through [D2-18] are identity 2-isomorphisms.

\item[3-axioms]
The above data are subject to twenty-six axioms, named [D3-1] - [D3-26].  The first seventeen are the seventeen axioms [F3-1] - [F3-17] from the definition of a 2-category object; these are the axioms not involving the 1-cell identity.  There are nine additional axioms, as well as variant axioms abbreviated in parentheses, as follows:

\end{description}

\begin{itemize}
\begin{multicols}{2}
\item[D3-18]:
\def\alphanum{\ifcase\xypolynode\or \ing{d3stuff-item18-1.eps} \or \ing{d3stuff-item18-2.eps} \or 
\ing{d3stuff-item18-3.eps} \or \ing{d3stuff-item18-4.eps} \fi}
\db{
\xy/r\scc/: \xypolygon4{~*{\alphanum}}
\endxy
}
\\[-5pt]
\hspace*{\fill} $\big[\hspace{-1.5ex}\bb{\xys{\ing{d3stuff-item18a.eps}}}\hspace{-1.5ex}\big]$ \hspace*{30pt}
\vspace{5pt}

\item[D3-19]:
\def\alphanum{\ifcase\xypolynode\or \ing{d3stuff-item19-1.eps} \or \ing{d3stuff-item19-2.eps} \or \ing{d3stuff-item19-3.eps} \or \ing{d3stuff-item19-4.eps} \or \ing{d3stuff-item19-5.eps} \or \ing{d3stuff-item19-6.eps} \or \ing{d3stuff-item19-7.eps} \fi}
\hspace{\hsqzb}
\db{
\xy/r\scg/:  \xypolygon7{~*{\alphanum}}
\endxy
}
\\[-5pt]
\hspace*{\fill} $\big[\hspace{-1.5ex}\bb{\xys{\ing{d3stuff-item19a.eps}}}\hspace{-1.5ex}\big]$ \hspace*{30pt}
\vspace{5pt}

\item[D3-20]:
\hspace*{-15pt}
\def\alphanum{\ifcase\xypolynode\or \ing{d3stuff-item20-1.eps} \or \ing{d3stuff-item20-2.eps} \or 
\ing{d3stuff-item20-3.eps} \or \ing{d3stuff-item20-4.eps} \or \ing{d3stuff-item20-5.eps} \or \ing{d3stuff-item20-6.eps} \fi}
\hspace{\hsqza}
\db{
\xy/r\sce/: \xypolygon6{~={30}~*{\alphanum}}
\endxy
}
\\[-5pt]
\hspace*{\fill} $\big[\hspace{-1.5ex}\bb{\xys{\ing{d3stuff-item20a.eps}}}\hspace{-1.5ex}\big]$ \hspace*{30pt}
\vspace{5pt}

\item[D3-21]:
\def\alphanum{\ifcase\xypolynode\or \ing{d3stuff-item21-1.eps} \or \ing{d3stuff-item21-2.eps} \or 
\ing{d3stuff-item21-3.eps} \or \ing{d3stuff-item21-4.eps} \fi}
\db{
\xy/r\scc/: \xypolygon4{~*{\alphanum}}
\endxy
}
\\[-5pt]
\hspace*{\fill} $\big[\hspace{-1.5ex}\bb{\xys{\ing{d3stuff-item21a.eps}}}\hspace{-1.5ex}\big]$ \hspace*{30pt}
\vspace{5pt}

\item[D3-22]:
\hspace*{-15pt}
\def\alphanum{\ifcase\xypolynode\or \ing{d3stuff-item22-1.eps} \or \ing{d3stuff-item22-2.eps} \or 
\ing{d3stuff-item22-3.eps} \or \ing{d3stuff-item22-4.eps} \or \ing{d3stuff-item22-5.eps} \or \ing{d3stuff-item22-6.eps} \fi}
\hspace{\hsqza}
\db{
\xy/r\sce/: \xypolygon6{~={30}~*{\alphanum}}
\endxy
} \nopagebreak
\\[-5pt]
\hspace*{\fill} $\big[\hspace{-1.5ex}\bb{\xys{\ing{d3stuff-item22a.eps}}}\hspace{-1.5ex}\big]$ \hspace*{30pt}
\vspace{5pt}

\item[D3-23]:
\def\alphanum{\ifcase\xypolynode\or \ing{d3stuff-item23-1.eps} \or \ing{d3stuff-item23-2.eps} \or 
\ing{d3stuff-item23-3.eps} \or \ing{d3stuff-item23-4.eps} \fi}
\db{
\xy/r\scc/: \xypolygon4{~*{\alphanum}}
\endxy
}
\\[-5pt]
\hspace*{\fill} $\big[\hspace{-1.5ex}\bb{\xys{\ing{d3stuff-item23a.eps}}}\hspace{-1.5ex}\big]$ \hspace*{30pt}
\vspace{5pt}

\item[D3-24]:
\def\alphanum{\ifcase\xypolynode\or \ing{d3stuff-item24-1.eps} \or \ing{d3stuff-item24-2.eps} \or 
\ing{d3stuff-item24-3.eps} \or \ing{d3stuff-item24-4.eps} \fi}
\db{
\xy/r\scc/: \xypolygon4{~*{\alphanum}}
\endxy
}
\vspace{10pt}

\item[D3-25]:
\hspace*{-15pt}
\def\alphanum{\ifcase\xypolynode\or \ing{d3stuff-item25-1.eps} \or \ing{d3stuff-item25-2.eps} \or 
\ing{d3stuff-item25-3.eps} \or \ing{d3stuff-item25-4.eps} \or \ing{d3stuff-item25-5.eps} \fi}
\hspace{\hsqza}
\db{
\xy/r\sce/: \xypolygon5{~={36}~*{\alphanum}}
\endxy
}
\\[-5pt]
\hspace*{\fill} $\big[\hspace{-1.5ex}\bb{\xys{\ing{d3stuff-item25a.eps}}}\hspace{-1.5ex}\big]$ \hspace*{30pt}
\vspace{5pt}

\item[D3-26]:
\hspace*{-15pt}
\def\alphanum{\ifcase\xypolynode\or \ing{d3stuff-item26-1.eps} \or \ing{d3stuff-item26-2.eps} \or 
\ing{d3stuff-item26-3.eps} \or \ing{d3stuff-item26-4.eps} \or \ing{d3stuff-item26-5.eps} \fi}
\hspace{\hsqza}
\db{
\xy/r\sce/: \xypolygon5{~={36}~*{\alphanum}}
\endxy
}
\\[-5pt]
\hspace*{\fill} $\big[\hspace{-1.5ex}\bb{\xys{\ing{d3stuff-item26a.eps}}}\hspace{-1.5ex}\big]$ \hspace*{30pt} 
\end{multicols}
\end{itemize}
\end{definition}

\subsection{Bicategories in 2-categories} \label{sec-bicatin2cat} $\left(\vcenter{\xymatrix@R=3pt@C=3pt@M=0pt{
\scriptscriptstyle\bullet & \scriptscriptstyle\bullet \\ \scriptscriptstyle\bullet & \scriptscriptstyle\bullet \\ \scriptscriptstyle\bullet & \scriptscriptstyle\bullet
}}\right)$

The natural geometric and algebraic examples of symmetric monoidal 3-categories appear to arise as dicategory objects in the 2-category of symmetric monoidal categories---that is, the associator of a composition of 1-cells is a symmetric monoidal natural transformation between the composition functors, rather than a 2-cell object with source and target the respective associations.  As such, bicategory objects are not strictly necessary, for instance, for our investigation of conformal nets.  However, they provide a natural bridge between dicategory objects in $\CAT$ and the notion of tricategory, and therefore warrant discussion.

The structure of a bicategory object is close to that of a dicategory object, and for brevity we refer most of the definition back to the corresponding elements of the earlier definitions.  An abbreviated list of all the data and axioms of a bicategory object is depicted in Table~\ref{table-dataax}, in the Appendix.
\begin{definition}
A bicategory object $C$ in the 2-category $\cC$ consists of the following three collections of data, subject to the listed axioms.
\begin{description}

\item[0-data]
The 0-data is exactly as for a dicategory object; in particular, there are three objects $C_0$, $C_1$, and $C_2$ of $\cC$, with $C_0$ and $C_1$ being groupoid objects.

\item[1-data]
There are nine pieces of 1-data, named [B1-1] - [B1-9].  The first eight are respectively the 1-data morphisms [F1-1] - [F1-6] for a 2-category object and [D1-7] - [D1-8] for a dicategory object.  There is one additional morphism of $\cC$ as follows:
\begin{itemize}
\item[B1-9]: $a: C_1 \times_{C_0} C_1 \times_{C_0} C_1 \ra C_2$, denoted \cb{\ing{b1stuff-item9.eps}} and called the associator 2-cell.  This morphism is compatible with the source and target maps.  Moreover this morphism is required to be invertible, in the sense that there exists a morphism, denoted \cb{\ing{b1stuff-item9a.eps}}, which is a two-sided inverse to \cb{\ing{b1stuff-item9.eps}} up to invertible 2-morphisms satisfying the triangle identities.
\end{itemize}

\item[2-data]
There are eighteen pieces of 2-data, named [B2-1] - [B2-18].  The first eight are the 2-isomorphisms [F2-1] - [F2-8] from the definition of a 2-category object; these are the pieces of 2-data not involving the 1-cell identity or an association of 1-cells.  Further the 2-data [B2-13] - [B2-15] are the three 2-isomorphisms [D2-13] - [D2-15] from the definition of a dicategory object; these involve 1-cell identities but not association of 1-cells.  The remaining dicategory 2-data are replaced by the following seven 2-isomorphisms of $\cC$:
\begin{multicols}{2}
\begin{itemize}
\item[B2-9]: \xymatrix@C-12pt{\cb{\ing{b2stuff-item9-1.eps}}   \ar@{=>}[r] &   \cb{\ing{b2stuff-item9-2.eps}}} 
\item[B2-10]: \xymatrix@C-12pt{\cb{\ing{b2stuff-item10-1.eps}}   \ar@{=>}[r] &   \cb{\ing{b2stuff-item10-2.eps}}} 
\item[B2-11]: \xymatrix@C-12pt{\cb{\ing{b2stuff-item11-1.eps}}   \ar@{=>}[r] &   \cb{\ing{b2stuff-item11-2.eps}}} 
\item[B2-12]: \xymatrix@C-12pt{\cb{\ing{b2stuff-item12-1.eps}}   \ar@{=>}[r] &   \cb{\ing{b2stuff-item12-2.eps}}} 
\item[B2-16]: \xymatrix@C-12pt{\cb{\ing{b2stuff-item16-1.eps}}   \ar@{=>}[r] &   \cb{\ing{b2stuff-item16-2.eps}}} 
\item[B2-17]: \xymatrix@C-12pt{\cb{\ing{b2stuff-item17-1.eps}}   \ar@{=>}[r] &   \cb{\ing{b2stuff-item17-2.eps}}} 
\item[B2-18]: \xymatrix@C-12pt{\cb{\ing{b2stuff-item18-1.eps}}   \ar@{=>}[r] &   \cb{\ing{b2stuff-item18-2.eps}}} 
\end{itemize}
\end{multicols}
\nid The sources and targets of all the 2-data 2-isomorphisms are identity 2-isomorphisms.

\item[3-axioms]
The above data are subject to twenty-six axioms, named [B3-1] - [B3-26].  The eight axioms [B3-1] - [B3-8] are exactly the 2-category axioms [F3-1] - [F3-8], and the two axioms [B3-18] - [B3-19] are exactly the dicategory axioms [D3-18] - [D3-19].  
The six axioms [B3-9] - [B3-14] and the five axioms [B3-20] - [B3-24] are obtained from the axioms [F3-9] - [F3-14] and [D3-20] - [D3-24] by inserting the necessary associator 2-cells; we do not redraw these minor variations.  The remaining five dicategory axioms, namely [F3-15], [F3-16], [F3-17], [D3-25], and [D3-26], are replaced by the following five axioms, together with the variant axioms abbreviated in parentheses:
\end{description}
\pagebreak

\begin{multicols}{2}

\hspace*{-15pt}
B3-15:\nopagebreak\\[5pt]
\xymatrix@C-12pt@R-15pt@M-2pt{\cb{\ing{b3stuff-item15-1A.eps}} \ar@{-}[r] \ar@{-}[d] & \cb{\ing{b3stuff-item15-1B.eps}} \ar@{-}[d] \\
\cb{\ing{b3stuff-item15-9B.eps}} \ar@{-}[d] & \cb{\ing{b3stuff-item15-2A.eps}} \ar@{-}[d] \\
\cb{\ing{b3stuff-item15-9A.eps}} \ar@{-}[d] & \cb{\ing{b3stuff-item15-2B.eps}} \ar@{-}[d] \\
\cb{\ing{b3stuff-item15-8B.eps}} \ar@{-}[d] & \cb{\ing{b3stuff-item15-3A.eps}} \ar@{-}[d] \\
\cb{\ing{b3stuff-item15-8A.eps}} \ar@{-}[d] & \cb{\ing{b3stuff-item15-4A.eps}} \ar@{-}[d] \\
\cb{\ing{b3stuff-item15-7B.eps}} \ar@{-}[d] & \cb{\ing{b3stuff-item15-5A.eps}} \ar@{-}[d] \\
\cb{\ing{b3stuff-item15-7A.eps}} \ar@{-}[d] & \cb{\ing{b3stuff-item15-5B.eps}} \ar@{-}[d] \\
\cb{\ing{b3stuff-item15-6B.eps}} \ar@{-}[r] & \cb{\ing{b3stuff-item15-6A.eps}}\\
}\\
\hspace*{\fill} $\big[\hspace{-1.5ex}\bb{\xys{\ing{b3stuff-item15a.eps}}}\hspace{-1.5ex}\big]$ \hspace*{10pt}

B3-16:\nopagebreak\\[5pt]
\hspace*{10pt}
\xymatrix@C-12pt@R-15pt@M-2pt{\cb{\ing{b3stuff-item16-1A.eps}} \ar@{-}[r] \ar@{-}[d] & \cb{\ing{b3stuff-item16-1B.eps}} \ar@{-}[d] \\
\cb{\ing{b3stuff-item16-11B.eps}} \ar@{-}[d] & \cb{\ing{b3stuff-item16-2A.eps}} \ar@{-}[d] \\
\cb{\ing{b3stuff-item16-11A.eps}} \ar@{-}[d] & \cb{\ing{b3stuff-item16-2B.eps}} \ar@{-}[d] \\
\cb{\ing{b3stuff-item16-10B.eps}} \ar@{-}[d] & \cb{\ing{b3stuff-item16-3A.eps}} \ar@{-}[d] \\
\cb{\ing{b3stuff-item16-10A.eps}} \ar@{-}[d] & \cb{\ing{b3stuff-item16-4A.eps}} \ar@{-}[d] \\
\cb{\ing{b3stuff-item16-9B.eps}} \ar@{-}[d] & \cb{\ing{b3stuff-item16-5A.eps}} \ar@{-}[d] \\
\cb{\ing{b3stuff-item16-9A.eps}} \ar@{-}[d] & \cb{\ing{b3stuff-item16-5B.eps}} \ar@{-}[d] \\
\cb{\ing{b3stuff-item16-8A.eps}} \ar@{-}[d] & \cb{\ing{b3stuff-item16-6A.eps}} \ar@{-}[d] \\
\cb{\ing{b3stuff-item16-7A.eps}} \ar@{-}[r] & \cb{\ing{b3stuff-item16-6B.eps}}\\
}\\[-5pt]
\hspace*{\fill} $\big[\hspace{-1.5ex}\bb{\xys{\ing{b3stuff-item16a.eps}}}\hspace{-1.5ex}\big]$ \hspace*{-5pt}

\hspace*{-15pt}
B3-17:\nopagebreak\\[5pt]
\xymatrix@C-12pt@R-15pt@M-2pt{\cb{\ing{b3stuff-item17-1A.eps}} \ar@{-}[r] \ar@{-}[d] & \cb{\ing{b3stuff-item17-13B.eps}} \ar@{-}[d] \\
\cb{\ing{b3stuff-item17-1B.eps}} \ar@{-}[d] & \cb{\ing{b3stuff-item17-13A.eps}} \ar@{-}[d] \\
\cb{\ing{b3stuff-item17-2A.eps}} \ar@{-}[d] & \cb{\ing{b3stuff-item17-12A.eps}} \ar@{-}[d] \\
\cb{\ing{b3stuff-item17-3A.eps}} \ar@{-}[d] & \cb{\ing{b3stuff-item17-11A.eps}} \ar@{-}[d] \\
\cb{\ing{b3stuff-item17-4A.eps}} \ar@{-}[d] & \cb{\ing{b3stuff-item17-10B.eps}} \ar@{-}[d] \\
\cb{\ing{b3stuff-item17-4B.eps}} \ar@{-}[d] & \cb{\ing{b3stuff-item17-10A.eps}} \ar@{-}[d] \\
\cb{\ing{b3stuff-item17-5A.eps}} \ar@{-}[d] & \cb{\ing{b3stuff-item17-9B.eps}} \ar@{-}[d] \\
\cb{\ing{b3stuff-item17-5B.eps}} \ar@{-}[d] & \cb{\ing{b3stuff-item17-9A.eps}} \ar@{-}[d] \\
\cb{\ing{b3stuff-item17-6A.eps}} \ar@{-}[d] & \cb{\ing{b3stuff-item17-8B.eps}} \ar@{-}[d] \\
\cb{\ing{b3stuff-item17-6B.eps}} \ar@{-}[d] & \cb{\ing{b3stuff-item17-8A.eps}} \ar@{-}[d] \\
\cb{\ing{b3stuff-item17-7A.eps}} \ar@{-}[r] & \cb{\ing{b3stuff-item17-7B.eps}}\\
}

\hspace*{25pt}
B3-25:\nopagebreak\\[5pt]
\hspace*{35pt}
\xymatrix@C-12pt@R-20pt@M-2pt{\cb{\ing{b3stuff-item25-1.eps}} \ar@{-}[r] \ar@{-}[d] & \cb{\ing{b3stuff-item25-7.eps}} \ar@{-}[d] \\
\cb{\ing{b3stuff-item25-2.eps}} \ar@{-}[d] & \cb{\ing{b3stuff-item25-6A.eps}} \ar@{-}[d] \\
\cb{\ing{b3stuff-item25-3A.eps}} \ar@{-}[d] & \cb{\ing{b3stuff-item25-5B.eps}} \ar@{-}[d] \\
\cb{\ing{b3stuff-item25-3B.eps}} \ar@{-}[d] & \cb{\ing{b3stuff-item25-5A.eps}} \ar@{-}[d] \\
\cb{\ing{b3stuff-item25-4A.eps}} \ar@{-}[r] & \cb{\ing{b3stuff-item25-4B.eps}}\\
}\\[-5pt]
\hspace*{\fill} $\big[\hspace{-1.5ex}\bb{\xys{\ing{b3stuff-item25a.eps}}}\hspace{-1.5ex}\big]$ \hspace*{30pt}

\hspace*{25pt}
B3-26:\nopagebreak\\[5pt]
\hspace*{35pt}
\xymatrix@C-50pt@R-20pt@M-2pt{\cb{\ing{b3stuff-item26-1.eps}} \ar@{-}[rr] \ar@{-}[d] && \cb{\ing{b3stuff-item26-2.eps}} \ar@{-}[d] \\
\cb{\ing{b3stuff-item26-8.eps}} \ar@{-}[d] && \cb{\ing{b3stuff-item26-3A.eps}} \ar@{-}[d] \\
\cb{\ing{b3stuff-item26-7B.eps}} \ar@{-}[d] && \cb{\ing{b3stuff-item26-3B.eps}} \ar@{-}[d] \\
\cb{\ing{b3stuff-item26-7A.eps}} \ar@{-}[d] && \cb{\ing{b3stuff-item26-4A.eps}} \ar@{-}[d] \\
\cb{\ing{b3stuff-item26-6B.eps}} \ar@{-}[dr] && \cb{\ing{b3stuff-item26-5A.eps}} \ar@{-}[dl]\\
& \cb{\ing{b3stuff-item26-6A.eps}} &
}\\[-5pt]
\hspace*{\fill} $\big[\hspace{-1.5ex}\bb{\xys{\ing{b3stuff-item26a.eps}}}\hspace{-1.5ex}\big]$ \hspace*{30pt}

\end{multicols}

\nid In the above pictures, the trees on the left indicate the order of vertical association.  Note that in these axioms the edges that indicate a change of vertical association may represent multiple applications of the vertical associator 2-morphism [F2-3].  In these cases, which sequence of vertical associators is chosen does not affect the content of the axiom, because of axiom [F3-4].

\end{definition}

\begin{remark}
In the modification of axioms [F3-9] through [F3-14] and [D3-20] through [D3-24] into bicategory axioms, there is one morphism that deserve comment, namely the one that appears on the left in axiom [F3-9] and on the lower left in axiom [F3-10].  This 2-morphism is replaced, in axioms [B3-9] and [B3-10], by a 2-morphism from the vertical composite of an associator 2-cell with an identity to the vertical composite of an identity with an associator 2-cell.  That 2-morphism is \emph{defined} to be the composition of the 2-morphism [F2-2] with the inverse of the 2-morphism [F2-1].
\end{remark}

\begin{remark}
Note that for a bicategory object $C=(C_0,C_1,C_2)$ in the 2-category $\cC$, it is not the case that the pair $(C_0,C_1)$ forms a category object, but it is still the case, as in Proposition~\ref{prop-subcatobj}, that the pair $(C_1,C_2)$ forms a not-necessarily fibrant category object in $\cC$.
\end{remark}

\vspace{12pt}

\subsection{2-categories are dicategories are bicategories} \label{sec-2dibi}

We show that a 2-category object in a 2-category $\cC$ can be given the structure of a dicategory object in $\cC$, and that a dicategory object in $\cC$ can be given the structure of a bicategory object in $\cC$.  These constructions make essential use of the fibrancy conditions in the definitions of 2-category object and dicategory object.  

\begin{prop} \label{prop-2catisdicat}
Given a 2-category object $C$ in a 2-category $\cC$, the triple $(C_0, C_1, C_2)$ of 0-cells, 1-cells, and 2-cells of $C$ also admits the structure of a dicategory object in $\cC$.
\end{prop}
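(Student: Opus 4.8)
The plan is to recognize that almost the entire dicategory structure is already present in the given 2-category object, and that the only essential new work is to convert the \emph{strict} left and right identity transformations---which in a 2-category object are 2-isomorphisms of the ambient 2-category $\cC$---into \emph{weak} 2-cell identities, that is, into genuine morphisms $i_l,i_r\colon C_1\ra C_2$. Accordingly I would take the 0-data unchanged, declare the dicategory 1-data [D1-1]--[D1-6] to be the 2-category 1-data [F1-1]--[F1-6], declare [D2-1]--[D2-12] to be [F2-1]--[F2-12], and note that the dicategory axioms [D3-1]--[D3-17] hold because they are literally [F3-1]--[F3-17]. This reduces the proposition to producing the new data $i_l$ [D1-7], $i_r$ [D1-8], and [D2-13]--[D2-18], and to verifying the new axioms [D3-18]--[D3-26].

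The construction of $i_l$ and $i_r$ is where the fibrancy hypothesis is used, exactly as advertised at the start of the section. Write $\lambda$ for the left identity transformation [F2-15], an invertible 2-morphism of $\cC$ from $m((i\circ s)\times\id)\colon C_1\ra C_1$ to $\id\colon C_1\ra C_1$. These two 1-morphisms have equal composite with $s\times t\colon C_1\ra C_0\times C_0$, so the pair $\big(m((i\circ s)\times\id),\,\id\big)$ is an object of $\Hom(C_1,\,C_1\times_{C_0\times C_0}C_1)$, and $(\lambda,\id)$ is an invertible arrow from it to $(\id,\id)$. The 2-cell identity $i_v\colon C_1\ra C_2$ lies over $(\id,\id)$ under $s\times t\colon C_2\ra C_1\times_{C_0\times C_0}C_1$. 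Since this map is a fibration, the arrow $(\lambda,\id)$ lifts to an invertible 2-morphism $\alpha_l\colon i_l\dra i_v$ of $\cC$ with $(s\times t)(\alpha_l)=(\lambda,\id)$; its source $i_l\colon C_1\ra C_2$ is the desired upper left 2-cell identity, satisfying $s\circ i_l=m((i\circ s)\times\id)$ and $t\circ i_l=\id$. The morphism $i_r$ is obtained in the same way from the right identity transformation [F2-16].

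To equip $i_l$ and $i_r$ with the invertibility demanded by the definition, I would lift the inverse 2-isomorphism $\lambda^{-1}$ by the same procedure to obtain the lower left 2-cell identity, and likewise on the right. The invertible 2-morphisms witnessing that $i_l$ and its lower partner are mutually inverse under vertical composition, and the two bigon triangle identities, are then transported from the genuine inverse relationship between $\lambda$ and $\lambda^{-1}$ in $\cC$ along the lifting isomorphisms $\alpha_l,\alpha_l^{-1}$; here one uses that a cartesian lift of an isomorphism is determined up to unique isomorphism. The remaining new 2-data [D2-13]--[D2-18] are assembled from $\alpha_l,\alpha_r$ together with the 2-cell-compatibility data [F2-13], [F2-14] of the 2-category object (whose images under $s$ and $t$ recover [F2-15] and [F2-16] respectively); with these in hand one checks the prescribed source/target conditions---that the sources and targets of [D2-13]--[D2-15] are identity 2-morphisms, and that the sources of [D2-16]--[D2-18] are [D2-12] or its inverse with identity targets.

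The main obstacle is verifying the nine new axioms [D3-18]--[D3-26]. I expect each to follow from the corresponding 2-category axioms [F3-18]--[F3-25], which govern the interplay of the strict identity transformations with whiskering, vertical composition, and the associator, by conjugating the relevant equations of 2-morphisms with $\alpha_l$, $\alpha_r$, and their inverses (the one extra axiom being absorbed by the triangle and invertibility data produced for $i_l,i_r$). This step is a sequence of pasting-diagram chases rather than anything conceptually new; what makes it uniform is that $i_l$ and $i_r$ differ from the strict transformations [F2-15], [F2-16] only by the essentially unique fibration lift, so every coherence holding strictly in $\cC$ transports to the corresponding weak coherence among 2-cells. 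In writing this up I would give the lifting construction in full and then carry out the transport for one representative axiom, indicating that the others are identical in form.
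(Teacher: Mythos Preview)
Your approach is exactly the paper's: keep all data and axioms not involving the 1-cell identity unchanged, lift [F2-15] and [F2-16] along the fibration $s\times t\colon C_2\to C_1\times_{C_0\times C_0}C_1$ starting from $i_v$ to produce $i_l,i_r$ together with lifting 2-isomorphisms (the paper calls them $\eta_l,\eta_r$), and build everything new from these.

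Two places where your expectations need correcting. First, the composites defining [D2-13]--[D2-18] require not just [F2-13], [F2-14] but also [F2-1], [F2-2], [F2-4], [F2-5]; and the source compatibility of [D2-15]--[D2-18] is precisely where the 2-category axioms [F3-23], [F3-24], [F3-25] get consumed---they are used for well-definedness of the 2-data, not for any 3-axiom. Second, the nine new 3-axioms do \emph{not} match [F3-18]--[F3-25] one-to-one with a leftover absorbed by triangle identities. In the paper's accounting, [D3-18]--[D3-22] reduce to [F3-18]--[F3-22] respectively (modulo auxiliary uses of [F3-1]--[F3-6]), but [D3-23] and [D3-24] both reduce to [F3-18], while [D3-25] and [D3-26] reduce to the associator axioms [F3-9] and [F3-9]$+$[F3-10]. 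So your plan to do one representative case and declare the rest ``identical in form'' would mislead; the reductions fall into several genuinely different shapes and are best listed individually.
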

\begin{proof}
\emph{0-data.}
The 0-cells, 1-cells, and 2-cells of the dicategory object $D$ under construction, and the source and target maps between these objects of cells, are the same as those for the given 2-category object $C$.

\emph{1-data.}
The morphisms [D1-1] through [D1-6] for $D$ are the morphisms [F1-1] through [F1-6] for $C$.  To construct the morphisms [D1-7] and [D1-8] we use the lifting property of the fibration $C_2 \xra{s \times t} C_1 \times_{C_0 \times C_0} C_1$.  Specifically, the morphism $C_1 \xra{\id \times \id} C_1 \times_{C_0 \times C_0} C_1$ has a canonical lift across $s \times t$ to the vertical identity morphism $i_v: C_1 \ra C_2$ provided by [F1-3].  That morphism $\id \times \id$ is `homotopic' to the morphism $C_1 \xra{m((i \circ s) \times \id) \times \id} C_1 \times_{C_0 \times C_0} C_1$ by the existence of the 2-isomorphism $\text{[F2-15]} \times \id: m((i \circ s) \times \id) \times \id \dra \id \times \id.$  This homotopy lifts to a homotopy (that is a 2-isomorphism), denoted $\eta_l$, of morphisms $C_1 \ra C_2$ from a lift $i_l$ of $m((i \circ s) \times \id) \times \id$ to the lift $i_v$ of $\id \times \id$.  The datum [D1-7] is defined to be that lift $i_l$.  Again, the homotopy lift is $\eta_l : i_l \dra i_v$, denoted graphically $\eta_l : \cb{\ings{d1stuff-item7.eps}} \dra \cb{\ings{1stuff-item3.eps}}$.  The datum [D1-8] is similar, defined via the homotopy lift $\eta_r : i_r \dra i_v$, that is $\eta_r : \cb{\ings{d1stuff-item8.eps}} \dra \cb{\ings{1stuff-item3.eps}}$, of the 2-isomorphism $\text{[F2-16]} \times \id : m(\id \times (i \circ t)) \times \id \dra \id \times \id$.  The lower left 2-cell identity and the lower right 2-cell identity are produced similarly and they indeed function as (triangle-identity-satisfying) vertical composition inverses for [D1-7] and [D1-8], as required.  % This is much more complicated than that last sentence suggests.  Indeed, producing the right structures requires certain inexplicit boundary conditions to be satisfied, and then checking the triangle axioms uses the uniqueness property of lifting.  See notebook.

\emph{2-data.}
The 2-isomorphisms [D2-1] through [D2-12] for $D$ are the 2-isomorphisms [F2-1] through [F2-12] for $C$.  The remaining 2-data are defined as the following composites:
\begin{itemize}
\item[D2-13]: 
\xymatrix@C+15pt{
\cb{\ing{d2stuff-item13-1.eps}}   
\ar@{=>}[r]^{\text{[F2-13]} \times \eta_l} &
\cb{\ing{2stuff-item2-1.eps}}
\ar@{=>}[r]^{\text{[F2-2]}} &
\cb{\ing{2stuff-item2-2.eps}}
\ar@{<=}[r]^{\text{[F2-1]}} &
\cb{\ing{2stuff-item1-1.eps}}
\ar@{<=}[r]^{\eta_l \times \id} &
\cb{\ing{d2stuff-item13-2.eps}}} \hspace{20pt}
\item[D2-14]: 
\xymatrix@C+15pt{
\cb{\ing{d2stuff-item14-1.eps}}   
\ar@{=>}[r]^{\text{[F2-14]} \times \eta_r} &
\cb{\ing{2stuff-item2-1.eps}}
\ar@{=>}[r]^{\text{[F2-2]}} &
\cb{\ing{2stuff-item2-2.eps}}
\ar@{<=}[r]^{\text{[F2-1]}} &
\cb{\ing{2stuff-item1-1.eps}}
\ar@{<=}[r]^{\eta_r \times \id} &
\cb{\ing{d2stuff-item14-2.eps}}} \hspace{20pt}
\item[D2-15]: 
\xymatrix@C+10pt{
\cb{\ing{d2stuff-item15-1.eps}}  
\ar@{=>}[r]^{\eta_l \circ i} &
\cb{\ing{bi-gps-3-2.eps}}
\ar@{<=}[r]^{\eta_r \circ i} &
 \cb{\ing{d2stuff-item15-2.eps}}} \hspace{20pt}
\item[D2-16]: 
\xymatrix@C+10pt{
\cb{\ing{d2stuff-item16-1.eps}}   
\ar@{=>}[r]^{\eta_l \times \id} &
\cb{\ing{2stuff-item4-1.eps}}
\ar@{=>}[r]^{\text{[F2-4]}} &
\cb{\ing{2stuff-item4-2.eps}}
\ar@{<=}[r]^{\eta_l \circ m} &
\cb{\ing{d2stuff-item16-2.eps}}} \hspace{20pt}
\item[D2-17]: 
\xymatrix@C+10pt{
\cb{\ing{d2stuff-item17-1.eps}}   
\ar@{=>}[r]^{\id \times \eta_r} &   
\cb{\ing{2stuff-item5-1.eps}}
\ar@{=>}[r]^{\text{[F2-5]}} &
\cb{\ing{2stuff-item5-2.eps}}
\ar@{<=}[r]^{\eta_r \circ m} &
\cb{\ing{d2stuff-item17-2.eps}}} \hspace{20pt}
\item[D2-18]: 
\xymatrix@C+10pt{
\cb{\ing{d2stuff-item18-1.eps}}   
\ar@{=>}[r]^{\eta_r \times \id} & 
\cb{\ing{2stuff-item4-1.eps}}
\ar@{=>}[r]^{\text{[F2-4]}} &
\cb{\ing{2stuff-item4-2.eps}}
\ar@{<=}[r]^{\text{[F2-5]}} &
\cb{\ing{2stuff-item5-1.eps}}
\ar@{<=}[r]^{\id \times \eta_l} &
\cb{\ing{d2stuff-item18-2.eps}}} \hspace{20pt}
\end{itemize}

\nid That this 2-isomorphism [D2-15] is compatible with the source map is a consequence of axiom [F3-23].  That [D2-16] and [D2-17] are compatible with the source map is a consequence of axiom [F3-24].  That [D2-18] is compatible with the source map is a consequence of axiom [F3-25].

\emph{3-axioms.}
The axioms [D3-1] through [D3-17] are the axioms [F3-1] through [F3-17].  The remainder of the axioms are directly verified by expanding the definitions of the morphisms involved and observing that the axiom is thereby reduced to known properties of the given 2-category object, as follows.
\begin{itemize}
\item[D3-18:] This axiom reduces, modulo two applications of axiom [F3-1], to axiom [F3-18].
\item[D3-19:] This axiom reduces, modulo an application of axiom [F3-3] and two applications of axiom [F3-2], to axiom [F3-19].
\item[D3-20:] This axiom reduces, modulo an application of axiom [F3-6] and two applications of axiom [F3-5], to axiom [F3-20].
\item[D3-21:] This axiom reduces, modulo an application of axiom [F3-6], to axiom [F3-21].
\item[D3-22:] This axiom reduces, modulo two applications of axiom [F3-5], to axiom [F3-22].
\item[D3-23:] This axiom reduces, modulo an application of axiom [F3-1], to axiom [F3-18].
\item[D3-24:] This axiom reduces to two applications of axiom [F3-18].
\item[D3-25:] This axiom reduces to axiom [F3-9].
\item[D3-26:] This axiom reduces to axioms [F3-9] and [F3-10]. \qedhere
\end{itemize}
\end{proof}
\nid We note that, in the above construction, all the conditions on the 2-category object were used in checking the dicategory object structure.

\begin{prop} \label{prop-dicatisbicat}
Given a dicategory object $D$ in a 2-category $\cC$, the triple $(C_0, C_1, C_2)$ of 0-cells, 1-cells, and 2-cells of $D$ also admits the structure of a bicategory object in $\cC$.
\end{prop}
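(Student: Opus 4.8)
The plan is to mimic the proof of Proposition~\ref{prop-2catisdicat} almost verbatim, with the strict associator 2-isomorphism playing the role that the strict identity transformations played there. The only structural difference between a dicategory object and a bicategory object is that the strict associator 2-isomorphism [D2-12] (a 2-morphism of $\cC$ from $m(m \times \id)$ to $m(\id \times m)$) must be replaced by a weak associator 2-cell [B1-9], namely a morphism $a: C_1 \times_{C_0} C_1 \times_{C_0} C_1 \ra C_2$ with $s \circ a = m(m \times \id)$ and $t \circ a = m(\id \times m)$; the weak 1-cell identities $i_l, i_r$ are already present in $D$ and are carried over unchanged. So the 0-data and the 1-data [B1-1]--[B1-8] of the bicategory object under construction are taken equal to the corresponding data of $D$.

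To produce the associator 2-cell [B1-9] I would invoke the lifting property of the fibration $C_2 \xra{s \times t} C_1 \times_{C_0 \times C_0} C_1$, exactly as $i_l$ and $i_r$ were produced in Proposition~\ref{prop-2catisdicat}. The vertical identity $i_v \circ m(\id \times m)$ is the canonical lift of the pair $(m(\id \times m), m(\id \times m))$ across $s \times t$; the 2-isomorphism $\text{[D2-12]} \times \id$ exhibits $(m(m \times \id), m(\id \times m))$ as homotopic to that pair; and lifting this homotopy produces a lift $a$ of $(m(m \times \id), m(\id \times m))$ together with a 2-isomorphism $\eta_a: a \dra i_v \circ m(\id \times m)$ (which restricts under $s$ to [D2-12] and under $t$ to an identity). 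The datum [B1-9] is this $a$. Lifting the inverse 2-isomorphism $\text{[D2-12]}^{-1}$ in the same way produces a two-sided inverse, with the triangle identities following just as they did for $i_l$ and $i_r$.

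Next I would assign the 2-data. Those not involving an association of 1-cells, namely [B2-1]--[B2-8] and [B2-13]--[B2-15], are taken directly from [D2-1]--[D2-8] and [D2-13]--[D2-15]. The seven associator-involving 2-isomorphisms [B2-9]--[B2-12] and [B2-16]--[B2-18] are defined as composites of the corresponding dicategory 2-data [D2-9]--[D2-12] and [D2-16]--[D2-18] with $\eta_a$ and $\eta_a^{-1}$, in precise analogy with the definitions of [D2-13]--[D2-18] in terms of $\eta_l$ and $\eta_r$. That the sources and targets of all these composites are identity 2-isomorphisms (as the bicategory definition requires of its 2-data) is \emph{not} automatic: since $\eta_a$ restricts under $s$ to [D2-12], each telescoping composite collapses to an identity only by virtue of the appropriate dicategory axioms, exactly as the source/target compatibility of [D2-13]--[D2-18] was shown to follow from [F3-23]--[F3-25].

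Finally, the 3-axioms. The axioms [B3-1]--[B3-8] and [B3-18]--[B3-19] are literally the dicategory axioms [D3-1]--[D3-8] and [D3-18]--[D3-19]. For [B3-9]--[B3-14] and [B3-20]--[B3-24], obtained from [F3-9]--[F3-14] and [D3-20]--[D3-24] by inserting the associator 2-cell, and for the five genuinely new coherence diagrams [B3-15], [B3-16], [B3-17], [B3-25], and [B3-26], the verification should in each case be the same: expand every 2-datum back into the dicategory data conjugated by $\eta_a$, observe that the inserted copies of $\eta_a$ and $\eta_a^{-1}$ cancel in pairs, and reduce the equation to the corresponding strict statement about $D$ (with the associator reappearing as [D2-12]). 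The hard part will be the bookkeeping in the five long diagrams [B3-15]--[B3-17] and [B3-25]--[B3-26], each of which contains several occurrences of $a$ whiskered and vertically composed in different orders; one must check that after the substitution the proliferating homotopies really do organize into canceling pairs, leaving exactly one dicategory axiom. This is routine in principle but delicate in practice, and — as in Proposition~\ref{prop-2catisdicat} — I expect that precisely the full strength of the dicategory axioms, together with the fibrancy used to build $a$, is consumed; the well-definedness of the reduction (that the choice of vertical reassociations is immaterial) rests on axiom [F3-4].
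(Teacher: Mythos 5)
Your proposal follows the paper's proof essentially verbatim: the associator 2-cell is produced by lifting the strict associator through the fibration $C_2 \xra{s \times t} C_1 \times_{C_0 \times C_0} C_1$ (the paper anchors the lift at $i_v \circ m(m \times \id)$ via the homotopy $\id \times \text{[D2-12]}^{-1}$, a mirror image of your choice, yielding $\alpha: a \dra i_v \circ m(m \times \id)$), the seven associator-involving 2-data are the dicategory data conjugated by this homotopy with [D2-1], [D2-2], [D2-4], [D2-5] as glue (and [B2-12]'s target-compatibility indeed requires a dicategory axiom, namely [D3-17]), and the remaining axioms are verified by expansion and cancellation, consuming precisely all the dicategory conditions as you predict. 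The one quibble is your expectation that each long diagram collapses to exactly one dicategory axiom: the paper's reductions mostly land on the corresponding axiom only modulo auxiliary applications of [D3-5], [D3-6], or [D3-18], and for [B3-17] the reduction lands on no single counterpart at all but on six instances of [D3-5], two of [D3-9], and one of [D3-10].
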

\begin{proof}
\emph{0-data.}
The 0-cells, 1-cells, and 2-cells of the bicategory object $B$ under construction, and the source and target maps between these objects of cells, are the same as those for the given dicategory object $D$.

\emph{1-data.}
The morphisms [B1-1] through [B1-8] for $B$ are the morphisms [D1-1] through \mbox{[D1-8]} for $D$.  To construct the morphism [B1-9] we use the lifting property of the fibration $C_2 \xra{s \times t} C_1 \times_{C_0 \times C_0} C_1$.  Specifically, the morphism $C_1 \times_{C_0} C_1 \times_{C_0} C_1 \xra{[m \circ (m \times \id)] \times [m \circ (m \times \id)]} C_1 \times_{C_0 \times C_0} C_1$ has a canonical lift across $s \times t$ to the vertical identity morphism on the horizontal composition, that is the composite $C_1 \times_{C_0} C_1 \times_{C_0} C_1 \xra{m \circ (m \times \id)} C_1 \xra{i_v} C_2$.  The morphism $[m \circ (m \times \id)] \times [m \circ (m \times \id)]$ is `homotopic' to the morphism $C_1 \times_{C_0} C_1 \times_{C_0} C_1 \xra{[m \circ (m \times \id)] \times [m \circ (\id \times m)]} C_1 \times_{C_0 \times C_0} C_1$ by the existence of the 2-isomorphism $\id \times \text{[D2-12]}^{-1}: [m \circ (m \times \id)] \times [m \circ (\id \times m)] \dra [m \circ (m \times \id)] \times [m \circ (m \times \id)].$  This homotopy lifts to a homotopy (that is a 2-isomorphism), denoted $\alpha$, of morphisms $C_1 \times_{C_0} C_1 \times_{C_0} C_1 \ra C_2$ from a lift $a$ of $[m \circ (m \times \id)] \times [m \circ (\id \times m)]$ to the lift $i_v \circ (m \circ (m \times \id))$ of $[m \circ (m \times \id)] \times [m \circ (m \times \id)]$.  The datum [B1-9] is defined to be that lift $a$.  The homotopy lift is denoted graphically $\alpha : \cb{\ing{b1stuff-item9.eps}} \dra \cb{\ing{db1stuff-item1.eps}}$.  The vertical composition inverse of the 2-cell $a$ is produced similarly.

\emph{2-data.}
The 2-isomorphisms [B2-1] through [B2-8] and [B2-13] through [B2-15] for $B$ are the 2-isomorphisms [D2-1] through [D2-8] and [D2-13] through [D2-15] for $D$.  The remaining 2-data are defined as the following composites:
\begin{itemize}
\item[B2-9]: 
\xymatrix@C-6pt{
\cb{\ing{b2stuff-item9-1.eps}}   
\ar@{=>}[r]^{\id \times \alpha} &
\cb{\ing{db2stuff-item9-1.eps}}
\ar@{=>}[r]^{\text{\tiny [D2-2]}} &
\cb{\ing{2stuff-item9-1.eps}}
\ar@{<=}[r]^{\text{\tiny [D2-1]}} &
\cb{\ing{db2stuff-item9-2.eps}}
\ar@{=>}[r]^{\alpha^{-1}}_{\text{\tiny [D2-9]}} &
\cb{\ing{b2stuff-item9-2.eps}}} \hspace{15pt} \vspace{8pt}
\item[B2-10]: 
\xymatrix@C-6pt{
\cb{\ing{b2stuff-item10-1.eps}}   
\ar@{=>}[r]^{\id \times \alpha} &
\cb{\ing{db2stuff-item10-1.eps}}
\ar@{=>}[r]^{\text{\tiny [D2-2]}} &
\cb{\ing{2stuff-item10-1.eps}}
\ar@{<=}[r]^{\text{\tiny [D2-1]}} &
\cb{\ing{db2stuff-item10-2.eps}}
\ar@{=>}[r]^{\alpha^{-1}}_{\text{\tiny [D2-10]}} &
\cb{\ing{b2stuff-item10-2.eps}}} \hspace{15pt} \vspace{8pt}
\item[B2-11]: 
\xymatrix@C-6pt{
\cb{\ing{b2stuff-item11-1.eps}}   
\ar@{=>}[r]^{\id \times \alpha} &
\cb{\ing{db2stuff-item11-1.eps}}
\ar@{=>}[r]^{\text{\tiny [D2-2]}} &
\cb{\ing{2stuff-item11-1.eps}}
\ar@{<=}[r]^{\text{\tiny [D2-1]}} &
\cb{\ing{db2stuff-item11-2.eps}}
\ar@{=>}[r]^{\alpha^{-1}}_{\text{\tiny [D2-11]}} &
\cb{\ing{b2stuff-item11-2.eps}}} \hspace{15pt} \vspace{8pt}
\item[B2-12]: 
\xymatrix@C-4pt@R-12pt{
\cb{\ing{b2stuff-item12-1.eps}}
\ar@{=>}[r]^{\id \times \alpha} &
\cb{\ing{db2stuff-item12-1.eps}}
\ar@{=>}[r]^{\text{\tiny [D2-2]}} &
\cb{\ing{db2stuff-item12-2.eps}}
\ar@{=>}[r]^{\alpha} &
\cb{\ing{db2stuff-item12-3.eps}}
\ar@{<=}[d]^{\text{\tiny [D2-4]}}
\\
\cb{\ing{db2stuff-item12-7.eps}}
\ar@{=>}[r]^{\id \times \alpha} 
\ar@{<=}[d]^{\text{\tiny [D2-2]}}
&
\cb{\ing{db2stuff-item12-6.eps}}
\ar@{=>}[r]^{\text{\tiny [D2-2]}} 
&
\cb{\ing{db2stuff-item12-5.eps}}
\ar@{=>}[r]^{\alpha \times \id} 
&
\cb{\ing{db2stuff-item12-4.eps}}
\\
\cb{\ing{db2stuff-item12-8.eps}}
\ar@{<=}[r]^{\text{\tiny [D2-5]}} &
\cb{\ing{db2stuff-item12-9.eps}}
\ar@{<=}[r]^{\id \times \id \times \alpha} &
\cb{\ing{b2stuff-item12-2.eps}} &
} \hspace{15pt} \vspace{8pt}
\item[B2-16]: 
\xymatrix@C+20pt{
\cb{\ing{b2stuff-item16-1.eps}}  
\ar@{<=}[r]^{\text{\tiny [D2-1]}} &
\cb{\ing{db2stuff-item16-1.eps}}
\ar@{<=}[r]^-{\alpha \times \text{\tiny [D2-16]}^{-1}} &
 \cb{\ing{b2stuff-item16-2.eps}}} \hspace{15pt} \vspace{8pt}
\item[B2-17]: 
\xymatrix@C+20pt{
\cb{\ing{b2stuff-item17-1.eps}}  
\ar@{=>}[r]^{\alpha \times \text{\tiny [D2-17]}} &
\cb{\ing{db2stuff-item17-1.eps}}
\ar@{=>}[r]^{\text{\tiny [D2-1]}} &
 \cb{\ing{b2stuff-item17-2.eps}}} \hspace{15pt} \vspace{8pt}
\item[B2-18]: 
\xymatrix@C+20pt{
\cb{\ing{b2stuff-item18-1.eps}}  
\ar@{<=}[r]^{\text{\tiny [D2-1]}} &
\cb{\ing{db2stuff-item18-1.eps}}
\ar@{<=}[r]^-{\alpha \times \text{\tiny [D2-18]}^{-1}} &
 \cb{\ing{b2stuff-item18-2.eps}}} \hspace{15pt} \vspace{2pt}
\end{itemize}

\nid That the 2-isomorphism [B2-12] is compatible with the target map is a consequence of axiom [D3-17].

\emph{3-axioms.}
The axioms [B3-1] through [B3-8] and [B3-18] and [B3-19] are the corresponding dicategory axioms.  The axioms [B3-9] through [B3-14] and [B3-20] through [B3-24] are elementary consequences of the corresponding dicategory axioms.  The remainder of the axioms are directly verified by expanding the definitions of the morphisms involved and observing that the axiom is thereby reduced to known properties of the given dicategory object, as follows.  Note that in the following we omit mention of uses of the axioms [D3-1] through [D3-4].
\begin{itemize}
\item[B3-15] This axiom reduces, modulo two applications of axiom [D3-5] and an application of axiom [D3-6], to axiom [D3-15].
\item[B3-16] This axiom reduces, modulo four applications of axiom [D3-5], to axiom [D3-16].
\item[B3-17] By systematically connecting all the nodes of this axiom to an identity 2-cell on a left associated composite of five 1-cells, this axiom reduces, rather elaborately, to six instances of axiom [D3-5], two instances of axiom [D3-9], and an instance of axiom [D3-10].
\item[B3-25] This axiom reduces, modulo an application of axiom [D3-5] and an application of axiom [D3-18], to axiom [D3-25].
\item[B3-26] This axiom reduces, modulo two applications of axiom [D3-5], to axiom [D3-26]. \qedhere
\end{itemize}
\end{proof}
\nid Note that in this construction precisely all the conditions on the dicategory object were used in checking the bicategory object structure.

\subsection{Categories in $\tCAT$ are bicategories in $\CAT$} \label{sec-catin2catarebicatincat}

In Example~\ref{eg-2catnwk} in Section~\ref{sec-catin3cat}, we saw that 2-categories, together with strict functors, weak natural transformations, and modifications, form a Gray 3-category, denoted $\tCATnwk$, and in Example~\ref{eg-tc} we noted that tensor categories should form a category object in $\tCATnwk$.  In this section, given a category object in $\tCATnwk$ we construct an associated bicategory object in the 2-category of categories.

\begin{theorem} \label{thm-catin2catarebicatincat}
If $C=(\cz,\cn)=((\cz^0,\cz^1,\cz^2),(\cn^0,\cn^1,\cn^2))$ is a category object in $\tCATnwk$, then the triple $(\bz,\bn,\bt)=((\bz^0,\bz^1),(\bn^0,\bn^1),(\bt^0,\bt^1))$ forms the 0-, 1-, and 2-cells of a not-necessarily fibrant bicategory object $B$ in $\CAT$, where
\begin{align}
(\bz^0,\bz^1) &:= (\cz^0,\cz^1) \nn \\
(\bn^0,\bn^1) &:= (\cn^0,\cn^1) \nn \\
\bt^0 &:= \cz^0 \underset{\cz^1}{\tms} \cn^1 \underset{\cz^1}{\tms} \cz^0 \nn \\
\bt^1 &:= \cn^2 \underset{(\cn^1 \tms \cn^1 \tms \cz^2 \times \cz^2)}{\tms} \Big( \Big( \bt^0 \underset{\cn^0}{\tms} \cn^1 \Big) \underset{(\cn^0 \tms \cn^0 \tms \cz^1 \tms \cz^1)}{\tms} \Big( \cn^1 \underset{\cn^0}{\tms} \bt^0 \Big) \Big) \nn
\end{align} 
\end{theorem}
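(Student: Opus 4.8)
The plan is to treat this as a repackaging result: every piece of structure and every axiom of the internal bicategory $B=(\bz,\bn,\bt)$ in $\CAT$ is read off from the corresponding datum of the category object $C=(\cz,\cn)$ in $\tCATnwk$, and the proof is then a systematic unwinding of the pullback definitions. First I would fix the dictionary. Writing $\cz=(\cz^0,\cz^1,\cz^2)$ and $\cn=(\cn^0,\cn^1,\cn^2)$ for the two ambient $2$-categories, the underlying $1$-categories $\bz=(\cz^0,\cz^1)$ and $\bn=(\cn^0,\cn^1)$ supply the $0$- and $1$-cell categories, and the source and target $2$-functors $s,t\colon\cn\ra\cz$ restrict to the functors $\bn\ra\bz$. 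The object $\bt^0=\cz^0\tms_{\cz^1}\cn^1\tms_{\cz^1}\cz^0$ is exactly the set of $1$-morphisms of $\cn$ whose images under $s$ and $t$ are identity $1$-morphisms of $\cz$, i.e.\ the ``globular'' $2$-cells; this is the direct analogue of the description, in the underlying-bicategory remark of Section~\ref{sec-catin2cat}, of the $2$-cells of an internal category as the morphisms of $C_1$ with identity source and target. The more intricate object $\bt^1$ packages a $2$-morphism of $\cn$ (an element of $\cn^2$) together with the left- and right-whiskering data by $1$-morphisms of $\cn$ that are needed to make its source and target globular, the $\cz^2$-factor in the base of the outer pullback recording that the boundary $2$-morphisms in $\cz$ agree. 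The first concrete task is to check that these nested pullbacks assemble into a genuine category $\bt$ and that $s,t\colon\bt\ra\bn$ are functors.

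Second, I would define the nine pieces of $1$-data and the $2$-data. The $1$-cell identity [B1-1] and horizontal composition [B1-2] come directly from $i\colon\cz\ra\cn$ and $m\colon\cn\tms_\cz\cn\ra\cn$. The vertical $1$-data come from $\cn$ and $m$: the $2$-cell identity [B1-3] and vertical composition [B1-4] are the identity $1$-morphisms and the composition of $1$-morphisms of $\cn$, while the whiskers [B1-5], [B1-6] are obtained by applying the composition $2$-functor $m$ to a globular $2$-cell and an identity $1$-morphism. The genuinely weak data are the associator $2$-cell [B1-9] and the left and right identity $2$-cells [B1-7], [B1-8]: these are precisely the component $1$-morphisms of the weak natural transformations [C2-3], [C2-1], [C2-2], which land in globular $2$-cells exactly because those transformations are compatible with source and target. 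Their required invertibility as $2$-cells of $B$ is inherited, together with the witnessing $3$-cells and triangle identities, from the invertibility data of [C2-1]--[C2-3] as $2$-morphisms of $\tCATnwk$. Because the objects of $\tCATnwk$ are \emph{strict} $2$-categories, the $2$-data [B2-1]--[B2-8] are identities; all weakness in $B$ originates from the naturality $2$-cells of [C2-1]--[C2-3], which, via the switch $3$-data [C3-1]--[C3-5], supply the remaining $2$-isomorphisms [B2-9]--[B2-12] and [B2-16]--[B2-18].

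Third comes the verification of the axioms [B3-1]--[B3-26]. Here I expect each axiom to reduce to one of three things: a (strict) $2$-category law of $\cn$ for vertical composition and whiskering; the $2$-naturality or modification identities satisfied by the components and naturality $2$-cells of [C2-1]--[C2-3]; or one of the five coherence $4$-axioms [C4-1]--[C4-5] of the category object. The crucial point is that the arrows marked ``sw'' in [C4-1]--[C4-5] are, after unwinding, exactly the interchange isomorphisms between whiskered associator $2$-cells that appear in the large coherence diagrams of a bicategory object; under this identification the bicategory pentagon [B3-17] matches [C4-5], and the mixed triangle/naturality axioms [B3-15], [B3-16], [B3-25], [B3-26] match [C4-1]--[C4-4]. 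I would carry out this matching one axiom at a time, in close parallel with the reductions already recorded in the proofs of Propositions~\ref{prop-2catisdicat} and~\ref{prop-dicatisbicat}.

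The main obstacle will be twofold. The bookkeeping obstacle is making the definition of $\bt^1$ fully precise: identifying all the structure maps of the four-fold iterated pullback, confirming that the identities and composition of $\cn$ descend to a category structure on $\bt$, and checking that whiskering and the boundary $\cz^2$-data are threaded compatibly so that $s$, $t$, the whiskers, and vertical composition are all functorial. The conceptual obstacle is the content of the third paragraph: recognizing the ambient switch morphism as internal interchange and verifying that the pentagon and triangle coherences of $B$ are \emph{exactly} [C4-5] and [C4-1]--[C4-4], rather than merely formal consequences of them, so that no hypotheses on $C$ beyond those of a category object in $\tCATnwk$ are used. I would also note in passing that, as in the tensor-category example of Example~\ref{eg-tc}, $\bz$ and $\bn$ need not be groupoids and $s\times t$ need not be a fibration, which is why the conclusion is asserted only for a \emph{not-necessarily fibrant} bicategory object in the sense of Remark~\ref{rem-nnf}.
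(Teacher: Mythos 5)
Your proposal follows essentially the same route as the paper's proof: the data of $B$ are read off from [C1-1]--[C1-2], the components and naturality 2-cells of the weak transformations [C2-1]--[C2-3], and the modifications [C3-1]--[C3-5]; strictness of the ambient 2-categories forces [B2-1]--[B2-8] to be identities; and the axioms reduce to functoriality, naturality, and modification conditions together with [C4-1]--[C4-5], with [B3-17] indeed being exactly [C4-5]. One forecast is mislabeled but harmless given your stated axiom-by-axiom matching: in the paper [B3-15] and [B3-16] are the modification conditions for [C3-5] (not instances of [C4-1]--[C4-4]), and it is [B3-23]--[B3-26] that correspond precisely to [C4-1]--[C4-4].
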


\nid By ``not-necessarily fibrant bicategory object" we mean not only to drop the fibration condition on the morphisms $B_1 \ra B_0 \times B_0$ and $B_2 \ra B_1 \times_{B_0 \times B_0} B_1$ but also implicitly to drop the condition that $B_0$ and $B_1$ be groupoids.

In more detail the expressions in the theorem are as follows. The $j$-morphisms in the 2-category $C_i$ are denoted $C_i^j$, and the $j$-morphisms in the category $B_i$ are denoted $B_i^j$.  Let $s,t: \cn \ra \cz$ denote the source and target 2-functors, let $S,T: C_i^j \ra C_i^{j-1}$ denote the source and target of morphisms in the 2-categories $C_i$, let $I: C_i^j \ra C_i^{j+1}$ denote the identity of morphisms in the 2-categories $C_i$, and let $M: C_i^j \times_{C_i^{j-1}} C_i^j \ra C_i^j$ denote composition of $j$-morphisms in the 2-category $C_i$.  In the fiber product defining $\bt^0$, the four maps are $I: \cz^0 \ra \cz^1$, $s: \cn^1 \ra \cz^1$, $t: \cn^1 \ra \cz^1$, and $I: \cz^0 \ra \cz^1$.  In the fiber product defining $\bt^1$, the various maps are, from inside out, $T: \bt^0 \ra \cn^0$ and $S: \cn^1 \ra \cn^0$, and $T: \cn^1 \ra \cn^0$ and $S:\bt^0 \ra \cn^0$, then 
\vspace{8pt}
\begin{align}
& (S \tms T \tms s \tms t) \circ M: \bt^0 \tms_{\cn^0} \cn^1 \ra \cn^0 \tms \cn^0 \tms \cz^1 \tms \cz^1 \text{ and} \nn \\
& (S \tms T \tms s \tms t) \circ M: \cn^1 \tms_{\cn^0} \bt^0 \ra \cn^0 \tms \cn^0 \tms \cz^1 \tms \cz^1, \nn \\[-3pt]
%\end{align} 
\intertext{and finally}
%\begin{align}
& S \tms T \tms s \tms t : \cn^2 \ra \cn^1 \tms \cn^1 \tms \cz^2 \times \cz^2, \nn \\
& M \tms M : \Big( \bt^0 \underset{\cn^0}{\tms} \cn^1 \Big) \underset{(\cn^0 \tms \cn^0 \tms \cz^1 \tms \cz^1)}{\tms} \Big( \cn^1 \underset{\cn^0}{\tms} \bt^0 \Big) \ra \cn^1 \times \cn^1, \text{ and} \nn \\
& \! \Big( \bt^0 \underset{\cn^0}{\tms} \cn^1 \Big) \underset{(\cn^0 \tms \cn^0 \tms \cz^1 \tms \cz^1)}{\tms} \Big( \cn^1 \underset{\cn^0}{\tms} \bt^0 \Big) \xra{\text{proj}} \bt^0 \underset{\cn^0}{\tms} \cn^1 \xra{I s M \tms I t M} \cz^2 \tms \cz^2. \nn
\end{align}

\begin{proof}
We describe the data of a bicategory object $B$ on the triple of categories $(\bz,\bn,\bt)$, and then verify the axioms.  The source and target functors $s,t: \bn \ra \bz$ are induced by the source and target 2-functors $s,t: \cn \ra \cz$.  The source, respectively target, functor $\bt \ra \bn$ is induced on objects by the map $S: \cn^1 \ra \cn^0$, respectively $T: \cn^1 \ra \cn^0$, and on morphisms by projection to the second, respectively first, factor of $\cn^1$ in the expression $\left( \bt^0 \tms_{\cn^0} \cn^1 \right) \tms_{(\cn^0 \tms \cn^0 \tms \cz^0 \tms \cz^0)} \left( \cn^1 \tms_{\cn^0} \bt^0 \right)$ inside the definition of $\bt^1$.  By construction, these functors satisfy $st = ss$ and $tt = ts$.

The 1-data of the bicategory object $B$ are as follows.  
\begin{description}
\item[B1-1] The 1-cell identity is the restriction of the 1-cell identity [C1-1] to objects and morphisms.
\item[B1-2] The horizontal composition is the restriction of the horizontal composition [C1-2] to objects and morphisms.
\item[B1-3] The 2-cell identity is induced by the identities $I: \cn^0 \ra \cn^1$ and $I: \cn^1 \ra \cn^2$ of the 2-category $\cn$.
\item[B1-4] The vertical composition is induced by the compositions $M: \cn^1 \times_{\cn^0} \cn^1 \ra \cn^1$ and $M: \cn^2 \times_{\cn^1} \cn^2 \ra \cn^2$ of the 2-category $\cn$.
\item[B1-5,6] The right whisker functor $\bt \tms_{\bz} \bn \ra \bt$ is defined on objects by the horizontal composition 2-functor [C1-2] applied in the first variable to morphisms and in the second variable to identity morphisms, and is defined on morphisms by the same 2-functor applied in the first variable to 2-morphisms and in the second variable to identity 2-morphisms.  The left whisker is similar.
\item[B1-7,8] The left 2-cell identity functor $\bn \ra \bt$ is defined by the left identity transformation [C2-1] applied to objects and morphisms.  The right 2-cell identity is similarly defined by the right identity transformation [C2-2].
\item[B1-9] The associator 2-cell functor is defined by the associator transformation [C2-3] applied to objects and morphisms.
\end{description}
\vspace{8pt}

The 2-data of the bicategory object $B$ are as follows. 
\begin{description}
\item[B2-1,2] Data [B2-1] and [B2-2] are identity isomorphisms, because the morphism and 2-morphism identities in $\cn$ are strict.
\item[B2-3] Datum [B2-3] is an identity isomorphism, because composition of morphisms and 2-morphisms in $\cn$ is associative.
\item[B2-4,5] Data [B2-4] and [B2-5] are identity isomorphisms, because the horizontal composition 2-functor [C1-2] takes identities to identities.
\item[B2-6,7] Data [B2-6] and [B2-7] are identity isomorphisms, because the horizontal composition 2-functor [C1-2] takes composites of morphisms to composites of morphisms, and because the identity 1-morphisms in $\cn$ are strict.
\item[B2-8] Datum [B2-8] is an identity isomorphism, because again the horizontal composition 2-functor [C1-2] takes composites of morphisms to composites of morphisms, and because the identity 1-morphisms in $\cn$ are strict.
\item[B2-9,10,11] Datum [B2-9] is the associator transformation [C2-3] applied to the image of $\bt^0 \times_{\cz^0} \cn^0 \times_{\cz^0} \cn^0 \xra{\text{inc} \times I \times I} \cn^1 \times_{\cz^0} \cn^1 \times_{\cz^0} \times \cn^1$.  The datum [B2-10], respectively [B2-11], is similar with the associator transformation applied after identities in the first two, respectively first and last, factors.
\item[B2-12] Datum [B2-12] is the modification [C3-5].
\item[B2-13,14] Datum [B2-13] is the left identity transformation [C2-1] applied to the image of the inclusion $\bt^0 \ra \cn^1$.  Similarly datum [B2-14] is induced by the right identity transformation [C2-2].
\item[B2-15] Datum [B2-15] is the modification [C3-1].
\item[B2-16] Datum [B2-16] is the modification [C3-2].
\item[B2-17] Datum [B2-17] is the modification [C3-3].
\item[B2-18] Datum [B2-18] is the modification [C3-4].
\end{description}
\vspace{8pt}

The 3-axioms of the bicategory object $B$ are satisfied, as follows.
\begin{description}
\item[B3-1 to B3-8] The first eight axioms [B3-1] through [B3-8] are trivially satisfied, because all the 2-morphisms involved are identities.
\item[B3-9] Unpacked, axiom [B3-9] asserts that the 2-morphism [B2-9] is the identity when applied to an identity 2-cell.  This is the case because the transformation [C2-3] takes identity morphisms to identity morphisms.
\item[B3-10] Unpacked, axiom [B3-10] asserts that the 2-morphism [B2-11] is the identity when applied to an identity 2-cell.  This is the case again because the transformation [C2-3] takes identity morphisms to identity morphisms.
\item[B3-11 to B3-14] Axiom [B3-11] is satisfied because the transformation [C2-3] takes composites of morphisms to composites of morphisms; in this case the composition property is applied with two nontrivial morphisms in the first factor of $\cn$.  Axiom [B3-12] is satisfied for the same reason, applied to nontrivial morphisms in the second factor of $\cn$.  Axiom [B3-13], respectively [B3-14], is satisfied for again the same reason, applied to one nontrivial morphism in each of the first and second, respectively first and third, factors of $\cn$.
\item[B3-15,16] Axiom [B3-15] is the condition that [C3-5] is a modification, applied with a non-identity morphism only in the first factor of $\cn$.  Similarly axiom [B3-16] is the modification condition of [C3-5] for a non-identity morphism in the second factor of $\cn$.
\item[B3-17] Axiom [B3-17] is axiom [C4-5].
\item[B3-18] Axiom [B3-18] asserts that the 2-morphism [B2-14] is the identity when applied to an identity 2-cell.  This is the case because the transformation [C2-1] takes identity morphisms to identity morphisms.
\item[B3-19] Axiom [B3-19] is satisfied because the transformation [C2-1] takes composites of morphisms to composites of morphisms.
\item[B3-20] Axiom [B3-20] is the condition that [C3-4] is a modification, applied with a non-identity morphism only in the first factor of $\cn$.
\item[B3-21] Axiom [B3-21] is the condition that [C3-3] is a modification, applied with a non-identity morphism only in the first factor of $\cn$.
\item[B3-22] Axiom [B3-22] is the condition that [C3-2] is a modification, applied with a non-identity morphism only in the first factor of $\cn$.
\item[B3-23] Axiom [B3-23] is axiom [C4-1].
\item[B3-24] Axiom [B3-24] is axiom [C4-2].
\item[B3-25] Axiom [B3-25] is axiom [C4-3].
\item[B3-26] Axiom [B3-26] is axiom [C4-4]. \qedhere
\end{description}
\end{proof}

Recall from Example~\ref{eg-tc} that we say a category object $C$ in $\tCATnwk$ is associatively strict if the associator transformation [C2-3] is a strict, rather than weak, natural transformation.  Such an associator transformation induces not an associator 2-cell [B1-9] but in fact an associator transformation [F2-12]; however, this transformation may fail to be a 2-isomorphism unless the associator transformation [C2-3] has the further property that it is strictly, not just weakly, invertible.  We refer to a category object in $\tCATnwk$ whose associator transformation [C2-3] is a strictly invertible strict natural transformation as a ``strictly associatively strict" (sas) category object.  For such a category object, the above construction produces a dicategory object rather than a bicategory object:

\begin{cor} \label{cor-ascatin2cataredicatincat}
If $C=(\cz,\cn)$ is a strictly associatively strict category object in $\tCATnwk$, then the triple $(\bz,\bn,\bt)$ defined in Theorem~\ref{thm-catin2catarebicatincat} forms the 0-, 1-, and 2-cells of a not-necessarily fibrant dicategory object in $\CAT$.
\end{cor}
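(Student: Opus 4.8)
The plan is to follow the construction in the proof of Theorem~\ref{thm-catin2catarebicatincat}, tracking how the strictness hypothesis on $C$ forces the associator data to assemble into the stricter form required by a dicategory object (Definition~\ref{def-dicat}) rather than a bicategory object. Concretely, the 0-data, the 1-data [D1-1]--[D1-8], and the 2-data [D2-1]--[D2-8] and [D2-13]--[D2-15] of the dicategory object are built exactly as the corresponding bicategory data [B1-1]--[B1-8], [B2-1]--[B2-8], and [B2-13]--[B2-15] were in the theorem (with [D2-13], [D2-14], [D2-15] coming from [C2-1], [C2-2], and the modification [C3-1]); none of these involve association of 1-cells, so the arguments there apply unchanged, and the not-necessarily-fibrant qualification is inherited. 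The only substantive difference is in how the associator enters.

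The key step is the construction of the associator transformation [D2-12]. By hypothesis the associator transformation [C2-3] of $C$ is a strictly invertible \emph{strict} natural transformation between the horizontal-composition 2-functors $m(m\times\id)$ and $m(\id\times m):\cn\tms_\cz\cn\tms_\cz\cn\ra\cn$. Restricting this strict natural transformation to the objects and morphisms of the underlying categories gives, for each composable triple of 1-cells, a morphism of $\bn$, and strictness of [C2-3] says precisely that its naturality 2-cells are identities, i.e.\ that the naturality squares commute on the nose; hence these components assemble into a genuine natural transformation of the functors $m(m\times\id),m(\id\times m):\bn\tms_\bz\bn\tms_\bz\bn\ra\bn$. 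Strict invertibility of [C2-3] makes each component an isomorphism of $\bn$, so this is a natural isomorphism, that is a 2-isomorphism of $\CAT$. This is exactly the datum [F2-12] $=$ [D2-12] of a dicategory object, replacing the associator 2-cell [B1-9] of the theorem. Equivalently, under strictness the associator 2-cell produced in the theorem is the image of [D2-12] under the inclusion of 2-morphisms into 2-cell objects, which is precisely the condition for that bicategory object to arise, via Proposition~\ref{prop-dicatisbicat}, from a dicategory object; this provides a useful consistency check.

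Given [D2-12], the remaining associator-dependent 2-data are supplied as in the theorem: [D2-9]--[D2-11] are induced by the naturality of [C2-3] together with the modification [C3-5], and by strictness they reduce to the appropriate 2-isomorphisms with source and target [D2-12] demanded by Definition~\ref{def-dicat}; the data [D2-16], [D2-17], [D2-18] are induced by the modifications [C3-2], [C3-3], [C3-4], with sources [D2-12], the inverse of [D2-12], and [D2-12] respectively, and identity targets. For the 3-axioms, [D3-1]--[D3-17] coincide with the 2-category axioms [F3-1]--[F3-17] and are verified as in the theorem, with the naturality-type axioms [D3-9]--[D3-14] following from [C2-3] respecting identities and composition. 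The nine dicategory-specific axioms [D3-18]--[D3-26] follow from the coherences [C4-1]--[C4-5] of $C$ together with the modification conditions on [C3-2]--[C3-5], each verification being the strict specialization of the corresponding bicategory-axiom verification [B3-18]--[B3-26] carried out in the theorem.

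I expect the main obstacle to be the bookkeeping in the associator-involving axioms [D3-20]--[D3-26], along with checking the source/target compatibility of [D2-9]--[D2-11] and [D2-16]--[D2-18]. In the bicategory proof these were handled by inserting and cancelling associator 2-cells; here one must confirm that strictness of [C2-3] collapses each such insertion either to an identity or to a single composition with the transformation [D2-12], so that no genuine associator 2-cell is ever needed and each axiom reduces to a coherence axiom [C4-$\ast$] or a modification condition of $C$. Once this reduction is carried out axiom by axiom, every piece of data and every axiom of the dicategory object matches a datum or axiom of the category object $C$, completing the proof.
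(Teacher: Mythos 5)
Your proposal is correct and follows the paper's own route: the paper states this corollary without a separate proof, justifying it by exactly your key observation---that a strictly invertible strict associator [C2-3] induces a genuine 2-isomorphism $[\text{F2-12}]=[\text{D2-12}]$ between the two horizontal-composition functors on $B_1$, in place of the associator 2-cell [B1-9], after which the construction in the proof of Theorem~\ref{thm-catin2catarebicatincat} specializes verbatim to the data and axioms of a dicategory object. Your axiom-by-axiom elaboration is the intended specialization; the only slip, immaterial to the argument, is crediting the modification [C3-5] in the construction of [D2-9]--[D2-11], which arise solely from the (now strict) naturality of [C2-3], exactly as [B2-9]--[B2-11] did in the theorem.
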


If the category object $C$ is in fact a category object in $\tCAT$, rather than in $\tCATnwk$, then the left and right identity transformations [C2-1] and [C2-2] are strict natural transformation of 2-categories and therefore induce, not only left and right 2-cell identities [D1-7] and [D1-8], but in fact left and right identity 2-transformations [F2-15] and [F2-16], which, though, may fail to be 2-isomorphisms.  We refer to the category object as ``strict" if the identity transformations [C2-1] and [C2-2] and the associator transformation [C2-3] are all strictly invertible.  In this case, the above construction produces a 2-category object rather than a di- or bicategory object:

\begin{cor} \label{cor-catin2catare2catincat}
If $C=(\cz,\cn)$ is a strict category object in $\tCAT$, then the triple $(\bz,\bn,\bt)$ defined in Theorem~\ref{thm-catin2catarebicatincat} forms the 0-, 1-, and 2-cells of a not-necessarily fibrant 2-category object in $\CAT$.
\end{cor}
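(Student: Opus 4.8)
The plan is to specialize the construction of Theorem~\ref{thm-catin2catarebicatincat} and to observe that, in the strict setting, it outputs strict rather than weak structure. I would begin by recording that a strict category object in $\tCAT$ is in particular a strictly associatively strict category object in $\tCATnwk$: every strict natural transformation is a weak one, and by hypothesis the associator transformation [C2-3] is strictly invertible. Corollary~\ref{cor-ascatin2cataredicatincat} therefore already equips $(\bz,\bn,\bt)$ with the structure of a not-necessarily fibrant dicategory object in $\CAT$. This supplies at once all of the data [F1-1]--[F1-6] and [F2-1]--[F2-12] and all of the axioms [F3-1]--[F3-17] that are common to dicategory and 2-category objects---in particular the strict associator 2-isomorphism [F2-12] induced by [C2-3]. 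What remains is to replace the dicategory's \emph{weak} 2-cell identities [D1-7] and [D1-8] by the \emph{strict} identity 2-isomorphisms [F2-15] and [F2-16] demanded of a 2-category object, and to verify the identity axioms [F3-18]--[F3-25].

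The crux is an elementary point about the difference between $\tCAT$ and $\tCATnwk$. The left identity transformation [C2-1] is a 2-morphism of the ambient switch 3-category from $m(i\times\id)(s\times\id)$ to $\id$; since that ambient is now $\tCAT$, it is a \emph{strict} natural transformation of the corresponding endofunctors of the 2-category $\cn$. Taking its 1-morphism components (indexed by the objects of $\cn$, i.e.\ by $\bn^0=\cn^0$) as component morphisms and reading its strict naturality squares (over $\cn^1=\bn^1$) as the naturality condition exhibits [C2-1] as exactly the data of an honest natural transformation of the induced endofunctors of $\bn$, namely the left identity 2-isomorphism [F2-15] of $\CAT$; the ``strict'' hypothesis, i.e.\ strict invertibility of [C2-1], makes [F2-15] invertible. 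The right identity 2-isomorphism [F2-16] arises identically from [C2-2], and the whiskered identity data [F2-13] and [F2-14] are then the whiskerings of [F2-15] and [F2-16]. By contrast, a merely weak [C2-1] carries genuinely nontrivial comparison 2-cells and can only be packaged as the cell-valued datum [D1-7]; this is precisely why weaker inputs produce only a dicategory or bicategory object.

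It then remains to check the identity axioms [F3-18]--[F3-25] against the definition of a 2-category object (Section~\ref{sec-2catin2cat}). I would carry this out exactly as the analogous bicategory axioms [B3-18]--[B3-26] were verified in the proof of Theorem~\ref{thm-catin2catarebicatincat}: some of these axioms record the strict naturality and functoriality of the identity transformations [C2-1] and [C2-2] (that they preserve identities and composites), while the remaining triangle- and unit-type axioms match the modification conditions [C3-1] through [C3-4] and the coherence diagrams [C4-1] through [C4-4] that express the modification and naturality properties of the 3-data of $C$. The decisive simplification relative to the bicategory case is that, because the identity and associator transformations are now strict, the coherence 2-cells that had to be inserted in the bicategory axioms collapse to identities, so each 2-category axiom reduces directly to one of these category-object conditions.

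I expect the main obstacle to be precisely the bookkeeping of this last step, since it is the part \emph{not} already handled by Corollary~\ref{cor-ascatin2cataredicatincat}. One must confirm that each of the strict identity axioms [F3-18]--[F3-25] follows from the modification axioms [C3-1]--[C3-4] and the coherence diagrams [C4-1]--[C4-4] together with the strict naturality of [C2-1], [C2-2], and [C2-3], and in particular that no coherence 2-cell genuinely present in the weak formulation survives once the identity transformations are strictified. The repackaging of strict natural transformations as 2-morphisms of $\CAT$ is conceptually clean, but it must be applied uniformly and compatibly across every one of these axioms, and checking that the weak coherences collapse without remainder is where the real work lies.
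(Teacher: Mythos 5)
Your proposal is correct and follows essentially the paper's own (implicit) argument: the paper proves this corollary simply by specializing the construction of Theorem~\ref{thm-catin2catarebicatincat}, observing that when [C2-1], [C2-2], and [C2-3] are strictly invertible strict natural transformations they induce the strict 2-isomorphisms [F2-15], [F2-16], and [F2-12] directly, so the same construction and axiom checks yield a 2-category object. Your factoring of the common data and axioms through Corollary~\ref{cor-ascatin2cataredicatincat}, and your reduction of the identity axioms to the modification conditions [C3-1]--[C3-4] and coherence diagrams [C4-1]--[C4-4] exactly as in the theorem's proof, is precisely the intended bookkeeping.
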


\section{Tricategories}\label{app-tricat}
In this final section, we investigate the notion of a tricategory, and how it relates to the other notions of ``3-level categories'' that we introduced in the previous sections.  

\subsection{The definition} \label{sec-tricatdef}

For simplicity we restrict attention to small tricategories.  In listing the various structures in the definition, in each case we first give a visual depiction of the item, then describe the item algebraically and verbally. 

\begin{definition} \label{def-tricat}
A tricategory $T$ consists of the following two collections of data, subject to axioms as follows:
\begin{description}

\item[0-data] There are four sets: \vspace{2pt}
\begin{itemize}
\item[] \cb{\ing{0stuff-item0.eps}} --- the set $T_0$ of 0-cells. \vspace{6pt}
\item[] \cb{\ing{0stuff-item1.eps}} --- the set $T_1$ of 1-cells. \vspace{6pt}
\item[] \cb{\ing{0stuff-item2.eps}} --- the set $T_2$ of 2-cells. \vspace{6pt}
\item[] \cb{\ing{0stuff-item3.eps}} --- the set $T_3$ of 3-cells. \vspace{6pt}
\end{itemize}
Moreover there are source and target maps $s,t:T_1 \ra T_0$, $s,t:T_2 \ra T_1$, and $s,t:T_3 \ra T_2$, such that $st = ss$ and $tt = ts$. \vspace{3pt}

\item[1-data] There are thirty-three maps of sets in three collections: \vspace{3pt}

\begin{description}

\item[1-cell target] There are two maps with target $T_1$: \vspace{3pt}
\begin{itemize}
\item[T1-1] \cb{\ing{1stuff-item1.eps}} --- $i_x: T_0 \ra T_1$ --- horizontal identity. \vspace{8pt}
\item[T1-2] \cb{\ing{1stuff-item2.eps}} --- $m_x: T_1 \times_{T_0} T_1 \ra T_1$ --- horizontal composition.
\end{itemize} \vspace{10pt}

\item[2-cell target] There are seven maps with target $T_2$: \vspace{3pt}
\begin{itemize}
\item[T1-3] \cb{\ing{1stuff-item3.eps}} --- $i_y: T_1 \ra T_2$ --- vertical identity. \vspace{8pt}
\item[T1-4] \cb{\ing{1stuff-item4.eps}} --- $m_y: T_2 \times_{T_1} T_2 \ra T_2$ --- vertical composition. \vspace{8pt}
\item[T1-5] \cb{\ing{1stuff-item5.eps}} --- $w_r: T_2 \times_{T_0} T_1 \ra T_2$ --- right whisker. \vspace{8pt}
\item[T1-6] \cb{\ing{1stuff-item6.eps}} --- $w_l: T_1 \times_{T_0} T_2 \ra T_2$ --- left whisker. \vspace{8pt}
\item[T1-7] \cb{\ing{d1stuff-item7.eps}} --- $i_l: T_1 \ra T_2$ --- left identity. \vspace{8pt}
\item[T1-8] \cb{\ing{d1stuff-item8.eps}} --- $i_r: T_1 \ra T_2$ --- right identity. \vspace{8pt}
\item[T1-9] \cb{\ing{b1stuff-item9.eps}} --- $a_x: T_1 \times_{T_0} T_1 \times_{T_0} T_1 \ra T_2$ --- horizontal associator.
\end{itemize} \vspace{10pt}

\item[3-cell target] There are twenty-four maps with target $T_3$.  The first eighteen are maps $\phi: T \ra T_3$, where $T$ is a fiber product involving only $T_0$, $T_1$, and $T_2$; these we indicate by depicting the source and target of the 3-cell $\phi(a)$ in terms of the source $a \in T$ of the map $\phi$.  
The remaining six maps we indicate directly by drawing an image of the target 3-cell. \vspace{5pt}
\begin{itemize}
\item[T1-10]: \xymatrix@C-12pt{\cb{\ing{2stuff-item1-1.eps}}   \ar@{-}[r] &   \cb{\ing{2stuff-item1-2.eps}}} 
--- $i_t: T_2 \ra T_3$ --- top identity. \vspace{4pt}
\item[T1-11]: \xymatrix@C-12pt{\cb{\ing{2stuff-item2-1.eps}}   \ar@{-}[r] &   \cb{\ing{2stuff-item2-2.eps}}} 
--- $i_b: T_2 \ra T_3$ --- bottom identity. \vspace{4pt}
\item[T1-12]: \xymatrix@C-12pt{\cb{\ing{2stuff-item3-1.eps}}   \ar@{-}[r] &   \cb{\ing{2stuff-item3-2.eps}}} 
--- $a_y: T_2 \times_{T_1} T_2 \times_{T_1} T_2 \ra T_3$ --- vertical associator. \vspace{4pt}
\item[T1-13]: \xymatrix@C-12pt{\cb{\ing{2stuff-item4-1.eps}}   \ar@{-}[r] &   \cb{\ing{2stuff-item4-2.eps}}} 
--- $e_{vir}: T_1 \times_{T_0} T_1 \ra T_3$ --- vert id expansion right. \vspace{4pt}
\item[T1-14]: \xymatrix@C-12pt{\cb{\ing{2stuff-item5-1.eps}}   \ar@{-}[r] &   \cb{\ing{2stuff-item5-2.eps}}} 
--- $e_{vil}: T_1 \times_{T_0} T_1 \ra T_3$ --- vert id expansion left. \vspace{4pt}
\item[T1-15]: \xymatrix@C-12pt{\cb{\ing{2stuff-item6-1.eps}}   \ar@{-}[r] &   \cb{\ing{2stuff-item6-2.eps}}} 
--- $dw_{r}: (T_2 \times_{T_1} T_2) \times_{T_0} T_1 \ra T_3$ --- dewhisker right. \vspace{4pt}
\item[T1-16]: \xymatrix@C-12pt{\cb{\ing{2stuff-item7-1.eps}}   \ar@{-}[r] &   \cb{\ing{2stuff-item7-2.eps}}} 
--- $dw_{l}: T_1 \times_{T_0} (T_2 \times_{T_1} T_2) \ra T_3$ --- dewhisker left. \vspace{4pt}
\item[T1-17]: \xymatrix@C-12pt{\cb{\ing{2stuff-item8-1.eps}}   \ar@{-}[r] &   \cb{\ing{2stuff-item8-2.eps}}} 
--- $sw: T_2 \times_{T_0} T_2 \ra T_3$ --- switch. \vspace{4pt}
\item[T1-18]: \xymatrix@C-12pt{\cb{\ing{b2stuff-item9-1.eps}}   \ar@{-}[r] &   \cb{\ing{b2stuff-item9-2.eps}}} 
--- $p_l: T_2 \times_{T_0} T_1 \times_{T_0} T_1 \ra T_3$ --- pass left. \vspace{4pt}
\item[T1-19]: \xymatrix@C-12pt{\cb{\ing{b2stuff-item10-1.eps}}   \ar@{-}[r] &   \cb{\ing{b2stuff-item10-2.eps}}}
--- $p_r: T_1 \times_{T_0} T_1 \times_{T_0} T_2 \ra T_3$ --- pass right. \vspace{4pt}
\item[T1-20]: \xymatrix@C-12pt{\cb{\ing{b2stuff-item11-1.eps}}   \ar@{-}[r] &   \cb{\ing{b2stuff-item11-2.eps}}}
--- $p_m: T_1 \times_{T_0} T_2 \times_{T_0} T_1 \ra T_3$ --- pass middle. \vspace{4pt}
\item[T1-21]: \xymatrix@C-12pt{\cb{\ing{b2stuff-item12-1.eps}}   \ar@{-}[r] &   \cb{\ing{b2stuff-item12-2.eps}}}
--- $pn: T_1 \times_{T_0} T_1 \times_{T_0} T_1 \times_{T_0} T_1 \ra T_3$ \hspace*{\fill} --- pentagonator. \vspace{4pt}
\item[T1-22]: \xymatrix@C-12pt{\cb{\ing{d2stuff-item13-1.eps}}   \ar@{-}[r] &   \cb{\ing{d2stuff-item13-2.eps}}}
--- $pi_l: T_2 \ra T_3$ --- left identity pass. \vspace{4pt}
\item[T1-23]: \xymatrix@C-12pt{\cb{\ing{d2stuff-item14-1.eps}}   \ar@{-}[r] &   \cb{\ing{d2stuff-item14-2.eps}}}
--- $pi_r: T_2 \ra T_3$ --- right identity pass. \vspace{4pt}
\item[T1-24]: \xymatrix@C-12pt{\cb{\ing{d2stuff-item15-1.eps}}   \ar@{-}[r] &   \cb{\ing{d2stuff-item15-2.eps}}}
--- $sp: T_0 \ra T_3$ --- swap. \vspace{4pt}
\item[T1-25]: \xymatrix@C-12pt{\cb{\ing{b2stuff-item16-1.eps}}   \ar@{-}[r] &   \cb{\ing{b2stuff-item16-2.eps}}}
--- $e_{li}: T_1 \times_{T_0} T_1 \ra T_3$ --- left identity expansion. \vspace{4pt}
\item[T1-26]: \xymatrix@C-12pt{\cb{\ing{b2stuff-item17-1.eps}}   \ar@{-}[r] &   \cb{\ing{b2stuff-item17-2.eps}}}
--- $e_{ri}: T_1 \times_{T_0} T_1 \ra T_3$ --- right identity expansion. \vspace{4pt}
\item[T1-27]: \xymatrix@C-12pt{\cb{\ing{b2stuff-item18-1.eps}}   \ar@{-}[r] &   \cb{\ing{b2stuff-item18-2.eps}}}
--- $fp: T_1 \times_{T_0} T_1 \ra T_3$ --- flip. \vspace{12pt}
\item[T1-28] \cb{\ing{t1stuff-item28.eps}} --- $i_z: T_2 \ra T_3$ --- spatial identity. 
\vspace{12pt}
\item[T1-29] \cb{\ing{t1stuff-item29.eps}} --- $m_z: T_3 \times_{T_2} T_3 \ra T_3$ --- spatial composition.
\vspace{12pt}
\item[T1-30] \cb{\ing{t1stuff-item30.eps}} --- $f_b: T_3 \times_{T_1} T_2 \ra T_3$ --- bottom fin.
\vspace{12pt}
\item[T1-31] \cb{\ing{t1stuff-item31.eps}} --- $f_t: T_2 \times_{T_1} T_3 \ra T_3$ --- top fin.
\vspace{12pt}
\item[T1-32] \cb{\ing{t1stuff-item32.eps}} --- $h_r: T_3 \times_{T_0} T_1 \ra T_3$ --- right 3-cell whisker.
\vspace{12pt}
\item[T1-33] \cb{\ing{t1stuff-item33.eps}} --- $h_l: T_1 \times_{T_0} T_3 \ra T_3$ --- left 3-cell whisker.
\end{itemize} \vspace{10pt}

\item[Inverses]
A 3-cell $c \in T_3$ is called invertible if there exists a 3-cell $c^{-1} \in T_3$ such that $m_z(c \times c^{-1}) = i_z(s(c))$ and $m_z(c^{-1} \times c) = i_z(t(c))$.  The 1-data [T1-10] through [T1-27] are required to take values in invertible 3-cells. \\
\hspace*{9pt} A 2-cell $c \in T_2$ is called invertible if there exists a 2-cell $c^{-1} \in T_2$, an invertible 3-cell $C \in T_3$ with source $m_y(c \times c^{-1})$ and target $i_y(s(c))$, and an invertible 3-cell $D \in T_3$ with source $m_y(c^{-1} \times c)$ and target $i_y(t(c))$, such that the following two equations are satisfied:
$m_z(f_b(C \times c) \times i_t(c)) = m_z(m_z(a_y(c \times c^{-1} \times c) \times f_t(c \times D)) \times i_b(c))$,
$m_z(f_b(D \times c^{-1}) \times i_t(c^{-1})) = m_z(m_z(a_y(c^{-1} \times c \times c^{-1}) \times f_t(c^{-1} \times C)) \times i_b(c^{-1}))$.  
These last equations are versions of the usual ``triangle identities", though they are in fact pentagons in this context.  The 1-data [T1-7], [T1-8], and [T1-9] are required to take values in invertible 2-cells.

\end{description} \vspace{3pt}

\item[2-axioms]
The above data are subject to the following forty-one axioms, together with reflection variant axioms abbreviated in the lists [T2-r] and [T2-R].  In the first twenty-six axioms, the condition is that the 3-cell obtained by composing all the edges of the diagram is the spatial identity; when the axiom is listed in a single column, the last edge implicitly loops back to the top picture.  In the last fifteen axioms, the indicated equation of 3-cells is satisfied; there, composition of 3-cells refers to spatial composition.

\end{description}
\end{definition}

%\newpage
\newgeometry{margin=1.5cm}

\begin{tikzpicture}[yscale=.05ex,xscale=.4ex]
\draw (1,0) node {
\begin{tikzpicture}[scale=\tikztextscale]
\draw (0,0) node {\pgftext{T2-1:}};
\end{tikzpicture}
};
\draw (1,-1) node[anchor=north] {
\def\axnum{1}
\begin{tikzpicture}[xscale=\epspicscalex,yscale=\epspicscaleyb]
\axgrid{\axnum};
\foreach \x in {1,...,2}
\draw (\x) node[littlenode-eps,anchor=center]
	{\innertikzeps{\ax{\axnum}-{\x};}};
\end{tikzpicture}
};
\draw (2,0) node {
\begin{tikzpicture}[scale=\tikztextscale]
\draw (0,0) node {\pgftext{T2-2:}};
\end{tikzpicture}
};
\draw (2,-1) node[anchor=north] {
\def\axnum{2}
\begin{tikzpicture}[xscale=\epspicscalex,yscale=\epspicscaleyc]
\axgrid{\axnum};
\foreach \x in {1,...,3}%,0}
\draw (\x) node[littlenode-eps,anchor=center]
	{\innertikzeps{\ax{\axnum}-{\x};}};
\end{tikzpicture}
};
\draw (3,0) node {
\begin{tikzpicture}[scale=\tikztextscale]
\draw (0,0) node {\pgftext{T2-3:}};
\end{tikzpicture}
};
\draw (3,-1) node[anchor=north] {
\def\axnum{3}
\begin{tikzpicture}[xscale=\epspicscalex,yscale=\epspicscaleyc]
\axgrid{\axnum};
\foreach \x in {1,...,3}
\draw (\x) node[littlenode-eps,anchor=center]
	{\innertikzeps{\ax{\axnum}-{\x};}};
\end{tikzpicture}
};
\draw (4,0) node {
\begin{tikzpicture}[scale=\tikztextscale]
\draw (0,0) node {\pgftext{T2-4:}};
\end{tikzpicture}
};
\draw (4,-1) node[anchor=north] {
\def\axnum{4}
\begin{tikzpicture}[xscale=\epspicscalex,yscale=\epspicscaleyd]
\axgrid{\axnum};
\foreach \x in {1,...,5}
\draw (\x) node[littlenode-eps,anchor=center]
	{\innertikzeps{\ax{\axnum}-{\x};}};
\end{tikzpicture}
};
\draw (5,0) node {
\begin{tikzpicture}[scale=\tikztextscale]
\draw (0,0) node {\pgftext{T2-5:}};
\end{tikzpicture}
};
\draw (5,-1) node[anchor=north] {
\def\axnum{5}
\begin{tikzpicture}[xscale=\epspicscalex,yscale=\epspicscaleyc]
\axgrid{\axnum};
\foreach \x in {1,...,4}%,0,-1,-2}
\draw (\x) node[littlenode-eps,anchor=center]
	{\innertikzeps{\ax{\axnum}-{\x};}};
\end{tikzpicture}
};
\draw (6,0) node {
\begin{tikzpicture}[scale=\tikztextscale]
\draw (0,0) node {\pgftext{T2-6:}};
\end{tikzpicture}
};
\draw (6,-1) node[anchor=north] {
\def\axnum{6}
\begin{tikzpicture}[xscale=\epspicscalex,yscale=\epspicscaleyc]
\axgrid{\axnum};
\foreach \x in {1,...,5}%,0}
\draw (\x) node[littlenode-eps,anchor=center]
	{\innertikzeps{\ax{\axnum}-{\x};}};
\end{tikzpicture}
};
\draw (7,0) node {
\begin{tikzpicture}[scale=\tikztextscale]
\draw (0,0) node {\pgftext{T2-7:}};
\end{tikzpicture}
};
\draw (7,-1) node[anchor=north] {
\def\axnum{7}
\begin{tikzpicture}[xscale=\epspicscalex,yscale=\epspicscaleyc]
\axgrid{\axnum};
\foreach \x in {1,...,6}%,0}
\draw (\x) node[littlenode-eps,anchor=center]
	{\innertikzeps{\ax{\axnum}-{\x};}};
\end{tikzpicture}
};
\draw (8,0) node {
\begin{tikzpicture}[scale=\tikztextscale]
\draw (0,0) node {\pgftext{T2-8:}};
\end{tikzpicture}
};
\draw (8,-1) node[anchor=north] {
\def\axnum{8}
\begin{tikzpicture}[xscale=\epspicscalex,yscale=\epspicscaleyc]
\axgrid{\axnum};
\foreach \x in {1,...,8}%,0}
\draw (\x) node[littlenode-eps,anchor=center]
	{\innertikzeps{\ax{\axnum}-{\x};}};
\end{tikzpicture}
};
\draw (1,-50) node {
\begin{tikzpicture}[scale=\tikztextscale]
\draw (0,0) node {\pgftext{T2-9:}};
\end{tikzpicture}
};
\draw (1,-51) node[anchor=north] {
\def\axnum{9}
\begin{tikzpicture}[xscale=\epspicscalex,yscale=\epspicscaleyc]
\axgrid{\axnum};
\foreach \x in {1,...,6}%,0}
\draw (\x) node[littlenode-eps,anchor=center]
	{\innertikztikz{\ax{\axnum}-{\x};}};
\end{tikzpicture}
};
\draw (2,-50) node {
\begin{tikzpicture}[scale=\tikztextscale]
\draw (0,0) node {\pgftext{T2-10:}};
\end{tikzpicture}
};
\draw (2,-51) node[anchor=north] {
\def\axnum{10}
\begin{tikzpicture}[xscale=\epspicscalex,yscale=\epspicscaleyc]
\axgrid{\axnum};
\foreach \x in {1,...,7}
\draw (\x) node[littlenode-eps,anchor=center]
	{\innertikztikz{\ax{\axnum}-{\x};}};
\end{tikzpicture}
};
\draw (3,-50) node {
\begin{tikzpicture}[scale=\tikztextscale]
\draw (0,0) node {\pgftext{T2-11:}};
\end{tikzpicture}
};
\draw (3,-51) node[anchor=north] {
\def\axnum{11}
\begin{tikzpicture}[xscale=\epspicscalex,yscale=\epspicscaleyc]
\axgrid{\axnum};
\foreach \x in {1,...,9}%,0}
\draw (\x) node[littlenode-eps,anchor=center]
	{\innertikztikz{\ax{\axnum}-{\x};}};
\end{tikzpicture}
};
\draw (4,-50) node {
\begin{tikzpicture}[scale=\tikztextscale]
\draw (0,0) node {\pgftext{T2-12:}};
\end{tikzpicture}
};
\draw (4,-51) node[anchor=north] {
\def\axnum{12}
\begin{tikzpicture}[xscale=\epspicscalex,yscale=\epspicscaleyc]
\axgrid{\axnum};
\foreach \x in {1,...,10}
\draw (\x) node[littlenode-eps,anchor=center]
	{\innertikztikz{\ax{\axnum}-{\x};}};
\end{tikzpicture}
};
\draw (5.75,-50) node {
\begin{tikzpicture}[scale=\tikztextscale]
\draw (0,0) node {\pgftext{T2-13:}};
\end{tikzpicture}
};
\draw (5.75,-51) node[anchor=north] {
\def\axnum{13}
\begin{tikzpicture}[xscale=\epspicscaledoubleplusx,yscale=\epspicscaleyc]
\axgrid{\axnum};
\foreach \x in {1,...,14}%, 0}
\draw (\x) node[littlenode-eps,anchor=center]
	{\innertikztikz{\ax{\axnum}-{\x};}};
\end{tikzpicture}
};
\draw (8,-50) node {
\begin{tikzpicture}[scale=\tikztextscale]
\draw (0,0) node {\pgftext{T2-14:}};
\end{tikzpicture}
};
\draw (8,-51) node[anchor=north] {
\def\axnum{14}
\begin{tikzpicture}[xscale=\epspicscaledoubleplusx,yscale=\epspicscaleyc]
\axgrid{\axnum};
\foreach \x in {1,...,12}%, 0}
\draw (\x) node[littlenode-eps,anchor=center]
	{\innertikztikz{\ax{\axnum}-{\x};}};
\end{tikzpicture}
};

\draw (10,0) node {
\begin{tikzpicture}[scale=\tikztextscale]
\draw (0,0) node {\pgftext{T2-r:}};
\end{tikzpicture}
};
\draw (10,-1) node[anchor=north] {
\def\axnum{27}
\begin{tikzpicture}[xscale=\epspicscalex,yscale=\epspicscaleyc]
\axgrid{\axnum};
\foreach \x in {2,51,52,53,6,7,8,15,16,18,19,25,26}
\draw (\x) node[littlenode-eps,anchor=center]
	{\innertikzeps{\ax{\axnum}-{\x};}};
\foreach \x in {9,11,13,20,21,22,23}
\draw (\x) node[littlenode-eps,anchor=center]
	{\innertikztikz{\ax{\axnum}-{\x};}};
\end{tikzpicture}
};
\end{tikzpicture}

\begin{tikzpicture}[yscale=.05ex,xscale=.4ex]
\draw (1,0) node {
\begin{tikzpicture}[scale=\tikztextscale]
\draw (0,0) node {\pgftext{T2-15:}};
\end{tikzpicture}
};
\draw (1,-1) node[anchor=north] {
\def\axnum{15}
\begin{tikzpicture}[xscale=\epspicscaledoublewidex,yscale=\epspicscaleycc]
\axgrid{\axnum};
\foreach \x in {1,...,16}
\draw (\x) node[littlenode-eps,anchor=center]
	{\innertikzeps{\ax{\axnum}-{\x};}};
\end{tikzpicture}
};
\draw (4.15,0) node {
\begin{tikzpicture}[scale=\tikztextscale]
\draw (0,0) node {\pgftext{T2-16:}};
\end{tikzpicture}
};
\draw (4.15,-1) node[anchor=north] {
\def\axnum{16}
\begin{tikzpicture}[xscale=\epspicscaledoublewidex,yscale=\epspicscaleycc]
\axgrid{\axnum};
\foreach \x in {1,...,18}
\draw (\x) node[littlenode-eps,anchor=center]
	{\innertikzeps{\ax{\axnum}-{\x};}};
\end{tikzpicture}
};
\draw (7.5,0) node {
\begin{tikzpicture}[scale=\tikztextscale]
\draw (0,0) node {\pgftext{T2-17:}};
\end{tikzpicture}
};
\draw (7.5,-1) node[anchor=north] {
\def\axnum{17}
\begin{tikzpicture}[xscale=\epspicscaledoublesuperwidex,yscale=\epspicscaleycc]
\axgrid{\axnum};
\foreach \x in {1,...,22}
\draw (\x) node[littlenode-eps,anchor=center]
	{\innertikzeps{\ax{\axnum}-{\x};}};
\end{tikzpicture}
};
\draw (8.75,-63) node {
\begin{tikzpicture}[scale=\tikztextscale]
\draw (0,0) node {\pgftext{T2-18:}};
\end{tikzpicture}
};
\draw (8.75,-64) node[anchor=north] {
\def\axnum{18}
\begin{tikzpicture}[xscale=\epspicscalex,yscale=\epspicscaleyc]
\axgrid{\axnum};
\foreach \x in {1,...,4}
\draw (\x) node[littlenode-eps,anchor=center]
	{\innertikzeps{\ax{\axnum}-{\x};}};
\end{tikzpicture}
};
\draw (7.75,-63) node {
\begin{tikzpicture}[scale=\tikztextscale]
\draw (0,0) node {\pgftext{T2-19:}};
\end{tikzpicture}
};
\draw (7.75,-64) node[anchor=north] {
\def\axnum{19}
\begin{tikzpicture}[xscale=\epspicscalex,yscale=\epspicscaleycc]
\axgrid{\axnum};
\foreach \x in {1,...,7}
\draw (\x) node[littlenode-eps,anchor=center]
	{\innertikzeps{\ax{\axnum}-{\x};}};
\end{tikzpicture}
};
\draw (6.75,-63) node {
\begin{tikzpicture}[scale=\tikztextscale]
\draw (0,0) node {\pgftext{T2-20:}};
\end{tikzpicture}
};
\draw (6.75,-64) node[anchor=north] {
\def\axnum{20}
\begin{tikzpicture}[xscale=\epspicscalex,yscale=\epspicscaleyc]
\axgrid{\axnum};
\foreach \x in {1,...,10}
\draw (\x) node[littlenode-eps,anchor=center]
	{\innertikztikz{\ax{\axnum}-{\x};}};
\end{tikzpicture}
};
\draw (5.75,-63) node {
\begin{tikzpicture}[scale=\tikztextscale]
\draw (0,0) node {\pgftext{T2-21:}};
\end{tikzpicture}
};
\draw (5.75,-64) node[anchor=north] {
\def\axnum{21}
\begin{tikzpicture}[xscale=\epspicscalex,yscale=\epspicscaleyc]
\axgrid{\axnum};
\foreach \x in {1,...,8}
\draw (\x) node[littlenode-eps,anchor=center]
	{\innertikztikz{\ax{\axnum}-{\x};}};
\end{tikzpicture}
};
\draw (4.75,-63) node {
\begin{tikzpicture}[scale=\tikztextscale]
\draw (0,0) node {\pgftext{T2-22:}};
\end{tikzpicture}
};
\draw (4.75,-64) node[anchor=north] {
\def\axnum{22}
\begin{tikzpicture}[xscale=\epspicscalex,yscale=\epspicscaleyc]
\axgrid{\axnum};
\foreach \x in {1,...,10}
\draw (\x) node[littlenode-eps,anchor=center]
	{\innertikztikz{\ax{\axnum}-{\x};}};
\end{tikzpicture}
};
\draw (3.75,-57) node {
\begin{tikzpicture}[scale=\tikztextscale]
\draw (0,0) node {\pgftext{T2-23:}};
\end{tikzpicture}
};
\draw (3.75,-58) node[anchor=north] {
\def\axnum{23}
\begin{tikzpicture}[xscale=\epspicscalex,yscale=\epspicscaleycc]
\axgrid{\axnum};
\foreach \x in {1,...,6}
\draw (\x) node[littlenode-eps,anchor=center]
	{\innertikztikz{\ax{\axnum}-{\x};}};
\end{tikzpicture}
};
\draw (2.75,-57) node {
\begin{tikzpicture}[scale=\tikztextscale]
\draw (0,0) node {\pgftext{T2-24:}};
\end{tikzpicture}
};
\draw (2.75,-58) node[anchor=north] {
\def\axnum{24}
\begin{tikzpicture}[xscale=\epspicscalex,yscale=\epspicscaleycc]
\axgrid{\axnum};
\foreach \x in {1,...,6}
\draw (\x) node[littlenode-eps,anchor=center]
	{\innertikztikz{\ax{\axnum}-{\x};}};
\end{tikzpicture}
};
\draw (1.5,-51) node {
\begin{tikzpicture}[scale=\tikztextscale]
\draw (0,0) node {\pgftext{T2-25:}};
\end{tikzpicture}
};
\draw (1.5,-52) node[anchor=north] {
\def\axnum{25}
\begin{tikzpicture}[xscale=\epspicscalex,yscale=\epspicscaleycc]
\axgrid{\axnum};
\foreach \x in {1,...,10}
\draw (\x) node[littlenode-eps,anchor=center]
	{\innertikzeps{\ax{\axnum}-{\x};}};
\end{tikzpicture}
};
\draw (.25,-51) node {
\begin{tikzpicture}[scale=\tikztextscale]
\draw (0,0) node {\pgftext{T2-26:}};
\end{tikzpicture}
};
\draw (.25,-52) node[anchor=north] {
\def\axnum{26}
\begin{tikzpicture}[xscale=\epspicscalex,yscale=\epspicscaleycc]
\axgrid{\axnum};
\foreach \x in {1,...,11}
\draw (\x) node[littlenode-eps,anchor=center]
	{\innertikzeps{\ax{\axnum}-{\x};}};
\end{tikzpicture}
};
\end{tikzpicture}

\restoregeometry

\begin{itemize}

%%%%%%

\begin{multicols}{2}

\item[T2-27]: %1
$\cb{\ingt{t2stuff-item27-1.eps}} \xlongequal{\phantom{x}} \cb{\ingt{t2stuff-item27-2.eps}}$
\vspace{18pt}

\item[T2-28]: %2
$\cb{\ingt{t2stuff-item28-1.eps}} \xlongequal{\phantom{x}} \cb{\ingt{t2stuff-item28-2.eps}}$
\vspace{18pt}

\item[T2-29]: %3
$\cb{\scalebox{\hscsqz}{\ingt{t2stuff-item29-1.eps}}}  \xlongequal{\phantom{x}}   \text{id} \big( \cb{\scalebox{\hscsqz}{\ingt{t2stuff-item29-2.eps}}} \big)$

\columnbreak

\item[T2-30]: %4
$\left(\cb{\ing{t2stuff-item30-1.eps}}\right) \circ (i_t)   \xlongequal{\phantom{x}}  (i_t) \circ \left( \cb{\ing{t2stuff-item30-2.eps}} \right)$
\vspace{10pt}

\item[T2-31]: %5
$\cb{\ing{t2stuff-item31-1.eps}}   \xlongequal{\phantom{x}}   \cb{\ing{t2stuff-item31-2.eps}}$
\vspace{10pt}

\item[T2-32]: %6
$\cb{\ing{t2stuff-item32-1.eps}}   \xlongequal{\phantom{x}}   \cb{\ing{t2stuff-item32-2.eps}}$

\end{multicols}

\item[T2-33]: %7
$\left(\cb{\ing{t2stuff-item33-1.eps}}\right) \circ (a_y)   \xlongequal{\phantom{x}}  (a_y) \circ \left( \cb{\ing{t2stuff-item33-2.eps}} \right)$ 
\vspace{8pt}

\item[T2-34]: %8
$\left(\cb{\ing{t2stuff-item34-1.eps}}\right) \circ (a_y)   \xlongequal{\phantom{x}}  (a_y) \circ \left( \cb{\ing{t2stuff-item34-2.eps}} \right)$
\vspace{8pt}

\item[T2-35]: %9
$\cb{\ing{t2stuff-item35-1.eps}}  \xlongequal{\phantom{x}}   \text{id} \left(\cb{\ing{t2stuff-item35-2.eps}}\right)$
\vspace{8pt}

\item[T2-36]: %10
$\left(\cb{\ing{t2stuff-item36-1.eps}}\right) \circ (pi_r)   \xlongequal{\phantom{x}}  (pi_r) \circ \left( \cb{\ing{t2stuff-item36-2.eps}} \right)$
\vspace{8pt}

\item[T2-37]: %11
$\left(\cb{\ing{t2stuff-item37-1.eps}}\right) \circ (sw)   \xlongequal{\phantom{x}}  (sw) \circ \left( \cb{\ing{t2stuff-item37-2.eps}} \right)$
\vspace{8pt}

\item[T2-38]: %12
$\cb{\ing{t2stuff-item38-1.eps}}   \xlongequal{\phantom{x}}   \cb{\ing{t2stuff-item38-2.eps}}$
\vspace{8pt}

\item[T2-39]: %13
$\left(\cb{\ing{t2stuff-item39-1.eps}}\right) \circ (dw_r)   \xlongequal{\phantom{x}}  (dw_r) \circ \left( \cb{\ing{t2stuff-item39-2.eps}} \right)$
\vspace{8pt}

\item[T2-40]: %14
$\left(\cb{\ing{t2stuff-item40-1.eps}}\right) \circ (p_l)   \xlongequal{\phantom{x}}  (p_l) \circ \left( \cb{\ing{t2stuff-item40-2.eps}} \right)$
\vspace{8pt}

\item[T2-41]: %15
$\left(\cb{\ing{t2stuff-item41-1.eps}}\right) \circ (p_m)   \xlongequal{\phantom{x}}  (p_m) \circ \left( \cb{\ing{t2stuff-item41-2.eps}} \right)$
\vspace{8pt}

\item[T2-R]: z-flip of [T2-27]; y-flip of [T2-29], [T2-30], [T2-31], and [T2-33]; x-flip of [T2-35], [T2-36], [T2-37], [T2-38], and [T2-40]; and x-flip, y-flip, and xy-flip of [T2-39].

\end{itemize}

\nid In these axioms, ``$i_t$'' refers to the top identity [T1-10], ``$a_y$" to the vertical associator [T1-12], ``$pi_r$" to the right identity pass [T1-22], ``$sw$" to the switch [T1-17], ``$dw_r$" to the dewhisker right [T1-15], ``$p_l$" to the pass left [T1-18], and ``$p_m$" to the pass middle [T1-20].

%\end{description}

%\end{definition}

\begin{remark}
Our pictograms for encoding the data and axioms of a tricategory are reminiscent of a 3-dimensional version of Kapranov and Voevodsky's 2-dimensional hieroglyphs for monoidal 2-categories~\cite{kapvoev}, and indeed these notions are related, as follows.  The paper~\cite{kapvoev} describes the notion of a monoidal (strict) 2-category, but it omits two pieces of data (the relationships of the identity morphism $\id_{a \otimes b}$ on a tensor of two objects to the tensor $\id_a \otimes b$ and to the tensor $a \otimes \id_b$) and thirteen axioms.  We refer to the notion that includes these pieces of data and axioms as a ``complete KV monoidal 2-category".  We observe that a complete KV monoidal 2-category is an especially rigid kind of tricategory with one object.  (The additional pieces of data for a complete KV monoidal 2-category ensure the existence of data [T1-13] and [T1-14] in the associated tricategory, and the additional axioms are necessary to ensure that the following tricategory axioms hold: [T2-5], its three reflections, [T2-6], its reflection, [T2-9], its reflection, [T2-10], [T2-18], its reflection, [T2-35], and its reflection.)

The above definition is also closely related to Batanin's globular operad approach to weak $n$-categories~\cite{batanin,leinster}, and to Gurski's algebraic models for tricategories~\cite{gurski}---Gurski moreover provides an extensive investigation of coherence properties for definitions of tricategories.
\end{remark}

\subsection{Bicategories in $\CAT$ are tricategories} \label{app-bicatincataretricat}

In Section~\ref{sec-catin2cat} we noted that a category object in the 2-category of categories has an underlying bicategory.  In this section, we prove that every bicategory object in the 2-category of categories has an underlying tricategory.  There are a number of analogous results in the literature associating globular higher categories to cubical-type higher categories---in particular, Shulman~\cite{shulman-constructing} constructs a monoidal bicategory from a monoidal double category, Garner--Gurski~\cite{garnergurski} construct a Gordon--Powers--Street-style tricategory~\cite{gps} from a multi-object monoidal double category (a `locally cubical bicategory'), and Grandis--Par\'e~\cite{grandispare-cubicalglobular} construct a GPS-style tricategory from a weak 3-cubical category.

\begin{theorem} \label{thm-bicatincataretricat}
Given a bicategory object $C=(C_0,C_1,C_2)$ in the 2-category $\CAT$ of categories, the quadruple $\{\obj C_0, \obj C_1, \obj C_2, (\obj C_1) \times_{C_1} (\mor C_2) \times_{C_1} (\obj C_1)\}$ forms the 0-, 1-, 2-, and 3-cells $\{T_0, T_1, T_2, T_3\}$ of a tricategory.  Here $(\obj C_1) \times_{C_1} (\mor C_2) \times_{C_1} (\obj C_1)$ is the set of morphisms $\phi$ of the category $C_2$ whose source $s(\phi)$ and target $t(\phi)$ are identity morphisms in $C_1$.
\end{theorem}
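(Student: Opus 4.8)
The plan is to obtain the tricategory by evaluating the bicategory object $C=(C_0,C_1,C_2)$ of Section~\ref{sec-bicatin2cat} on the objects and morphisms of the three categories $C_0,C_1,C_2$. First I would record the $0$-data: set $T_0=\obj C_0$, $T_1=\obj C_1$, $T_2=\obj C_2$, and $T_3$ as in the statement. The source and target $T_1\to T_0$ and $T_2\to T_1$ are the source and target functors $s,t\colon C_1\to C_0$ and $s,t\colon C_2\to C_1$ applied to objects, while $s,t\colon T_3\to T_2$ send a morphism $\phi$ of $C_2$ to its domain and codomain. The defining requirement that $s(\phi)$ and $t(\phi)$ be identity morphisms of $C_1$ is exactly what forces the globularity identities $ss=st$ and $ts=tt$ for $3$-cells, so the $0$-data of a tricategory (Definition~\ref{def-tricat}) is in place.

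Next I would produce the thirty-three structure maps in three groups. The nine cell operations [T1-1]--[T1-9] are the bicategory $1$-data [B1-1]--[B1-9] evaluated on objects: the functors $i,m,i_v,m_v,w_r,w_l$ give the identities, compositions, and whiskers of $1$- and $2$-cells, while $i_l,i_r\colon C_1\to C_2$ and $a\colon C_1\times_{C_0}C_1\times_{C_0}C_1\to C_2$ give the left identity, right identity, and horizontal associator $2$-cells. The eighteen coherence $3$-cells [T1-10]--[T1-27] are the object-wise components of the eighteen $2$-data $2$-isomorphisms [B2-1]--[B2-18], matched in order (so that, for instance, the vertical associator [T1-12] is [B2-3], the switch [T1-17] is [B2-8], and the pentagonator [T1-21] is [B2-12]); the stipulation that every $2$-datum has identity source and target $2$-isomorphisms guarantees that each component is a morphism of $C_2$ over identities, hence lies in $T_3$, and these components are invertible because the $2$-data are $2$-isomorphisms. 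Finally the six remaining operations [T1-28]--[T1-33] come from the category structure of $C_2$ and the functoriality of the structure functors on morphisms: the spatial identity $i_z$ and spatial composition $m_z$ are the identity-assignment and composition of the category $C_2$ (which preserve the over-identity condition), the fins $f_t,f_b$ are $m_v$ evaluated on a pair consisting of a morphism of $C_2$ in one slot and an identity morphism in the other, and the whiskers $h_r,h_l$ are $w_r,w_l$ evaluated similarly; compatibility of $m_v,w_r,w_l$ with source and target shows these images again lie over identities. At this stage I would also verify the invertibility of the $2$-cells [T1-7], [T1-8], [T1-9]: the invertibility data of [B1-7], [B1-8], [B1-9] (an inverse morphism with witnessing invertible $2$-morphisms satisfying triangle identities) evaluates to exactly the inverse $2$-cell and the two witnessing invertible $3$-cells demanded in Definition~\ref{def-tricat}.

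The heart of the proof is the verification of the forty-one axioms [T2-1]--[T2-41], which I would organize by provenance. A first block consists of the genuine coherence axioms, in bijection with the explicit bicategory axioms [B3-1]--[B3-26]: each such tricategory axiom, unpacked, asserts an equation among spatial composites of $2$-data components, and this is the image under object-evaluation of the corresponding equation of $2$-isomorphisms. A second block consists of the naturality axioms --- those asserting that a coherence $3$-cell commutes past a fin, whisker, or associator, such as [T2-30], [T2-33], [T2-34], [T2-36], [T2-37], [T2-39], [T2-40], and [T2-41] --- which hold precisely because the relevant data [B2-$k$] are \emph{natural} transformations, so their naturality squares evaluated at morphisms of $C_1$ or $C_2$ supply the required equalities. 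A third block consists of the structural axioms governing $i_z$, $m_z$, the fins, and the whiskers, which reduce to the category axioms (associativity and unitality of composition) of $C_2$ and to the functoriality and interchange built into $m_v,w_r,w_l$; these are the conditions that are automatic in the bicategory-object formulation but become explicit at the tricategory level.

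I expect the main obstacle to be precisely this final bookkeeping: confirming that the three blocks, together with the reflection variants [T2-r] and [T2-R], exhaust all forty-one axioms, and that the axioms not literally present among [B3-1]--[B3-26] are genuinely forced by naturality and functoriality. Two points will need special care. First, the triangle identities witnessing invertibility of [T1-7]--[T1-9] are drawn as bigons in the bicategory object but appear as pentagons in Definition~\ref{def-tricat}; reconciling them requires tracking how the tricategory's weak vertical composition (through the vertical associator $a_y$, that is [B2-3], and the unitors $i_t,i_b$) is absorbed into the strict equation of natural transformations supplied by the bicategory object. Second, one must check that the order-matching of [T1-10]--[T1-27] with [B2-1]--[B2-18] stays consistent with the source and target conventions throughout. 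The organizing principle behind this sorting --- that each axiom is either an explicit relation or a consequence of naturality and functoriality --- is the same one made precise by the evaluation complexes of Appendix~A (cf.\ the discussion surrounding Theorem~\ref{thm-evalcatobj}).
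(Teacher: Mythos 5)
Your proposal is correct and follows essentially the same route as the paper's proof: you evaluate the 1-data functors [B1-1]--[B1-9] and the 2-data natural transformations [B2-1]--[B2-18] on objects, obtain $i_z$, $m_z$ from the category structure of $C_2$ and the fins and whiskers from $m_v$, $w_r$, $w_l$ applied with identity morphisms in one slot, and verify [T2-1]--[T2-26] directly from [B3-1]--[B3-26] while sorting the remaining fifteen axioms into naturality consequences ([T2-30], [T2-33], [T2-34], [T2-36], [T2-37], [T2-39]--[T2-41]) and functoriality/category-axiom consequences ([T2-27]--[T2-29], [T2-31], [T2-32], [T2-35], [T2-38]), exactly as the paper does. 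Your explicit attention to the invertibility of [T1-7]--[T1-9] and the bigon-versus-pentagon form of the triangle identities is a point the paper's proof leaves implicit, so including it only strengthens the argument.
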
 
\begin{proof}
We construct the data of a tricategory and then verify the axioms.  The 1-data items [T1-1] through [T1-9] of a tricategory are given by the value of the functors [B1-1] through [B1-9] on objects.  The 1-data items [T1-10] through [T1-27] are given by the value of the natural transformations [B2-1] through [B2-18] on objects.  The remaining data are as follows:
\begin{description}
\item[T1-28] The 1-datum item [T1-28], $i_z: T_2 \ra T_3$, is the identity of the category $C_2$.  
\item[T1-29] The 1-datum item [T1-29], $m_z: T_3 \times_{T_2} T_3 \ra T_3$, is the composition in the category $C_2$.  
\item[T1-30] The 1-datum item [T1-30], $f_b: T_3 \times_{T_1} T_2 \ra T_3$, is given by $f_b(\gamma \times a) = m_v(\gamma \times \id_a)$, where $m_v$ is the vertical composition functor [B1-4] and $\id_a$ is the identity morphism on the object $a$ in the category $C_2$.  
\item[T1-31] The 1-datum [T1-31] is similarly given by $f_t(a \times \gamma) = m_v(\id_a \times \gamma)$.  
\item[T1-32] The 1-datum [T1-32], $h_r: T_3 \times_{T_0} T_1 \ra T_3$, is given by $h_r(\gamma \times u) = w_r(\gamma \times \id_u)$, where $w_r$ is the right whisker functor [B1-5] and $\id_u$ is the identity morphism on the object $u$ in the category $C_1$.  
\item[T1-33] The 1-datum [T1-33] is similarly given by $h_l(u \times \gamma) = w_l(\id_u \times \gamma)$.  
\end{description}

The axioms [T2-1] through [T2-26] follow immediately from the axioms [B3-1] through [B3-26].  The remaining axioms are checked as follows:
\begin{description}
\item[T2-27] Axiom [T2-27] is satisfied because $C_2$ is a category---in particular the identity is strict. %%
\item[T2-28] Axiom [T2-28] is satisfied because $C_2$ is a category---in particular the composition is strictly associative. %%
\item[T2-29] Axiom [T2-29] is satisfied because [B1-4] is a functor---in particular it takes the identity to the identity. %%
\item[T2-30] Axiom [T2-30] is satisfied because [B2-1] and [B2-2] are natural transformations, using the fact that [B1-3] is a functor, therefore takes the identity to the identity. %%
\item[T2-31] Axiom [T2-31] is satisfied because [B1-4], i.e. $m_v$, is a functor---more specifically for any object $a \in \obj C_2$, the composites $C_2 \times_{C_1} \{a\} \ra C_2 \times_{C_1} C_2 \xra{m_v} C_2$ and $\{a\} \times_{C_1} C_2 \ra C_2 \times_{C_1} C_2 \xra{m_v} C_2$ are functors; in particular they take composition to composition. %%
\item[T2-32] Axiom [T2-32] is satisfied because [B1-4] is a functor---we have $m_z(f_t(s(\delta) \times \gamma) \times f_b(\delta \times t(\gamma))) = m_z(m_v(\id_{s(\delta)} \times \gamma) \times m_v(\delta \times \id_{t(\gamma)})) =
m_v(m_z(\id_{s(\delta)} \times \delta) \times m_z(\gamma \times \id_{t(\gamma)})) = m_v(\delta \times \gamma) = m_v(m_z(\delta \times \id_{t(\delta)}) \times m_z(\id_{s(\gamma)} \times \gamma)) = m_z(m_v(\delta \times \id_{s(\gamma)}) \times m_v(\id_{t(\delta)} \times \gamma)) = m_z(f_b(\delta \times s(\gamma)) \times f_t(t(\delta) \times \gamma))$. %%
\item[T2-33] Axiom [T2-33] is satisfied because [B2-3] is a natural transformation, using the fact that [B1-4] is a functor, therefore takes the identity to the identity. %%
\item[T2-34] Axiom [T2-34] is satisfied because [B2-3] is a natural transformation. %%
\item[T2-35] Axiom [T2-35] is satisfied because [B1-5] and [B1-6] are functors---in particular they take the identity to the identity. %%
\item[T2-36] Axiom [T2-36] is satisfied because [B2-13] and [B2-14] are natural transformations, using the fact that [B1-1], [B1-7], and [B1-8] are functors, therefore take the identity to the identity. %%
\item[T2-37] Axiom [T2-37] is satisfied because [B2-8] is a natural transformation, using the fact that [B1-5] and [B1-6] are functors, therefore take the identity to the identity. %%
\item[T2-38] Axiom [T2-38] is satisfied because [B1-5], i.e. $w_r$, and [B1-6], i.e. $w_l$, are functors---more specifically for any object $u \in \obj C_1$, the composites $C_2 \times_{C_0} \{u\} \ra C_2 \times_{C_0} C_1 \xra{w_r} C_2$ and $\{u\} \times_{C_0} C_2 \ra C_1 \times_{C_0} C_2 \ra C_2$ are functors; in particular they take composition to composition. %%
\item[T2-39] Axiom [T2-39] is satisfied because [B2-6] and [B2-7] are natural transformations, using the fact that [B1-5] and [B1-6] are functors, therefore take the identity to the identity. %%
\item[T2-40] Axiom [T2-40] is satisfied because [B2-9] and [B2-10] are natural transformations, using the fact that [B1-2] and [B1-9] are functors, therefore take the identity to the identity. %%
\item[T2-41] Axiom [T2-41] is satisfied because [B2-11] is a natural transformation, using the fact that [B1-9] is a functor, therefore takes the identity to the identity. \qedhere %%
\end{description}
\end{proof}

We note that the above construction of a tricategory does not depend on the fibrancy condition on a bicategory object.  However, besides this item, the construction and verification uses precisely the subset of the functoriality and naturality properties of bicategory objects that can be expressed using the 0-data of the underlying tricategory.

\appendix
\section*{Appendix. Table of data and axioms for internal bicategories} \label{app-table}

In Table~\ref{table-dataax}, we list a single pictorial abbreviation for each piece of data and each axiom in the definition of an internal bicategory; the meaning of these pictograms is described in Section~\ref{sec-internalbicats}. 

\begin{figure}[ht]
\begin{tikzpicture}[xscale=\tikztablescalex,yscale=\tikztablescaley]
\draw (0,0) node {0-data};
\draw (-.3,-1) node[anchor=north west] {
\def\tablenum{1}
\begin{tikzpicture}[scale=\tikztablescaleinner]
\tablegrid{\tablenum};
\foreach \x in {1,...,3}
\draw (\x) node[tablenode,anchor=west]
	{\innertikztikz{\table{\tablenum}-{\x};}};
\end{tikzpicture}
};
\draw (.95,0) node {1-data};
\draw (.65,-1) node[anchor=north west] {
\def\tablenum{2}
\begin{tikzpicture}[scale=\tikztablescaleinner]
\tablegrid{\tablenum};
\foreach \x in {1,...,9}
\draw (\x) node[tablenode,anchor=west]
	{\innertikztikz{\table{\tablenum}-{\x};}};
\end{tikzpicture}
};
\draw (2.05,0) node {2-data};
\draw (1.75,-1) node[anchor=north west] {
\def\tablenum{3}
\begin{tikzpicture}[xscale=\tikztablescaleinnerx,yscale=\tikztablescaleinnery]
\tablegrid{\tablenum};
\foreach \x in {1,...,2}
\draw (\x) node[tablenode-eps,anchor=west]
	{\innertikzeps{\table{\tablenum}-{\x};}};
\foreach \x in {3,...,18}
\draw (\x) node[tablenode,anchor=west]
	{\innertikztikz{\table{\tablenum}-{\x};}};
\end{tikzpicture}
};
\draw (4.3,0) node {3-axioms};
\draw (4,-1) node[anchor=north west] {
\def\tablenum{4}
\begin{tikzpicture}[xscale=\tikztablescaleinnerx,yscale=\tikztablescaleinnery]
\tablegrid{\tablenum};
\foreach \x in {1,2,3,6,7,8,9,12,13}
\draw (\x) node[tablenode-eps,anchor=west]
	{\innertikzeps{\table{\tablenum}-{\x};}};
\foreach \x in {4,5,10,11}
\draw (\x) node[tablenode,anchor=west]
	{\innertikztikz{\table{\tablenum}-{\x};}};
\foreach \x in {14,...,46}
\draw (\x) node[tablenode,anchor=west]
	{\innertikztikz{\table{\tablenum}-{\x};}};
\end{tikzpicture}
};
\end{tikzpicture}
\vspace*{-20pt}
\renewcommand{\figurename}{Table}
\caption{Abbreviated definition of an internal bicategory.} \label{table-dataax}
\end{figure}

We briefly describe the recursive algorithm by which is it possible to determine how many pieces of data and axioms there must be, and therefore to check that our list is complete.  A pictogram of the kind drawn in Table~\ref{table-dataax} may be transformed into another by performing one of the following four operations: taking a 0-cell and replacing it by a 1-cell; taking a 1-cell that is not part of a 2-cell and replacing it by an identity 1-cell; taking a non-identity 1-cell and replacing it by a 2-cell; taking a 2-cell and replacing it by an identity 2-cell.  Each of these operations will be considered to increase the complexity of the pictogram by 1 unit, and the inverse of such an operation will be considered to decrease the complexity by 1 unit.  We declare a single non-identity 2-cell to have complexity 0; every pictogram obtainable from such a 2-cell by the listed operations (and their inverses) is thereby assigned a unique \emph{complexity}.

Each pictogram corresponds to a piece of data or an axiom, for the internal bicategory $\cC=(\cC_0,\cC_1,\cC_2)$, that occurs either in the object of 0-cells $\cC_0$, or in the object of 1-cells $\cC_1$, or in the object of 2-cells $\cC_2$.  The \emph{weight} of a pictogram is equal to its complexity if the pictogram represents a piece of data or an axiom occurring in $\cC_2$; the weight is 1 more than the complexity if the pictogram represents a piece of data occurring in $\cC_1$; and the weight is 2 more than the complexity if the pictogram represents a piece of data occurring in $\cC_0$.  The set of weight 3 pictograms is exactly the set of complexity 3 pictograms, and this is the set of ``3-axioms" in the table.  The set of weight~2 pictograms is exactly the set of complexity 2 pictograms, and this set is the ``2-data" in the table.  The set of weight 1 pictograms is the set of complexity 1 pictograms, together with the identity 1-cell pictogram and the composite of two 1-cells pictogram (each of which is complexity 0 but weight 1); this set is the ``1-data" in the table.  The set of weight 0 pictograms contains the 2-cell, the 1-cell, and the 0-cell, and is the ``0-data" in the table.

Let $\fC(k)$ denote the number of pictograms of complexity $k$, and let $\fN(k)$ denote the number of pictograms of complexity $k-2$ whose horizontal width is exactly 1.  Note that $\fN(1) = 1$, $\fN(2) = 2$, and $\fN(k)$ is the $k$-th Fibonacci number for $k>2$.  By considering the ways of obtaining a pictogram of complexity $k$ and width $w$ from a pictogram of complexity less than $k$ and width $w-1$, we observe that $\fC(k) = \sum_{i=1}^{k+2} \fN(i)\fC(k-i)$ for $k > -2$.  From this we calculate that $\fC(3) = 46$, and indeed there are 46 3-axioms listed in Table~\ref{table-dataax}, which are in turn organized into 26 axiom groups in Section~\ref{sec-bicatin2cat}.  Proceeding further, note that the notion of internal bicategory in a 3-category would have 118 axioms.  These techniques predict, more generally, the number of axioms for a weak $k$-category internal to a strict $n$-category.  For instance, a $1$-category internal to a strict $n$-category will obey $\fF(n+3)$ axioms for $n>0$, where $\fF(i)$ is the $i$-th Fibonacci number.  Inductive numerology similarly implies that a tricategory internal to a 1-category will satisfy 74 axioms (and indeed Definition~\ref{def-tricat} contains 74 axioms organized into 41 groups), or that a tricategory internal to a 2-category will satisfy 231 axioms, or, finally and heaven forbid, that a tricategory internal to a 3-category will satisfy 725 axioms.

\subsection*{Acknowledgments}  We thank Arthur Bartels for the rewarding collaboration that motivated this paper, and Peter Teichner, Stephan Stolz, and Mike Shulman for inspiring and informative conversations.  The first author was partially supported by a Miller Research Fellowship, and the second author was partially supported by ERC Horizon 2020 grant 674978.

\bibliography{ib}

\bibliographystyle{plain}

\end{document}